%% file: symdk.tex
\documentclass[10pt]{article}

\input{preamble}

\title{Sheaves on polynomials and polynomial sheaves}
\author{Tobias Dyckerhoff\footnote{Universität Hamburg, Fachbereich Mathematik, Bundesstraße 55, 20146 Hamburg, Germany
	email: {\tt first.last@uni-hamburg.de}}, Till Heine\footnotemark[\value{footnote}], Simon Schneider\footnotemark[\value{footnote}]}

\begin{document}

\maketitle

\begin{abstract}
    We show that several Dold-Kan type equivalences can be understood as
    a passage from constructible sheaves on symmetric products (i.e. spaces of
    polynomials) to polynomial sheaves in the sense of Goodwillie-Weiss
    calculus, by means of a natural analytic expansion functor. This geometric
    perspective provides access to standard tools from sheaf theory such
    as the six functor formalism and $t$-structures. For example, the equivalence
    between filtered objects and coherent chain complexes in a stable
    $\infty$-category receives a geometric incarnation as Verdier duality
    between $!$-sheaves and $*$-cosheaves on an $\infty$-dimensional simplex.
    As a main application, we obtain an intrinsic geometric explanation for the
    appearance of vanishing cycle data for perverse sheaves in a paracyclic variant
    of the Dold--Kan correspondence pioneered by Dwyer and Kan, at the same
    time lifting their equivalence to stable $\infty$-categories.
\end{abstract}

\tableofcontents

\addcontentsline{toc}{section}{Introduction}

\numberwithin{equation}{subsection}

\newpage

\section*{Introduction}

One version of the ``cyclic'' Dold-Kan type correspondences established by
Dwyer and Kan in \cite{DK85} can be formulated (cf. \cite{heine:dk}) as an
equivalence between the category of paracyclic abelian groups and the
category $P$ of diagrams of abelian groups of the form
\begin{equation}
    \label{eq:diag_perverse}
    \begin{tikzcd}
        B_0 \ar[bend left=10]{r}{\delta} & \
        \ar[bend left=10]{r}{\delta}\ar[bend left=10]{l}{d} B_1 & \
        \ar[bend left=10]{r}{\delta}\ar[bend left=10]{l}{d} B_2 & 
        \ar[bend left=10]{l}{d}  \cdots 
    \end{tikzcd}
\end{equation}
subject to the conditions
\begin{enumerate}[label = (\alph *)]
    \item\label{it:1} $d^2 = 0$ and $\delta^2 = 0$,
    \item\label{it:2} the various endomorphisms $\id - d \delta$ and $\id - \delta d$ are invertible. 
\end{enumerate}
It is striking that, while the methods used in \cite{DK85} are of a purely
combinatorial nature, without any direct reference to geometry, the category
$P$ does have a very geometric interpretation. Namely denote, for $k \ge 0$, by
$P_k \subset P$ the full subcategory consisting of diagrams satisfying $B_i =
0$ for $i > k$. As can be seen, e.g. by iterated Beilinson gluing \cite{beilinson:howtoglue},
the category $P_k$ is equivalent to the category of perverse sheaves on affine
$k$-space $\CC^k$, constructible with respect to the stratification
corresponding to the hyperplane filtration
\begin{equation}
    \label{eq:cnfilt}
    \{0\} \subset \CC \subset \CC^2 \subset \cdots \subset \CC^k.
\end{equation}

The starting point of this work was the desire to provide a geometric context
for the above paracyclic Dwyer-Kan correspondence in which the appearance
of perverse sheaves becomes manifest -- without reference to the classification
data \eqref{eq:diag_perverse}.
A key observation towards this goal is the fact that the filtration
\eqref{eq:cnfilt} can, by the fundamental theorem of symmetric polynomials, be
reexpressed as the filtration of symmetric products
\begin{equation}
    \label{eq:spnfilt}
    \{0\} \subset \SP^1(\CC) \subset \SP^2(\CC) \subset \cdots \subset \SP^k(\CC).
\end{equation}
In a nutshell, it then turns out that the various power sum vanishing cycles 
\[
    \Phi_{z_1^n + z_2^n + \dots + z_k^n}(F)_0 \quad \text{, $n \ge 0$,}
\]
of a perverse sheaf $F$ on $\SP^k(\CC)$ at the origin $0$ organize into a
paracyclic abelian group and that this assignment describes the Dwyer-Kan
correspondence. 

These results can most naturally be developed within a rather general framework
of {\em analytic expansion} relating sheaves on symmetric products and
polynomial sheaves. This context does not only give a natural habitat for the
Dwyer-Kan correspondence -- as we will see, the construction also provides a
geometric perspective on Lurie's Dold-Kan correspondence for stable
$\infty$-categories making manifest its relation with the classical Dold-Kan
correspondence for abelian categories via Verdier duality. Beyond these
applications, which will be established in this work, the sheaf-theoretic
perspective points towards possible higher--dimensional Dold-Kan type
correspondences which we plan to explore in future work.

We proceed by surveying our geometric setup: Let $N$ be a closed real manifold,
denote by $C = C(N)$ its open cone with tip $*$, and let $\D$ be a pointed
$\infty$-category with limits. In this context, we construct an {\em analytic
expansion} functor
\[
    \An: \Shc^*(\SP^{\infty}(C); \D) \lra \Shan(C^*, \D), \; F \mapsto \Fan
\]
from constructible sheaves on the infinite symmetric product of $C$ to analytic
sheaves on $C^* = C \setminus \{*\}$, defined as locally constant presheaves on the
poset of finite unions of disjoint disks. A key feature is that the analytic
expansion of a sheaf with support on $\SP^k(C)$ is polynomial of degree $\le k$
in the sense of Goodwillie-Weiss calculus. For $C = \CC$ (the cone over a
circle), the space $\SP^k(\CC)$ can be identified with the space of (monic)
polynomials of degree $\le k$ so that the functor $\An$ carries sheaves on this
space of polynomials to polynomial sheaves on $\CC^*$.
The functor $\An$ is in fact very simple to define: Denoting the monoid
structure of $X := \SP^{\infty}(C)$ by $+$, the value of $\Fan$ on an open $U \subset
C^*$ is given by the formula
\begin{equation}\label{eq:an_expansion}
    \Fan(U) := \fib(F(X) \to F(U + X)) \quad (\simeq \Gamma_{\SPi(C \setminus U)}(F)).
\end{equation}

\begin{exa}
    \label{exa:an_expansion} To illustrate the nature of the analytic expansion
    functor consider the example $C = C(\pt) = \Rg$. Suppose $F \in \Shc(\SPi(\Rg);\D)$
    is a constructible sheaf with support on $\SP^2(\Rg)$. This latter space
    can be identified with an open corner of a $2$-simplex parameterized by
    $\{0 \le x \le y\}$ with the $3$ contractible strata $\{x = y = 0\}$, $\{ x
    = 0, y > 0 \}$, and $\{x > 0, y > 0\}$. The sheaf $F$ can be described by
    its exit path representation which amounts to a diagram $F_0 \to F_1 \to
    F_2$ in $\D$. We compute the value of $F^{\an}$ at a disjoint union $U =
    D_1 \cup D_2 \cup D_3$ of three intervals. The intersection of the open $U
    + \SPi(\Rg)$ with $\SP^2(\Rg)$ is depicted in Figure \ref{fig:open}. We
    deduce that $F^{\an}(U)$ is equivalent to the total fiber of the cube
    \[
    \begin{tikzcd}[
      column sep=small,
      row sep=small
    ]
        F_0 \arrow[dr]\arrow[rr]\arrow[dd] & & F_1 \arrow[dr]\arrow[dd] & \\
                                  & F_1 \arrow[rr, crossing over]\arrow[dd] & & F_2\arrow[dd]\\
        F_1 \arrow[dr]\arrow[rr] & & F_2 \arrow[dr] & \\
                   & F_2 \arrow[rr] & & *
    \end{tikzcd}
    \]
\end{exa}

\begin{figure}[htpb]
    \centering
\begin{tikzpicture}[scale=.7]
  \def\xmax{8}
  \def\ymax{8}

  \fill[black!15] (0,0) -- (\xmax,0) -- (\xmax,\ymax) -- cycle;

  \filldraw[black] (0,0) circle (2pt);

  \fill[black!70] (3,1) rectangle (4,2);

  \begin{scope}
    \clip (0,0) -- (\xmax,0) -- (\xmax,\ymax) -- cycle;

    \fill[black!40] (1,0) rectangle (2,\ymax);
    \fill[black!40] (0,1) rectangle (\xmax,2);

    \fill[black!40] (3,0) rectangle (4,\ymax);
    \fill[black!40] (0,3) rectangle (\xmax,4);

    \fill[black!40] (5,0) rectangle (6,\ymax);
    \fill[black!40] (0,5) rectangle (\xmax,6);

    \fill[black!70] (3,1) rectangle (4,2);
    \fill[black!70] (5,1) rectangle (6,2);
    \fill[black!70] (5,3) rectangle (6,4);

  \end{scope}

  \draw[thick] (0,0) -- (\xmax,0);

  \foreach \t in {1,2,3,4,5,6} { \draw (\t,0) -- (\t,-0.12); }
  \node[below] at (1.5,-0.12) {$D_1$};
  \node[below] at (3.5,-0.12) {$D_2$};
  \node[below] at (5.5,-0.12) {$D_3$};

\end{tikzpicture}
    \caption{The intersection of the open $(D_1 \cup D_2 \cup D_3) + \SPi(\Rg)$ with $\SP^2(\Rg)$}%
    \label{fig:open}
\end{figure}

As we will demonstrate, when the $\infty$-category $\D$ is stable, then
analytic expansion provides a uniform geometric perspective on various Dold-Kan
type correspondences. The sheaf-theoretic point of view naturally comes with
concepts such as Verdier duality and geometrically defined $t$-structures which
turn out to play an appealing role. We provide an overview of our results:

\subsection{The symmetric product \texorpdfstring{$\SPi(\RR_{\ge 0})$}{spirg}}%
\label{sub:r}
Let $N = \{* \}$, and hence $C = \RR_{\ge 0}$. The following results are proved in \S \ref{sec:spr}. Statements and
references to proofs can be found in \S \ref{sub:spr_results}.

\begin{enumerate}
    \item Analytic expansion provides an equivalence 
        \begin{equation}\label{eq:stareq}
            \begin{tikzcd}
                \Shc^*(\SPi(\Rg); \D) \arrow[r, "\An"{above}, "\simeq"{below}] &  \Shan(\Rg; \D) 
            \end{tikzcd}
        \end{equation}
        where $\Shc^*(\SPi(\Rg); \D)$ denotes the $\infty$-category of $*$-sheaves
        obtained as an inverse limit over the $*$-restrictions along $\SP^k(\Rg)
        \subset \SP^{k+1}(\Rg)$. The equivalence \eqref{eq:stareq} can be interpreted
        in purely combinatorial terms yielding an equivalence
        \begin{equation}\label{eq:starcombinatorial}
           \begin{tikzcd}
              \Fun(\NN,\D) \arrow[r, "\simeq"{below}] &  \D_{\Delta_+} 
            \end{tikzcd}
        \end{equation}
        between filtered objects and augmented simplicial objects in $\D$,
        respectively. The equivalence \eqref{eq:starcombinatorial} can be
        regarded as an augmented variant of Lurie's Dold-Kan correspondence
        \cite[1.2.4]{lurie:ha}. 
    \item Restricting analytic expansion to the full subcategory
        $\Shc^!(\SPi(\Rg); \D) \subset \Shc^*(\SPi(\Rg); \D)$ of $!$-sheaves
        (defined as an inverse limit over $!$-restrictions), we obtain an equivalence 
        \begin{equation}\label{eq:shriekeq}
            \begin{tikzcd}
                \Shc^!(\SPi(\Rg); \D) \arrow[r, "\An"{above}, "\simeq"{below}] &  \Shan(\Rg; \D)^c
            \end{tikzcd}
        \end{equation}
        with {\em complete} analytic sheaves, i.e. sheaves whose value at the
        empty set is the colimit of the values on nonempty opens.
        Combinatorially, the subcategory of $!$-sheaves can be identified with the full subcategory
        \[
            \Fun(\NN,\D)^0 \subset \Fun(\NN,\D)
        \]
        consisting of filtered objects with zero colimit which, in turn, can
        again be identified with $\Fun(\NN,\D)$ (by associating to each term of
        a filtered object the fiber of the map to the colimit). 
        Upon this identification, the equivalence \eqref{eq:shriekeq} recovers Lurie's stable
        Dold-Kan correspondence
        \begin{equation}\label{eq:shriekcombinatorial}
           \begin{tikzcd}
              \Fun(\NN,\D) \arrow[r, "\simeq"{below}] &  \D_{\Delta}.
            \end{tikzcd}
        \end{equation}
    \item Composing the equivalence \eqref{eq:shriekeq} with Lurie-Verdier duality (\cite[5.5.5]{lurie:ha})
        \begin{equation}
            \begin{tikzcd}
                \Shc^!(\SPi(\Rg); \D) \arrow[r, "\V"{above}, "\simeq"{below}] & \coSh_{\PP}^*(\SPi(\Rg);\D)^0  
             \end{tikzcd}
        \end{equation}
        we obtain an equivalence
        \begin{equation}\label{eq:verdierequivalence}
            \begin{tikzcd}
                \coSh_{\PP}^*(\SPi(\Rg);\D)^0 \arrow[r, "\An\circ\V^{-1}"{above}, "\simeq"{below}] &  \Shan(\Rg; \D)^c
             \end{tikzcd}
        \end{equation}
        between constructible cosheaves supported off the discriminant and
        complete analytic sheaves. This translates to a combinatorial equivalence
        \begin{equation}\label{eq:verdiercombinatorial}
           \begin{tikzcd}
               \Ch_{\ge 0}(\D) \arrow[r, "\simeq"{below}] &  \D_{\Delta}.
            \end{tikzcd}
        \end{equation}
        between coherent connective chain complexes and simplicial objects in
        $\D$ which can be interpreted as a stable coherent variant of the classical Dold-Kan
        correspondence. 
    \item Suppose that $\D$ comes equipped with a $t$-structure with heart
        $\A$. Then the equivalence \eqref{eq:verdierequivalence} is $t$-exact
        with respect to standard $t$-structures and its restriction to hearts
        yields the classical Dold-Kan correspondence
        \begin{equation}\label{eq:classicalverdiercombinatorial}
           \begin{tikzcd}
               \Ch_{\ge 0}(\A) \arrow[r, "\simeq"{below}] &  \A_{\Delta}.
            \end{tikzcd}
        \end{equation}
        In other words, the equivalence \eqref{eq:shriekeq} is {\em not}
        $t$-exact with respect to the standard ``sheaf'' $t$-structure, but
        rather with respect to the Verdier dual ``cosheaf'' $t$-structure. This
        explains geometrically why the formulation of the stable Dold-Kan
        correspondence in terms of filtered objects does not admit a classical
        abelian analog. 
\end{enumerate} 

\begin{exa}
    \label{exa:verdier} 
    To give a sense of the insights brought about by the geometric perspective
    suppose, as in Example \ref{exa:an_expansion}, that $F \in
    \Shc(\SPi(\Rg);\D)$ is a constructible sheaf with support on $\SP^2(\Rg)$.
    As already mentioned in Example \ref{exa:an_expansion}, the sheaf
    $F$ can be described by its exit path representation $F_0 \to F_1 \to F_2$
    in $\D$. Its Verdier dual is given by the constructible cosheaf with entrance
    path representation
    \[
    \begin{tikzcd}
        0 \ar{d} & \ar{l} F_2[-2] \ar{d}\\
        \fib(F_0 \to F_1) & \ar{l} \fib(F_1 \to F_2)[-1]
    \end{tikzcd}
    \]
    as illustrated in Figure \ref{fig:verdier}. The zero relation comes about,
    since the Verdier dual is {\em not} constructible with respect to the
    $\NN$-stratification of $\SP^2(\Rg)$ but rather takes the value $0$ along
    the discriminant locus $\{ x = y \}$. This phenomenon continues to hold in
    higher dimensions so that Verdier duality provides a geometric model for
    the equivalence between filtered objects and coherent chain complexes in a
    stable $\infty$-category $\D$.
\end{exa}

\begin{figure}[htpb]
        \centering
\begin{tikzpicture}[scale=1.0]
  \begin{scope}
      \def\xmax{5}
      \def\ymax{3}

      \fill[black!15] (0,0) -- (\xmax,0) -- (\xmax,\ymax) -- cycle;

      \filldraw[black] (0,0) circle (2pt) node[anchor=north east] (A) {$F_0$};
      \node[below] (B) at (4,0) {$F_1$};
      \node (C) at (4,1.5) {$F_2$};

      \draw[->] (A.east) -- (B.west);
      \draw[->] (B.north) -- (C.south);

      \draw[thick] (0,0) -- (\xmax,0);
  \end{scope}

  \begin{scope}[shift={(6,1)}]
      \draw (0,0) edge[|->] node[above] {$\V$} (1,0);
  \end{scope}

  \begin{scope}[shift={(9,0)}]
      \def\xmax{5}
      \def\ymax{3}

      \fill[black!15] (0,0) -- (\xmax,0) -- (\xmax,\ymax) -- cycle;

      \filldraw[black] (0,0) circle (2pt) node[anchor=north east] (A) {$\fib(F_0 \to F_1)$};
      \node[below] (B) at (4,0) {$\fib(F_1 \to F_2)[-1]$};
      \node (C) at (4,1.5) {$F_2[-2]$};
      \node[anchor=east] (O) at (2,1.5) {$0$};

      \draw[<-] (A.east) -- (B.west);
      \draw[<-] (B.north) -- (C.south);
      \draw[<-] (O.east) -- (C.west);
      \draw[<-] (A.north) -- (O.west);

      \draw[thick] (0,0) -- (\xmax,0);
      \draw[thick, dotted] (0,0) -- (\xmax,\ymax);
  \end{scope}
\end{tikzpicture}
    \caption{A constructible sheaf on $\SP^2(\Rg)$ and its Verdier dual cosheaf}%
    \label{fig:verdier}
\end{figure}

\subsection{The symmetric product \texorpdfstring{$\SPi(\CC)$}{spic}}%
\label{sub:c}

Let $N = S^1$, and hence $C = \CC$. Statements and proofs for the following can be found in \S \ref{sub:complex_results}.

\begin{enumerate}
    \item Analytic expansion provides an equivalence 
        \begin{equation}\label{eq:cstareq}
            \begin{tikzcd}
                \Shc^*(\SPi(\CC); \D) \arrow[r, "\An"{above}, "\simeq"{below}] &  \Shanr(\CC^*; \D) 
            \end{tikzcd}
        \end{equation}
        between constructible $*$-sheaves and {\em radial} analytic sheaves on
        $\CC^*$ defined as locally constant presheaves on the poset of those
        opens in $\DD(\CC^*)$ which are invariant under the action of the
        positive reals $\RR_{+}$ by multiplication. The $\infty$-category
        $\Shanr(\CC^*,\D)$ can be interpreted combinatorially as the
        $\infty$-category of augmented paracyclic objects in $\D$. The category
        $\Shc^*(\SPi(\CC); \D)$ may be described combinatorially in terms of
        the exit path category, given as an iterated join of circles. This
        gives a description
        \begin{equation}
           \begin{tikzcd}
                   \Fun(\Delta^0 * \bigstar_{n \in \NN} S^1,\D)  \arrow[r, "\simeq"{below}] &  \D_{\Lambda_{\infty,+}},
            \end{tikzcd}
        \end{equation}
        which can be regarded as an augmented stable homotopy-coherent variant of the paracyclic Dwyer-Kan correspondence
        outlined above. 
\item Restricting to {\em complete} radial analytic sheaves on $\CC^*$, analytic expansion provides an equivalence
        \begin{equation}
                \label{eq:cstarshriekeq}
                \begin{tikzcd}
                        \Shc^{*!}(\SPi(\CC);\D) \arrow[r, "\An"{above}, "\simeq"{below}] & \Shanr(\CC^*,\D)^c
                \end{tikzcd}
        \end{equation}
        with the subcategory of $*!$-sheaves on $\SPi(\CC)$. Combinatorially, this yields a paracyclic Dwyer-Kan correspondence
        \begin{equation}
           \begin{tikzcd}
                   \Fun(\bigstar_{n \in \NN} S^1,\D)  \arrow[r, "\simeq"{below}] &  \D_{\Lambda_{\infty}},
            \end{tikzcd}
    \end{equation}
        in the style of Lurie's Dold-Kan equivalence.

    \item Suppose that $\D$ comes equipped with a $t$-structure with heart
        $\A$. Then the equivalence \eqref{eq:cstarshriekeq} is $t$-exact with respect to the {\em perverse}
        $t$-structure. Its restriction to hearts recovers the paracyclic
        Dwyer-Kan correspondence
        \begin{equation}\label{eq:cstareq_heart}
            \begin{tikzcd}
                    \Perv^{*!}_{\NN}(\SPi(\CC); \A) \arrow[r, "\simeq"{above}] &  \A_{\Lambda_{\infty}} .
            \end{tikzcd}
        \end{equation}
        The terms of the paracyclic object corresponding to a perverse sheaf
        $F$ are equivalent to the vanishing cycles of $F$ at the origin $0$
        with respect to the power sums $z_1^n + z_2^n + \dots$ for varying $n \ge 0$. 
\end{enumerate}

\subsection{Outlook}

The following aspects will not be developed in detail in this work - we hope to
return to them in future work.

\begin{enumerate}

    \item {\bf Higher dimensions.} For $n \ge 2$, we obtain an analytic expansion functor
        \[
            \An: \Shc(\SP^{\infty}(\RR^{n+1}), \D) \lra \Shan(\RR^{n+1}\setminus\{0\}, \D)
        \]
        which will (rather clearly) not be an equivalence but may serve as a
        starting point for higher--dimensional Dold-Kan type correspondences. 

    \item {\bf Equivariance.} The equivalence \eqref{eq:cstareq} can be
        shown to be $\on{O}(2)$-equivariant with respect to the actions induced
        by the geometric action of $\on{O}(2)$ on $\RR^2 = \CC$. Given a
        homomorphism $G \to \on{O}(2)$, we obtain an equivalence
        \begin{equation}\label{eq:coheqdwyerkan}
            \Shc(\SP^{\infty}(\CC);\D)^G \simeq \Shanr(\CC^*; \D)^G
        \end{equation}
        between categories of $G$-equivariant sheaves. Further, the equivalence
        \eqref{eq:cstareq_heart} becomes
        \begin{equation}\label{eq:equi-dwyerkan}
            \Perv^G(\SP^{\infty},\A) \simeq \A_{\Lambda_{\infty}/G}
        \end{equation}
        identifying $G$-equivariant perverse sheaves with $\Lambda_{\infty}/G$-objects.
        For example, setting $G = \on{SO}(2)$, the category
        $\Lambda_{\infty}/\on{SO}(2)$ can be identified with Connes' cyclic category $\Lambda$.
        Working out the circle action in terms of the classifying data
        \eqref{eq:diag_perverse}, we recover the explicit combinatorial formulas from
        \cite{DK85} (also cf. \cite{heine:dk}). Varying the group $G$, we
        obtain Dold-Kan type equivalences for all planar crossed simplicial
        groups in the sense of \cite{DK15}.
\end{enumerate}

\subsection*{Funding}
The authors acknowledge support from the Deutsche Forschungsgemeinschaft (DFG,
German Research Foundation) under Germany's Excellence Strategy - EXC 2121
``Quantum Universe'' - 390833306 and the Collaborative Research Center - SFB
1624 ``Higher structures, moduli spaces and integrability'' - 506632645.

\section*{Glossary and conventions}

\begin{itemize}
    \item $M$ generic manifold
    \item $\DD(M)$ poset of finite, possibly empty, disjoint unions of open disks in $M$
    \item $\DD_{\le k}(M) \subset \DD(M)$ unions of cardinality $\le k$
    \item $\DD^*(M) = \DD(M) \setminus \emptyset$
    \item $N$ manifold
    \item $C = C(N)$ open cone over $N$ with tip $*$
    \item $C^*$ punctured cone
    \item $\U(-)$ poset of open subsets of a topological space
\end{itemize}

\begin{itemize}
    \item We call a functor $f: \C \to \D$ of $\infty$-categories {\em
        cofinal} if, for every $y \in \D$, the category $f_{y/}$ is weakly
        contractible. In particular, $f: \Delta^0 \to \D$ is cofinal if an
        only if $f(0)$ is a final object of $\D$. 
    \item We call a functor $f: \C \to \D$ of $\infty$-categories {\em
        coinitial} if, for every $y \in \D$, the category $f_{/y}$ is weakly
        contractible. In particular, $f: \Delta^0 \to \D$ is coinitial if an only if
        $f(0)$ is an initial object of $\D$. 
\end{itemize}

\section{Analytic sheaves from constructible sheaves on symmetric products}

We will now introduce the sheaf--theoretic concepts needed to set up the
geometric perspective on Dold-Kan type correspondences outlined in the
introduction. 

\subsection{Analytic and polynomial sheaves on manifolds}
\label{sub:analytic_polynomial}

We collect some ingredients from the Goodwillie-Weiss calculus of functors (e.g.
\cite{arone-ching:handbook}), adapted to our purposes. Let $M$
be a real $n$-dimensional topological manifold and let $\D$ be a pointed
$\infty$-category with limits. We denote by $\DD(M)$ the poset of finite
disjoint, possibly empty, unions of open disks (i.e. open subsets of $M$
homeomorphic to $\RR^n$).

\begin{defi}
    \label{defi:analytic_sheaf}
    A $\D$-valued {\em analytic sheaf} on $M$ is a functor
    \[
        G: \DD(M)^{\op} \lra \D
    \]
    such that, for every homotopy equivalence $U \subset V$ in $\DD(M)$, the
    associated map $G(V) \to G(U)$ is an equivalence in $\D$. We denote the
    $\infty$-category of analytic sheaves, defined as a full subcategory of
    $\Fun(\DD(M)^{\op},\D)$, by $\Shan(M;\D)$.
\end{defi}

\begin{rem}
    \label{rem:weiss}
    Denoting by $\U(M)$ the poset of all open subsets of $M$, we may extend an
    analytic sheaf $G$ by right Kan extension to a presheaf $\widetilde{G}$ on
    $\U(M)$. By the pointwise formula, this amounts to setting
    \begin{equation}\label{eq:weissdescent}
        \widetilde{G}(U) \simeq \lim G | (\DD(M)/U).
    \end{equation}
    We may view the open cover $\DD(M)/U$ as a {\em Weiss (hyper)cover} of
    $U$: every finite subset of $U$ is contained in an open $V \in
    \DD(M)/U$. The equivalence \eqref{eq:weissdescent} can then be interpreted
    as the statement that $\widetilde{G}$ satisfies (hyper)descent with respect
    to this cover. In particular, analytic sheaves in our sense, can be
    (tautologically) regarded as presheaves on $M$ satisfying Weiss
    (hyper)descent in the sense of the equivalences \eqref{eq:weissdescent}.
    Vice versa, we may start with a $\D$-valued presheaf $H$ on $\U(M)$,
    assuming that it maps homotopy equivalences to equivalences in $\D$. Then
    we obtain a canonical map
    \[
            H \lra \widetilde{H | \DD(M)}
    \]
    referred to as the {\em analytic approximation} of $H$. While very important in
    Goodwillie-Weiss calculus, this approximation will not play any role in
    this work -- we will always consider analytic sheaves to begin with.
\end{rem}

\begin{exas}
    \label{exa:real}
    The following examples are of particular interest to us.
    \begin{enumerate}
        \item $M = \RR$. Let $\Delta$ denote the simplex category and
            $\Delta_+$ its augmentation by an initial object. As shown in Proposition
            \ref{prop:intervals_localization}, the functor
            $\DD(\RR) \to \Delta_{+}, U \mapsto \pi_0(U)$ exhibits $\Delta_+$
            as a localization along the homotopy equivalences in $\DD(\RR)$.
            Therefore, pullback along the functor establishes an equivalence
            \[
                \D_{\Delta_+} \simeq \Shan(\RR;\D)
            \]
            identifying analytic sheaves on $\RR$ with augmented simplicial objects.

        \item $M = S^1$. The localization of $\DD(S^1)$ along the homotopy
            equivalences can be identified with the augmented paracyclic category
            $\Lambda_+$, see Proposition \ref{prop:s1_intervals_localization}. In particular, we obtain an equivalence
            \[
                \D_{\Lambda_+} \simeq \Shan(S^1;\D).
            \]
    \end{enumerate}
\end{exas}

Given a finite set $L$ and a cubical diagram
\[
    f: \P(L)^{\op} \to \D
\]
parameterized by the poset $\P(L)$ of subsets of $L$, we define the {\em total fiber}
\[
    \tfib(f) := \fib(f(L) \to \lim_{I \subsetneq L } f(I)).
\]

\begin{defi}
    \label{defi:derivative}
    Let $G \in \Shan(M;\D)$, $U \in \DD(M)$ and $D_1, \dots, D_l \subset M$
    pairwise disjoint open disks which are disjoint from $U$.
    We define the {\em $l$th difference cube} of $G$ at $U$ with respect to $D_1, \dots, D_l$ as
    \[
        \delta^l G (U; D_1, \dots, D_l) = \{G(U \cup D_I)\}_{I \subset \{1,\dots,l\}}
    \]
    where for $I \subset \{1,\dots,l\}$ we set $D_I = \bigcup_{i \in I} D_i$. We
    further refer to the total fiber of the $l$th difference cube as the the
    corresponding {\em $l$th total difference.}
\end{defi}

\begin{exa}
    \label{exa:first_and_second}
    The first difference cube is given by the arrow
    \[
        \delta G (U; D) = G(U \cup D) \to G(U)
    \]
    and the second difference cube is the square
    \[
    \begin{tikzcd}
        G(U \cup D_1 \cup D_2) \ar{d}\ar{r} & \ar{d}G(U \cup D_2)\\
        G(U \cup D_1) \ar{r} & G(U).
    \end{tikzcd}
    \]
\end{exa}

\begin{defi}
    \label{defi:polynomial}
    An analytic sheaf $G \in \Shan(M;\D)$ is called {\em polynomial of degree
    $\le k$} if all $l$th difference cubes
    \[
        \delta^l G (\emptyset; D_1, \dots, D_l)
    \]
    for $l > k$ are Cartesian (i.e. limit cones). We denote the full subcategory of $\Shan(M;\D)$ spanned
    by the polynomial sheaves of degree $\le k$ by $\Shk(M;\D)$.
\end{defi}

\begin{rem}
    \label{rem:cart}
    Note that, if a cubical diagram is a limit diagram, then its total fiber
    vanishes. The converse statement does not hold in general, but, for
    example, for stable $\infty$-categories.
\end{rem}

\begin{prop} \label{prop:kan} For an analytic sheaf $G$ on $M$ the following are
    equivalent:
    \begin{enumerate}[label = (\roman *)]
        \item $G$ is polynomial of degree $\le k$.
        \item $G$ is a right Kan extension of its restriction to the poset
            $\DD_{\le k}(M)$ of unions of at most $k$ disks.
    \end{enumerate}
\end{prop}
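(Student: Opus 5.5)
By the pointwise formula, condition (ii) says that for every $U \in \DD(M)$ consisting of $l > k$ disks the canonical map
\[
G(U) \to \lim G|(\DD_{\le k}(M)/U)
\]
is an equivalence. Write $U = D_1 \cup \dots \cup D_l$. The plan is to replace the slice $\DD_{\le k}(M)/U$ by a finite cubical poset. Let $\P_{\le k}(L)$ denote the poset of subsets of $L = \{1,\dots,l\}$ of cardinality $\le k$, and consider the functor
\[
\P_{\le k}(L) \to \DD_{\le k}(M)/U, \quad I \mapsto D_I .
\]
\textbf{Key step.} We show that, after restricting $G$, this functor induces an equivalence on limits. An object $V \subset U$ of $\DD_{\le k}(M)/U$ has at most $k$ components, and each component lies in a single $D_i$. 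So $V$ determines the set $I(V)$ of indices $i$ with $V \cap D_i \neq \emptyset$, and $|I(V)| \le k$.

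We would use the analytic property to contract. Inside $D_i$, the finitely many components of $V$ and the disk $D_i$ are related through homotopy equivalences only when $V \cap D_i$ is a single disk. So one cannot simply replace $V$ by $D_{I(V)}$. Instead we argue in two stages.

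1. Compute the limit over $\DD_{\le k}(M)/U$ as an iterated right Kan extension: first along the inclusion of the full subposet of those $V$ having at most one component in each $D_i$, then the remaining extension. For $V$ with several components in one $D_i$, the slice condition is handled as follows. Since $G$ is already right Kan extended from $\DD_{\le k}(M)$ (by induction on the number of components, which is at most $k$), $G(V)$ is the corresponding limit, so extending does not change limits. This inductive framing is the delicate part, and I would rather argue directly as in step 2.

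2. On the subposet $Q \subset \DD_{\le k}(M)/U$ of $V$ meeting each $D_i$ in at most one disk, there is a map $Q \to \P_{\le k}(L)$, $V \mapsto I(V)$. The fibre over $I$ is the product over $i \in I$ of the posets of subdisks of $D_i$. Each such poset has final object $D_i$, and $G$ is constant on it up to equivalence, since inclusions of disks are homotopy equivalences. Hence the right Kan extension along $Q \to \P_{\le k}(L)$ of $G|_Q$ is $I \mapsto G(D_I)$, and $\lim G|_Q \simeq \lim_{I \in \P_{\le k}(L)} G(D_I)$.

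3. Show that $Q \subset \DD_{\le k}(M)/U$ is coinitial. For $V$ in the slice, the category $Q_{/V}$ consists of subsets $W \subset V$ with at most one component per $D_i$. This category is the nerve of the poset of choices of "at most one component of $V$ in each $D_i$, together with a subdisk". We claim it is contractible. After contracting disks (each disk poset has a final object), it becomes the poset of partial sections, which has an initial object $\emptyset$. Hence it is contractible.

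Granting these steps, (ii) at $U$ becomes the statement that $G(D_L) \to \lim_{I \in \P_{\le k}(L)} G(D_I)$ is an equivalence.

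Now prove (i)$\Leftrightarrow$(ii) by induction on $l > k$. $\P(L)\setminus\{L\}$ is built from $\P_{\le k}(L)$ by adding subsets of size between $k+1$ and $l-1$. We therefore get
\[
\lim_{I \subsetneq L} G(D_I) \simeq \lim_{\P_{\le k}(L)} G(D_I)
\]
exactly when each $G(D_J)$ with $k < |J| < l$ is the limit over $\P_{\le k}(J)$, by the standard iterated Kan extension along the size filtration. By induction, both conditions (cubes Cartesian for all sizes $k < |J| < l$, respectively the Kan condition for those $J$) give this. Hence the $l$-cube $\delta^l G(\emptyset; D_1,\dots,D_l)$ is Cartesian iff $G(D_L)$ is the limit over $\P_{\le k}(L)$.

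The main obstacle is the coinitiality in step 3, especially the bookkeeping of components of $V$ inside a single $D_i$. I would first try to prove it via Quillen's Theorem A, contracting disk posets to points.
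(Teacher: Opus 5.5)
\textbf{There is a genuine gap in Step 3: you check the wrong comma category.}

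The diagram whose limit you need is $G$ restricted to $(\DD_{\le k}(M)/U)^{\op}$. To compute that limit on $Q^{\op}$, the inclusion $Q^{\op}\to(\DD_{\le k}(M)/U)^{\op}$ must be coinitial. That is, for every $V$ in the slice, the poset $\{W\in Q : V\subseteq W\}$ of elements of $Q$ \emph{containing} $V$ must be weakly contractible. You instead examine $\{W\in Q : W\subseteq V\}$, which is the condition relevant to covariant diagrams on the slice. That poset is trivially contractible because $\emptyset$ is initial. For the same reason the one-element subposet $\{\emptyset\}$ also passes your test, so your argument would equally ``show'' $\lim G|(\DD_{\le k}(M)/U)^{\op}\simeq G(\emptyset)$. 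As written, Step 3 proves nothing.

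The correct statement is true and easy. The poset $\{W\in Q : V\subseteq W\}$ contains $D_{I(V)}$: all components of $V$ inside $D_i$ lie in the single disk $D_i$, which is where your bookkeeping worry is resolved. The monotone map $W\mapsto W\cap D_{I(V)}$ lands in the same poset and gives a natural zigzag $W\supseteq W\cap D_{I(V)}\subseteq D_{I(V)}$, which contracts it.

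Better still, drop $Q$ entirely. For $V\subseteq U$ with at most $k$ components, $I(V)\subseteq J$ holds if and only if $V\subseteq D_J$. So $V\mapsto I(V)$ is left adjoint to $J\mapsto D_J$ as functors between $\DD_{\le k}(M)/U$ and $\P_{\le k}(L)$. Hence $\P_{\le k}(L)^{\op}\to(\DD_{\le k}(M)/U)^{\op}$ is a left adjoint, thus coinitial, and $\lim G|(\DD_{\le k}(M)/U)^{\op}\simeq\lim_{I\in\P_{\le k}(L)^{\op}}G(D_I)$ for \emph{any} presheaf $G$.

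Analyticity plays no role here. Your remark that one ``cannot replace $V$ by $D_{I(V)}$'' conflates replacing values of $G$ with cofinality of the indexing functor. The same point applies in Step 2: right Kan extensions are computed on comma categories, not fibres. What actually gives $J\mapsto G(D_J)$ is that $\{V\in Q : I(V)\subseteq J\}$ has terminal object $D_J$.

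With this reduction repaired, your final induction on $l$ is exactly the paper's argument: the cardinality filtration $\P_{\le k}(L)\subset\P_{\le k+1}(L)\subset\cdots\subset\P(L)$ together with the pointwise criterion. One wording fix there: ``exactly when'' should read ``whenever''. Only that direction holds in general, and it is the only one you use.

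The paper's proof never spells out why (ii) at $U=D_L$ is the same as the cube $I\mapsto G(D_I)$ being right Kan extended from $\P(L)^{\le k}$. The corrected version of your Steps 2--3 supplies exactly that omitted step.
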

\begin{proof}
    For $U = D_1 \cup \dots \cup D_l$ with $l > k$ set $L = \{1,\dots,l\}$ and consider the cube
    \[
        q: \P(L)^{\op} \to \D,\; I \mapsto G(D_I).
    \]
    Using the functoriality of Kan extensions, we obtain that the cube $q$ is a
    right Kan extension of its restriction to the poset $\P(L)^{\le k}$ of
    subsets of cardinality less than $k+1$ if and only if it is obtained by
    successive right Kan extensions along the steps of the filtration
    \[
        \P(L)^{\le k} \subset \P(L)^{\le k+1} \subset \dots \subset \P(L).
    \]
    Via the pointwise criterion, this latter condition is equivalent to
    requiring that all $m$th difference cubes ($k < m \le l$) of $G$ at $\emptyset$
    with respect to $m$ disks among $D_1, \dots, D_l$ are limit cubes. The claimed
    equivalence now follows by letting $q$ range over all cubes of the given
    form.
\end{proof}

\begin{prop}
    \label{prop:allderivatives}
    Let $G \in \Shk(M;\D)$ be polynomial of degree $\le k$. Then, for every $U
    \in \DD(M)$, every $l > k$, and every collection $D_1, \dots, D_l$ of
    pairwise disjoint disks satisfying $D_i \cap U = \emptyset$, the $l$th difference cube
    \[
        \delta^l G (U; D_1, \dots, D_l)
    \]
    is Cartesian.
\end{prop}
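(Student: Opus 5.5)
Write $U = E_1 \cup \dots \cup E_m$ as a disjoint union of disks. The plan is to view $\delta^l G(U; D_1,\dots,D_l)$ as a face of a larger difference cube based at $\emptyset$, and then use the standard fact that a face of a Cartesian cube need not be Cartesian, but becomes so when all the relevant larger cubes are Cartesian.

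Concretely, set $L = \{1,\dots,l\}$ and $P = \{1,\dots,m\}$, and consider the $(l+m)$-cube
\[
    Q: \P(L \sqcup P)^{\op} \lra \D, \quad (I, J) \mapsto G(D_I \cup E_J).
\]
Since the $D_i$ and $E_j$ are pairwise disjoint disks, every subcube of $Q$ of the form
\[
    \{G(E_J \cup D_{I'} \cup D_{I''})\}_{I'' \subset S}
\]
is a difference cube at $E_J \cup D_{I'}$ with respect to the disks indexed by $S$, where $S \subset L$ and $I' \subset L \setminus S$. The cube we want is the face of $Q$ with $J = P$ fixed and $I$ varying over $\P(L)$.

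I would argue by induction on $m$. The case $m = 0$ is the definition of polynomiality. For the inductive step, split off one disk, say $U = U' \cup E$ with $U'$ a union of $m-1$ disks. Consider the $(l+1)$-cube
\[
    \{G(U' \cup D_I \cup E^{\epsilon})\}_{I \subset L,\ \epsilon \in \{0,1\}},
\]
where $E^0 = \emptyset$ and $E^1 = E$. This is the difference cube at $U'$ with respect to the $l+1$ disks $D_1,\dots,D_l,E$. Since $l+1 > k$, it is Cartesian by the inductive hypothesis applied to $U'$. Its face $\epsilon = 0$ is the difference cube at $U'$ with respect to $D_1,\dots,D_l$, which is Cartesian by induction since $l > k$. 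Now I would use the standard lemma: if an $(l+1)$-cube, viewed as a map of $l$-cubes $X_1 \to X_0$, is Cartesian and $X_0$ is Cartesian, then $X_1$ is Cartesian. The face $\epsilon = 1$ is exactly $\delta^l G(U; D_1,\dots,D_l)$, so this finishes the step.

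The main technical point is this cube lemma in an arbitrary $\infty$-category with limits, with no stability assumed. I would prove it by writing $\lim_{\P(L)^{\op}\setminus\{L\}}$ of the big punctured cube as an iterated limit. Concretely, the big punctured cube's limit is the pullback of $X_1$'s punctured limit and $X_0(L)$ over $X_0$'s punctured limit. Then use the fact that a pullback along an equivalence is an equivalence (cf. HTT 4.4.2 / Lurie HA 6.1.1). The induction must be set up so that the hypothesis covers all $l > k$ and all $U$ with fewer disks, which the above scheme does.
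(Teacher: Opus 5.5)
Your proof is correct, but it takes a different route from the paper.

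The paper invokes Proposition~\ref{prop:kan} to write $G$ as the right Kan extension of $G|\DD_{\le k}(M)$. This computes $G(W)$, for $W = U \cup D_1 \cup \dots \cup D_l$, as a limit over the unions of at most $k$ disks contained in $W$. Each such union has at most $k<l$ connected components, each lying in a single component of $W$, so it lies in some $U \cup D_I$ with $I \subsetneq \{1,\dots,l\}$. Hence the punctured cube is cofinal for this limit, and the cube is Cartesian in one step.

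You instead induct on the number of components of $U$. You use only the defining condition at $U=\emptyset$ together with the elementary cube lemma: if $X_1 \to X_0$ forms a Cartesian $(l+1)$-cube and $X_0$ is Cartesian, then $X_1$ is Cartesian. You justify that lemma correctly through the pullback decomposition of the punctured limit. The hypotheses you need are all in place: $D_1,\dots,D_l,E$ are pairwise disjoint and disjoint from $U'$, and both $l$ and $l+1$ exceed $k$.

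The trade-off is as follows. Your argument is more elementary and self-contained: it needs only finite limits in $\D$, and it does not rely on Proposition~\ref{prop:kan} or on limits over the infinite posets $\DD_{\le k}(M)/W$. The paper's argument is shorter once the Kan extension characterization is available, and it matches the Weiss-descent viewpoint of Remark~\ref{rem:weissk}.

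The auxiliary $(l+m)$-cube $Q$ in your first paragraph plays no role in the argument and can be dropped.
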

\begin{proof}
        By Proposition \ref{prop:kan}, the anlytic sheaf $G$ is a right Kan extension of
        the restriction to $\DD_{\le k}(M)$. For the punctured cube $C_U = \{ U \cup D_I \}_{I \subsetneq \{1, \dots,l\}}$
        the inclusion of full subcategories
        \[
                C_U \to C_U \cup \DD_{\le k}(M)/(U \cup D_1 \cup \dots \cup D_l)
        \]
        of $\DD(M)$ is cofinal because $k < l$. The value of $G$ at $U \cup D_1 \cup \dots \cup D_l$
        can be computed by iterated right Kan extension, so the $l$th difference cube
        $\delta^l G (U; D_1, \dots, D_l)$ is biCartesian.

\end{proof}

\begin{rem}
    \label{rem:weissk}
    Continuing the discussion thread of Remark \ref{rem:weiss}, using
    Proposition \ref{prop:kan} along with the pointwise formula for Kan
    extensions, we obtain that, for a polynomial sheaf $G$ of degree $\le
    k$, the value of the presheaf $\widetilde{G}$ on $\U(M)$ at an open subset
    $U$ is given by the formula
    \begin{equation}\label{eq:weissk}
        \widetilde{G}(U) \simeq \lim G | \DD_{\le k}(M)/U.
    \end{equation}
    The open cover $\DD_{\le k}(M)/U$ is a {\em Weiss (hyper)cover of $U$ of degree
    $k$}: every finite subset of cardinality $k$ is contained in an open set of
    the cover. Again, tautologically, we may view polynomial sheaves of degree
    $\le k$ as presheaves on $M$ satisfying (hyper)descent in the sense of the
    equivalence \eqref{eq:weissk}.
\end{rem}

For $k \ge 0$, consider the embedding $i_k: \DD_{\le k}(M) \subset \DD(M)$. We
obtain a (co)localization functor
\[
    \tau_k: \Shan(M;\D) \to \Shk(M;\D),\; G \mapsto (i_k)_*(i_k)^* G
\]
which we refer to as the {\em $k$th polynomial approximation}.

\begin{prop}\label{prop:approx}
    The diagram
    \[
    \begin{tikzcd}
        \Shan(M;\D) \ar[swap]{dr}{\tau_k} \ar[bend left=5]{drr}{\tau_{k-1}} \ar[bend left=10]{drrrr}{\tau_0} & & & \\
        \cdots \ar{r}& \Shk(M;\D) \ar[swap]{r}{\tau_{k-1}} &  \Sh^{\le k-1}(M;\D) \ar{r}  & \cdots \ar{r} & \Sh^{ \le 0}(M;\D)
    \end{tikzcd}
    \]
    is a limit cone.
\end{prop}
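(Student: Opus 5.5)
The plan is to replace each $\Shk(M;\D)$ by a category of functors on the truncated poset $\DD_{\le k}(M)$, so that the tower in the statement becomes a tower of restriction functors. Its limit can then be computed formally, because $\DD(M)$ is the increasing union of the $\DD_{\le k}(M)$.

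\textbf{Step 1 (truncated analytic sheaves).} Let $\Shan_{\le k}(M;\D)\subset \Fun(\DD_{\le k}(M)^{\op},\D)$ be the full subcategory of functors sending homotopy equivalences $U\subset V$ in $\DD_{\le k}(M)$ to equivalences. By Proposition \ref{prop:kan}, every $G\in\Shk(M;\D)$ is right Kan extended from $\DD_{\le k}(M)$. Since $i_k$ is fully faithful, the counit $i_k^*(i_k)_*\to \id$ is an equivalence. Hence $i_k^*$ and $(i_k)_*$ are mutually inverse equivalences $\Shk(M;\D)\simeq \Shan_{\le k}(M;\D)$, provided $(i_k)_*$ of a locally constant functor is again locally constant.

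The paper implicitly asserts this last point by declaring $\tau_k$ to land in $\Shk(M;\D)$. I expect it to be the main technical point. To check it, fix a homotopy equivalence $U\subset V$. Each component disk of $V$ then contains exactly one component disk of $U$. I would compare the limits over $\DD_{\le k}(M)/U$ and $\DD_{\le k}(M)/V$ by showing that the inclusion of the former into the latter is coinitial after localizing at homotopy equivalences. Concretely, any union of $\le k$ disks in $V$ can be isotoped, through a zig-zag of homotopy equivalences inside $V$, to one inside $U$, by radially shrinking each disk of $V$ into the corresponding disk of $U$. Alternatively, if the paper's later localization results make this automatic, I would invoke them instead.

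\textbf{Step 2 (identify the transition maps).} Under these identifications, the functor $\tau_{k-1}\colon\Shk(M;\D)\to \Sh^{\le k-1}(M;\D)$ corresponds to restriction along $\DD_{\le k-1}(M)\subset\DD_{\le k}(M)$, since
\[
i_{k-1}^*(i_{k-1})_*i_{k-1}^*(i_k)_*\simeq i_{k-1}^*(i_k)_*\simeq (i_k^{k-1})^*,
\]
again using full faithfulness of $i_k$. Likewise, $\tau_k\colon\Shan(M;\D)\to\Shk(M;\D)$ corresponds to $i_k^*$. So the cone in the statement is equivalent to the cone of restriction functors from $\Shan(M;\D)$ to the tower $\{\Shan_{\le k}(M;\D)\}_k$.

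\textbf{Step 3 (compute the limit).} As simplicial sets, $N(\DD(M))=\bigcup_k N(\DD_{\le k}(M))$ is a filtered colimit of inclusions, which is a homotopy colimit in the Joyal model structure. Therefore
\[
\Fun(\DD(M)^{\op},\D)\simeq \lim_k \Fun(\DD_{\le k}(M)^{\op},\D).
\]
It remains to match the full subcategories on both sides. A functor on $\DD(M)$ lies in $\Shan(M;\D)$ if and only if all its restrictions do, because every homotopy equivalence $U\subset V$ in $\DD(M)$ preserves the number of components and so already lies in $\DD_{\le k}(M)$ for $k=|\pi_0 V|$. A limit of full subcategories cut out by compatible conditions is the corresponding full subcategory of the limit. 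Hence $\Shan(M;\D)\simeq\lim_k\Shan_{\le k}(M;\D)$ via the restrictions, and by Step 2 the original diagram is a limit cone.
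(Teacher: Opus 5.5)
Your proof is correct, but it is organized differently from the paper's. The paper reduces the claim to showing that the canonical map $G \to \lim_k \tau_k(G)$ is an equivalence for each $G \in \Shan(M;\D)$. It checks this pointwise: on $U \in \DD_{\le l}(M)$ the values $\tau_k(G)(U)$ are constant, equal to $G(U)$, for $k \ge l$. The other composite, $\tau_m(\lim_k G_k) \simeq G_m$ for a compatible family, is left implicit; it is formal because $\tau_m$ preserves limits and the family stabilizes pointwise.

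You instead replace the tower by restriction functors between locally constant functors on the truncated posets. You then compute the limit from $N\DD(M) = \bigcup_k N\DD_{\le k}(M)$, using that homotopy equivalences preserve the number of components. This gives the equivalence of $\infty$-categories in one stroke, with both directions handled. The price is your Step 1. As formulated, it needs $(i_k)_*$ to preserve local constancy for \emph{every} locally constant functor on $\DD_{\le k}(M)$. The paper only tacitly uses this for functors of the form $i_k^*G$, when it declares that $\tau_k$ lands in $\Shk(M;\D)$.

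You can drop that extra input altogether:
\begin{itemize}
\item By Proposition \ref{prop:kan} and full faithfulness of $(i_k)_*$, the functor $i_k^*\colon \Shk(M;\D) \to \Shan_{\le k}(M;\D)$ is fully faithful with no further work.
\item Your Step 2 shows these functors intertwine the transition maps. Hence $\Psi\colon \lim_k \Shk(M;\D) \to \lim_k \Shan_{\le k}(M;\D)$ is fully faithful.
\item The composite of $\Psi$ with the cone functor is the equivalence of your Step 3. So $\Psi$ is essentially surjective, hence an equivalence, and the cone functor is an equivalence by two-out-of-three.
\end{itemize}
This route is preferable for another reason. Your concrete justification of the technical point is isotoping each union of $\le k$ disks in $V$ into $U$ by a zig-zag. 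That only shows every object of the localized $\DD_{\le k}(V)$ is hit, which falls short of the coinitiality you say you need.
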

\begin{proof}
    This amounts to the statement that, for every analytic sheaf $G \in \Shan(M;\D)$, we have
    \begin{equation}\label{eq:approx}
        G \overset{\simeq}{\lra} \lim_{k \ge 0} \tau_k(G).
    \end{equation}
    In other words, $G$ is the limit of its sequence of polynomial approximations. But this
    latter statement is clear, since it can be verified pointwise. When
    evaluating on $U \in \DD_{\le l}(M)$ then the sequence $\{\tau_k(G)\}_{k \ge
    0}$ stabilizes for $k > l$ with value $G(U)$.
\end{proof}

\subsection{Constructible sheaves on symmetric products}

Let $N$ be a compact real topological manifold and let
\[
    C = C(N) =  \{*\} \amalg_{N \times \{0\}} N \times \RR_{\ge 0}
\]
denote its open cone. For $k \ge 0$, we denote by
\[
    \SP^k(C) = \overbrace{C \times \dots \times C}^k / S_k
\]
the $k$th symmetric product of $C$. We regard $C$ as pointed by the tip $* \in
C$ of the cone. This gives rise to a sequence of embeddings
\[
    \{*\} \subset C \subset \SP^2(C) \subset \SP^3(C) \subset  \dots
\]
given by adding the base point, and we define
\[
    \SPi(C) = \bigcup_{k \ge 0 } \SP^k(C).
\]
We abbreviate $X := \SPi(C)$ and keep track of the filtration $X_{\le k}
= \SP^k(C) \subset X$, $k \ge 0$. The space $X$ can thus be viewed as an
$\NN$-stratified space with strata $X_k = X_{\le k} \setminus X_{\le k-1}$, $k
\in \NN$. We call this stratification the {\em filtration stratification}.

We further set $C^* = C \setminus \{*\}$ to be the complement of the
tip of the cone.

\begin{exas}
    \label{exas:spks}
    \begin{enumerate}
        \item For $N = S^1$, we identify $\CC = C(S^1)$. The elementary symmetric
            polynomials give rise to a homeomorphism
            \begin{equation}\label{eq:elementary}
                \SP^k(\CC) \overset{\cong}{\lra} \CC^k,\; (z_1, \dots, z_k) \mapsto (e_1, \dots, e_k)
            \end{equation}
            parameterizing the space of monic complex polynomials of degree $k$ in
            terms of their zeros ($\SP^k(\CC)$) or their coefficients ($\CC^k$).
            Interpreting the symmetric product $\SP^k(\CC)$ as the space of monic
            polynomials of degree at most $k$ in the variable $T$, the inclusion $\SP^k(\CC)
            \subset \SP^{k+1}(\CC)$ is given by multiplication with $T$. Under the
            identification \eqref{eq:elementary}, this corresponds to the inclusion
            $\CC^k \subset \CC^{k+1}$ obtained by adjoining $0$ as the last coordinate.
            The $k$-stratum of the $\NN$-stratification of $\SPi(\CC)$ is therefore
            given by the space of monic polynomials of degree $k$ which, under
            \eqref{eq:elementary} gets identified with $\CC^{k-1} \times \CC^*$. In
            particular, all strata are homotopy equivalent to $S^1$.
        \item For $N = \pt$, we obtain $C = \RR_{\ge 0}$. We may identify
            \[
                \SP^k(\RR_{\ge 0}) = \{ (x_1, \dots, x_k) \in \RR^k \; | \; x_1 \ge x_2 \ge \dots \ge x_k \ge 0 \}
            \]
            so that we may view this space as the complement of the face
            opposite to the vertex $0$ of a geometric $k$-simplex (i.e. an open
            corner of $0$). Again, the inclusion is given by setting the last
            coordinate to zero, which amounts to the inclusion of the face
            opposite to the vertex $k+1$. In particular, the $k$-stratum of
            $\SPi(\RR_{\ge 0})$ can be identified with the complement
            of the faces $\partial_0$ and $\partial_{k}$ of a geometric
            $k$-simplex, in particular, it is contractible.
        \item In the general situation, for $C = C(N)$, setting $C^*$ to be the
            complement of the cone point, the $k$-stratum of the stratification
            of $\SPi(C)$ is given by $\SP^k(C^*)$ which deformation
            retracts to $\SP^k(N)$. Note, in particular, that the discriminant
            stratification of $\SP^k(C)$ with open stratum given by the
            configuraion space is {\em not} reflected in our
            $\NN$-stratification.
    \end{enumerate}
\end{exas}

The symmetric product $X = \SPi(C)$ comes equipped with a commutative topological
monoid structure which, when viewing the class in $\SP^k(C)$ of a tuple $(x_1,
\dots, x_k) \in C^k$ as a formal sum $x_1 + \dots + x_k$, is given by formally
adding such expressions. This monoid structure can be used to conveniently
describe open subsets of $X$ which will be of crucial importance for us.
For example, given an open subset $U \subset C^*$, we denote by
\begin{equation}\label{eq:upx}
    U + X \subset X
\end{equation}
the open subset consisting of formal sums with at least one summand in $U$.

We also write
\[
	m_1 \cdot y_1 + \dots + m_r \cdot y_r
\]
for an element of $\SP^k(C)$ where the $y_i$ are distinct and $\sum_{i=1}^r m_i = k$.

We introduce a refinement of the filtration stratification which we shall require in \S \ref{sec:spr}.
\begin{defi}
Let $\PP$ denote the set of (unordered) partitions of natural numbers, i.e. decompositions $k=\sum_{i \in I} m_i$ with $I$ finite and
$m_i \in \NN_{>0}$. There is a natural poset structure on $\PP$ as follows:
\begin{itemize}
       \item If $\sum_{i \in I} m_i =k = \sum_{j \in J} n_j$, then there is an
               arrow $\sum_{i \in I} m_i \to \sum_{j \in J} n_j$ if and only if  $J$ admits a partition
               $J= \bigcup_{i \in I} J_i$ such that $\sum_{j \in J_i} n_j = m_i$. For instance, there is an arrow $2+2 \to 1+1+2$.
\item If $\sum_{i \in I} m_i =k < l = \sum_{j \in J} n_j$, then there is an arrow
        $\sum_{i \in I} m_i \to \sum_{j \in J} n_j$ if and only if the $(|I|+1)$-term partition $l= \sum_{i \in I} m_i +(l-k)$ can
        be obtained from $\sum_{j \in J} n_j$ as above.
\end{itemize}
The {\em discriminant stratification} $\SPi(C) \to \PP$ is given by
\[
        \sum_{i \in I} m_i \cdot x_i \mapsto \sum_{i \in I} m_i
\]
for $x_i \in C^*$ distinct and $m_i >0$.
\end{defi}
There is a natural map of posets $\PP \to \NN$ given by $k=\sum_{i \in I} m_i \mapsto k$. We denote by $\PP_k$ and $\PP_{\le k}$
the preimages of $k$ and $[k]=\{1,\dots,k\}$ respectively.

We now fix $n \in \NN$ and recall the structure of a (smooth) conical manifold on $\SP^{n}(C)$ (c.f. \S \ref{sub:conical_manifolds}).
\begin{prop}
\label{prop:smooth_structures}
Suppose that $N$ is a smooth compact manifold and let $C=C(N)$ denote the open cone.
The discriminant stratification
\[
	\SP^{n}(C) \to \PP
	\]
	naturally admits the structure of a smooth conical manifold.
\end{prop}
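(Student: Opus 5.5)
We will build the conical structure by induction on strata depth, using local models near each point. A smooth conical manifold (in the sense of \S\ref{sub:conical_manifolds}, i.e.\ Ayala--Francis--Tanaka) is a stratified space with an atlas of basics $\RR^i \times C(L)$, where $L$ is a compact stratified space that is itself conically smooth, with smooth transition maps. So it suffices to do two things: exhibit basic charts around every point of $\SP^n(C)$ that respect the discriminant stratification to $\PP$, and check compatibility of the charts on overlaps.

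\textbf{Local splitting.} A point of $\SP^n(C)$ has the form $m_0\cdot * + m_1\cdot y_1 + \dots + m_r\cdot y_r$ with the $y_i \in C^*$ distinct and $m_0 \ge 0$. Choose pairwise disjoint neighbourhoods: a small cone neighbourhood $C_\epsilon$ of $*$ and disks $B_i \cong \RR^d$ around the $y_i$, where $d=\dim C$. Addition gives an open embedding
\[
\SP^{m_0}(C_\epsilon)\times \prod_{i=1}^r \SP^{m_i}(B_i) \hookrightarrow \SP^n(C),
\]
which is a homeomorphism onto an open neighbourhood of the point. It is compatible with stratifications, where the product carries the product of the discriminant stratifications and partitions are added, mapping into $\PP$. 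Since products of conically smooth spaces are conically smooth, we reduce to two local models.

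\textbf{The models.} The first model is $\SP^m(\RR^d)$. Translating the centre of mass splits off a factor $\RR^d$, and scaling by $\RR_{>0}$ identifies the remainder with the open cone on the link $L_m = \{\text{centred configurations of norm } 1\}/S_m$. For $d\ge 3$ this space is not a manifold, but the link is stratified by the partition type. Its strata near a point again split as above into lower symmetric products, so we argue by induction on $m$. The second model, $\SP^m(C(N))$, has a radial scaling action coming from the $\RR_{>0}$-action on the cone. This exhibits it as the open cone on $\{\sum |x_j| = 1\}/S_m$, whose points have at least one summand in $C^*$; there the link again decomposes locally via the splitting step, so induction on $m$ applies again. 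Here we use that $N$ is smooth compact, so $C^*=N\times\RR_{>0}$ is a smooth manifold.

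\textbf{Main obstacle: smoothness of transition maps.} The hard part is checking that the transition maps between different splitting charts, and between the conical coordinates produced at different inductive levels, are conically smooth. Our first approach is to present everything as the quotient of the smooth $S_m$-manifold (with corners along the cone point) $C^m$ by a finite group. A finite group quotient of a smooth manifold is conically smooth with the orbit-type stratification, which we take from the literature. The partition stratification is exactly the orbit-type stratification coarsened by the $S_m$-conjugacy classes of Young subgroups, so the quotient structure induces the desired one. All charts above are induced from equivariant smooth maps on $C^m$, which gives compatibility.

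The cone point $*$ needs separate treatment: $C^m$ is itself conical at $*$, with links $N$. For this we use the joined-cone description $C^m \cong C(N^{\star m})$, with $S_m$ acting on the join. This reduces the cone point to the quotient of the compact stratified space $N^{\star m}$ by $S_m$, which is again handled by the finite-quotient result.
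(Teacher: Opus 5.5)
Your eventual strategy is the paper's: realize $\SP^n(C)$ as the $S_n$-quotient of $C^n$ and import a finite-quotient result from the literature. The paper applies \cite[Prop.~3.5.18]{aft17} to $C(N)\to[1]$, which makes $C^n$ conically smooth with its \emph{ordered} discriminant stratification (recording which coordinates coincide and which lie at the tip). It then descends this along the $S_n$-action via \cite[Ex.~3.5.15]{aft17}.

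However, your literature step is applied to the wrong object, and this is where all the content lies. As you note yourself, the local splitting and induction only produce topological conical charts; smoothness is the real issue.

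First, $C^m$ is not a smooth manifold with corners. For a general closed $N$ (a torus, three points, \dots) the cone $C(N)$ is genuinely singular at $*$. Even for $N=\pt$, $\RR_{\ge 0}^m$ is a stratified space rather than a smooth manifold. So a statement about finite quotients of smooth manifolds does not apply. Your patch via $C^m\cong C(N^{\star m})$ does not close the gap either: $N^{\star m}$ is again a stratified space, not a manifold. The phrase ``again handled by the finite-quotient result'' therefore invokes a statement you have not stated.

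Second, even granting a quotient theorem, the orbit-type stratification does not recover $\PP$ near the tip. In $C^2$, the points $(*,y)$ and $(y_1,y_2)$ (with $y_1\neq y_2$ in $C^*$) both have trivial stabilizer. The points $(*,*)$ and $(y,y)$ both have stabilizer $S_2$. Yet $*+y$, $y_1+y_2$, $2\cdot *$ and $2\cdot y$ lie in four distinct $\PP$-strata, namely $1$, $1+1$, the empty partition of $0$, and $2$.

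What is missing is therefore twofold:
\begin{enumerate}
\item A descent statement for finite group actions on conically smooth \emph{stratified} spaces, not just manifolds.
\item The fact that $C(N)^n$ is conically smooth for the stratification refining the product stratification over $[1]^n$ by all diagonals, with $S_n$ acting by conically smooth stratified automorphisms.
\end{enumerate}
This refined stratification is exactly the one whose image in the quotient is $\SP^n(C)\to\PP$. Once you have both inputs, compatibility of charts is automatic (everything comes from a single $S_n$-equivariant conically smooth space). Your local splitting and the induction on $m$ then become unnecessary.
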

\begin{proof}
        Applying Proposition 3.5.18 of \cite{aft17} to the conical manifold $C(N) \to [1]$ yields a smooth conical
structure on an ordered version of the discrimant stratification and Example 3.5.15
therein shows that this structure descends along the action of the permutation group to a smooth structure on the discrimant
stratification itself.
\end{proof}
\begin{rem}
        \label{rem:basics}
A more precise examination of the arguments in \cite{aft17} allows an explicit description of a basis of conical disks of $\SP^n(C)$:
Such a basis $\B$ of $\SP^n(C)$ is given by sums
\[
        \sum_{i=0}^k \SP^{m_i}(D_i) =\{\sum_{i=1}^n x_i |  D_j \text{ contains
        $m_j$ of the $x_j$ for all $0 \le j \le k$} \}
\]
such that
\begin{itemize}
       \item $\sum_{i=0}^k m_i =n$ and $m_i>0$ for $i>0$,
        \item $D_1,\dots,D_k \in \DD_1(C^*)$ are pairwise disjoint smooth disks,
        \item if $m_0 >0$ then $D_0=C_{\epsilon}(N)\subset C$ is a cone disjoint from all the $D_i$.
\end{itemize}
(We allow $m_0=0$ to make the notation uniform for all basics.)

More generally, we also obtain a basis if we allow only smooth disks $D_i$ from some basis of smooth disks of $C^*$.
In particular, if $C^*$ admits an intersection-stable basis of smooth disks,
this yields an intersection-stable basis of $\SP^n(C)$.
\end{rem}
Let $\D$ be an $\infty$-category which is either bicomplete and stable or of the large class of presentable categories
described in Remark \ref{rem:non-stable-coefficients}.

From the general theory of conical manifolds, we obtain:
\begin{prop}
        \label{prop:discriminant_exodromy}
        Restriction induces an equivalence
        \[
                \Sh_{\PP}(\SP^n(C);\D) \to \Fun_{\PP}(\B^{\op},\D)
        \]
        between $\PP$-constructible sheaves (c.f. Definition \ref{defi:constructible_sheaf}) on $\SP^{n}(C)$
        and $\PP$-constructible presheaves (c.f. Definition \ref{defi:constructible_presheaf})
        on the intersection-stable basis $\B$ of Remark \ref{rem:basics}.

        The inverse is given by right Kan extension.
\end{prop}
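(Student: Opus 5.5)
The plan is to prove this as an instance of the general comparison between constructible sheaves on a smooth conical manifold and constructible presheaves on a basis of conical disks. The deep input is the exodromy theorem for conical manifolds recalled in \S\ref{sub:conical_manifolds}, applied via Proposition \ref{prop:smooth_structures}.

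Concretely, I would first show that restriction and right Kan extension give inverse equivalences between $\D$-valued sheaves on $\SP^n(C)$ and presheaves on $\B$ satisfying descent for covers by basics. This is the standard comparison lemma for a basis: right Kan extension along $\B^{\op}\subset \U(\SP^n(C))^{\op}$ is fully faithful, and by the pointwise formula its value on $U$ is $\lim F|(\B/U)$. The essential image consists of sheaves, because $\B$ is intersection-stable (Remark \ref{rem:basics}). Then $\B/U$ is a covering sieve closed under finite intersections, so the Čech descent conditions for $F$ on $\B$ transfer to descent for the extension.

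It remains to match the constructibility conditions on the two sides. Let $G$ be a $\PP$-constructible sheaf and let $V\subset W$ be basics such that $V$ contains the cone point stratum of $W$. Since each basic $\sum_i \SP^{m_i}(D_i)$ is a conical disk with a unique minimal stratum, the conical structure gives that $G(W)\to G(V)$ is an equivalence. This is exactly the condition of Definition \ref{defi:constructible_presheaf}, so $G|_\B$ is $\PP$-constructible.

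Conversely, let $F$ be a $\PP$-constructible presheaf on $\B$. I would show that its right Kan extension is a sheaf, and constructible. The key observation is that $F$ is determined by a functor out of the localization of $\B$ at such "central" inclusions, and this localization is the exit path category $\mathrm{Exit}_{\PP}(\SP^n(C))$; this identification is the AFT result for bases of conical disks. Descent for $F$ along $\B$-covers then follows from the exodromy equivalence $\Sh_\PP\simeq \Fun(\mathrm{Exit},\D)$. That equivalence is available under the stated hypotheses on $\D$ (stable bicomplete, or presentable as in Remark \ref{rem:non-stable-coefficients}).

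The main obstacle is this last step: showing that every constructible presheaf on $\B$ satisfies descent. I would attack it by induction on the depth of the stratification, using hypercovers by basics. The plan is to reduce to the case of a single basic $W$ covered by smaller basics. By constructibility, the restriction of $F$ to $\B/W$ factors through $\mathrm{Exit}(W)$, which has an initial object (the cone point). Descent then becomes the statement that the nerve of the cover has contractible exit-path-weighted realization, which holds because conical disks are weakly contractible relative to their strata.
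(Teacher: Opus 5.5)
Your outer plan matches the paper's. The paper's proof is simply the observation that $\SP^n(C)\to\PP$ is a smooth conical manifold (Proposition \ref{prop:smooth_structures}) with an intersection-stable basis $\B$ of conical disks, so that Theorem \ref{thm:exodromy} together with Corollary \ref{cor:cons_sheaves_exodromy_intersection_stable} applies. Your comparison-lemma step is fine, and so is your check that constructible sheaves restrict to constructible presheaves (homotopy invariance plus Proposition \ref{prop:basics_properties}).

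The gap is in the step you yourself identify as the crux: that the right Kan extension of an \emph{arbitrary} $\PP$-constructible presheaf $F$ on $\B$ satisfies descent. As stated, this does not follow from an exodromy equivalence $\Sh_{\PP}(\SP^n(C);\D)\simeq\Fun(\mathrm{Exit},\D)$. Knowing abstractly that $\Sh_{\PP}(\SP^n(C);\D)$ and $\Fun_{\PP}(\B^{\op},\D)$ are both equivalent to $\Fun(\mathrm{Exit},\D)$ tells you nothing about whether \emph{your} restriction functor is essentially surjective. You would need that restriction to $\B$, followed by the localization identification, \emph{is} the exodromy functor. For example, you would need that exodromy's inverse sends $\Phi$ to $U\mapsto\lim_{\mathrm{Exit}(U)}\Phi$, and that $\mathrm{Exit}(B)$ has an initial object for every basic $B$, compatibly with inclusions of basics. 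That compatibility is essentially the content of the proposition. This route also imports exodromy with your coefficient category as an extra black box, whereas the paper's argument is purely formal in $\D$.

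Your fallback sketch does not close the gap either:
\begin{itemize}
\item The induction on depth never acquires an inductive step, and none is needed.
\item ``Contractible exit-path-weighted realization'' has to be sharpened to the statement that the diagram of cover members maps \emph{coinitially} to $(\Disk_{/W})^{\op}$.
\item What proves that coinitiality is not the contractibility of individual disks, but a van Kampen argument on the spaces of points of fixed local type.
\end{itemize}

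The missing ideas are those of Propositions \ref{prop:explicit_localization} and \ref{prop:hypersheaf_condition}:
\begin{itemize}
\item $\B/U\to\Disk_{/U}$ is a localization at inclusions of the same type, so limits over $(\B/U)^{\op}$ can be computed over $(\Disk_{/U})^{\op}$.
\item Since $(\Disk_{/U})^{\op}\to\Disk^{\op}$ is a left fibration, coinitiality of $\colim_{d}(\Disk_{/\chi(d)})^{\op}\to(\Disk_{/U})^{\op}$ for a hypercover $\chi$ may be checked on groupoid cores.
\item These cores are $\coprod_{[V]}U_{[V]}$ (Proposition \ref{prop:basics_properties}). The required equivalence $\colim_{d}\coprod_{[V]}\chi(d)_{[V]}\simeq\coprod_{[V]}U_{[V]}$ is then Lurie's van Kampen theorem, using only that the fibers of $\chi$ over points are weakly contractible.
\end{itemize}
This gives hyperdescent uniformly, and intersection-stability then identifies constructible hypersheaves with constructible sheaves. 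So either cite Theorem \ref{thm:exodromy} in its restriction-to-a-basis form, as the paper does, or reprove it along these lines.
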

\begin{proof}
        This is an application of the general exodromy statement Theorem \ref{thm:exodromy} for conical manifolds.
\end{proof}
There is a similar statement for the filtration stratification.
\begin{thm}\label{thm:filtration_exodromy}
        An inclusion $B \subset B'$ with $B,B' \in \B$ is an $\NN$-stratified homotopy equivalence if and only if they have the
        same minimal stratum over $\NN$.

        Restriction induces an equivalence
	\[
                \Sh_{\NN}(\SP^n(C);\D) \to \Fun_{\NN}(\B^{\op},\D).
	\]
        is an equivalence for the intersection-stable basis $\B$ of Remark \ref{rem:basics}.

        The inverse is given by right Kan extension.
\end{thm}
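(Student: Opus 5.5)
The plan is to deduce the $\NN$-stratified statement from the discriminant statement, Proposition \ref{prop:discriminant_exodromy}. The bridge is that an $\NN$-constructible sheaf is exactly a $\PP$-constructible sheaf whose exit path functor inverts the arrows of $\mathrm{Exit}_{\PP}(\SP^n(C))$ that are sent to identities in $\NN$. On the basis side the analogous condition should be inverting the inclusions $B \subset B'$ in $\B$ that are $\NN$-stratified homotopy equivalences. So the first task is the geometric characterization in the first sentence of the theorem.

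\textbf{Step 1 (the characterization).} Write a basic as $B = \SP^{m_0}(D_0) + \sum_{i \ge 1} \SP^{m_i}(D_i)$. Its minimal $\NN$-stratum is $n - m_0$ if $m_0 > 0$. If $m_0 = 0$ the basic lies in the top stratum $n$, since all points of $C^*$ are away from the tip.

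For the "if" direction, the argument goes as follows. Each factor $\SP^{m_i}(D_i)$ with $D_i \cong \RR^d$ radially retracts onto $m_i \cdot c_i$. Likewise $\SP^{m_0}(C_\epsilon(N))$ retracts onto $m_0 \cdot *$ by scaling the cone. Both retractions preserve the number of summands equal to the tip, so they are $\NN$-stratum-preserving. Hence $B$ is $\NN$-stratified homotopy equivalent to the link-type neighbourhood of its minimal stratum. More precisely, for each $j$ the stratum $B \cap X_j$ is compatibly homotopy equivalent to a configuration-type space depending only on the minimal stratum and the total multiplicity structure.

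I expect the cleanest route is to show that for $B \subset B'$ with the same minimal stratum $n - m_0$, both $B$ and $B'$ are $\NN$-stratified conical neighbourhoods of a point of stratum $n - m_0$ in the sense of exit paths. Concretely, $B \cap X_{\ge j}$ deformation retracts, compatibly in $j$, to $\SP^{m_0}(C_\epsilon(N))\cap X_{\ge j}$ times a contractible factor. The key observation is that the $\NN$-stratification only sees the multiplicity at the tip, so the discriminant data of the $D_i$ for $i>0$ become contractible. For $m_0 = 0$, both basics are contractible subsets of a single stratum.

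For "only if", note that a stratified homotopy equivalence induces a bijection on the sets of nonempty strata. The minimal stratum is the minimum of that set, so it must agree.

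\textbf{Step 2 (localization).} By Proposition \ref{prop:discriminant_exodromy}, restriction identifies $\Sh_{\PP}$ with $\Fun_{\PP}(\B^{\op},\D)$, with inverse given by right Kan extension. The subcategory $\Sh_{\NN}$ consists of those $\PP$-constructible sheaves that are locally constant along the coarser strata. On stalks, these are the sheaves $F$ whose map $F(B') \to F(B)$ is an equivalence whenever $B \subset B'$ share their minimal $\NN$-stratum. This uses that such basics are cofinal among neighbourhoods of points of that stratum and represent the $\NN$-exit paths. By Step 1, this is exactly the condition defining $\Fun_{\NN}(\B^{\op},\D)$. Alternatively, one can apply Theorem \ref{thm:exodromy} directly, if it only requires a basis of $\NN$-stratified contractible neighbourhoods, which is precisely what Step 1 provides. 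The equivalence and the statement about the inverse then follow by restricting the equivalence of Proposition \ref{prop:discriminant_exodromy} to full subcategories.

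The main obstacle is Step 1, the "if" direction. The scaling retractions of the $D_i$ and of the cone must be chosen compatibly with the inclusion $B \subset B'$ so that they yield a stratified homotopy inverse rather than merely stratumwise equivalences. I would first try to reduce to Quillen-type stratumwise criteria, which are valid for conically stratified spaces: check that $B \cap X_j \to B' \cap X_j$ and the corresponding homotopy links are weak equivalences.
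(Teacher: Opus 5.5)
Your architecture matches the paper's. Step 1 is Lemma \ref{lem:filtration_basics_homotopy}, and Step 2 is Proposition \ref{prop:coarsen_stratification_cons_sheaves} applied to the coarsening $\SP^n(C)\to\PP\to\NN$; its hypothesis \ref{item:conical_extra_condition} is exactly Step 1. Your alternative of applying Theorem \ref{thm:exodromy} directly to the $\NN$-stratification is not available, since $\SP^n(C)\to\NN$ is not a conical manifold. Your ``only if'' argument and Step 2 are fine.

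The gap is the ``if'' direction of Step 1. Retracting $\SP^{m_0}(C_\epsilon(N))$ onto $m_0\cdot *$ by scaling the cone is \emph{not} stratum-preserving: at time $1$ every summand is sent to the tip, which changes the number of tip summands. For $m_0>0$, your two retractions would make $B$ stratified homotopy equivalent to a point. Your own ``only if'' argument rules this out, since $B$ meets $m_0+1$ strata. After that, the ``link-type neighbourhood'' description is left unproved. The fallback via stratumwise equivalences and homotopy links would require a stratified Whitehead theorem, with its hypotheses checked for a stratification that is not conical in the sense of \S\ref{sub:conical_manifolds}.

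The missing step is short, and the compatibility problem you flag never arises.
\begin{itemize}
\item Write $B\cong\SP^{m_0}(D_0)\times\prod_{i\ge1}\SP^{m_i}(D_i)$. The $\NN$-stratum of a point depends only on the first factor, because the disks $D_i\subset C^*$ never contain the tip.
\item The second factor is contractible, and contracting it is stratum-preserving. Hence the projection $B\to\SP^{m_0}(D_0)$ is an $\NN$-stratified homotopy equivalence.
\item If $B\subset B'$ have the same $m_0$, a connectedness argument gives $D_0\subset D_0'$. It also shows that the inclusion commutes with the two projections.
\item The bottom arrow of the resulting square is $\SP^{m_0}$ applied to an inclusion of cones $C_\epsilon(N)\subset C_{\epsilon'}(N)$. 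That inclusion is a $[1]$-stratified homotopy equivalence via radial rescaling by a \emph{positive} factor, which fixes the tip and keeps all other points off it. So its $\SP^{m_0}$ is an $\NN$-stratified homotopy equivalence.
\end{itemize}
Two-out-of-three then finishes the argument, and no homotopy inverse needs to be chosen compatibly with the inclusion.
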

\begin{proof}
        The first part follows immediately from Lemma \ref{lem:filtration_basics_homotopy}.
        The coarsening $\SP^n(C) \to \PP \to \NN$ thus satisfies the additional hypothesis of Proposition
        \ref{prop:coarsen_stratification_cons_sheaves}, hence the equivalence.
\end{proof}
\begin{rem}
The filtration stratification $\SP^{n}(C) \to \NN$ does not generally itself make $\SP^n(C)$ a conical manifold,
though it is conical in the weaker sense of \cite[A.5.5]{lurie:ha}.
For instance, we have seen that the $2$-stratum of $\SP^2(\Rg)$ has a nonempty boundary given by the discriminant.
\end{rem}
\subsection{Analytic expansion}
In this subsection, let $\D$ be a pointed coefficient category (bicomplete and stable or of the class
in Remark \ref{rem:non-stable-coefficients}).

The following definition is of central importance for this work. 
\begin{defi}
    \label{defi:analytic_expansion}
    Let $X = \SPi(C)$ and let $F$ be a $\D$-valued $\NN$-constructible
    sheaf on $X$. We define the {\em analytic expansion} of $F$ to be the
    presheaf
    \[
        \Fan: \DD(C^*)^{\op} \lra \D, \; U \mapsto \fib(F(X) \to F(U + X)).
    \]
    where the fiber is taken along the restriction map corresponding to the
    open subset $U + X \subset X$ defined in \eqref{eq:upx}.
\end{defi}

\begin{exa}
    \label{exa:realanalyticexplansion}
    Let $C = C(\pt) = \RR_{\ge 0}$. An $\NN$-constructible sheaf $F$ on
    $\SPi(\RR_{\ge 0})$ is given, in terms of its exit path representation (c.f.
    Proposition \ref{prop:exit_paths_real_case}), as a sequence
    \[
        F_0 \to F_1 \to F_2 \to \dots
    \]
    The value of the analytic expansion $\Fan$ on the union $D_1 \cup \dots \cup
    D_k \in \DD(\RR_{>0})$ of $k$ disjoint intervals is equivalent to the
    totalization of the cube
    \[
        \P(\{1,\dots,k\})^{\op} \to \D, \; I \mapsto F_{|I|}.
    \]
\end{exa}

\begin{thm}
    \label{thm:polytopoly}
    Let $F \in \Shc(\SPi(C); \D)$. Then the following hold:
    \begin{enumerate}
        \item The presheaf $F^{\an}$ is an analytic sheaf.
        \item If $F$ has support on $\SP^{k}(C)$ then $F^{\an}$ is polynomial
            of degree $\le k$.
    \end{enumerate}
\end{thm}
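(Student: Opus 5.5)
The plan is to reduce both statements to local computations on the basis $\B$ of Remark \ref{rem:basics}, using the exodromy equivalence of Theorem \ref{thm:filtration_exodromy}. First I extend the definition of the opens $U+X$ to arbitrary opens of $C^*$ and record two structural facts. For disjoint opens $U, V \subset C^*$ we have
\[
(U \cup V) + X = (U + X) \cup (V + X),
\]
and the intersection $(U+X)\cap(V+X)$ consists of sums with a summand in $U$ and a summand in $V$. Since $F$ is a sheaf, $F$ carries this union to a pullback square. Iterating, for $U = D_1 \cup \dots \cup D_l$ the value $F(U + X)$ is the limit of the punctured cube $I \mapsto F(\bigcap_{i \in I} (D_i + X))$ for $\emptyset \neq I$. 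Hence $\Fan(U)$ is the total fiber of the cube $I \mapsto F(\bigcap_{i\in I}(D_i + X))$, with the $\emptyset$-vertex equal to $F(X)$. This generalizes Example \ref{exa:realanalyticexplansion}. All statements can be checked on each $X_{\le n}$, since $F(X)=\lim_n F(X_{\le n})$, or directly if $F$ is supported on a finite stage.

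\textbf{Analyticity.} Let $U \subset V$ be a homotopy equivalence in $\DD(C^*)$. Then each component of $V$ contains exactly one component of $U$. By the cube description above, it suffices to show that for $I$ a set of components the restriction $F(\bigcap_{i\in I}(D'_i+X)) \to F(\bigcap_{i \in I}(D_i+X))$ is an equivalence whenever $D_i \subset D'_i$ are disks. The open $\bigcap_{i \in I}(D_i + X)$ is the union over $n$ of the sets $\sum_{i\in I} \SP^{\ge 1}(D_i) + \SP^{n}(C)$ type pieces. More concretely, it is a union of basics of the form $\sum_i \SP^{m_i}(D_i) + \SP^{m_0}(C)$, where the cone $C$ itself plays the role of $D_0$ (possibly after shrinking to $C_\epsilon$). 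For such basics, Theorem \ref{thm:filtration_exodromy} says that the inclusion for $D_i \subset D'_i$ is an $\NN$-stratified homotopy equivalence, because the minimal stratum $m_0$ is unchanged. Hence $F$, which is $\NN$-constructible, takes the same value on both. The main obstacle is making this colimit-of-basics argument precise: one must show that restricting to the cofinal system of such basics computes $F$ on these (non-basic) opens, via the right Kan extension formula of Theorem \ref{thm:filtration_exodromy}, compatibly for $D$ and $D'$. Alternatively, one can use an isotopy shrinking $D'_i$ into $D_i$ and invoke homotopy invariance of constructible sheaves under stratum-preserving deformations of opens.

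\textbf{Polynomiality.} Suppose $F$ is supported on $\SP^k(C)$ and $l > k$, and take $U = D_1\cup\dots\cup D_l$. For any $I$ with $|I| > k$, the open $\bigcap_{i\in I}(D_i+X)$ meets $\SP^k(C)$ in the empty set, since a point would need at least $|I|$ summands. So $F$ vanishes on it. The difference cube $\delta^l\Fan(\emptyset;D_1,\dots,D_l)$ is the cube $J \mapsto \Fan(D_J)$. By the formula above, its value at $D_J$ is the total fiber of the subcube of $H_I := F(\bigcap_{i\in I}(D_i+X))$ indexed by $I\subset J$. A formal manipulation of iterated fibers shows that the total fiber of the full cube $J\mapsto \Fan(D_J)$ is identified with $H_{\{1,\dots,l\}}$ up to shift and sign. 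A cleaner route avoids total fibers altogether. The cube $J \mapsto \fib(F(X)\to F(D_J+X))$ is Cartesian precisely when the cube $J \mapsto F(D_J + X)$ is Cartesian, because fibers commute with limits. The latter cube is Cartesian because $\{D_j + X\}_{j \le l}$ covers $D_J + X$ and all $(k+1)$-fold intersections vanish after restriction to the support. I therefore prove Cartesianness via the Čech descent for this finite cover: $F(D_L+X)$ is the limit over nonempty $I$ of the $H_I$, and the $H_I$ with $|I|>k$ are terminal (zero). The cubical limit over proper subsets $J$ likewise computes the same limit, since each $F(D_J+X)$ is itself the Čech limit over nonempty subsets of $J$, and a cofinality argument on the poset of nonempty subsets finishes the proof. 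This works in the non-stable pointed setting as well.
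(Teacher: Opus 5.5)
Your proposal is correct in outline, but for analyticity it takes a different route from the paper, and as written it relies on a citation that does not apply. The paper uses Lemma \ref{lem:fundamental} to identify each \v{C}ech vertex $F(D^{\Sigma I}+X)$ with $i_k^*F(D^{\Sigma I})$. That is the value of a locally constant sheaf on a contractible open of the open stratum $\SP^k(C^*)$, so invariance under homotopy equivalences $U\subset V$ is immediate. You instead compare $D^{\Sigma I}+X\subset D'^{\Sigma I}+X$ directly, and Theorem \ref{thm:filtration_exodromy} does not cover this. In $\SP^n(C)$ the open $\bigcap_{i\in I}(D_i+X)$ is the single open $\sum_{i\in I}\SP^1(D_i)+\SP^{n-|I|}(C)$. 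Its cone summand is all of $C$, not a small cone disjoint from the disks, so it is not in $\B$. The ``shrinking to $C_\epsilon$'' you mention is exactly the nontrivial content of Lemma \ref{lem:extend_zero_disk}. The right input is Lemma \ref{lem:intersection_inclusions} with all multiplicities equal to $1$, which says this inclusion is an $\NN$-stratified homotopy equivalence. Proposition \ref{prop:homotopy_equivalence_constructible_sheaves}(2) applies to arbitrary open inclusions and then finishes the step, so no colimit-over-basics argument is needed. Your isotopy alternative amounts to the same lemma. For arbitrary nested disks it needs the configuration-dependent shrinking of Lemma \ref{lem:shrink_disks}, not a single global isotopy of $C$. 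Your route bypasses $i_k^*$ entirely. The paper's route identifies the cube vertices with stratum data on $\SP^k(C^*)$, which is what makes computations like Example \ref{exa:realanalyticexplansion} transparent.

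For polynomiality, your ``cleaner route'' is essentially the paper's argument repackaged. The paper applies Lemma \ref{lem:cubeinversion} to $I\mapsto F(D^{\Sigma I}+X)$ and uses only $F(D^{\Sigma L}+X)\simeq 0$. The proof of that lemma is exactly your limit-of-limits computation showing that $J\mapsto F(D_J+X)$ is Cartesian once the top \v{C}ech vertex vanishes. You just take fibers against the constant cube at $F(X)$ afterwards rather than before; that cube is Cartesian because its punctured part is weakly contractible. Your direct support argument for $F(D^{\Sigma I}+X)\simeq 0$ when $|I|>k$ is a fine substitute for Lemma \ref{lem:fundamental}. Only the implication from the $F(D_J+X)$-cube to the $\Fan$-cube is needed. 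The converse implicit in your ``precisely when'' requires stability, but the argument does not use it.
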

\begin{proof}
    Let $U = D_1 \cup D_2 \cup \dots \cup D_l$ be an open in $\DD(C^*)$. We set $L = \{1, \dots, l\}$ and
    compute $F(U + X)$ via descent for the open cover $\{D_i + X\}_{i \in L}$. For $I \subset L$, we set
    \begin{equation}
        D^{\Sigma I} := \sum_{i \in I} D_i \quad \subset \SP^{|I|}(C)
    \end{equation}
    and note
    \[
        \bigcap_{i \in I} (D_i + X) = D^{\Sigma I} + X
    \]
    obtaining
    \begin{equation}\label{eq:coveringsym}
        F(U + X) \simeq \lim \{F(D^{\Sigma I} + X)\}_{\emptyset \neq I \in \P(L)}
    \end{equation}
    We thus obtain
    \[
        \Fan(U) = \fib(F(X) \to F(U + X)) \simeq \tfib \{F(D^{\Sigma I} + X)\}_{\emptyset \neq I \in \P(L)}
    \]
    where we set $D_{\emptyset} = X$. Now let $I \subset L$ with $|I| = k$ and denote $i_k: \SP^k(C) \to \SP^{\infty}(C)$.
    By Lemma \ref{lem:fundamental}, the restriction map
    \begin{equation}\label{eq:reseq}
        F(D^{\Sigma I} + X) \simeq i_k^* F(D^{\Sigma I})
    \end{equation}
    is an equivalence. Therefore, since $D^{\Sigma I} \subset \SP^k(C)$ is an open
    subset of the open stratum $\SP^k(C^*)$, and $F$ is assumed constructible,
    we deduce that $\Fan$ is analytic.

    Now suppose that $F$ has support on $\SP^k(C)$, suppose $l > k$ and, for $J
    \subset L$, set
    \[
        D_J = \bigcup_{i \in J} D_i \subset C^*.
    \]
    We need to show that the cube
    \[
        \{\Fan(D_J)\}_{J \in \P(L)^{\op}}
    \]
    is Cartesian which follows from Lemma \ref{lem:cubeinversion}, since it is
    the inverted cube corresponding to the cube 
    \[
        \{F(D^{\Sigma I} + X)\}_{I \in \P(L)}.
    \]
    Note that the hypothesis of the lemma is satisfied since $F$ vanishes on
    $\SP^l(C^*)$, as this guarantees that 
    \[
        F(D_1 + \dots + D_l + X) \simeq 0
    \]
    (cf. \eqref{eq:reseq}).
\end{proof}

\begin{lem}[(Cube inversion)]\label{lem:cubeinversion}
    Let $L$ be a finite set and denote by $\P(L)$ the poset of subsets of $L$.
    Let $\D$ be an $\infty$-category with finite limits and let
    \[
        q: \P(L) \to \D
    \]
    be a cube in $\D$ such that $q(L) \simeq *$. We define the {\em inverted cube}
    \[
        \check{q}: \P(L)^{\op} \to \D, \; J \mapsto \tfib q|\P(L)/J
    \]
    where $\P(L)/J$ denotes the subposet consisting of subsets of $L$ that are
    contained in $J$. Then the cube $\check{q}$ is Cartesian.
\end{lem}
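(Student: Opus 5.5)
We must show that $\check q:\P(L)^{\op}\to\D$ is a limit cube, i.e. that $\check q(L)\to\lim_{J\subsetneq L}\check q(J)$ is an equivalence. The plan is to rewrite both sides as limits over subposets of $\P(L)$ and compare them by a cofinality argument.

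For $J\subset L$, the total fiber of $q|\P(L)/J$ is the fiber of $q(\emptyset)\to\lim_{\emptyset\neq I\subset J}q(I)$. Since $\D$ is pointed, this can be written as a single limit $\check q(J)\simeq\lim_{\P(J)^{\triangleleft}} \bar q_J$. Here $\P(J)^{\triangleleft}$ is $\P(J)$ with one extra vertex mapping to $\emptyset$, and $\bar q_J$ takes the value $*$ at that vertex. Equivalently, $\check q(J)\simeq\lim_{\P(J)} q_J'$, where $q_J'$ modifies $q$ by replacing $q(\emptyset)$ with the fiber data. The convenient way to organize this is as follows. Let $K:=\fib(q(\emptyset)\to *)$, taken along the unique map. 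Then $\check q(J)$ is the fiber of
\[
q(\emptyset)\to \lim_{\emptyset\neq I\subset J} q(I),
\]
and $\check q(\emptyset)=q(\emptyset)$, because the limit over the empty diagram is $*$.

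The key step is to compute $\lim_{J\subsetneq L}\check q(J)$. Fibers commute with limits, and the map $q(\emptyset)$ is constant in $J$, so
\[
\lim_{J\subsetneq L}\check q(J)\simeq \fib\Big(\lim_{J\subsetneq L} q(\emptyset)\to \lim_{J\subsetneq L}\lim_{\emptyset\neq I\subset J}q(I)\Big).
\]
The indexing poset $\{J\subsetneq L\}^{\op}$ has the final object $\emptyset$ (in the opposite order) and is therefore weakly contractible, so the first term is $q(\emptyset)$. The iterated limit in the second term equals the limit of $q$ over the poset of pairs $\emptyset\neq I\subset J\subsetneq L$, mapped to $\P(L)$ via $(I,J)\mapsto I$. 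This map is a Cartesian-type fibration onto $\{\emptyset\neq I\subsetneq L\}$, and its fiber over $I$ is $\{J: I\subset J\subsetneq L\}$. That fiber has the minimal element $I$, so it is weakly contractible, and the map is coinitial. The second term is therefore $\lim_{\emptyset\neq I\subsetneq L}q(I)$. Thus
\[
\lim_{J\subsetneq L}\check q(J)\simeq\fib\Big(q(\emptyset)\to\lim_{\emptyset\neq I\subsetneq L}q(I)\Big).
\]

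It remains to compare this with $\check q(L)=\fib\big(q(\emptyset)\to\lim_{\emptyset\neq I\subset L}q(I)\big)$. The poset $\{\emptyset\neq I\subset L\}$ is obtained from $\{\emptyset\neq I\subsetneq L\}$ by adjoining the final element $L$. Because $q(L)\simeq *$, the limit over the larger poset agrees with the limit over the smaller one: the right Kan extension to a final vertex with value $*$ does not change the limit. Concretely, $\lim_{\P(L)\setminus\emptyset}q$ is the pullback of $\lim_{\emptyset\neq I\subsetneq L}q(I)\to\lim_{\emptyset\neq I\subsetneq L}q(L)\leftarrow q(L)$, and both targets are $*$. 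One must also check that the comparison map $\check q(L)\to\lim\check q(J)$ is the map identified here, which holds because it is induced by the restrictions of indexing diagrams.

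The main obstacle is the coinitiality step, specifically identifying the iterated limit with a limit over the Grothendieck construction and verifying the slice-contractibility condition in the paper's convention. I would handle it by using the pointwise formula: present the $J$-indexed limits as a right Kan extension along $\{(I,J)\}\to\{J\}$, and then apply the contractibility of $\{J:I\subset J\subsetneq L\}$.
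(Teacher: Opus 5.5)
Your proposal is correct and follows the paper's route. Both use $q(L)\simeq *$ to identify $\check q(L)$ with $\fib\bigl(q(\emptyset)\to\lim q|T\bigr)$ for the twice-punctured cube $T=\P(L)\setminus\{\emptyset,L\}$, and then compute $\lim q|T$ as the iterated limit $\lim_{J\subsetneq L}\lim_{\emptyset\neq I\subset J}q(I)$ coming from the cover $J\mapsto\{\emptyset\neq I\subset J\}$. The only difference is that the paper cites Lurie's decomposition-of-diagrams result from \cite{lurie:htt} for this last step, whereas you verify it by hand via the Grothendieck construction: ordering pairs by $I\subset I'$ and $J\supseteq J'$ makes both projections Cartesian fibrations with weakly contractible fibres, which gives the fibrewise right Kan extension along $(I,J)\mapsto J$ and the coinitiality of $(I,J)\mapsto I$.
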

\begin{proof}
    The claim reduces to the statement that the limit of the twice-punctured
    cube $q|T$ where $T = \P(L) \setminus \{\emptyset, L\}$ can be computed as
    a limit of limits by covering $T$ via
    \[
        \P(L) \to \Cat/\P(L), J \mapsto \{\emptyset \neq I \subset J \}
    \]
    and applying \cite[4.2.10]{lurie:htt}.
\end{proof}

\begin{rem}
    \label{rem:informal}
    Lemma \ref{lem:cubeinversion} states that the total fiber of the cube $q$
    (which is the initial vertex of the inverted cube $\check{q}$) can be
    computed as a limit of total fibers over all proper faces of the cube
    $\P(L)$ that contain the initial vertex. If $q$ does not satisfy the
    condition $q(L) \simeq 0$, then we may form a fiber sequence of cubes
    \[
        q' \to q \to q''
    \]
    where $q''$ is the cube $q$ but with $q(L)$ replaced by $0$, so that $q'$
    is a cube whose only nonzero value is $q'(L) \simeq q(L)$. Passing to total fibers, we obtain a fiber sequence
    \[
        \Omega^{|L|} q(L) \to \tfib(q) \to \tfib(q'')
    \]
    where $\tfib(q'')$ can be computed as a limit of total fibers using Lemma
    \ref{lem:cubeinversion}. 
\end{rem}

\begin{lem}
    \label{lem:fundamental}
    Let $F$ be a constructible sheaf on $\SPi(C)$. Let $D_1, D_2, \dots, D_k$ be
    pairwise disjoint open disks in $C^*$ and $i: \SP^k(C) \subset \SPi(C)$. Then the restriction map
    \[
        F(D_1 + \dots + D_k + X) \simeq i^*F(D_1 + \dots + D_k)
    \]
    is an equivalence in $\D$.
\end{lem}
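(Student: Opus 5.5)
We need to show that the canonical map
\[
F(D_1 + \dots + D_k + X) \lra \colim_{W \supset D^{\Sigma}} F(W\cap \SP^k(C)) \;\to\; i^*F(D_1+\dots+D_k)
\]
is an equivalence, where $D^\Sigma := D_1 + \dots + D_k \subset \SP^k(C^*)$. Here $i^*F$ is the sheaf pullback, so $i^*F(D^\Sigma)$ is the sheafification of the colimit over opens $W \subset X$ containing $D^\Sigma$. The plan is to exhibit $V := D^\Sigma + X$ as a ``stratified tubular neighbourhood'' of $D^\Sigma$ that deformation retracts onto it compatibly with the $\NN$-stratification. Then constructibility identifies sections over $V$ with sections of $i^*F$ over $D^\Sigma$.

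\textbf{Step 1: the retraction.} Choose, for each disk $D_j$, a radial contraction of $C$ onto $\overline{D_j}$-neighbourhoods. Concretely, fix a deformation $h_t: C \to C$ with three properties:
\begin{itemize}
\item $h_t$ is the identity on $D_1 \cup \dots \cup D_k$;
\item $h_t$ preserves $C$ and fixes the cone point;
\item $h_1$ collapses the complement of $\bigcup D_j$ onto $*$.
\end{itemize}
Collapsing the complement is not continuous on the nose, so instead use a shrinking family of cone neighbourhoods $C_\epsilon(N)$ and push points outside $\bigcup D_j$ toward the tip. Applying $h_t$ summandwise gives a monoid map $\SPi(h_t)$ on $X$. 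This map preserves $V$, since a sum with one point in each $D_j$ keeps that property. It also never increases the number of non-tip summands, so it moves points into lower (or equal) $\NN$-strata. In the limit it lands in $\SP^k(C)\cap V$ near $D^\Sigma$.

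\textbf{Step 2: reduce to the basis $\B$.} Rather than working with a genuine homotopy, I would use Theorem~\ref{thm:filtration_exodromy}, which says that $F$ is determined by its values on the basis $\B$ of Remark~\ref{rem:basics} and is a right Kan extension from it. The opens
\[
B_\epsilon = D_1 + \dots + D_k + \SP^{m}(C_\epsilon(N))
\]
lie in $\B$ inside $\SP^{k+m}(C)$. More precisely, they are their unions over $m$, that is $D^\Sigma + \SPi(C_\epsilon(N))$. Each such open is an inclusion of basics with the same minimal stratum $k$, namely the stratum containing $D^\Sigma + 0\cdot *$. So for each $m$, $F$ takes the restriction $D^\Sigma + \SP^m(C_\epsilon) \supset D^\Sigma$ (in $\SP^k$) to an equivalence. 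Passing to the limit over the $\SP^m$-filtration then shows that $F(D^\Sigma + \SPi(C_\epsilon)) \to i^*F(D^\Sigma)$ is an equivalence. These opens $D^\Sigma + \SPi(C_\epsilon)$ form a cofinal system of neighbourhoods of $D^\Sigma$.

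\textbf{Step 3: compare $V$ with $D^\Sigma+\SPi(C_\epsilon)$.} The remaining step, and in my view the main obstacle, is to show that $F(V) \to F(D^\Sigma + \SPi(C_\epsilon))$ is an equivalence. The difficulty is that $V$ allows extra summands anywhere in $C$, including inside the $D_j$ and far from the tip. I would cover $V$ by basics of the form
\[
D'_1+\dots+D'_k+E_1+\dots+E_r+\SP^m(C_\epsilon),
\]
with $D'_j \subset D_j$ and the $E_i$ small disks, intersected with each $\SP^n(C)$. Each such basic has minimal stratum $\ge k$. Then I would apply the first part of Theorem~\ref{thm:filtration_exodromy} together with a cofinality argument: within $V\cap\SP^n(C)$, the subposet of basics whose minimal stratum is exactly $k$ is coinitial for computing $F$ as a right Kan extension. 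This should follow because every basic in $V$ contains one whose extra summands all sit at the tip. Concretely, every point of $V$ lies in a basic with the points of the $D_j$ marked, and shrinking the $E_i$ toward $*$ is an $\NN$-stratified homotopy equivalence by the lemma underlying Theorem~\ref{thm:filtration_exodromy}. Finally, limits over $n$ commute with all of this, which gives the claim.
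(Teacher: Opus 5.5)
\textbf{Where the argument breaks.} Your Step 2 is essentially sound once it is justified correctly. The open $D_1+\dots+D_k+\SP^m(C_\epsilon(N))$ is a conical basic whose minimal $\NN$-stratum is the contractible set $D_1+\dots+D_k$. So, for constructible $F$, both its sections and $i^*F(D_1+\dots+D_k)$ are identified with the stalk at a point of $D_1+\dots+D_k$. (The claim that these opens are cofinal among neighbourhoods is not needed, and it fails for the non-compact set $D_1+\dots+D_k$.)

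The genuine gap is Step 3, which is where the content of the lemma lies. Write $V_n=D_1+\dots+D_k+\SP^{n-k}(C)$. To compute the right Kan extension $F(V_n)=\lim F|(\B/V_n)^{\op}$ over basics of minimal stratum $k$ only, you would need the following: for every basic $B\subset V_n$, the poset of such basics $B'$ with $B\subset B'\subset V_n$ is weakly contractible. In general this poset is empty. Take $C=\Rg$, $k=1$, $D_1=(1,2)$, $n=2$ and $B=(1.2,1.4)+(3,4)$. Any basic $D+[0,\delta)$ containing $B$ must have $\delta\ge 1.4$ and $D\supset(3,4)$. Hence it contains the divisor with summands $0.5$ and $3.5$, which is not in $V_2$.

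Your justification also runs in the wrong direction and is false as stated. A basic with an extra summand in a disk $E_i\subset C^*$ lies entirely in strata $\ge k+1$, so it contains no basic of minimal stratum $k$. Moreover, ``shrinking the $E_i$ toward $*$'' is neither an inclusion nor a stratified homotopy, since a summand reaching the tip changes the $\NN$-stratum. The deformation $h_t$ of Step 1 has the same defect: it lowers strata, so it cannot be fed into homotopy invariance.

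\textbf{What is missing.} You need an honest $\NN$-stratified homotopy equivalence
\[
D_1'+\dots+D_k'+\SP^{m}(C_\epsilon(N))\subset D_1'+\dots+D_k'+\SP^{m}(C).
\]
This is Lemma \ref{lem:extend_zero_disk}, applied after shrinking the $D_i$ to chart balls $D_i'$ via Lemma \ref{lem:shrink_disks}. The paper's proof consists of exactly this together with homotopy invariance, Proposition \ref{prop:homotopy_equivalence_constructible_sheaves}.

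The device that copes with free summands lying inside the $D_i$ is never to single out a ``marked'' summand:
\begin{itemize}
\item Take $j\colon \SP^m(C)\to D_1'+\dots+D_k'+\SP^m(C)$, $s\mapsto s+c_1+\dots+c_k$, with $c_i$ the chart centres.
\item Take a homotopy $H_t$ of $C$ contracting each $D_i'$ onto $c_i$, constant outside a compact subset of $\bigcup_i D_i$ and hence near the tip.
\item Then $s\mapsto \SP(H_1)(s)-(c_1+\dots+c_k)$ is a homotopy inverse of $j$ via the symmetrized homotopy. That homotopy keeps summands in $C^*$, so it preserves strata.
\end{itemize}
The same works with $C_\epsilon(N)$ in place of $C$. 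The radial stratified homotopy equivalence $\SP^m(C_\epsilon(N))\subset\SP^m(C)$ moves summands toward, but never onto, the tip, and it closes the square by two-out-of-three.
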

\begin{proof}
        This is a consequence of Lemma \ref{lem:extend_zero_disk} and homotopy invariance Proposition
        \ref{prop:homotopy_equivalence_constructible_sheaves}.
\end{proof}

\subsection{Complete analytic sheaves}%
\label{sub:complete_analytic}

Note that everything we have discussed so far holds in the ``unstable'' context
for e.g. the categories of pointed spaces or $\infty$-categories (c.f. Remark \ref{rem:non-stable-coefficients}).
In particular, Theorem \ref{thm:polytopoly} holds in that situation. For the
remainder of this paper, we focus on the case where $\D$ is stable (and
bicomplete). This will be the standing assumption in what follows. In this
section, we document various basic statements that hold under this assumption.

\begin{rem}
    \label{rem:tfib_stable}
    For stable coefficients $\D$, in the context of Definition
    \ref{defi:derivative}, we have that an $l$th difference cube is Cartesian
    if and only if its total fiber, referred to as the corresponding {\em $l$th
    total difference} vanishes. 
\end{rem}

\begin{defi}
    \label{defi:complete_analytic}
    An analytic sheaf $G$ on $M$ is called {\em complete} if the corresponding
    diagram
    \[
        G: \DD(M)^{\op} \to \D
    \]
    is a colimit cone, exhibiting $G(\emptyset)$ as the colimit of $G|\DD^*(M)$.
    We denote the full subcategory of $\Shan(M;\D)$
    consisting of complete analytic sheaves by $\Shan(M;\D)^c$.
\end{defi}

\begin{prop}
    \label{prop:stable_complete}
    Assume that $\D$ is stable. Then every $\D$-valued polynomial sheaf on $M$
    is complete.
\end{prop}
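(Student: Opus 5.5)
We want to show that, for $G \in \Shk(M;\D)$ with $\D$ stable, the canonical map $\colim G|\DD^*(M)^{\op} \to G(\emptyset)$ is an equivalence. The plan is to reduce everything to finite cubes using Proposition \ref{prop:kan} and stability, and then to compare cubes through a cofinality argument.

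The first step is a finite model. Fix $k+1$ pairwise disjoint disks $D_1, \dots, D_{k+1}$ in $M$ and set $L = \{1,\dots,k+1\}$. By Definition \ref{defi:polynomial} the cube $I \mapsto G(D_I)$ on $\P(L)^{\op}$ is Cartesian. Since $\D$ is stable, a Cartesian cube is also coCartesian. Hence $G(\emptyset)$ is the colimit of the punctured cube $\{G(D_I)\}_{\emptyset \neq I \subset L}$. So $G(\emptyset)$ is already a colimit over a finite diagram of values on nonempty opens.

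The main obstacle is to promote this to the colimit over all of $\DD^*(M)^{\op}$, which is not filtered and need not be weakly contractible in any obvious sense. My approach is to show that the colimit over $\DD^*(M)^{\op}$ is itself computed by such cubes. I would argue via right Kan extension. Proposition \ref{prop:kan} says $G$ is right Kan extended from $\DD_{\le k}(M)$, and in a stable category finite limits commute with colimits. I would therefore try to rewrite $\colim_{\DD^*(M)^{\op}} G$ as $\colim_{U \in \DD^*(M)^{\op}} \lim_{V \in \DD_{\le k}(M)/U} G(V)$. Then, for each $U$ with at least $k+1$ components, I would use Proposition \ref{prop:allderivatives} to rewrite $G(U)$ as a finite limit over proper subunions.

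A cleaner alternative, which I would try first, is to use the total difference formalism. Let $G'$ be the fiber of $G \to \underline{G(\emptyset)}$, where $\underline{G(\emptyset)}$ is the constant sheaf; in degree $0$ this is the fiber of the map to $\tau_0 G$. The constant presheaf $\underline{G(\emptyset)}$ is polynomial of degree $0$, and its colimit over $\DD^*(M)^{\op}$ is $G(\emptyset)\otimes |\DD^*(M)|$. If $M$ is nonempty this classifying space is contractible: $\DD^*(M)$ is cofiltered under refinement locally, and in fact any two objects admit a common subobject, namely a single small disk inside one component of each. So it suffices to show that $\colim_{\DD^*(M)^{\op}} G' \simeq 0$ when $G'(\emptyset)=0$ and $G'$ is polynomial of degree $\le k$.

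For this I would induct on $k$ using the fiber sequences $\tau$-layers, namely $\fib(\tau_k G' \to \tau_{k-1} G')$. These layers are homogeneous: they are right Kan extended from $\DD_{\le k}$ and vanish on $\DD_{\le k-1}$. Homogeneous layers are described by symmetric functors on $k$-fold configurations. For such a layer $H$, I would show that $\colim_{\DD^*(M)^{\op}} H$ vanishes. The argument runs through the cube relation for $k+1$ disks, which expresses each value on a $k$-component union as an alternating combination. Equivalently, the colimit over the subdiagram of unions of at most $k+1$ disks containing a fixed disk is cofinal, and there the values cancel by the coCartesian square obtained from adding one more disk $D$. Concretely, the map $H(U\cup D) \to H(U)$ becomes an equivalence on passing to the colimit, because the first difference $\delta H(U;D)$ appears as a finite piece of the Cartesian $(k+1)$-cube, and that piece is killed. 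I expect this cofinality and cancellation bookkeeping to be the hard part. The fallback is to verify that the map is an equivalence after applying $\Map(-,E)$ for each $E \in \D$, which turns the statement into a limit computation. In that dual form it is exactly the right Kan extension statement of Proposition \ref{prop:kan} for the functor $\Map(G(-),E)$.
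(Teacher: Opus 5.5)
\textbf{There is a genuine gap.} Your preliminary reductions are correct. A Cartesian $(k+1)$-cube is coCartesian in stable $\D$, so $G(\emptyset)$ is the colimit of the punctured cube $\{G(D_I)\}_{\emptyset\neq I}$. Splitting off $\tau_0 G$ and inducting over the layers of $\tau_\bullet G'$ is also legitimate. But the step that carries all the content is never carried out: comparing the colimit over the infinite, non-filtered poset $\DD^*(M)^{\op}$ with such finite cubes. None of the mechanisms you sketch works.
\begin{itemize}
\item The subposet of unions containing a fixed disk (or point) is not cofinal. For $U$ disjoint from that disk, the comma category of objects of the subposet contained in $U$ is empty. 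The assertion that $H(U\cup D)\to H(U)$ ``becomes an equivalence on passing to the colimit'' is not an argument.
\item In your first approach, the limits over $\DD_{\le k}(M)/U$ are infinite, so you cannot commute them past the colimit. If you replace them by finite cubes via Proposition~\ref{prop:allderivatives}, you are back at the original reorganisation problem.
\item The fallback is circular. Applying $\operatorname{Map}(-,E)$ turns completeness into the claim that the covariant functor $\operatorname{Map}(G(-),E)$ on $\DD(M)$ is right Kan extended from $\DD^*(M)$ at $\emptyset$. That is just the dual of the original statement; the paper's proof in fact begins with exactly this dualisation, passing to $\D^{\op}$. It is not Proposition~\ref{prop:kan}, which concerns the inclusion $\DD_{\le k}(M)\subset \DD(M)$ in the opposite variance.
\item Your justification of $|\DD^*(M)|\simeq *$ is false: two disjoint disks have no common nonempty subobject. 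The conclusion is true, but proving it needs the same covering argument as the proposition itself. So subtracting the constant part gains nothing.
\end{itemize}

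\textbf{The missing idea} is a decomposition of $\DD^*(M)$ into pieces over which the colimit can actually be computed. This is what the paper does.
\begin{itemize}
\item For finite $T\subset M$, let $\DD_T\subset\DD^*(M)$ consist of the $U$ with $U\cap T\neq\emptyset$. These subposets increase with $T$ and exhaust $\DD^*(M)$ as $T$ runs over the filtered, hence weakly contractible, poset of finite subsets with $|T|>k$. So it suffices to show that the colimit over each $\DD_T^{\op}$ is $G(\emptyset)$, compatibly in $T$.
\item $\DD_T$ is the union of the upward-closed subposets $\DD_{\{t\}}$, $t\in T$. Hence its colimit is the colimit of the punctured cube of colimits over the intersections $\{U : U\supseteq I\}$, $\emptyset\neq I\subseteq T$.
\item Each such intersection contains the codirected, coinitial subposet of unions of $|I|$ disks each containing exactly one point of $I$. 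By analyticity $G$ inverts all of its morphisms, so the colimit is $G(D_I)$ for fixed disjoint disks $D_t\ni t$.
\item The resulting punctured cube $\{G(D_I)\}_{\emptyset\neq I\subseteq T}$ has colimit $G(\emptyset)$ by your first step, since $|T|>k$.
\end{itemize}
With this argument in place, neither the subtraction of $\tau_0 G$ nor the induction over homogeneous layers is needed.
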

\begin{proof}
    Switching from $\D$ to $\D^{\op}$, we may, more conveniently, view an
    analytic sheaf as a covariant functor $G: \DD(M) \to \D$. Such an analytic
    sheaf is polynomial of degree $\le k$ if, for every pairwise disjoint
    collection $D_1, \dots, D_l \in \DD(M)$ of disks, $l>k$, the cube
    \[
        q: \P(\{1,\dots,l\}) \to \D,\; I \mapsto G(D_I)
    \]
    is biCartesian, in particular, it exhibits $G(\emptyset)$ as the limit of
    the punctured cube. We need to show that $G(\emptyset)$ is the limit over
    all of $\DD^* = \DD(M) \setminus \emptyset$. To this end, we define, for a finite
    subset $T \subset M$, the subposet $\DD_T \subset \DD(M)$ consisting of $U
    \in \DD(M)$ such that $U \cap T \neq \emptyset$. Let $J$ denote the
    poset of finite subsets of $M$ of cardinality $>k$. Covering the diagram
    $\DD^*$ by $J$ via $T \mapsto \DD_T$, we may compute the limit over
    $\DD^*$ as a limit of limits over the various posets $\DD_T$. The poset $J$
    is contractible (since it is filtered) and we claim that the limits of all
    restrictions $G|\DD_T$ are equivalent to $G(\emptyset)$. Indeed, $\DD_T$ is
    the union of the subposets $\DD_{\{t\}}$, $t \in T$, so that we may compute
    the limit over $\DD_T$ as the limit of the punctured cube
    \[
        \P(T) \to \D, I \mapsto \lim G|\bigcap_{t \in I} \DD_{\{t\}}.
    \]
    Let $D_1, \dots, D_l$ be a collection of disjoint disks such that each
    element of $T$ is contained in precisely one of them. Then a simple
    coinitiality argument (using that $G$ is analytic) implies that the the
    punctured cube is equivalent to $I \mapsto G(D_I)$ so that its limit is
    $G(\emptyset)$ as claimed.
\end{proof}

\subsection{The cup-cap equivalence}

Let $k \le n$ and let $D_1, \dots, D_k$ be disjoint open disks in $C^*$. There are two assoctiated open subsets of $\SP^n(C)$: 
\begin{enumerate}[label=(\Roman *)]
    \item\label{type:1} the open $D_1 \cup  D_2 \cup \dots \cup D_k + \SP^{n-1}(C) \subset \SP^n(C)$,
    \item\label{type:2} the open $D_1 + D_2 + \dots + D_k + \SP^{n-k}(C) \subset \SP^n(C)$.
\end{enumerate}
The main result of this section, Theorem \ref{thm:stable_interchange}, implies
that the values of a sheaf $F \in \Sh(\SP^n(C);\D)$ on opens of type \ref{type:1} uniquely determine its
values on opens of type \ref{type:2} and vice versa. This phenomenon lies at
the heart of our argument to establish the equivalences described in \S
\ref{sub:r} and \S \ref{sub:c}. The result only holds for a {\em stable}
coefficient category $\D$ which explains why we cannot expect analytic expansion
to establish an equivalence for unstable coefficients.\\

For reasons of flexibility, we will formulate our result for an arbitrary subposet $\BB \subset \DD^*_{\le n}(C^*)$, which need only satisfy the following {\em decomposability} condition:
\begin{equation}
        \label{eq:partition_closure}
        \text{If } \bigcup_{i \in S}D_i \in \BB \text{ for } S \text{ finite then }
        \bigcup_{i \in T}D_i \in \BB\text{ for all }\emptyset \neq T \subset S.
\end{equation}
(Here and in the following we use the convention that the $D_i$ in a term $\bigcup_{i \in S}D_i$ are pairwise disjoint disks.)
Given such a poset, we consider the two families of opens in the symmetric product $\SP^n(C)$ of the above types:
\[
	\begin{split}
                \BB_{\cup}&= \{\bigcup_{i \in S} D_i+\SP^{n-1}(C) | \bigcup_{i \in S} D_i \in \BB \} \cup \{\emptyset\}, \\
                \BB_{\cap}&= \{\bigcap_{i \in S} (D_i+\SP^{n-1}(C)) | \bigcup_{i \in S} D_i \in \BB  \} \cup \{\SP^n(C)\}.
	\end{split}
\]

The map $U \mapsto U+\SP^{n-1}(C)$ furnishes an isomorphism of posets $\BB \cup \{\emptyset\} \cong \BB_{\cup}$, whose inverse image is given by restricting to the $1$-stratum.
Note again that for an element of $\BB_{\cap}$ we have
\[
        \bigcap_{i \in S} (D_i+\SP^{n-1}(C)) = D^{\Sigma S} + \SP^{n-|S|}(C)
\]
For $\bigcup_{i \in S} D_i \in \BB$, let
\[
        \{D^{\Sigma I} + \SP^{n-|I|}(C)\}_{\emptyset \neq I \subset S}
\]
denote the punctured \v{C}ech cube associated to the covering of
$\bigcup_{i \in S} D_i +\SP^{n-1}(C)$ by the opens $D_i+\SP^{n-1}(C)$. By property
\eqref{eq:partition_closure} this indeed takes values in $(\BB_{\cup} \cup
\BB_{\cap})^{\op}$.
For a stable $\infty$-category $\D$, we denote by 
\[
        \Fun((\BB_{\cup} \cup \BB_{\cap})^{\op},\D)^{\square} \subset \Fun((\BB_{\cup} \cup \BB_{\cap})^{\op},\D)
\]
the full subcategory consisting of presheaves $F$ satisfying $F(\emptyset) \simeq 0$ and the following
condition: for every $\bigcup_{i=1}^k D_i \in \BB$, we have
\[
        F(\bigcup_{i=1}^k D_i +\SP^{n-1}(C)) \simeq \lim \{F(D^{\Sigma I} +
        \SP^{n-|I|}(C))\}_{\emptyset \neq I \subset \{1,\dots,k\}}
\]
or, put differently, the full \v{C}ech cube associated to the open covering $\{D_i+\SP^{n-1}(C)\}$ is biCartesian.

\begin{thm}[(Cup-cap equivalence)]\label{thm:stable_interchange}
	Let $\D$ be a stable $\infty$-category. Then the pullback functors 
    \[\begin{tikzcd}
        {\Fun(\BB_{\cap}^{\op},\D)} & {\Fun((\BB_{\cup} \cup \BB_{\cap})^{\op},\D)^{\square}}
                                    & {\Fun((\BB_{\cup}^{\triangleleft})^{\op},\D)^{\emptyset}}
            \arrow["r"', from=1-2, to=1-1]
            \arrow["s", from=1-2, to=1-3]
    \end{tikzcd}\]
    form a span of trivial Kan fibrations. Here, $\BB_{\cup}^{\triangleleft} = \BB_{\cup} \cup \{\SP^n(C)\}$ and
$\Fun((\BB_{\cup}^{\triangleleft})^{\op},\D)^{\emptyset} \subset \Fun((\BB_{\cup}^{\triangleright})^{\op},\D)$
is the full subcategory of presheaves with $F(\emptyset) \simeq 0$.

The functors restrict to equivalences on $\NN$-constructible presheaves
(c.f. Definition \ref{defi:constructible_presheaf}).

\end{thm}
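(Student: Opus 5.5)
The plan is to show that both $r$ and $s$ are restrictions along full subposet inclusions which exhibit the source category as the full subcategory of Kan extended functors. Then \cite[4.3.2.15]{lurie:htt} yields trivial Kan fibrations. Write $P = \BB_{\cup}\cup\BB_{\cap}$; its elements are the opens $\emptyset$, $\SP^n(C)$, $U_S := \bigcup_{i\in S}D_i + \SP^{n-1}(C)$ and $V_S := D^{\Sigma S}+\SP^{n-|S|}(C)$. Note $U_S = V_S$ when $|S|=1$. The inclusions among these opens are the ones visible from the strata. In particular $V_S \subset U_T$ iff some $D_i$ with $i\in S$ is contained in a disk of $T$, and $V_S\subset V_T$ iff the disks of $T$ are "refined" by those of $S$ in the evident sense.

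\textbf{The map $s$.} The condition defining $\Fun(P^{\op},\D)^{\square}$ says that $F(U_S)$ is the limit of the punctured cube $\{F(V_I)\}_{\emptyset\ne I\subset S}$. Stability converts this into the statement that $F(V_S)$ is determined by the values on the $U_T$. Concretely, by the cube inversion Lemma \ref{lem:cubeinversion} and Remark \ref{rem:informal}, bicartesianness of the full \v{C}ech cube lets us solve inductively in $|S|$:
\[
F(V_S)[\,\cdot\,] \simeq \tfib\{F(U_{I})\}_{I\subset S}
\]
up to the shift $\Omega^{|S|-1}$, with $U_\emptyset=\emptyset$ giving $0$. I would make this functorial by proving that $F$ is a right Kan extension of $F|(\BB_{\cup}^{\triangleleft})^{\op}$ after a modification. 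The cleanest route is to show that the square condition is equivalent to $F$ being a \emph{left} Kan extension (stable: limits and colimits of finite cubes agree) from $\BB_{\cup}^{\triangleleft}$ to $P$. For this I would proceed by induction on $|S|$, adding the $V_S$ one cardinality at a time, as in the proof of Proposition \ref{prop:kan}.

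\textbf{The map $r$.} Dually, $F(U_S)$ is literally the limit over the \v{C}ech cube of the $V_I$. So I would show that the condition is exactly that $F$ is a right Kan extension of $F|\BB_{\cap}^{\op}$ along $\BB_{\cap}\subset P$, and that $F(\emptyset)=0$ is automatic as the empty limit. This uses cofinality: the subposet of $\BB_{\cap}$ over $U_S$ (the $V_T$ contained in $U_S$) must be shown to be coinitial onto the \v{C}ech cube $\{V_I\}_{I\subset S}$. The map sending $V_T$ to $V_{I}$, with $I$ the set of $D_i$ that contain disks of $T$, is the candidate retraction; decomposability \eqref{eq:partition_closure} guarantees $V_I\in\BB_{\cap}$.

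\textbf{Main obstacle and constructibility.} The main obstacle is the $s$ side. There the relevant comma posets are not obviously cubes, and one must verify that the total fiber formula extends functorially to all inclusions $V_S\subset V_T$ and $V_S\subset U_T$. I would first try to reduce to the $r$ side by a formal stability argument. On each finite cube $\P(S)$, the restriction of $\square$-presheaves to the $U$-vertices and to the $V$-vertices are both equivalences by Lemma \ref{lem:cubeinversion} applied to $q$ and $\check q$. Exhausting $P$ by these cubes (a colimit of posets along which the pointwise Kan extension formulas are compatible) then globalizes the result. Finally, $\NN$-constructibility is a condition on values along stratified homotopy equivalences (Theorem \ref{thm:filtration_exodromy}), and the explicit formulas $F(U_S)=\lim F(V_I)$ and $F(V_S)\simeq\tfib F(U_I)$ visibly preserve it, so the equivalences restrict.
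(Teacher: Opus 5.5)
Your treatment of $r$ is the paper's argument. The \v{C}ech condition says exactly that $F$ is right Kan extended from $\BB_{\cap}$. Indeed, for $V=W^{\Sigma J}+\SP^{n-|J|}(C)\subseteq U_S$, the $I$ with $V\subseteq V_I$ are precisely the nonempty subsets of $A=\{i : V\subseteq D_i+\SP^{n-1}(C)\}$, which is your retraction. Moreover $A\neq\emptyset$ by the paper's point-picking argument: otherwise choose $y_j\in W_j\setminus\bigcup_{i\in S}D_i$ to produce a point of $V\setminus U_S$. Finally, $F(\emptyset)\simeq 0$ is the empty limit.

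The genuine gap is $s$. You name the same strategy as the paper: left Kan extension from $\BB_{\cup}^{\triangleleft}$, adding the $V_S$ by cardinality. But the whole content of that step is control of the comma poset $\{x\in\BB_{\cup}\cup(\BB_{\cap})_{<|S|} : x\supseteq V_S\}$, and your remedy does not supply it. Your $P$ is not glued from the cubes $\{U_I,V_I\}_{I\subseteq S}$, since it contains inclusions $V_S\subseteq V_J$ and $V_S\subseteq U_T$ in which a single disk of $J$ or $T$ swallows several $D_i$. Such relations lie in no single cube, so $\Fun(P^{\op},\D)$ is not a limit of functor categories on cubes, and cube-wise applications of Lemma \ref{lem:cubeinversion} do not globalize.

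Note also that the paper's own justification here only gives nonemptiness of the relevant slices, not contractibility. The paper argues that every object of the comma poset contains a vertex of the tipless cube $\{U_S\}\cup\{V_I\}_{\emptyset\neq I\subsetneq S}$, hence cofinality. But take $|S|=3$ and $V_J=W_a+W_b+\SP^{n-2}(C)$ with $W_a\supseteq D_1\cup D_2$ and $W_b\supseteq D_3$ (e.g.\ $C=\Rg$, $\BB=\DD^*_{\le n}(\RR_{>0})$, $n\geq 3$). The tipless vertices contained in $V_J$ are exactly the incomparable $V_{\{1,3\}}$ and $V_{\{2,3\}}$, so that step cannot simply be copied.

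A way to close the gap is to left Kan extend in one step from $\BB_{\cup}^{\triangleleft}$. For nonempty $x\in\BB_{\cup}^{\triangleleft}$, let $A_x\subseteq S$ be the set of $i$ with $D_i$ inside a disk of $x$ (so $A_x=S$ for $x=\SP^n(C)$). By the $r$-argument, $V_I\subseteq x$ iff $I\cap A_x\neq\emptyset$ for $I\neq\emptyset$. By Lemma \ref{lem:union_inclusion}, $U_I\subseteq x$ iff $I\subseteq A_x$. Hence, for $x\supseteq V_S$, the vertices of the punctured $U$-cube $\{U_I\}_{\emptyset\neq I\subseteq S}$ lying in $x$ form a nonempty poset with a largest element. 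So this punctured cube is cofinal in the comma poset, and $(\operatorname{Lan}G)(V_S)\simeq\colim_{\emptyset\neq I\subseteq S}G(U_I)\simeq\Sigma^{|S|-1}\tfib\{G(U_I)\}_{I\subseteq S}$. This is your formula, now functorial for free. Two checks remain.

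(i) $\operatorname{Lan}G$ satisfies the \v{C}ech conditions. These conditions are stable under colimits, since total fibres are finite limits, and $\operatorname{Lan}$ preserves colimits. So it suffices to treat the generators $G_{x,d}$, with value $d$ on nonempty $y\subseteq x$ and zero otherwise. Their extension is $d$ on nonempty $V\subseteq x$ and zero elsewhere. The \v{C}ech cube at $S$ is then constant ($A_x=S$), zero ($A_x=\emptyset$), or independent of a coordinate $s_0\in S\setminus A_x$, hence biCartesian in every case.

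(ii) Every $F\in\Fun(P^{\op},\D)^{\square}$ is the left Kan extension of its restriction. Argue by induction on $|S|$: the counit is an equivalence at $U_S$ and at all $V_I$ with $I\subsetneq S$, and both \v{C}ech cubes at $S$ are coCartesian.

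With $s$ settled this way, your constructibility remark goes through as in the paper.
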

\begin{proof}
        We first prove that $r$ is a trivial Kan fibration by showing that the functors
        in $\Fun((\BB_{\cup} \cup \BB_{\cap})^{\op},\D)^{\square}$ are precisely the right Kan
        extensions of functors in $\Fun(\BB_{\cap}^{\op},\D)$. For this it suffices to show that, for every union of disjoint disks
        $U = \bigcup_{i \in S} D_i +\SP^{n-1}(C) \in \BB_{\cup}$ the punctured cube
\[
        \{F(D^{\Sigma I} + \SP^{n-|I|}(C))\}_{\emptyset \neq I \subset S}
\]
        is coinitial in the comma category $U/\BB_{\cap}^{\op}$:
        For
        \[
                W^{\Sigma J}+\SP^{n-|J|}(C) \in U/\BB_{\cap}^{\op}
        \]
        we show that $W_j \subset D_i$ for some pair of indices $i \in S, j \in J$. Assume, to the contrary, that $W_j \nsubset D_i$
        for all $i \in I, j \in J$. By connectedness of the $W_j$, we also have $W_j \nsubset \bigcup_{i \in I}D_i$, so there exists
        $y_j \in W_j \setminus \bigcup_{i \in S} D_i$ for all $j \in J$. But then
        \[
                \sum_{j=1}^{l}y_j+(n-k-1) \cdot * \in (W^{\Sigma J }+\SP^{n-|J|}(C) )\setminus U,
        \]
        a contradiction.

        The claim about coinitiality now follows from Quillen's Theorem A since the relevant overcategories are filtered.

        Secondly, we show that $s$ is a trivial Kan fibration by showing that the functors in
        $\Fun((\BB_{\cup} \cup \BB_{\cap})^{\op},\D)^{\square}$ are those arising from functors in
        $\Fun((\BB_{\cup}^{\triangleright})^{\op},\D)^{\emptyset}$ via left Kan extension.
        To this end, we first define, for $1 \le m \le n$, the subcategory $(\BB_{\cap})_{\le m} \subset \BB_{\cap}$ as the poset
        of opens $ D^{\Sigma I} + \SP^{n-|I|}(C)) \in \BB_{\cap}$ with $|I| \le m$.
        There is then a tower of inclusions
        \[
                \BB_{\cup}^{\triangleright} = \BB_{\cup} \cup (\BB_{\cap})_{\le 1} \subset \BB_{\cup} \cup (\BB_{\cap})_{\le 2} \subseteq
                \dots \subset \BB_{\cup}\cup (\BB_{\cap})_{\le n} = \BB_{\cup} \cup \BB_{\cap}.
        \]
        We show that, for $1 < m \le n$, a functor $F: (\BB_{\cup} \cup (\BB_{\cap})_{\le m})^{\op} \to \D$ is a left
        Kan extension of its restriction to $(\BB_{\cup} \cup (\BB_{\cap})_{\le m-1})^{\op}$ if and only if
        \[
                F(\bigcup_{i \in S}D_i +\SP^{n-1}(C)) \simeq \lim \{F(D^{\Sigma I} +
                \SP^{n-|I|}(C))\}_{\emptyset \neq I \subset \{1,\dots,m\}}
        \]
        for any $\bigcup_{i \in S}D_i \in \BB \cap \DD_m(C)$.
        By stability of $\D$, this is equivalent to
        \[
                F(D^{\Sigma S} + \SP^{n-m}(C)) \simeq \colim (\{\bigcup_{i=1}^{m}D_i +\SP^{n-1}(C) \} \cup \{F(D^{\Sigma I} +
                \SP^{n-|I|}(C))\}_{\emptyset \neq I \subsetneq \{1,\dots,m\}})
        \]
        To establish this, we show that the punctured cube is cofinal in
        $(\BB_{\cup} \cup (\BB_{\cap})_{\le m})^{\op}/(D^{\Sigma S} + \SP^{n-m}(C))$.
        It again suffices to show that any object of the comma category lies above an object of the tip-less cube. Given
        $\bigcup_{j \in J}V_j \in \BB_{\le m}$ such that
        \[
                D^{\Sigma S}+\SP^{n-m}(C) \subset V^{\Sigma J}+\SP^{n-|J|}(C),
        \]
        Lemma \ref{lem:intersection_inclusions} yields        \[
                D^{\Sigma I}+\SP^{n-|J|}(C) \subset V^{\Sigma |J|}+\SP^{n-|J|}(C).
        \]
        for some $I \subset S$ with $|I|=|J|$.
        Since Kan extensions compose, the above implies that the functors in $\Fun((\BB_{\cup} \cup \BB_{\cap})^{\op},\D)^{\square}$
        are the left Kan extensions along $(\BB_{\cup}^{\triangleright})^{\op} \to (\BB_{\cup} \cup \BB_{\cap})^{\op}$ of
        presheaves with $F(\emptyset) \simeq 0$.

         For both restrictions, we have shown that the value of a Kan extension $F$ at the new objects is determined by a biCartesian
        \v{C}ech cube. The claim about constructible presheaves therefore follows from Lemmas \ref{lem:intersection_inclusions}
        and \ref{lem:union_inclusion} and homotopy invariance of limits and colimits.
\end{proof}
Finally, we normalize:
\begin{lem}\label{lem:normalization}
        The normalization functor
\[
        N:\Fun((\BB_{\cup}^{\triangleright})^{\op},\D)^{\emptyset} \to \Fun(\BB_{\cup}^{\op},\D)
\]
given by
\[
        F \mapsto (U \mapsto \fib(F(\SP^n(C)) \to F(U)
\]
is an equivalence which respects $\NN$-constructible presheaves.
\end{lem}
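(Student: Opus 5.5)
The approach is to exhibit $N$ as the composite of two elementary equivalences, using only that $\D$ is stable. The poset $\BB_{\cup}^{\triangleright}$ is $\BB_{\cup}$ with the terminal object $\SP^n(C)$ adjoined. Since $\emptyset \in \BB_{\cup}$, the index category $(\BB_{\cup}^{\triangleright})^{\op}$ has $\SP^n(C)$ as an initial object and $\emptyset$ as a terminal object.

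First, I would identify $\Fun((\BB_{\cup}^{\triangleright})^{\op},\D)$ with the $\infty$-category of triples $(A, G, \phi)$. Here $G\in \Fun(\BB_{\cup}^{\op},\D)$, $A \in \D$, and $\phi: A \to \lim G$ is a map. This is the standard description of functors out of a cone. Because $\BB_{\cup}^{\op}$ has the terminal object $\emptyset$, we get $\lim G \simeq G(\emptyset)$. The condition $F(\emptyset)\simeq 0$ then says that $G(\emptyset)\simeq 0$.

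Under this description, $N$ sends $(A,G,\phi)$ to the functor $U \mapsto \fib(A \to G(U))$. I would make this functorial as the fiber of the natural transformation $\underline{A} \to G$, where $\underline{A}$ denotes the constant presheaf on $\BB_{\cup}^{\op}$ with value $A$.

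Next, I would write down an inverse. Given $H \in \Fun(\BB_{\cup}^{\op},\D)$, set $A := \Omega^{-1}H(\emptyset)$, i.e. $A = H(\emptyset)[1]$. Since $\emptyset$ is terminal in $\BB_{\cup}^{\op}$, there is a canonical map $H \to \underline{H(\emptyset)}$. Let $G$ be the cofiber of this map, shifted by $[1]$:
\[
    G(U) = \operatorname{cofib}(H(U) \to H(\emptyset))[1] \simeq \operatorname{cofib}(H(U)\to H(\emptyset)[1] \ldots)
\]
Then $G(\emptyset)=0$, and the map $\underline{A}\to G$ comes out of the construction. Concretely, $F(U) = \operatorname{cofib}(H(U) \to H(\emptyset))$ and $F(\SP^n(C)) = H(\emptyset)[1]$ gives
\[
    \fib(F(\SP^n(C)) \to F(U)) \simeq H(U),
\]
using the fiber sequence $H(U)\to H(\emptyset)\to F(U)$ in the stable category $\D$.

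To see that these two functors are mutually inverse, I would use that in a stable $\infty$-category fiber and cofiber sequences coincide. Pointwise, $\fib(A \to G(U))$ and $G(U)$ together with $A$ recover each other. The normalization $G(\emptyset)\simeq 0$ is exactly what pins down $A \simeq \fib(A\to G(\emptyset))[1]$.

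The main obstacle is making this inverse coherent rather than merely pointwise. To handle it, I would recast $N$ as a composite of equivalences of functor categories. The first is the equivalence between arrows $\underline{A}\to G$ with $G(\emptyset)=0$ and arrows $H \to \underline{H(\emptyset)}$ that are cofibers of one another. This holds because $\Fun(\Delta^1,\Fun(\BB_{\cup}^{\op},\D))$ is stable and the fiber functor on it is an equivalence onto its image. The second is the observation that an arrow $H \to \underline{B}$ is determined by $H$, with $B = H(\emptyset)$: the constant diagram is right adjoint to evaluation at the terminal object, and we restrict to arrows whose component at $\emptyset$ is an equivalence. This is a right Kan extension argument along $\{\emptyset\}\subset \BB_{\cup}^{\op}$, and so it is coherent.

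Finally, for $\NN$-constructibility (Definition \ref{defi:constructible_presheaf}), I would note that both $N$ and its inverse are built pointwise from fibers and cofibers of maps into the fixed object $F(\SP^n(C))$ or $H(\emptyset)$. Hence a restriction map $F(V)\to F(U)$ is an equivalence if and only if the induced map $N(F)(V) \to N(F)(U)$ is one. Since the constructibility condition only concerns which inclusions $U\subset V$ in $\BB_{\cup}$ must go to equivalences, this condition is preserved in both directions.
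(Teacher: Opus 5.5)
Your overall strategy is the paper's. There, the inverse extends $H\in\Fun(\BB_{\cup}^{\op},\D)$ by zero to $\SP^n(C)$ (left Kan extension along the cone point) and then takes $U\mapsto\cof(H(U)\to H(\emptyset))$, i.e.\ the cofiber of the canonical map to the constant presheaf with value $H(\emptyset)$, where $\emptyset$ is terminal in $(\BB_{\cup}^{\triangleright})^{\op}$. Your rotation argument in $\Fun(\Delta^1,\Fun(\BB_{\cup}^{\op},\D))$ together with $\mathrm{ev}_{\emptyset}\dashv\mathrm{const}$ is a correct way to make this inverse coherent. However, two steps as written are wrong, and the explicit inverse you give does not work.

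First, $\lim G\simeq G(\emptyset)$ is false. A terminal object of the indexing category computes the \emph{colimit}. The limit is the value at an initial object of $\BB_{\cup}^{\op}$ (a largest element of $\BB_{\cup}$) when one exists, and it is not $G(\emptyset)$. This is not harmless. If it held, then with $G(\emptyset)\simeq 0$ the space of cone data $\phi\colon A\to\lim G\simeq 0$ would be contractible, and every map $F(\SP^n(C))\to F(U)$ would be null. Hence $N(F)(U)\simeq F(\SP^n(C))\oplus F(U)[-1]$, and $N$ could not be an equivalence. The extra datum is a genuine natural transformation $\underline{A}\to G$ (equivalently $A\to\lim_{\BB_{\cup}^{\op}}G$), which is what you actually use later.

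Second, the shift is off. Since $F(\emptyset)\simeq 0$, one has $N(F)(\emptyset)=\fib(F(\SP^n(C))\to 0)\simeq F(\SP^n(C))$. So an inverse must put $H(\emptyset)$, not $H(\emptyset)[1]$, at $\SP^n(C)$, and $G(U)=\cof(H(U)\to H(\emptyset))$ without shift. Your own fiber sequence $H(U)\to H(\emptyset)\to F(U)$ yields $\fib(H(\emptyset)\to F(U))\simeq H(U)$, which needs exactly this; by contrast, ``$A\simeq\fib(A\to G(\emptyset))[1]$'' would assert $A\simeq A[1]$. Your coherent argument, read literally, gives $H=\fib(\underline{A}\to G)$ and hence $H(\emptyset)\simeq A$. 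That is consistent with the corrected formula rather than the one you stated, and with these two fixes the proof goes through.

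For constructibility, also say why the condition on $(\BB_{\cup}^{\triangleright})^{\op}$ only involves inclusions inside $\BB_{\cup}$. No inclusion $U\subset\SP^n(C)$ with $U\in\BB_{\cup}$ is a stratified homotopy equivalence, because $U$ misses the $0$-stratum $\{n\cdot *\}$. Among nonempty members of $\BB_{\cup}$, the relevant inclusions are described by Lemma~\ref{lem:union_inclusion}, which is what the paper cites. Your stability observation, that $F(V)\to F(U)$ is an equivalence if and only if $N(F)(V)\to N(F)(U)$ is, then finishes the proof.
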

\begin{proof}
        An inverse is clearly given by the map
        \[
                F \mapsto (U \mapsto \cof(F(U) \to F(\emptyset)))
        \]
        with $F(\SP^{n}(C) \simeq 0$ by left Kan extension.
        The statement about constructibility follows immediately from the characterization in Lemma \ref{lem:union_inclusion}.
\end{proof}
\begin{cor}
        \label{cor:analytification_equivalence_criterion}
        For a decomposable poset $\BB \subset \DD^*_{\le n}(C^*)$ of nonempty unions of disks the restricted analytic expansion
        functor
\[
        \AA_{\BB}:\Sh_{\NN}(\SP^n(C);\D) \to \Fun_{\NN}((\BB^{\triangleright})^{\op},\D)
\]
given by
\[
        \AA_{\BB}(F)(U)= \fib(F(\SP^n(C)) \to F(U + \SP^{n-1}(C)))
\]
is an equivalence if and only if the restriction functor
\[
        \Sh_{\NN}(\SP^n(C);\D) \to \Fun_{\NN}(\BB_{\cap}^{\op},\D)
\]
is an equivalence.
\end{cor}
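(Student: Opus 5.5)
The plan is to factor the restricted analytic expansion $\AA_{\BB}$ through the span of Theorem \ref{thm:stable_interchange} and the normalization of Lemma \ref{lem:normalization}, and then observe that every functor in the factorization except the first restriction is an equivalence.

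First, restriction along the inclusion of posets $\BB_{\cup} \cup \BB_{\cap} \subset \U(\SP^n(C))$ gives a functor
\[
\rho: \Sh_{\NN}(\SP^n(C);\D) \lra \Fun_{\NN}((\BB_{\cup} \cup \BB_{\cap})^{\op},\D).
\]
Its image lies in the subcategory marked $\square$. Indeed, $F(\emptyset) \simeq 0$ for any sheaf. Moreover, for $\bigcup_{i \in S} D_i \in \BB$, descent for the finite open cover $\{D_i + \SP^{n-1}(C)\}_{i\in S}$ of $\bigcup_i D_i + \SP^{n-1}(C)$ gives exactly the required limit formula. The \v{C}ech nerve takes values in $\BB_{\cap}$ by decomposability \eqref{eq:partition_closure}, since $\bigcap_{i\in I}(D_i+\SP^{n-1}(C)) = D^{\Sigma I}+\SP^{n-|I|}(C)$. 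Constructibility of $F$ translates into $\NN$-constructibility of $\rho(F)$ in the sense of Definition \ref{defi:constructible_presheaf}.

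Now compose $\rho$ with $r$ and with $s$. By construction, $r\circ \rho$ is the restriction functor $\Sh_{\NN}(\SP^n(C);\D) \to \Fun_{\NN}(\BB_{\cap}^{\op},\D)$. The composite $N \circ s \circ \rho$ sends $F$ to $U + \SP^{n-1}(C) \mapsto \fib(F(\SP^n(C)) \to F(U + \SP^{n-1}(C)))$. One point of bookkeeping is needed here: $\BB_{\cup}$ contains $\emptyset$, and under the isomorphism $\BB \cup \{\emptyset\} \cong \BB_{\cup}$ this corresponds to the cone point of $\BB^{\triangleright}$. The value at $\emptyset$ is $\fib(F(\SP^n(C)) \to 0) \simeq F(\SP^n(C))$, which agrees with $\AA_{\BB}(F)(\emptyset)$ since $\emptyset + \SP^{n-1}(C) = \emptyset$. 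Hence, after identifying $\Fun_{\NN}(\BB_{\cup}^{\op},\D) \simeq \Fun_{\NN}((\BB^{\triangleright})^{\op},\D)$, we get $\AA_{\BB} \simeq N\circ s\circ\rho$. This identification should be natural in $F$, because both sides are built from the same restriction maps.

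By Theorem \ref{thm:stable_interchange}, $r$ and $s$ restrict to equivalences on $\NN$-constructible presheaves. By Lemma \ref{lem:normalization}, $N$ is an equivalence preserving $\NN$-constructibility. Hence $\AA_{\BB} \simeq (N \circ s \circ r^{-1}) \circ (r \circ \rho)$, where the first factor is an equivalence. Two-out-of-three for equivalences then shows that $\AA_{\BB}$ is an equivalence if and only if $r\circ\rho$, the restriction to $\BB_{\cap}$, is one.

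The only step requiring genuine care is the compatibility of constructibility conditions. One must check that the $\NN$-constructible objects on each side of the span correspond under $r$, $s$ and $N$, so that the restricted equivalences really have $\Fun_{\NN}(\BB_{\cap}^{\op},\D)$ and $\Fun_{\NN}((\BB^{\triangleright})^{\op},\D)$ as targets. This is precisely the final clause of Theorem \ref{thm:stable_interchange} together with Lemma \ref{lem:normalization}, so I expect it to reduce to matching the definitions.
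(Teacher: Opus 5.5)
Your proposal is correct and matches the paper's proof: the paper likewise identifies $\AA_{\BB}$ with the composite of restriction to $(\BB_{\cup}\cup\BB_{\cap})^{\op}$, the functor $s$ of Theorem \ref{thm:stable_interchange}, and the normalization $N$, and then concludes by two-out-of-three, using that $r$ composed with this restriction is the restriction to $\BB_{\cap}$. You additionally spell out two points the paper leaves implicit: why sheaves land in the $\square$-subcategory (finite \v{C}ech descent together with decomposability), and the bookkeeping at $\emptyset$.
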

\begin{proof}
        This is an immediate consequence of Lemma \ref{lem:normalization} and Theorem \ref{thm:stable_interchange} because
        $\AA_{\BB}$ may be identified with the composite
\[\begin{tikzcd}
        {\Sh_{\NN}(\SP^n(C);\D)} & {\Fun((\BB_{\cup} \cup \BB_{\cap})^{\op},\D)^{\square}} & {\Fun((\BB_{\cup}^{\triangleright})^{\op},\D)^{\emptyset} } & {\Fun(\BB_{\cup}^{\op},\D).}
	\arrow[from=1-1, to=1-2]
	\arrow[from=1-2, to=1-3]
	\arrow["N", from=1-3, to=1-4]
\end{tikzcd}\]
\end{proof}

\section{\texorpdfstring{$\SPi(\Rg)$}{spirg} and the simplicial Dold-Kan correspondence}
\label{sec:spr}

In this section, we let $\D$ be a bicomplete {\em stable} $\infty$-category and
discuss how the analytic expansion functor is related to the simplicial Dold-Kan
correspondence. 

Let $C = C(\pt) = \Rg$. 
\subsection{The results}
\label{sub:spr_results}

We state the main results of \S \ref{sec:spr}.

\begin{thm}
    \label{thm:real_analytic}
    Let $\D$ be a stable bicomplete $\infty$-category.
    \begin{enumerate}
        \item\label{it:remain1} Analytic expansion induces a tower of equivalences
            \begin{equation}\label{eq:real_augmented_thm}
                 \begin{tikzcd}
                     \Shc^*(\SPi(\Rg); \D) \arrow[r, "\An"{above}, "\simeq"{below}] &  \Shan(\Rg, \D) \\
                     \Shc^!(\SPi(\Rg); \D) \arrow[u, phantom, sloped, "\subset"] \arrow[r, "\simeq"{above}]& \Shan(\Rg, \D)^c \arrow[u,phantom, sloped, "\subset"]\\
                     \Shc(\SP^{k}(\Rg); \D) \arrow[u,phantom, sloped, "\subset"]\arrow[r, "\simeq"{above}] &  \Shk(\Rg, \D)\arrow[u,phantom, sloped, "\subset"]
                 \end{tikzcd}
            \end{equation}
            where 
            \begin{itemize}
                \item $\Shc^*(\SPi(\Rg); \D)$: $*$-sheaves, i.e. inverse limit of $\{\Shc(\SP^k(\Rg); \D)\}_{k \ge 0}$ along $*$-restrictions,
                \item $\Shc^!(\SPi(\Rg); \D)$: $!$-sheaves, i.e. inverse limit of $\{\Shc(\SP^k(\Rg); \D)\}_{k \ge 0}$ along $!$-restrictions,
                \item $\Shan(\Rg, \D)^c$: complete analytic sheaves (cf. \S \ref{sub:complete_analytic}),
                \item $\Shk(\Rg, \D)$: polynomial sheaves of degree $\le k$ (cf. \S \ref{sub:analytic_polynomial}).
            \end{itemize}
        \item\label{it:remain2} Via passage to exit path representations, we obtain the tower of equivalences
            \begin{equation}\label{eq:real_augmented_ident_thm}
                 \begin{tikzcd}
                    \Fun(\NN,\D) \arrow[r, "\simeq"{above}] & \D_{\Delta_+} \\
                    \Fun(\NN,\D)^0 \arrow[r, "\simeq"{above}]\arrow[u, phantom, sloped, "\subset"]  & (\D_{\Delta_+})^{\on{c}}\arrow[u, phantom, sloped, "\subset"]  \\
                    \Fun(\NN_{\le k},\D) \arrow[u, phantom, sloped, "\subset"] \arrow[r, "\simeq"{above}] & \D_{\Delta_+}^{(k)} \arrow[u, phantom, sloped, "\subset"]
                 \end{tikzcd}
            \end{equation}
            where 
            \begin{itemize}
                \item $\Fun(\NN,\D)^0$: filtered objects $F$ satisfying $\colim F \simeq 0$,
                \item $\D_{\Delta_+}$: augmented simplicial objects valued in $\D$,
                \item $(\D_{\Delta_+})^{\on{c}} \subset \D_{\Delta_+}$ consists of simplicial objects $G$ satisfying $G(\emptyset) \simeq \colim G|\Delta$,
                \item $\D_{\Delta_+}^{(k)} \subset \D_{\Delta_+}$ consists of
                    simplicial objects $G$ which are right Kan extensions of
                    their restrictions to the full subcategory of $\Delta_+$ spanned by $\{\emptyset, [0], [1], \dots, [k-1]\}$.
            \end{itemize}
            The equivalences in \eqref{eq:real_augmented_ident_thm} can be described explicitly as follows:
            For $[n] \in \Delta_+$, we consider the cube
            \[
                q_n: \P([n]) \to \NN, I \mapsto |I|
            \]
            of cardinalities of subsets of $[n]$. Then the simplicial
            object $G_{\bullet}$ corresponding to a filtered object $F$ is
            determined by the formula
            \[
                G_n = \tfib q_n^* F
            \]
            noting its functoriality in $[n] \in \Delta_+^{\op}$.
    \end{enumerate}
\end{thm}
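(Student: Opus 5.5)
The plan is to prove the bottom row of \eqref{eq:real_augmented_thm} first. Then pass to the limit over $k$ to get the $*$- and $!$-rows. Finally translate everything into combinatorics through exit paths and the localization $\DD(\RR_{>0}) \to \Delta_+$ of Proposition \ref{prop:intervals_localization}.

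\textbf{Step 1: the finite level.} Fix $k$ and take $\BB = \DD^*_{\le k}(\RR_{>0})$, which is clearly decomposable. By Corollary \ref{cor:analytification_equivalence_criterion}, the restricted expansion $\AA_\BB$ is an equivalence if and only if restriction
\[
    \Sh_{\NN}(\SP^k(\Rg);\D) \to \Fun_{\NN}(\BB_{\cap}^{\op},\D)
\]
is an equivalence. I would deduce this from Theorem \ref{thm:filtration_exodromy}. In the case $C = \Rg$, $N = \pt$, each opens $D^{\Sigma I} + \SP^{k-|I|}(\Rg)$ is a basic of the form in Remark \ref{rem:basics} with $D_0$ a cone $[0,\epsilon)$ and arbitrary $\epsilon$. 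One checks that $\BB_\cap$ is coinitial, up to $\NN$-stratified homotopy equivalence, in the full basis $\B$. Concretely, any basic $\sum \SP^{m_i}(D_i)$ with minimal stratum $j$ contains a member of $\BB_\cap$ with the same minimal stratum; shrink each $D_i$ to $m_i$ disjoint subintervals and use the first part of Theorem \ref{thm:filtration_exodromy}. Hence $\NN$-constructible presheaves on $\B$ and on $\BB_\cap$ agree.

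Next, $\Fun_{\NN}((\BB^{\triangleright})^{\op},\D)$ is identified with $\Shk(\Rg,\D)$: analytic sheaves are right Kan extended from $\DD_{\le k}$ by Proposition \ref{prop:kan}, and the value at $\emptyset$ is forced to be the limit, i.e. completeness via Proposition \ref{prop:stable_complete}. Compatibility with $\An$ uses Lemma \ref{lem:fundamental} to replace $F(U+X)$ by $i_k^*F(U+\SP^{k-1})$, and Theorem \ref{thm:polytopoly} guarantees that the image lies in $\Shk$.

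\textbf{Step 2: passage to the limit.} The $*$-restrictions $i^*$ along $\SP^{k}\subset\SP^{k+1}$ should correspond, under $\An$, to the polynomial approximations $\tau_k$. Indeed, Lemma \ref{lem:fundamental} gives $(i^*F)^{\an}(U) \simeq F^{\an}(U)$ for $U \in \DD_{\le k}$, so $(i^*F)^{\an} \simeq \tau_k F^{\an}$. Taking limits and applying Proposition \ref{prop:approx} then gives the top equivalence. For the $!$-row, I would show that a compatible system under $!$-restrictions corresponds to a tower of polynomial sheaves whose value at $\emptyset$ is the colimit, i.e. a complete analytic sheaf. Equivalently, in exit-path terms, a $*$-sheaf is a $!$-sheaf exactly when $\colim F_n \simeq 0$. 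The value at $\emptyset$ is $\fib(F(X)\to F(X))$-type data, and this vanishing should translate into completeness through Example \ref{exa:realanalyticexplansion}, by checking that $\colim_{\Delta} G \simeq \colim_n F_n$-fiber.

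\textbf{Step 3: combinatorics.} Exit paths give $\Shc(\SP^k(\Rg))\simeq \Fun(\NN_{\le k},\D)$, since the strata are contractible and totally ordered. Example \ref{exa:real} gives $\Shan \simeq \D_{\Delta_+}$. Example \ref{exa:realanalyticexplansion} supplies the formula $G_n = \tfib q_n^*F$, functorial because face maps merge or drop intervals. I expect the main obstacle to be the $!$-row, namely matching the $!$-limit with completeness. I would attack it by computing $\colim_{\Delta^{\op}}$ of the $\tfib$ cubes directly for $F$ supported in degree $\le k$, and then passing to filtered colimits in $k$.
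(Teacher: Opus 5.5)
Your Step 1 has a genuine gap, at the one point where real work is needed: showing that restriction $\Sh_{\NN}(\SP^k(\Rg);\D)\to\Fun_{\NN}(\BB_{\cap}^{\op},\D)$ is an equivalence.

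\textbf{Step 1: $\BB_{\cap}$ is not coinitial in $\B$.} You claim that an open $D^{\Sigma I}+\SP^{k-|I|}(\Rg)$ is a basic of Remark \ref{rem:basics} and that $\BB_{\cap}$ is coinitial in $\B$. Both claims are false.
\begin{itemize}
\item In a basic, the cone $D_0=[0,\epsilon)$ must be disjoint from the $D_i$. The free summands of $\SP^{k-|I|}(\Rg)$ range over all of $\Rg$, so these opens even meet the discriminant.
\item For a basic $B\neq\SP^k(\Rg)$, the summands of its points lie in $D_0\cup\bigcup_i D_i\subsetneq\Rg$. Hence the only members of $\BB_{\cap}$ contained in $B$ are those of the form $D_1'+\dots+D_k'$, which lie in the open stratum. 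A right Kan extension from $\BB_{\cap}$ would therefore only see the top stratum.
\item The inclusion actually runs the other way, and only for multiplicity-free basics. The basic $B=\SP^{m_0}([0,\epsilon))+D_1+\dots+D_r$ lies in the smallest member $D_1+\dots+D_r+\SP^{m_0}(\Rg)$ of $\BB_{\cap}$, via an $\NN$-stratified homotopy equivalence (Corollary \ref{cor:extended_disk_type}).
\item A basic with some $m_i\ge 2$ lies in no member of $\BB_{\cap}$ of the same minimal stratum, because the $V_j$ are disjoint.
\end{itemize}
This is why the paper works through $\Fun_{\NN}((\B_{c}\cup\BB_{\cap})^{\op},\D)$. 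Its objects are \emph{right} Kan extended from the multiplicity-free basics $\B_c$, and \emph{left} Kan extended from $\BB_{\cap}$, since the smallest enveloping member is final in the comma category.

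\textbf{Step 1: dropping multiplicities.} The step $\Fun_{\NN}(\B^{\op},\D)\simeq\Fun_{\NN}(\B_c^{\op},\D)$ is not settled by exhibiting one shrunken $B'\subset B$. For the right Kan extension you need the poset of multiplicity-free basics of the same type inside $B$ to be weakly contractible. The paper proves this in Lemma \ref{lem:Rg_ignore_discriminant_equivalence}, from convexity of the space of multiplicity-free divisors, i.e.\ contractibility of configuration spaces of $\RR_{>0}$. That is precisely where the geometry of $\Rg$ enters, and your sketch never invokes it.

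\textbf{Step 2: the $*$-row is fine.} Passing to the limit via $(i^*F)^{\an}\simeq\tau_k\Fan$ and Proposition \ref{prop:approx} works.

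\textbf{Step 2: the $!$-row is only a plan.}
\begin{itemize}
\item You assert without argument that a $*$-sheaf is a $!$-sheaf iff $\colim F_n\simeq 0$. This first requires embedding the limit along the $i_k^!$ into $\Shc^*$, which the paper does in Proposition \ref{prop:realshriek_sub} via $\colim_k(j_k)_*(j_k)^!F\to F$.
\item Reducing to finitely supported $F$ and passing to colimits then gives only ``$!$ implies complete''.
\item The input that handles both directions is the identification of $\Gamma_{X_{\le k}}(F)$ with the colimit of $\Fan$ over the truncation $\Delta_{\le k}^{\op}$. Combined with $F_{k+1}\simeq\cof(\Gamma_{X_{\le k}}(F)\to F(X))$, it yields $\colim_k F_k\simeq\cof(\colim\Fan|\Delta^{\op}\to\Fan(\emptyset))$.
\end{itemize}

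\textbf{A minor point.} The equivalence $\Fun_{\NN}(\DD_{\le k}(\RR_{>0})^{\op},\D)\simeq\Shk(\Rg;\D)$ is just Proposition \ref{prop:kan}. The value at $\emptyset$ is free data, namely $F(\SP^k(\Rg))$, not a forced limit, and completeness (a colimit condition) plays no role there.
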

\begin{proof}
        The exit path representations of \ref{it:remain2} are proved in \S \ref{subsec:real_constructible}.
        The equivalence of \ref{it:remain1} for the truncated case and $*$-sheaves is shown in \S \ref{sub:analytic_expansion_real}. The statement for $!$-sheaves is the content of \S \ref{sub:shriek_real}.
\end{proof}

\begin{thm}
    \label{thm:real_verdier}
    Let $\D$ be a stable bicomplete $\infty$-category.
    \begin{enumerate}
        \item\label{it:remain3} Composing the equivalence for $!$-sheaves from
            Theorem \ref{thm:real_analytic} with Verdier duality yields an
            equivalence
            \[
                 \coSh^*_{\PP}(\SPi(\Rg); \D)^0 \simeq \Shan(\Rg, \D)^c
            \]
            which upon
            passage to exit path representations yields an equivalence
            \begin{equation}\label{eq:homotopy_dk}
                \Ch_{\ge 0}(\D) \simeq \D_{\Delta}
            \end{equation}
            where $\Ch_{\ge 0}(\D)$ denotes the $\infty$-category of
            homotopy-coherent connective chain complexes in $\D$. 
        \item\label{it:remain4} In the presence of a $t$-structure on $\D$ with
            heart $\A$, the restriction of the equivalence
            \eqref{eq:homotopy_dk} to the hearts of the standard $t$-structures
            yields the classical Dold-Kan correspondence
            \begin{equation}\label{eq:homotopy_dk_heart}
                \Ch_{\ge 0}(\A) \simeq \A_{\Delta}.
            \end{equation}
    \end{enumerate}
\end{thm}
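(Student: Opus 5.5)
The plan is to compose three equivalences: the $!$-sheaf part of Theorem \ref{thm:real_analytic}, Lurie--Verdier duality \cite[5.5.5]{lurie:ha}, and an explicit exit/entrance path description of the relevant cosheaves. Part \ref{it:remain4} is then a $t$-exactness check.

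For \ref{it:remain3}, I first apply Verdier duality on each finite stage $\SP^k(\Rg)$. Here $\SP^k(\Rg)$ is an open corner of a $k$-simplex, a conical manifold for the discriminant stratification $\PP_{\le k}$ (Proposition \ref{prop:smooth_structures}). Verdier duality exchanges $!$-restrictions of sheaves along the closed inclusions $\SP^{k}(\Rg)\subset\SP^{k+1}(\Rg)$ with $*$-corestrictions of cosheaves. Passing to the inverse limit identifies $\Shc^!(\SPi(\Rg);\D)$ with a limit of categories of $\PP$-constructible cosheaves. I then determine the essential image: it consists of cosheaves vanishing on the discriminant strata, denoted $(-)^0$. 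To see this, compute the dual of an $\NN$-constructible sheaf on a basic conical disk $\sum_i\SP^{m_i}(D_i)$ from Remark \ref{rem:basics}. When some $m_i\ge 2$ with $D_i\subset\RR_{>0}$, the link of the stratum is a simplex-like cone, and the sheaf is constant in the transverse direction along it. Its compactly supported sections therefore vanish, as in Example \ref{exa:verdier}. Composing with \eqref{eq:shriekeq} gives $\coSh^*_{\PP}(\SPi(\Rg);\D)^0\simeq\Shan(\Rg,\D)^c$.

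Next I pass to entrance path representations. The strata not meeting the discriminant are exactly $\{0\}$ and the configuration-type strata $\{x_1>\dots>x_j>0=x_{j+1}=\dots\}$, one for each $j\ge 0$. The entrance path category of $\PP$ restricted to these strata, with zeros imposed on the discriminant strata, is the poset $\NN^{\op}$. Each composite of two consecutive arrows factors through a discriminant stratum (the $0$ in Figure \ref{fig:verdier}). So a cosheaf in the image is a diagram $\cdots\to E_2\to E_1\to E_0$ together with coherent nullhomotopies of all composites through zero objects. This is precisely the definition of a homotopy-coherent connective chain complex, a functor out of the pointed category generated by $\NN^{\op}$ with the square-zero relations. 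I would make this precise by showing that the entrance path $\infty$-category of the discriminant stratification, localized at the discriminant strata being zero, is the category $\Ch_{\ge 0}$ indexing coherent complexes, using the explicit poset $\PP$. This identification of the localized entrance path category, and checking that the zero strata impose exactly the coherent chain complex conditions, is the step I expect to be the main obstacle. Combined with Theorem \ref{thm:real_analytic}\ref{it:remain2}, complete analytic sheaves become $\D_\Delta$, which yields \eqref{eq:homotopy_dk}. As a consistency check, the composite should send $E_\bullet$ to the simplicial object whose normalized chains are $E_\bullet$, which one can verify on the cubes $q_n$ using Example \ref{exa:verdier}.

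For \ref{it:remain4}, I show the equivalence is $t$-exact when $\Ch_{\ge0}(\D)$ carries the levelwise $t$-structure (the cosheaf $t$-structure) and $\D_\Delta$ carries the levelwise one. The simplicial object associated to $E_\bullet$ has $G_n$ equal to a finite direct sum of the $E_j$, indexed by surjections $[n]\to[j]$. This splitting follows from stability and cube inversion, Lemma \ref{lem:cubeinversion} and Remark \ref{rem:informal}. Levelwise truncations are therefore preserved in both directions. On hearts, a coherent complex in $\A$ is an ordinary chain complex, since the higher coherences are unique. Restricting the equivalence to hearts thus gives an equivalence $\Ch_{\ge0}(\A)\simeq\A_\Delta$ with the same normalized-chains formula, which is the classical Dold--Kan correspondence.
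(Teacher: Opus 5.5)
Your Part (1) follows the paper's route: levelwise Verdier duality turns $!$-restrictions into $*$-corestrictions, you pass to the limit, identify the image, and compute exit paths. Your transverse-corner argument is a local version of the paper's computation $i_q^!F(X_q)\simeq\fib(F(X_{\ge q})\to F(X_{>q}))\simeq 0$ via Lemma \ref{lem:greaterq_conical}. Two things are missing, though.

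First, you only prove one inclusion for the essential image. Essential surjectivity also needs the converse: a $\PP$-constructible $F$ with $i_q^!F\simeq 0$ on every discriminant stratum is $\NN$-constructible. The paper proves this by induction over $q\in[1]^n$, upgrading $\NN$-constructibility from $X_{>q}$ to $X_{\ge q}$.

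Second, the exit-path category of the discriminant stratification is not the poset $\PP$. The $\PP$-strata are disconnected: the type $2+1$ stratum of $\SP^3(\Rg)$ has the two components $x_1=x_2>x_3>0$ and $x_1>x_2=x_3>0$. The correct category is the cube $[1]^n$ of ordered partitions (Proposition \ref{prop:real_discriminant_exit_path_cube}). With that in hand, $\Ch_{[0,n]}(\D)$ is \emph{by definition} the full subcategory of $\Fun(([1]^n)^{\op},\D)$ supported on the vertices $(1,\dots,1,0,\dots,0)$, and $*$-corestriction is stupid truncation. So what you call the main obstacle is absorbed into the exit-path computation.

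The genuine gap is in Part (2). The splitting $G_m\simeq\bigoplus_k E_k^{\binom{m}{k}}$ does not follow from stability and Lemma \ref{lem:cubeinversion}. Cube inversion and Remark \ref{rem:informal} only yield Cartesian cubes and fiber sequences such as $\Omega^{m+1}F_{m+1}\to G_m\to\tfib(q'')$, and fiber sequences in a stable $\infty$-category need not split.

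Moreover, $G_m\simeq\tfib q_m^*F$ is expressed through the filtered object $F$. To rewrite it in terms of $E_\bullet$ \emph{without shifts}, you need $E_k\simeq\fib(F_k\to F_{k+1})[-k]$, where $[-k]$ comes from Verdier duality on the stratum $\cong\RR^k$. You never establish this (Example \ref{exa:verdier} only illustrates it), yet it is exactly why the cosheaf $t$-structure, not the sheaf one, is the right one.

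The paper obtains the direct sum geometrically. For $A=\Rg\setminus U$ and $i:\SP^n(A)\subset\SP^n(\Rg)$, the value at $U$ is $\Gamma_c(\SP^n(A);i^*G)$. Since $\SP^n(A)$ is a finite disjoint union of closed pieces $r_0A_0+\dots+r_{m+1}A_{m+1}$, $\Gamma_c$ splits as a direct sum over them:
\begin{itemize}
\item pieces with some $r_i\ge 2$, $0<i<m+1$, vanish because $G$ vanishes on the discriminant;
\item the remaining pieces with $r_{m+1}>0$ vanish as compactly supported sections of a constant cosheaf on a non-compact set;
\item the $\binom{m}{k}$ pieces $(n-k)A_0+A_{j_1}+\dots+A_{j_k}$ each contribute $B_k$.
\end{itemize}

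If you prefer to stay combinatorial, here is a repair. Filter $F$ by the filtered objects with value $\fib(F_k\to F_{k+1})$ in degrees $\le k$ and $0$ above. The cube on $\P(\{0,\dots,m\})$ that is constant with value $C$ on subsets of size $\le k$ and $0$ elsewhere has total fiber $C[-k]^{\binom{m}{k}}$, because the $(k-1)$-skeleton of $\Delta^m$ is a wedge of $\binom{m}{k}$ spheres $S^{k-1}$. This gives a finite filtration of $G_m$ with graded pieces $E_k^{\binom{m}{k}}$. That already suffices for $t$-exactness: both halves of a $t$-structure are closed under extensions and finite sums, and an exact equivalence preserving both halves also reflects them.
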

\begin{proof}
    This is established in \S \ref{sub:realverdier}.
\end{proof}

\subsection{Exit path representations for constructible sheaves on \texorpdfstring{$\SP^{\infty}(\Rg)$}{spirg}}
\label{subsec:real_constructible}

We begin by computing the exit path representations for constructible sheaves
on $X=\SP^n(\Rg)$, both with respect to the discriminant stratification over
$\PP$ and with respect to the filtration representation over $\NN$.

A basis $\B$ of conical disks of $\SP^n(\Rg)$ is given by the sums $\sum_{i=0}^r \SP^{m_i}(I_i)$ of Remark \ref{rem:basics},
where the  $I_i \subset \Rg$ are open (in $\Rg$) intervals such that $0 \in I_0$.
Recall the identification of $\SP^n(\Rg)$ with the half-open infinite $n$-simplex:
\[
	\SP^n(\Rg) \cong \{(x_1, \dots, x_n) \in \Rg^n : x_1 \ge x_2 \dots \ge x_n \} \subset
	\Rg^n,
\]
Using this description, we define a map
\[
	\pi:\SP^n(\Rg) \to [1]^n, \, \pi(x_1, \dots, x_n) = (\pi_1(x),
	\dots,\pi_n(x))
\]
with
\[
	\pi_k(x)= \begin{cases}
                0, &\text{ if } x_k=x_{k+1}\\
			1, &\text{ if } x_k \neq x_{k+1}\\

	\end{cases}
\]
with the convention that $x_{n+1}=0$.
The map $\pi$ stratifies $\SP^n(\Rg)$ by the different types of { \em ordered}
partitions, as opposed to the unordered partitions of $\PP_{\le n}$.
Consequently, there is a map $\rho:[1]^n \to \PP_{\le n}$ of posets
exhibiting $\pi$ as a refinement of the discriminant stratification:
\[\begin{tikzcd}
	{\SP^n(\Rg)} & {[1]^n} \\
	& {\PP_{\le n}}
	\arrow["\pi", from=1-1, to=1-2]
	\arrow[from=1-1, to=2-2]
	\arrow["\rho", from=1-2, to=2-2]
\end{tikzcd}\]
\begin{lem}\label{lem:contractible_decomposition_discriminant_strata}
        Let $Q \in \PP_{\le n}$ denote the unordered partition $k = m_1 + \dots + m_r$.
	The stratum $X_{Q}$ of the dicriminant stratification admits a decomposition into contractible open subsets
	\[
		X_{Q} = \bigsqcup_{q \in \rho^{-1}(Q)}
		X_{q},
	\]
    which are the strata of the stratification by $\pi$.
	For any standard basic $B \in \B$,
	the intersection $B \cap X_q$ is either empty or contractible.
\end{lem}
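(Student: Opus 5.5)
We work in the coordinates $\SP^n(\Rg) \cong \{x_1 \ge \dots \ge x_n \ge 0\}$ and make the strata of $\pi$ explicit. For $q = (q_1,\dots,q_n) \in [1]^n$, the set $X_q = \pi^{-1}(q)$ consists of the points where $x_k = x_{k+1}$ if $q_k = 0$ and $x_k > x_{k+1}$ if $q_k = 1$ (with $x_{n+1}=0$).

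The positions $k$ with $q_k = 1$ cut the sequence into consecutive blocks of equal coordinates. The lengths of the blocks with positive value give the multiplicities $m_1,\dots,m_r$ of the nonzero points. The trailing block of zeros, of length $n-k$, is the multiplicity at the tip. Thus $\rho(q)$ is the unordered partition $k = m_1+\dots+m_r$ obtained by forgetting the order of the positive blocks. Since every point has a unique $\pi$-value, this gives $X_Q = \bigsqcup_{q \in \rho^{-1}(Q)} X_q$ set-theoretically.

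Each $X_q$ is the image of an open cone: parametrize it by the distinct values $y_1 > \dots > y_r > 0$. This is a homeomorphism of $X_q$ with $\{y_1 > \dots > y_r > 0\}$, which is convex and hence contractible (a point if $r=0$).

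To see the decomposition is topological, i.e. each $X_q$ is open in $X_Q$, observe that within $X_Q$ the multiplicity pattern is locally constant along the stratum. Nearby points of $X_Q$ have the same number of distinct positive values with the same multiplicities, and the ordering of the values cannot change without two values colliding. A collision would change the partition type, so it cannot happen inside $X_Q$. Hence the ordered pattern $q$ is locally constant on $X_Q$, and each $X_q$ is open and closed in $X_Q$.

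For the second claim, let $B = \sum_{i=0}^s \SP^{m_i}(I_i)$ be a standard basic, with $I_1,\dots,I_s$ disjoint open intervals in $\Rg^{>0}$ and $I_0 = [0,\epsilon)$ if $m_0 > 0$. Order the intervals along $\Rg$; since they are disjoint, this order is strict. A point of $B \cap X_q$ distributes its $m_i$ points in $I_i$. Each $\SP^{m_i}(I_i)$ is itself an open simplex-type region, $\{a_i < x_{1} \le \dots\}$ in decreasing coordinates, with $\SP^{m_0}(I_0)$ a corner.

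Because the intervals are linearly ordered and disjoint, $B$ is a product $\prod_i \SP^{m_i}(I_i)$. Correspondingly, the ordered pattern $q$ restricts to an ordered pattern $q^{(i)}$ on each factor, determined by which coordinates fall in $I_i$ and hence fixed by the fixed ordering of intervals. This gives
\[
B \cap X_q \cong \prod_i \bigl(\SP^{m_i}(I_i) \cap X_{q^{(i)}}\bigr),
\]
where this intersection is empty if $q$ is not compatible with the block sizes $m_i$, i.e. if some block straddles two intervals.

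Each factor is a region of the form $\{y_1 > \dots > y_{r_i}\}$ with all $y_j \in I_i$, with the tip value forced to $0$ when $i=0$. This is again convex, so it is either empty or contractible, and so is the product.

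The main care point is the compatibility bookkeeping: checking that equal-value blocks cannot straddle two disjoint intervals, and that positive blocks in $I_0$ behave correctly near $0$. Both follow from disjointness and openness, which should make this step routine.
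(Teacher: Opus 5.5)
Your proof is correct and follows essentially the paper's route: ordered coordinates on $\SP^n(\Rg)$ together with convexity. The paper makes your ``no collision'' openness step precise more economically --- all elements of $\rho^{-1}(Q)$ have the same number of $1$'s and so are pairwise incomparable in $[1]^n$, while continuity of $\pi$ forces points near any $x \in X_q$ to have $\pi$-value $\ge q$ --- and it gets contractibility of $B \cap X_q$ directly from the convexity of both $B$ and $X_q$ in $\RR^n$, without your product decomposition.
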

\begin{proof}
	The fiber $\rho^{-1}(Q)$ is given by the set
	of ordered partitions which induce $Q$. These are incomparable, which implies
        that $X_q \subset X_Q$ is open. The space $X_q$ is convex as a subspace of $\Rg^{n}$ and so is $B \cap X_q$ if it
        is nonempty.
\end{proof}
\begin{prop}\label{prop:real_discriminant_exit_path_cube}
	There is a natural equivalence
	\[
		\Sh_{\PP}(\SP^n(\Rg);\D) \simeq \Fun([1]^n,\D).
	\]
\end{prop}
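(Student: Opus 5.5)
The plan is to pass through the refined stratification $\pi:\SP^n(\Rg)\to[1]^n$ by ordered partitions. First we check that $\pi$ is a stratification in the poset sense, i.e. that $\pi$ is continuous for the Alexandrov topology on $[1]^n$. The preimage of an up-set is open, because the conditions $x_k\neq x_{k+1}$ are open. Second, we show that $\PP$-constructibility coincides with $[1]^n$-constructibility. Lemma \ref{lem:contractible_decomposition_discriminant_strata} gives two facts: every discriminant stratum $X_Q$ is a disjoint union of the open pieces $X_q$, $q\in\rho^{-1}(Q)$, and each $X_q$ is contractible (convex). So a sheaf whose restriction to each $X_Q$ is locally constant has locally constant restriction to each $X_q$. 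Conversely, a sheaf locally constant on each $X_q$ is locally constant on $X_Q$, since the $X_q$ are open in $X_Q$. Hence $\Sh_{\PP}(\SP^n(\Rg);\D)=\Sh_{[1]^n}(\SP^n(\Rg);\D)$ as full subcategories of $\Sh(\SP^n(\Rg);\D)$.

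Next we compute the exit path category of $\pi$ using Proposition \ref{prop:discriminant_exodromy}, the exodromy equivalence $\Sh_{\PP}\simeq\Fun_{\PP}(\B^{\op},\D)$ for the basis $\B$ of sums $\sum_i\SP^{m_i}(I_i)$. The aim is to show that the localization of $\B^{\op}$ at the $\PP$-stratified homotopy equivalences is the poset $[1]^n$. Each basic $B$ is convex. By Lemma \ref{lem:contractible_decomposition_discriminant_strata} its intersections with the $X_q$ are empty or contractible. Moreover, $B$ has a unique minimal ordered stratum $q(B)=\pi(b)$, given by the ordered partition recording the interval multiplicities $(m_0,m_1,\dots)$ read from $0$ outward; $B$ meets exactly the strata $q\ge q(B)$. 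This gives a functor $\B^{\op}\to[1]^n$, $B\mapsto q(B)$, where an inclusion $B'\subset B$ forces $q(B)\le q(B')$.

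We then show that this functor is a localization at exactly those inclusions preserving $q$, which are the stratified homotopy equivalences. The argument is a Quillen Theorem A style argument, as in Proposition \ref{prop:intervals_localization}. For each $q$, the category of basics with $q(B)=q$ is filtered under intersection (shrinking intervals around a point of $X_q$), hence contractible. The comma categories $\B_{q/}$ and their analogues are likewise filtered or cofiltered by convexity. Consequently, a $\PP$-constructible presheaf on $\B$ is the same as a functor on $[1]^n$, read covariantly in the exit direction, and we obtain $\Sh_{\PP}(\SP^n(\Rg);\D)\simeq\Fun([1]^n,\D)$. Naturality in $n$ follows because the inclusion $\SP^{n}\subset\SP^{n+1}$ corresponds to the face $x_{n+1}=0$.

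The main obstacle is the localization step, and specifically making the constructibility condition on $\B$ match: we must show that $B'\subset B$ is a stratified homotopy equivalence precisely when $q(B)=q(B')$. We also need the exit path category to be the poset $[1]^n$ itself rather than an $\infty$-category with nontrivial mapping spaces. For the second point, the plan is to use that the link between $X_q$ and $X_{q'}$ inside a basic is a convex set, hence contractible, by Lemma \ref{lem:contractible_decomposition_discriminant_strata}. All mapping spaces in the exit path $\infty$-category are therefore contractible or empty, with nonempty ones exactly for $q\le q'$.
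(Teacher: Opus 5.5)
Your route is the paper's: pass to $\Fun_{\PP}(\B^{\op},\D)$ via Proposition~\ref{prop:discriminant_exodromy}, then show that $B\mapsto q(B)$ (the minimal ordered stratum) exhibits $[1]^n$ as the localization of $\B^{\op}$ at the stratified homotopy equivalences, using the criterion of \cite[A.6.4]{DKS:milnor}. Your preliminary identification $\Sh_{\PP}=\Sh_{[1]^n}$ is correct but not needed.

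\textbf{The gap.} The step that fails is your justification of weak contractibility. The poset $\B_q$ of basics with minimal stratum $q$ is not filtered under intersection; in fact it is not filtered in either direction.
\begin{itemize}
\item Already for $n=2$ and $q=(1,1)$, the basics $(3,4)+(1,2)$ and $(7,8)+(5,6)$ are disjoint, so they have no common lower bound.
\item Nor do they have a common upper bound $I''_1+I''_2$ in $\B_q$. The rigid ordering of the intervals forces $I''_1\supset(3,8)$ and $I''_2\supset(1,6)$, and these overlap.
\item The same example lives in the comma categories $\B_q/B$: take $B=\SP^2([0,10))$.
\end{itemize}
Filteredness holds only locally: the basics of type $q$ containing a fixed point of $X_q$ are closed under intersection.

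\textbf{The repair.} The correct argument, which is the paper's, applies Lurie's Seifert--van Kampen theorem \cite[A.3.1]{lurie:ha} to two covers:
\begin{itemize}
\item $\B_q\to\U(X_q)$, $B\mapsto B\cap X_q$;
\item $\B_q/B\to\U(B\cap X_q)$.
\end{itemize}
These are covers by contractible opens that are cofiltered over each point, so their nerves are weakly equivalent to $X_q$ and $B\cap X_q$ respectively. The essential input your argument does not use at this point is that the whole stratum $X_q$ and the sets $B\cap X_q$ are convex (Lemma~\ref{lem:contractible_decomposition_discriminant_strata}). Weak contractibility of $\B_q$ is a global statement, and local filteredness alone cannot give it.

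\textbf{Remaining points.} For the first condition of the criterion, note that nested basics with the same minimal $\PP$-stratum automatically have the same ordered type, again by rigidity of the ordering. Proposition~\ref{prop:basics_properties} then identifies the inverted morphisms with the stratified homotopy equivalences. Once the criterion is verified, your closing paragraph about links and mapping spaces is superfluous: that the localization is the poset $[1]^n$ is exactly what the criterion delivers.
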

\begin{proof}
	By Proposition \ref{prop:discriminant_exodromy}, there is an equivalence
	\[
		\Sh_{\PP}(\SP^n(\Rg);\D) \simeq \Fun_{\PP}(\B^{\op},\D).
	\]
	It thus suffices to show that the induced functor
	\[
		\pi:\B^{\op} \to [1]^n
	\]
	which takes a basic to its minimal stratum in the $\pi$-stratification
    is an $\infty$-categorical localization at stratified homotopy equivalences.
	We verify this by using the criterion \cite[A.6.4]{DKS:milnor}. It follows from basic
	facts on smooth conical manifolds that the morphisms which are inverted by
	$\pi$ are precisely the inclusions of conical disks of the same type.
	For $q \in [1]^n$ let $\B_q \subset \B$ denote the subposet of
	basics with minimal stratum $q$.
	To establish the second condition we utilize the open cover
	\[
		\B_q \to \U(X_q), B \mapsto B \cap X_q
	\]
	which takes basics of type $q$ to their intersection with the
	$q$-stratum. Both these intersections and the stratum $X_q$ itself are contractible by
        Lemma \ref{lem:contractible_decomposition_discriminant_strata}. It follows from Lurie's van-Kampen theorem
        that $\B_q$ is weakly contractible.

	To establish the cofinality condition of the third point, we prove
	that the comma categories are weakly contractible, again by a
	van-Kampen argument. For $B \in \B$ with $\pi(B) \le q$ this comma category
        is $(\B_q/B)^{\op}$, i.e. the poset of basics with minimal stratum $q$ which are contained in $B$. There is
	again a covering
	\[
		\B_q/B \to \U(B \cap X_q), \, B' \mapsto B' \cap X_q
	\]
	and $B \cap X_q$ is a contractible as above, so $(\B_q/B)^{\op}$ is weakly contractible.
\end{proof}
We also compute the exit path representation for the filtration stratification on $\SP^n(\Rg)$.
\begin{rem}\label{rem:strata_computation}
        Let $X=\SP^n(\Rg)$. The above exit path equivalence may also be identified with the pullback along $q \mapsto X_{\ge q}$,
        as the inclusion $B \subset X_{\ge q}$ of any conical basic $B \in \B_q$ with minimal stratum $q$ can be seen to be an
        equivalence.
\end{rem}
\begin{prop}\label{prop:exit_paths_real_case}
	There is a natural equivalence
	\[
		\Sh_{\NN}(\SP^n(\Rg);\D) \simeq \Fun(\Delta^n,\D).
	\]
\end{prop}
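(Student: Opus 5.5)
We follow the proof of Proposition \ref{prop:real_discriminant_exit_path_cube}, replacing the cube $[1]^n$ by the filtration poset $\Delta^n = \{0 < 1 < \dots < n\}$. By Theorem \ref{thm:filtration_exodromy}, restriction gives an equivalence $\Sh_{\NN}(\SP^n(\Rg);\D) \simeq \Fun_{\NN}(\B^{\op},\D)$. Here $\Fun_{\NN}(\B^{\op},\D)$ consists of presheaves on the basis $\B$ that invert inclusions $B \subset B'$ with the same minimal $\NN$-stratum. The first part of the theorem identifies these inclusions with the $\NN$-stratified homotopy equivalences. It therefore suffices to show that the functor
\[
    \mu: \B^{\op} \lra \Delta^n, \quad B \mapsto \text{minimal $\NN$-stratum of } B,
\]
exhibits $\Delta^n$ as the $\infty$-categorical localization of $\B^{\op}$ at these inclusions. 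We check this with the criterion \cite[A.6.4]{DKS:milnor}, as in the discriminant case.

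The criterion has three conditions.

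\textbf{First condition.} The morphisms inverted by $\mu$ are exactly the inclusions with equal minimal $\NN$-stratum. This is immediate from the definition of $\mu$.

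\textbf{Second condition.} For $k \in \Delta^n$, let $\B_k \subset \B$ be the basics whose minimal stratum is $k$. We need $\B_k$ to be weakly contractible. Concretely, a basic $\sum_{i=0}^r \SP^{m_i}(I_i)$ lies in $\B_k$ exactly when $m_0 = n-k$. We use the covering
\[
    \B_k \lra \U(X_k), \quad B \mapsto B \cap X_k .
\]
By Example \ref{exas:spks}, the stratum $X_k$ is an open-cornered simplex minus two faces, hence convex in $\Rg^n$ and contractible. Each intersection $B \cap X_k$ is an intersection of convex subsets of $\Rg^n$, so it is convex. It is nonempty because $B$ meets its minimal stratum. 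To apply Lurie's van Kampen theorem we also need the family $\{B \cap X_k\}_{B \in \B_k}$ to be a complete cover of $X_k$, i.e. every point has a basis of such neighbourhoods. This holds because the basics form a basis, and a basic centred at a point of $X_k$ has minimal stratum $k$. Van Kampen then gives that $\B_k$ is weakly contractible.

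\textbf{Third condition.} For $B \in \B$ with $\mu(B) \le k$, the relevant comma category is $(\B_k/B)^{\op}$, the poset of basics of minimal stratum $k$ contained in $B$. We apply the same covering argument to the map $B' \mapsto B' \cap X_k$ into $\U(B \cap X_k)$. The space $B \cap X_k$ is convex, and it is nonempty since $B$ meets all strata $\ge \mu(B)$. Hence $(\B_k/B)^{\op}$ is weakly contractible.

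\textbf{Alternative route.} We could also bypass $\B$ and use Proposition \ref{prop:real_discriminant_exit_path_cube} directly. The $\NN$-stratification is a coarsening of the $\pi$-stratification along $[1]^n \to \Delta^n$, where a vector $q$ is sent to its number of nonzero summands (the largest $k$ with $\pi_k \neq 0$, or rather the count of $x_i \neq 0$). Then $\NN$-constructible sheaves correspond to functors $[1]^n \to \D$ inverting the morphisms whose image in $\Delta^n$ is an identity. One would then check that $[1]^n \to \Delta^n$ is a localization, since its fibres have initial objects. We prefer the direct basis argument because the localization check requires some care.

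\textbf{Main obstacle.} We expect the hardest step to be the convexity of the intersection of a basic with the $\NN$-stratum $X_k = \{x_k > 0 = x_{k+1}\}$. Near the discriminant, the basics $\SP^{m_i}(I_i)$ are neither products of intervals nor simple in the coordinates $x_1 \ge \dots \ge x_n$. Our plan is to describe a basic in ordered coordinates as
\[
    \{x : x_j \in I_i \text{ for the appropriate block of indices } j\},
\]
which is an intersection of the convex set $\{x_1 \ge \dots \ge x_n \ge 0\}$ with a product of intervals. Intersecting further with the convex set $X_k$ keeps it convex.
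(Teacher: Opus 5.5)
Your proposal is correct and is essentially the paper's own proof. You reduce via Theorem~\ref{thm:filtration_exodromy} to $\Fun_{\NN}(\B^{\op},\D)$ and check the criterion of \cite[A.6.4]{DKS:milnor} for $\B^{\op}\to\Delta^n$, with the first condition coming from that theorem; the other two follow from Lurie's van Kampen theorem via convexity of $B\cap X_k$, which your block description of a basic in ordered coordinates settles. The paper also mentions your alternative route of localizing $[1]^n$. For that aside, however, ``the fibres have initial objects'' does not by itself give a localization; the correct reason is that $q\mapsto\max\{k : q_k=1\}$ is left adjoint to the fully faithful functor $k\mapsto(1,\dots,1,0,\dots,0)$, which makes it a reflective localization at the fibrewise morphisms.
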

\begin{proof}
        Due to Theorem \ref{thm:filtration_exodromy} this can be seen by localizing $[1]^n$ at the appropriate class of morphisms or
        by utilizing the equivalence
	\[
                \Sh_{\NN}(\SP^n(\Rg),\D) \simeq \Fun_{\NN}(\B^{\op},\D).
	\]
	and showing that the functor $\B^{\op} \to \Delta^n$ is an
        $\infty$-categorical localization at the set of $\NN$-stratified homotopy equivalences.
	A proof is via the criterion of \cite[A.6.4]{DKS:milnor}. The first condition
        also follows from Theorem \ref{thm:filtration_exodromy}.
        By Lurie's van-Kampen theorem, the other two parts again reduce to the observation that
	for any basic open $B \in \B$ and $0 \le k \le n$ the intersection $B \cap
        X_k$ with the $k$-stratum is convex as a subset of $\Rg^{n}$ (hence contractible) or empty.
\end{proof}

\begin{cor}\label{cor:exit_filtered_star}
	There is an equivalence
	\[
		\Sh_{\NN}^*(\SPi(\Rg);\D) \simeq \Fun(\NN,\D).
	\]
\end{cor}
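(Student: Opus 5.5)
The plan is to pass to the limit in Proposition \ref{prop:exit_paths_real_case}. By definition,
\[
\Sh_{\NN}^*(\SPi(\Rg);\D) = \lim_{k} \Sh_{\NN}(\SP^k(\Rg);\D),
\]
where the limit is taken along the $*$-restrictions $i_k^*$ for $i_k\colon \SP^{k}(\Rg)\subset \SP^{k+1}(\Rg)$. On the other side, $\Fun(\NN,\D)\simeq \lim_k \Fun(\Delta^k,\D)$ along restriction to the initial segments $\Delta^k\subset\Delta^{k+1}$, because $\NN$ is the colimit of these inclusions. It therefore suffices to show that the equivalences of Proposition \ref{prop:exit_paths_real_case} assemble into a natural transformation of towers. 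Concretely, the square formed by $i_k^*$ and restriction along $\Delta^k \subset \Delta^{k+1}$ should commute up to coherent homotopy. Passing to limits then gives the claimed equivalence.

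To get this compatibility, I would use the concrete description from Remark \ref{rem:strata_computation}. For the filtration stratification it reads: the exit path representation of $F$ sends $j \in \Delta^k$ to $F(X_{\ge j})$, or equivalently to the stalk along the $j$-stratum. Equivalently, it sends $j$ to the value of $F$ on a conical basic with minimal stratum $j$. The $*$-restriction $i_k^*F$ is computed on a basic $B\cap \SP^k(\Rg)$ of $\SP^k(\Rg)$ as the colimit of $F$ over opens $B'\supset B \cap \SP^k(\Rg)$ of $\SP^{k+1}(\Rg)$. Basics $\sum \SP^{m_i}(I_i)$ of $\SP^{k+1}(\Rg)$ meeting $\SP^k(\Rg)$ have $m_0\ge 1$, so their intersection with $\SP^k(\Rg)$ is again a basic with $m_0$ lowered by one. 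It also has the same minimal $\NN$-stratum $j\le k$.

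Using constructibility, the colimit defining $i_k^*F$ on such a basic is essentially constant, with value $F(B')$. Hence $(i_k^*F)_j\simeq F_j$ naturally in $j\in\Delta^k$. To make this coherent rather than objectwise, I would express it as a statement about the functors $\B_k^{\op}\to\Delta^k$ and $\B_{k+1}^{\op}\to\Delta^{k+1}$. The map $B\mapsto B\cap\SP^k(\Rg)$ from the basics meeting $\SP^k(\Rg)$ should be compatible with these two localization functors. Applying Theorem \ref{thm:filtration_exodromy}, which identifies sheaves with constructible presheaves on basics, then yields the commutative square at the level of functor categories.

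The main obstacle is this coherence: identifying $i_k^*$ on constructible presheaves on bases with restriction along a functor of bases, rather than just computing stalks. I would handle it by the cofinality argument above, showing that the basics of $\SP^{k+1}(\Rg)$ containing a given basic of $\SP^k(\Rg)$ contain a coinitial family that is homotopy-equivalent to a single basic. This is a filtered/coinitial family, which should follow from the van Kampen-type contractibility arguments already used in Proposition \ref{prop:real_discriminant_exit_path_cube}.
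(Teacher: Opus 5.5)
Your proposal follows the paper's proof: the paper likewise passes to the limit of the equivalences $\Sh_{\NN}(\SP^n(\Rg);\D)\simeq\Fun(\Delta^n,\D)$ of Proposition~\ref{prop:exit_paths_real_case}, and it simply observes, without further argument, that $*$-restriction corresponds to restriction along $\partial_{n+1}\colon\Delta^n\to\Delta^{n+1}$. Your functor $B\mapsto B\cap\SP^k(\Rg)$ on basics with $m_0\ge 1$ is a good way to make this coherent, but the objectwise comparison should go through the natural map $F(B)\to(i_k^*F)(B\cap\SP^k(\Rg))$, which is an equivalence because both sides agree with the stalk at a point of the common minimal stratum; a cofinality claim about basics containing $B\cap\SP^k(\Rg)$ does not work, since these are not coinitial among its neighborhoods (it is not compact) and the presheaf colimit would still need sheafifying.
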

\begin{proof}
    Follows immediately by passing to the $*$-limit, observing that the
    $*$-restrictions are given by restricting along the face inclusions $\partial_{n+1}: \Delta^n \to
    \Delta^{n+1}$.
\end{proof}

We also compute the localizations on the other side:
\begin{prop}\label{prop:intervals_localization}
        For $k \in \NN \cup \{\infty\}$ the functor
\[
        \pi_0:\DD_{\le k}(\Rg) \to (\Delta_{+})_{\le k-1}
\]
exhibits $(\Delta_{+})_{\le k}$ as an $\infty$-categorical localization at the class of homotopy equivalences. Here,
$\pi_0(U)$ is regarded as a finite poset via the natural order induced from $\Rg$. (There is a unique isomorphism with the standard
simplex $\pi_0(U) \cong [| \pi(U)| -1]$.)
\end{prop}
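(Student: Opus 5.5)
To show that $\pi_0:\DD_{\le k}(\Rg) \to (\Delta_+)_{\le k-1}$ is an $\infty$-categorical localization at the homotopy equivalences, I will verify the criterion \cite[A.6.4]{DKS:milnor}, as in Propositions \ref{prop:real_discriminant_exit_path_cube} and \ref{prop:exit_paths_real_case}.

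First, $\pi_0$ is a functor. An inclusion $U \subset V$ of finite disjoint unions of open intervals induces a map $\pi_0(U) \to \pi_0(V)$. This map is order preserving, because a component of $U$ lying to the left of another is sent to a component of $V$ lying weakly to the left. The empty set goes to $\emptyset \in \Delta_+$, and $|\pi_0(U)| \le k$ lands in $(\Delta_+)_{\le k-1}$. A morphism $U \subset V$ is sent to an isomorphism exactly when it is a homotopy equivalence: for disjoint unions of intervals, the inclusion is a homotopy equivalence iff each component of $V$ contains exactly one component of $U$, i.e. iff $\pi_0$ is bijective. This gives the first condition of the criterion.

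Second, for each $[m] \in (\Delta_+)_{\le k-1}$ the fiber $\DD_{[m]}$ of unions with exactly $m+1$ components must be weakly contractible. I will identify it with the poset of open subsets of the configuration space of $m+1$ ordered points $0 < t_0 < \dots < t_m$ in $\Rg$ (excluding $0$, since disks lie in $\Rg$ open). The map sends a union $I_0 < \dots < I_m$ to the product $I_0 \times \dots \times I_m$, which is a convex open subset of this configuration space. These products form a basis closed under finite nonempty intersections, since intersections of convex sets are convex. By Lurie's van Kampen theorem (the same argument as in Proposition \ref{prop:real_discriminant_exit_path_cube}), the nerve of $\DD_{[m]}$ is then equivalent to the configuration space. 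That space is convex, hence contractible.

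Third, I will verify the cofinality condition. For $U \in \DD_{\le k}$ and $[m]$ with a map $\alpha: \pi_0(U) \to [m]$, the relevant comma category is the poset of $V \supset U$ with $m+1$ components realizing $\alpha$, or dually the poset of $V \subset U$ with prescribed $\pi_0(V) \to \pi_0(U)$, depending on which variance A.6.4 demands. Either way it is again a poset of convex product neighborhoods in a convex region of the ordered configuration space. For example, $V \subset U$ with a given monotone map $[m] \to \pi_0(U)$ corresponds to points $t_0 < \dots < t_m$ with $t_j$ in the prescribed component. The same van Kampen argument makes it weakly contractible.

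The main obstacle is the case where the required $V$ must contain $U$ and merge components, or where $\alpha$ is non-surjective, which forces new intervals to be inserted in gaps. Here I must check that the indexing region is still convex and nonempty, and that the cardinality bound $\le k$ does not break it. The latter should only restrict $m \le k-1$, which is already built in. I will first try to choose the variance so that only shrinking ($V \subset U$) occurs, where convexity is evident. The case $k = \infty$ follows from the same argument, or by passing to the colimit over $k$.
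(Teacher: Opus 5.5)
Your proposal is correct and is essentially the paper's proof. The paper verifies the localization criterion \cite[A.6.2]{DKS:milnor} as you do. It gets weak contractibility of $([l]/\pi_0)^{\iso}\cong\DD_{l+1}(\RR_{>0})^{\simeq}$, and of the comma categories for the coinitial inclusion $([l]/\pi_0)^{\iso}\to[l]/\pi_0$, from van Kampen covers of the convex configuration space and of its convex subregion $C\cap(I_{f(0)}+\dots+I_{f(l)})$, using unordered sums in $\SP^{l+1}(\RR_{>0})$ instead of your ordered products. Those comma categories are exactly your shrinking variant ($V\subset U$ with $\pi_0(V)\to\pi_0(U)$ prescribed), so your hedge resolves in your favour. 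The enlarging variant would genuinely fail, since its comma categories can be empty, e.g.\ for $U=(1,2)\cup(2,3)$, or $U$ having a component $(0,b)$, with suitable non-surjective $\alpha$.
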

\begin{proof}
To show that $\pi_0$ is an $\infty$-localization, we utilize the criterion from \cite[A.6.2]{DKS:milnor}.

Clearly $\pi_0$ inverts precisely the homotopy equivalences, since intervals are contractible.

To prove the second condition, we note that, for $0 \le l < k$, the category
\[
        ([l]/\pi_0)^{\iso} \cong \DD_{l+1}(\RR_{>0})^{\simeq}
\]
is given by the category of $l+1$-fold disjoint unions of intervals with morphisms given by homotopy equivalences.
To establish weak contractibility, we examine the functor
\[
        \chi:\DD_{l+1}(\RR_{>0})^{\simeq} \to \U(\SP^{l+1}(\mathbb{R}_{>0})), \bigcup_{i=0}^{l} I_i \mapsto \sum_{i=0}^l I_i
\]
These sums $\sum_{i=0}^l I_i$ are contractible and they cover the
unordered configuration space $C \subset \SP^{l+1}(\mathbb{R}_{>0})$, which is contractible because it is convex (as a subset of
$\Rg^n$).
For $c \in C$, the category $\chi^{-1}(c)$ is filtered: If $c \in \sum_{i=0}^l I_i$ and $c \in \sum_{i=0}^l I_i'$, then also
$c \in \sum_{i=0}^l (I_i \cap I_i')$, where we order the intervals by the natural order of $\Rg$.
This implies that $\chi^{-1}(c)$ is weakly contractible.
Applying \cite[A.3.1]{lurie:ha}, we conclude that $([l]/\pi_0)^{\iso}$ is weakly contractible.

Thirdly, we show that for $0 \le l \le k$, the inclusion
\[
j_{[l]}:([l]/\pi_0)^{\iso} \to [l]/\pi_0
\]
is $\infty$-coinitial. By Quillen's Theorem A, it suffices to show that the comma categories 
\[
        ([l]/\pi_0)^{\iso}/(\bigcup_{i=0}^r I_i,f:[l] \to [r])
\]
are weakly contractible. The restriction of $\chi$ to such a comma category covers the contractible (again by convexity) space
$C \cap (I_{f(0)}+ \dots +I_{f(l)})$. By the same filteredness argument as above, we may again apply the van Kampen theorem
to deduce weak contractibility.
\end{proof}
\subsection{The analytic expansion equivalence}
\label{sub:analytic_expansion_real}

We begin by showing that analytic expansion defines an equivalence on finite symmetric products. 
\eqref{eq:real_augmented_thm}.
\begin{prop}\label{prop:Rg_polynomification_equivalence}
        The truncated analytic expansion functor
        \[
                \An: \Sh_{\NN}(\SP^n( \Rg);\D) \to \Fun_{\NN}(\DD_{\le n}^{\op}(\RR_{>0}),\D)
        \]
        taking a sheaf $F \in \Sh_{\NN}(\SP^n (\Rg);\D)$ to the polynomial sheaf
        \[
                U \mapsto \Gamma_{\SP^n(\Rg \setminus U)}(F)
        \]
        is an equivalence of $\infty$-categories.
\end{prop}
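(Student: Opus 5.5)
The plan is to apply Corollary \ref{cor:analytification_equivalence_criterion} with $\BB = \DD^*_{\le n}(\RR_{>0})$. This poset is decomposable: a nonempty subfamily of pairwise disjoint intervals is again a nonempty union of at most $n$ intervals. First I check that the truncated analytic expansion in the statement agrees with $\AA_{\BB}$. For $U \in \DD_{\le n}(\RR_{>0})$, the open $U + \SP^{n-1}(\Rg)$ is the complement in $\SP^n(\Rg)$ of the closed subset $\SP^n(\Rg \setminus U)$, so $\fib(F(\SP^n(\Rg)) \to F(U+\SP^{n-1}(\Rg))) \simeq \Gamma_{\SP^n(\Rg\setminus U)}(F)$. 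On the target side, $(\BB^{\triangleright})^{\op}$ is just $\DD_{\le n}(\RR_{>0})^{\op}$, the cone point being $\emptyset$. Moreover, $\NN$-constructibility in the sense of Lemma \ref{lem:union_inclusion} on $\BB_\cup$ corresponds to inverting homotopy equivalences of unions of intervals. So the target is $\Fun_{\NN}(\DD^{\op}_{\le n}(\RR_{>0}),\D)$, which is $\Shk$ by Proposition \ref{prop:kan} after right Kan extension. By the corollary, it therefore suffices to show that restriction
\[
\Sh_{\NN}(\SP^n(\Rg);\D) \lra \Fun_{\NN}(\BB_{\cap}^{\op},\D)
\]
is an equivalence, where $\BB_\cap$ consists of the opens $I^{\Sigma S} + \SP^{n-|S|}(\Rg)$ for disjoint intervals $I_s \subset \RR_{>0}$ with $|S| \le n$, together with $\SP^n(\Rg)$.

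To show this, I will reduce both sides to $\Fun(\Delta^n,\D)$ and compare. Proposition \ref{prop:exit_paths_real_case} gives $\Sh_{\NN}(\SP^n(\Rg);\D)\simeq \Fun(\Delta^n,\D)$, realised by evaluating on conical basics. For the $\BB_\cap$ side, consider the functor $\BB_\cap^{\op} \to \Delta^n$ sending $I^{\Sigma S}+\SP^{n-|S|}(\Rg)$ to $|S|$ and $\SP^n(\Rg)$ to $0$. This is well defined and order preserving, since an inclusion $I^{\Sigma S}+\SP^{n-|S|} \subset J^{\Sigma T}+\SP^{n-|T|}$ forces $|T|\le |S|$, by the argument of Lemma \ref{lem:intersection_inclusions}. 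I will show that this functor is a localization at the $\NN$-constructibility morphisms, following the pattern of Proposition \ref{prop:intervals_localization} and using the criterion \cite[A.6.4]{DKS:milnor}, in three steps.
\begin{enumerate}
\item The morphisms inverted are exactly those between opens with the same $|S|$. These are $\NN$-stratified homotopy equivalences, by Lemma \ref{lem:intersection_inclusions} and the fact that such opens retract onto their minimal stratum.
\item Each fiber over $k$ is weakly contractible. Sending $I^{\Sigma S}+\SP^{n-k}$ to $I^{\Sigma S}$ identifies this fiber with the poset of products of $k$ disjoint intervals covering the convex configuration space of $k$ distinct points in $\RR_{>0}$. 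The pointwise filteredness argument (intersect the intervals in order) together with \cite[A.3.1]{lurie:ha} then gives contractibility.
\item The comma categories are weakly contractible. This follows by the same van Kampen argument applied to the convex set $\mathrm{Conf}_k \cap J^{\Sigma T}$.
\end{enumerate}

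It remains to show that the restriction functor is compatible with these identifications: the composite $\Sh_{\NN}\to\Fun_{\NN}(\BB_\cap^{\op},\D)\simeq\Fun(\Delta^n,\D)$ should agree with the exit-path equivalence. By Remark \ref{rem:strata_computation}, the exit path value at $k$ can be computed on any open $B$ whose minimal stratum is $k$ and whose inclusion into a conical basic is a stratified homotopy equivalence. I expect this step to be the main obstacle. The open $I^{\Sigma S}+\SP^{n-k}(\Rg)$ is not itself a conical basic, but it is a union of basics $I^{\Sigma S}+\SP^{n-k}([0,\epsilon))$ with $\epsilon\to 0$. By Lemma \ref{lem:fundamental} (via Lemma \ref{lem:extend_zero_disk} and Proposition \ref{prop:homotopy_equivalence_constructible_sheaves}), the value of $F$ on it agrees with its value on such a basic, for every $\NN$-constructible $F$. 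It follows that the restriction functor is identified with the identity of $\Fun(\Delta^n,\D)$, and hence is an equivalence. Corollary \ref{cor:analytification_equivalence_criterion} then concludes the proof. I will also check, via Example \ref{exa:realanalyticexplansion}, that the values land in polynomial sheaves.
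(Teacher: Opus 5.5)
Your reduction via Corollary~\ref{cor:analytification_equivalence_criterion} to showing that restriction $\Sh_{\NN}(\SP^n(\Rg);\D)\to\Fun_{\NN}(\BB_{\cap}^{\op},\D)$ is an equivalence is exactly the paper's. Computing $\BB_\cap^{\op}\to\Delta^n$ as a localization is a legitimate alternative; it is what the paper does in the complex case (Lemma~\ref{lem:drop_multiplicities_complex_case}).

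\textbf{The gap is step 3.} The comma categories there consist of the $k$-fold opens $I^{\Sigma S}+\SP^{n-k}(\Rg)$ contained in a fixed $V=J^{\Sigma T}+\SP^{n-|T|}(\Rg)$ with $|T|<k$. In the dual formulation of the criterion they are instead those containing such a $V$ with $|T|>k$.
\begin{itemize}
\item In Proposition~\ref{prop:intervals_localization} the comma category carries a map $f\colon[l]\to[r]$ prescribing which interval each point lies in. That is what makes the covered region convex.
\item Your target $\Delta^n$ is a poset and records no such assignment. So the covered region is the union over all assignments, and it is not convex.
\item Example: take $k=2$, $V=J+\SP^{n-1}(\Rg)$ and $J=(1,2)$. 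The region is $\{x_1>x_2>0 : x_1\in J \text{ or } x_2\in J\}$. It contains $(5,1.5)$ and $(1.5,0.5)$ but not their midpoint $(3.25,1)$. The dual formulation has the same defect.
\end{itemize}
The contractibility you need is true, but it requires a real argument. One option: the push-apart homotopy $x_i\mapsto x_i+\frac{k-i}{k}\epsilon(x)$ (with $\epsilon(x)$ suitably small), from the proof of Lemma~\ref{lem:drop_multiplicities_complex_case}, shows the region is homotopy equivalent to $J^{\Sigma T}+\SP^{k-|T|}(\RR_{>0})\subset\SP^k(\RR_{>0})$, where multiplicities are allowed. That set is contractible by the shrinking arguments of Lemmas~\ref{lem:shrink_disks} and~\ref{lem:extend_zero_disk}. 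Alternatively, cover the region by the convex pieces indexed by assignments and apply van Kampen again, as with the $Y_a$ in Proposition~\ref{prop:exit_paths_complex_case}.

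\textbf{Two smaller points.} First, matching values object by object does not identify the two functors to $\Fun(\Delta^n,\D)$. Write $E$ for the exit-path equivalence and $L$ for the minimal-stratum functor. A constructible sheaf is right Kan extended from $\B^{\op}$, so the unit along $\B^{\op}\subset(\B\cup\BB_\cap)^{\op}$ gives a natural map $E(F)\circ L\to F|\BB_\cap^{\op}$. You can then check pointwise that it is an equivalence, using Corollary~\ref{cor:extended_disk_type}. Second, $I^{\Sigma S}+\SP^{n-k}(\Rg)$ is not a union of basics $I^{\Sigma S}+\SP^{n-k}([0,\epsilon))$, since the extra summands may lie anywhere in $\Rg$. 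You only need the homotopy equivalence, which Corollary~\ref{cor:extended_disk_type} provides.

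\textbf{Comparison with the paper.} The paper never localizes $\BB_\cap$. It passes through the basis $\B_c$ of basics without multiplicities away from $0$:
\begin{itemize}
\item Constructible presheaves on $\B$ are right Kan extended from $\B_c$ (Lemma~\ref{lem:Rg_ignore_discriminant_equivalence}).
\item Each $\SP^{m_0}(D_0)+D_1+\dots+D_r\in\B_c$ lies in a smallest element $D_1+\dots+D_r+\SP^{m_0}(\Rg)$ of $\BB_\cap$.
\item Hence constructible presheaves on $\B_c\cup\BB_\cap$ are both right Kan extended from $\B_c$ and left Kan extended from $\BB_\cap$, and 2-out-of-3 concludes.
\end{itemize}
The only convexity used there concerns a fixed basic, where the region really is convex. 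Your route, once repaired, also gives the explicit identification $\Fun_{\NN}(\BB_\cap^{\op},\D)\simeq\Fun(\Delta^n,\D)$ of the analytic side. The price is the harder contractibility argument above.
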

Before giving the proof, we
note the following final ingredient, which which captures the contractibility of the configuration spaces of $\Rg$.
\begin{lem}\label{lem:Rg_ignore_discriminant_equivalence}
        Let $\B \subset \U(\SP^n(\Rg))$ denote the standard basis of sums
        \[
                B=\SP^{m_0}(D_0) + \dots + \SP^{m_r}(D_r).
        \]
        Let also $\B_{c} \subset \B$ denote the subcategory spanned by those $B$ with $m_1 = \dots =m_r =1$, i.e. without
        higher multiplicities except near $0$. The restriction functor
        \[
                \Fun_{\NN}(\B^{\op},\D) \to \Fun_{\NN}(\B_{c}^{\op},\D)
        \]
        is a trivial Kan fibration. The functors in the former category are precisely
        the right Kan extensions of functors in the latter.
\end{lem}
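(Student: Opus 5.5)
The plan is to show that every $\NN$-constructible presheaf $F$ on $\B$ is the right Kan extension of its restriction to $\B_c$. Conversely, right Kan extensions of $\NN$-constructible functors on $\B_c$ should again be $\NN$-constructible. Granting both, the standard fact that restriction from a full subcategory onto the Kan-extended functors is a trivial Kan fibration finishes the argument (\cite[4.3.2.15]{lurie:htt}).

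By the pointwise formula, the first claim reduces to the following. For each $B=\SP^{m_0}(D_0)+\dots+\SP^{m_r}(D_r)\in\B$, the map
\[
F(B)\to\lim F|(\B_c/B)^{\op}
\]
must be an equivalence. I would first establish that the category $\B_c/B$ of basics without higher multiplicities contained in $B$ is weakly contractible. It should also have a very concrete shape: we write $\SP^{m_i}(D_i)$ with $D_i$ an interval in $\RR_{>0}$ for $i\ge1$. The $\B_c$-basics inside it are exactly of the form $\SP^{m_0'}(D_0')+J_1+\dots+J_s$, where the $J$'s are disjoint intervals, $D_0'\subset D_0$, and the distribution of intervals among the $D_i$ has the right counts. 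The key geometric input is the one used repeatedly above: the unordered configuration space of $m_i$ points in an interval is convex, hence contractible. Arguing as in Proposition~\ref{prop:intervals_localization}, I would cover $\SP^{m_i}(D_i)\cap(\text{configuration locus})$ by the sums $J_1+\dots+J_{m_i}$. These fibers are filtered (intersect intervals in order), so \cite[A.3.1]{lurie:ha} shows that $\B_c/B$ is weakly contractible. The same argument works after restricting to those basics whose minimal $\NN$-stratum agrees with that of $B$.

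Next I use constructibility. By Theorem~\ref{thm:filtration_exodromy}, $F$ inverts inclusions of basics with the same minimal $\NN$-stratum, and this stratum is determined by $m_0$ alone. Inside $\B_c/B$, the full subposet of basics with the same $m_0$ as $B$ should be coinitial in $(\B_c/B)$, i.e. cofinal for the limit. Given any $B'\in\B_c/B$, I would shrink its cone part, or add points near the tip, to produce an element below it with minimal stratum $m_0$. The relevant comma categories are again contractible by the convexity argument. On this subposet $F$ is constant with value $F(B)$, since all inclusions into $B$ are $\NN$-stratified homotopy equivalences. So the limit is the limit of a constant diagram over a weakly contractible category, which is $F(B)$.

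For the converse, let $G$ be constructible on $\B_c$. The same coinitiality computation expresses $(\mathrm{Ran}\,G)(B)$ as $G(B')$ for any $B'\in\B_c$ contained in $B$ with the same $m_0$. Hence $\mathrm{Ran}\,G$ inverts stratified homotopy equivalences, and it restricts back to $G$ because $\B_c\subset\B$ is full. I expect the main obstacle to be the coinitiality step. The issue is handling basics in $\B_c/B$ whose cone part has smaller multiplicity than $m_0$, which forces a careful choice of intervals near $0$ inside $D_0$, plus a filteredness check mixing the cone factor with the interval factors.
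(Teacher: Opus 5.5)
Your proposal is correct and follows essentially the paper's own argument: the paper also passes to the subposet $C_B$ of $\B_c$-basics inside $B$ with the same cone multiplicity $m_0$. It shows that $C_B$ is coinitial in the comma category and weakly contractible (by a van Kampen cover of the convex subspace of $B$ consisting of divisors without multiplicities outside $D_0$), and concludes from the constancy of $F$ on $C_B$.

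For the step you flag as the main obstacle, note that every $B''\in\B_c/B$ has cone multiplicity at most $m_0$. So what must be checked is that the elements of $C_B$ \emph{containing} $B''$ form a contractible poset. Enlarging the cone of $B''$ to swallow its $m_0-m_0''$ intervals lying in $D_0$ produces the least such element, so no shrinking of the cone and no convexity argument is needed there.
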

\begin{proof}
        We analyze the comma categories. For $ B=\sum_{i=1}^r \SP^{m_i}(D_i) +\SP^{m_0}(\Rg)$
        let $C_B = B/\B_{c}^{\op} \cap (\B_{c})_{m_0}$ denote the subposet of the comma category spanned by basics of
        the form $\SP^{m_0}(D_0')+\sum_{i=1}^{n-m_0} D_i \subset B$.
        This inclusion $C_B \to B/\B^{\op}_c$ is coinitial, because the relevant overcategories have a final object  and all elements of $C_B$ are basics of the same type, hence their inclusions are equivalences.
        The poset $C_B$ is also weakly contractible by Lurie's van-Kampen theorem: It can be interpreted as an open cover of the subspace $B_c \subset B$ given by divisors without multiplicities outside
        $D_0$. The space $B_c$ is convex as a subset of $\Rg^n$, hence contractible, and the cover is filtered over points,
        because the nonempty intersection of elements of $C_B$ is again an element of this poset.

        Now consider $F \in \Fun(\B^{\op},\D)$ with $\NN$-constructible restriction $F|\B_{c}^{\op}$. By
        the above, $F$ is a right Kan extension of its restriction if and only if $F(B) \simeq F(B')$
        for any $B \in \B$ and $B \supset B' \in C_B$, i.e. if and only if $F$ is $\NN$-constructible.
\end{proof}
\begin{proof}[Proof of Proposition \ref{prop:Rg_polynomification_equivalence}]
        Let $\BB = \DD^{*}_{\le n}(\RR_{>0})$. We verify the criterion of Corollary \ref{cor:analytification_equivalence_criterion}
        for $\AA = \AA_{\BB}$, so we must show that the diagonal arrow in the following commutative diagram is an equivalence:
\[\begin{tikzcd}
	{\Fun_{\NN}(\B^{\op},\D)} & {\Sh_{\NN}(\SP^n(\Rg);\D)} & \\
        {\Fun_{\NN}(\B_{c}^{\op},\D)}& {\Fun_{\NN}((\B_{c}\cup \BB_{\cap})^{\op},\D)} & {\Fun_{\NN}(\BB_{\cap}^{\op},\D)}
	\arrow[from=1-2, to=1-1]
	\arrow[from=1-2, to=2-2]
	\arrow["{r_{c}}", from=1-1, to=2-1]
	\arrow["{r_L}", from=2-2, to=2-3]
	\arrow["{r_R}"', from=2-2, to=2-1]
	\arrow[ from=1-2, to=2-3]
\end{tikzcd}\]
Here $\BB_{\cap} \subset \U(X)$ is the poset of sums of disks from Theorem \ref{thm:stable_interchange}.
The sheaves on $\SP^n(\Rg)$ are the right Kan extensions along the inclusion $\B^{\op} \subset \U(X)^{\op}$ by
Theorem \ref{thm:filtration_exodromy}.
By Lemma \ref{lem:Rg_ignore_discriminant_equivalence} the constructible presheaves on $\B$ are themselves the right
Kan extensions along $(\B_{c})^{\op} \subset \B^{\op}$. By transitiviy of Kan extensions, the right Kan extensions
of constructible presheaves along $(\B_{c})^{\op} \subset (\B_{c} \cup \BB_{\cap})^{\op}$ are constructible. Conversely,
any $F \in \Fun_{\NN}((B_{c}\cup \BB_{\cap})^{\op},\D)$ is then also canonically equivalent to the Kan extension of its restriction,
so that $r_R$ is a trivial Kan fibration.

We next show that $r_L$ is a trivial Kan fibration because the functors in $\Fun_{\NN}((\B_{c}\cup \BB_{\cap})^{\op},\D)$
are the left Kan extensions of functors in $\Fun_{\NN}(\BB_{\cap}^{\op},\D)$:
For any $B=\SP^{m_0}(D_0)+\sum_{i=1}^{n-m_0}D_i\in \B_{c}$
the open $D_1 + \dots +D_r + \SP^{m_0}(\Rg) \in \BB_{\cap}$ is a final object in $\BB_{\cap}^{\op}/B$.

By $2$-out-of-$3$, the restriction $\ShnRgC \to \Fun_{\NN}(\BB_{\cap}^{\op},\D)$ is an equivalence, so $\AA$ is too.
\end{proof}

\begin{cor}
    \label{cor:star_equivalence}
    Let $\D$ be a stable bicomplete $\infty$-category. Then analytic expansion induces an equivalence
    \[
         \begin{tikzcd}
             \Shc^*(\SPi(\Rg); \D) \arrow[r, "\An"{above}, "\simeq"{below}] &  \Shan(\Rg, \D).
         \end{tikzcd}
    \]
\end{cor}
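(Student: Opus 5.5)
The plan is to pass to the limit $n \to \infty$ in Proposition \ref{prop:Rg_polynomification_equivalence}, using Proposition \ref{prop:approx} on the analytic side. By definition, $\Shc^*(\SPi(\Rg);\D) = \lim_n \Sh_{\NN}(\SP^n(\Rg);\D)$, with transition functors the $*$-restrictions $i^*$ along $\SP^{n}(\Rg)\subset\SP^{n+1}(\Rg)$. By Proposition \ref{prop:approx}, $\Shan(\Rg;\D) \simeq \lim_n \Sh^{\le n}(\Rg;\D)$, with transition functors the polynomial approximations $\tau_{n}$. Here I identify $\Shan(\Rg;\D)$ with $\Shan(\RR_{>0};\D)$, as the paper does implicitly.

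\textbf{Step 1.} Identify the target of Proposition \ref{prop:Rg_polynomification_equivalence}. Analytic presheaves on $\DD_{\le n}(\RR_{>0})$ correspond, via the right Kan extension $(i_n)_*$, to polynomial sheaves of degree $\le n$; this is Proposition \ref{prop:kan}. So the truncated analytic expansion $\An_n$ is an equivalence $\Sh_{\NN}(\SP^n(\Rg);\D)\simeq \Sh^{\le n}(\Rg;\D)$.

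\textbf{Step 2.} Check that the equivalences $\An_n$ assemble into a map of towers, i.e. that there is a natural equivalence
\[
\An_n(i^*F)\simeq \tau_n \An_{n+1}(F) \quad\text{for } F\in \Sh_{\NN}(\SP^{n+1}(\Rg);\D).
\]
Since both sides are right Kan extended from $\DD_{\le n}$, it suffices to compare them on $U=D_1\cup\dots\cup D_l$ with $l\le n$. By the proof of Theorem \ref{thm:polytopoly}, $\An_{n+1}(F)(U)$ is the total fiber of the cube $I\mapsto F(D^{\Sigma I}+\SP^{n+1-|I|}(\Rg))$. By Lemma \ref{lem:fundamental}, or directly by $\NN$-constructibility and homotopy invariance, each vertex is computed by restriction to $D^{\Sigma I}+\SP^{n-|I|}(\Rg)\subset \SP^n(\Rg)$. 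The reason is that for $|I|\le n$ this open already meets the relevant strata in a homotopy equivalent way; equivalently, in exit path terms (Proposition \ref{prop:exit_paths_real_case}), both cubes are $I\mapsto F_{|I|}$, as in Example \ref{exa:realanalyticexplansion}. These identifications are natural in $U$, so the square commutes.

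Precisely, I would construct a natural restriction map $\An_{n+1}(F)|_{\DD_{\le n}}\to \An_n(i^*F)|_{\DD_{\le n}}$ induced by the inclusion $\SP^n\subset\SP^{n+1}$, and then check that it is an equivalence vertexwise as above.

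\textbf{Step 3.} Conclude. We now have a natural equivalence of towers of $\infty$-categories, so the induced functor on limits is an equivalence. Its value on a compatible family $(F_n)$ is $\lim_n (i_n)_*\An_n(F_n)$. For $F\in \Shc^*(\SPi(\Rg);\D)$ this recovers $\Fan$ itself: $\Fan(U)$ for $|\pi_0U|\le n$ only depends on $F|_{\SP^n}$, again by Lemma \ref{lem:fundamental}. Hence $\An$ is the limit equivalence.

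\textbf{Main obstacle.} The main obstacle is Step 2: making the compatibility of $\An$ with $*$-restriction versus $\tau_n$ coherent, rather than merely objectwise. I would handle this by writing both composites as restrictions along functors of posets of opens, which makes the naturality strict, and reducing the equivalence check to exit path data.
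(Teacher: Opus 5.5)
Your proposal is correct and is the paper's argument: the paper's proof just passes to the limit of the truncated equivalences of Proposition~\ref{prop:Rg_polynomification_equivalence} along $*$-restrictions. Your use of Proposition~\ref{prop:approx} and your Step~2 check $\An_n\circ i^*\simeq\tau_n\circ\An_{n+1}$ spell out what the paper leaves implicit, and since both towers are indexed by $\NN$, supplying these squares naturally already takes care of the coherence issue you flag as the main obstacle.
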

\begin{proof}
    Obtained from the truncated equivalences of Proposition
    \ref{prop:Rg_polynomification_equivalence} by passing to the $*$-limits.
\end{proof}

\subsection{Verdier duality and coherent chain complexes}

By Theorem \ref{thm:verdier_duality_constructibility}, Verdier duality yields an equivalence
\[
	\V:\Sh_{\PP}(\SP^n(\Rg);\D) \to \coSh_{\PP}(\SP^n(\Rg);\D).
\]
We have previously computed the exit path category of this conical manifold in
Proposition \ref{prop:real_discriminant_exit_path_cube}, so we obtain the
following square of natural equivalences
\[\begin{tikzcd}
	{\Sh_{\PP}(\SP^n(\Rg);\D)} & {\coSh_{\PP}(\SP^n(\Rg);\D)} \\
    {\Fun([1]^n,\D)} & {\Fun(([1]^n)^{\op},\D)}
	\arrow["\V",from=1-1, to=1-2]
	\arrow[from=1-1, to=2-1]
	\arrow[from=1-2, to=2-2]
	\arrow["\V",from=2-1, to=2-2]
\end{tikzcd}\]
In this section we again denote $X=\SP^n(\Rg)$.
\begin{lem}\label{lem:greaterq_conical}
        Let $q \in [1]^n$ be a point in the discriminant of the stratifying poset, i.e. $q$ corresponds to a partition
        $n-m_0=\sum_{i=1}^r m_i$ with $m_i \ge 2$ for some $m_i >0$. Let $p$ denote the partition $n-m_0 =1 + \dots + 1$.
        Let $B \in \B_p$ be a conical basic with minimal stratum $p$. Then the inclusion
        \[
                B \subset X_{>q}
        \]
        is an $\NN$-stratified homotopy equivalence.
\end{lem}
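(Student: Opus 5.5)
The plan is to reduce the statement to contractibility of the individual $\NN$-strata of both opens, and then to compare the resulting exit path $\infty$-categories. I will use the coordinates $\SP^n(\Rg)\cong\{x_1\ge\dots\ge x_n\ge 0\}\subset\Rg^n$ throughout. I read $X_{>q}$ as $X_{\ge q}\setminus X_q$, where $X_{\ge q}$ is the open union of $\pi$-strata refining $q$; in these coordinates $X_{\ge q}$ is cut out by the strict inequalities $x_k>x_{k+1}$ for $q_k=1$.

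First, I record which $\NN$-strata occur in each open. A point of $B=\SP^{m_0}(D_0)+D_1+\dots+D_{n-m_0}$ has between $n-m_0$ and $n$ nonzero coordinates. The same range occurs for $X_{>q}$, because the $n-m_0$ points of $q$ away from $0$ can only split further and cannot collide with the tip. Hence both opens meet exactly the strata $X_k$ with $n-m_0\le k\le n$. I will check below that, for each such $k$, both $B\cap X_k$ and $X_{>q}\cap X_k$ are nonempty and contractible.

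\textbf{Contractibility of the strata.} The set $B\cap X_k$ is convex in $\Rg^n$, as already used in the proof of Proposition \ref{prop:exit_paths_real_case}, and it is nonempty for $n-m_0\le k\le n$.

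For $X_{>q}\cap X_k$ I will show it is star-shaped, which is the step I expect to be the main obstacle. This set is not convex, since the face $X_q$ has been removed from the convex set $X_{\ge q}$. Choose a point $y$ with $y_1>\dots>y_k>0=y_{k+1}=\dots=y_n$. Such a $y$ lies in the fully generic $\pi$-stratum, which refines every ordered partition, so $y\in X_{>q}\cap X_k$.

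Now take any $x\in X_{>q}\cap X_k$ and consider the segment $h_t=(1-t)x+ty$. For every $t$, the first $k$ coordinates of $h_t$ are positive and the remaining ones vanish, so $h_t\in X_k$. For $t>0$, all inequalities among the first $k$ coordinates of $h_t$ are strict, so $h_t$ lies in the generic stratum and hence in $X_{>q}$. At $t=0$ we have $h_0=x\in X_{>q}$ by assumption. Thus the whole segment lies in $X_{>q}\cap X_k$, and this set is star-shaped about $y$, hence contractible.

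\textbf{Passage to exit path categories.} By Theorem \ref{thm:filtration_exodromy}, an inclusion of opens is an $\NN$-stratified homotopy equivalence if and only if it induces an equivalence of $\NN$-exit path $\infty$-categories; I will take this as the working criterion. Restricting to $B$ and to $X_{>q}$ the localization argument of Proposition \ref{prop:exit_paths_real_case} (the criterion \cite[A.6.4]{DKS:milnor} together with Lurie's van Kampen theorem), both exit path categories are identified with the linear poset $\{n-m_0<\dots<n\}$.

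The inputs needed for that identification are of two kinds. First, the strata must be contractible, which is the previous step. Second, certain comma categories, namely the posets of basics of a fixed minimal $\NN$-type contained in a given basic, must be weakly contractible. For the latter, the van Kampen reduction leads to intersections of a basic with a stratum $X_k$. When these intersections lie inside $B$ or inside $X_{>q}$, they are again convex, or star-shaped by the same segment argument.

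Under these identifications the inclusion $B\subset X_{>q}$ induces the identity of $\{n-m_0<\dots<n\}$. This identity is an equivalence, which proves the claim.
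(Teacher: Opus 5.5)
Your geometric observations are correct, including the star-shapedness of $X_{>q}\cap X_k$: for $t>0$ the first $k\ge n-m_0$ inequalities become strict, and this is strictly above $q$ because $q$ has a zero among its first $n-m_0$ entries. The gap is the step that is supposed to turn these observations into the lemma. Theorem \ref{thm:filtration_exodromy} does not say that an inclusion of opens inducing an equivalence of $\NN$-exit path $\infty$-categories is an $\NN$-stratified homotopy equivalence. It only does two things: it characterizes stratified homotopy equivalences among inclusions $B\subset B'$ of \emph{basics} (they must have the same minimal $\NN$-stratum), and it identifies constructible sheaves with constructible presheaves on the basis. $X_{>q}$ is not a basic, and nothing in the paper supplies the stratified Whitehead-type theorem you are assuming. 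Such a statement needs genuine hypotheses on the stratified spaces, and the paper explicitly remarks that the $\NN$-stratification is not a conical manifold structure. Stratumwise contractibility plus an equivalence of exit path categories does not by itself produce what the lemma asserts: a stratum-preserving map $X_{>q}\to B$ together with stratum-preserving homotopies. In particular, your contractions of the individual strata to different centres $y$ (one for each $k$) do not glue across strata.

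Granting the localization computation for the open $X_{>q}$ (e.g.\ via Proposition \ref{prop:coarsen_stratification_cons_sheaves} and Lemma \ref{lem:filtration_basics_homotopy}), your argument does give something. It shows that restriction $\Sh_{\NN}(X_{>q};\D)\to\Sh_{\NN}(B;\D)$ is an equivalence, hence $F(X_{>q})\simeq F(B)$ for every $\NN$-constructible $F$. That is all Proposition \ref{prop:verdier_filtration_image} actually consumes, but it is weaker than the lemma as stated.

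The lemma itself follows directly, as in the paper. Write $B=\SP^{m_0}(I_0)+I_1+\dots+I_{n-m_0}$ with $I_1>\dots>I_{n-m_0}>I_0$. Choose orientation-preserving homeomorphisms $g_i\colon\RR_{>0}\cong I_i$ for $1\le i\le n-m_0$ and $g_0\colon\Rg\cong I_0$, and set $g(x)=(g_1(x_1),\dots,g_{n-m_0}(x_{n-m_0}),g_0(x_{n-m_0+1}),\dots,g_0(x_n))$. This map sends $X_{>q}$ into $B$ and preserves the number of nonzero coordinates. The coordinatewise straight-line homotopies $(1-t)x+tg(x)$ stay in $B$ by convexity. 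They stay in $X_{>q}$ by exactly your segment computation: for $t>0$ the first $n-m_0$ inequalities become strict because the $I_i$ are ordered. Finally, they preserve the $\NN$-strata.
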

\begin{proof}
        Suppose that $B=\SP^{m_0}(I_0) + \sum_{i=1}^{n-m_0} I_i$.
        A homotopy inverse is given by the restriction of
        \[
                g_1 \times \dots  \times g_n:  \times \RR_{>0}^{n-m_0} \times \Rg^{m_0} \to \Rg^n
        \]
        with orientation-preserving homeomorphisms  $g_{i}:\RR_{>0} \cong I_i$ for $i \le n-m_0$ and
        $g_{n-m_0+1}= \dots = g_{n}:\Rg \cong I_0$. A witnessing homtopy is given by straight-line homotopies in each factor.
\end{proof}
\begin{prop}\label{prop:verdier_filtration_image}
	Verdier duality induces an equivalence
        \[
                \V:\Sh_{\NN}(\SP^n(\Rg);\D) \simeq \coSh_{\PP}(\SP^{n}(\Rg);\D)^0
        \]
        between the category of $\NN$-constructible sheaves and the subcategory of $\PP$-constructible cosheaves
        \[
                G:\U(\SP^n(\Rg)) \to \D
        \]
        such that $G_x \simeq 0$ for any point $x$ in the discrimant,
        i.e. $x= m_0 \cdot 0 + m_1 \cdot y_1 \dots + m_r \cdot y_r$ with $m_i \ge 2$ for some $i \ge 1$.
\end{prop}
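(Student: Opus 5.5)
Since $\NN$-constructible sheaves form a full subcategory of $\PP$-constructible sheaves, and $\V$ is already an equivalence $\Sh_{\PP}(\SP^n(\Rg);\D)\simeq\coSh_{\PP}(\SP^n(\Rg);\D)$ by Theorem \ref{thm:verdier_duality_constructibility}, the plan is to characterize the essential image of $\Sh_{\NN}$ under $\V$. I will do this on the combinatorial side, using the square of exit path equivalences displayed above. Under Proposition \ref{prop:real_discriminant_exit_path_cube} and Remark \ref{rem:strata_computation}, a $\PP$-constructible sheaf is a cube $F:[1]^n\to\D$ with $F(q)=F(X_{\ge q})$. It is $\NN$-constructible exactly when $F$ inverts every arrow $q\to q'$ of $[1]^n$ whose endpoints lie in the same $\NN$-stratum, i.e.\ have the same value of $m_0$. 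This is the content of Theorem \ref{thm:filtration_exodromy} and Proposition \ref{prop:exit_paths_real_case}, which present $\Delta^n$ as the localization of $[1]^n$ at these arrows.

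On the cosheaf side, the costalk $\V(F)_x$ at a point $x$ of $\pi$-stratum $q$ is computed by the local cohomology $\Gamma_{X_q}(F|_{X_{\ge q}})$, suitably shifted. Concretely, it is the total fiber of the restriction of $F$ to the subcube $\{q'\ge q\}\subset[1]^n$, shifted by the codimension $|q|$; this is the shape already visible in Example \ref{exa:verdier}. I will derive this identification from the description of $\V$ on basics in Theorem \ref{thm:verdier_duality_constructibility}: for a conical basic $B$ with minimal stratum $q$, the compactly supported sections of $F$ on $B$ near $X_q$ are the fiber of $F(B)\to F(B\setminus X_q)$, and $B\setminus X_q$ is covered by the opens $X_{>q}$ in the individual coordinate directions.

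The key step is then the following. Let $q$ be a discriminant point, and let $p\ge q$ be the vertex with the same $m_0$ and all multiplicities of the nonzero part equal to $1$ (the partition $1+\dots+1$ of $n-m_0$). By Lemma \ref{lem:greaterq_conical}, $X_{>q}$ is $\NN$-stratified homotopy equivalent to a basic of type $p$. Hence, for $\NN$-constructible $F$, the restriction of $F$ to the subcube above $q$ is constant in every direction that does not change $m_0$. Since $q$ is in the discriminant, there is at least one such direction, namely splitting a multiple point $m_i\ge 2$. A cube that is constant along a direction has vanishing total fiber, so $\V(F)_x\simeq 0$.

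Conversely, suppose all costalks at discriminant points vanish. I will argue by downward induction on $q\in[1]^n$ that $F$ inverts the arrows inside each $\NN$-stratum. The total-fiber vanishing at $q$, together with the inductive hypothesis on the faces of the subcube above $q$, forces the edge $F(q)\to F(q')$ in a non-$m_0$-changing direction to be an equivalence. This step is cleanest in the stable setting via Remark \ref{rem:cart}: a cube whose total fiber vanishes and whose other parallel faces are already constant in the given direction must itself be constant in that direction. I expect this induction to be the main obstacle, because it requires tracking carefully which directions of $[1]^n$ preserve $m_0$. Such a direction joins $x_k=x_{k+1}$ to $x_k\ne x_{k+1}$ with $k<n-m_0$, whereas the last nonzero coordinate leaving $0$ is the direction that changes $m_0$. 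The geometric input of Lemma \ref{lem:greaterq_conical} is what shows the costalk description is insensitive to the discriminant directions. Full faithfulness is automatic, since $\V$ is an equivalence restricted to full subcategories.
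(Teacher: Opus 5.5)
Your proof is correct. Its overall skeleton matches the paper's, but the key step is argued differently.

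\textbf{Shared structure.} In both arguments, vanishing of $\V(F)$ on a discriminant stratum $X_q$ is equivalent to $\fib(F(X_{\ge q})\to F(X_{>q}))\simeq 0$. This is exactly your total fiber of the subcube $\{q'\ge q\}$, because $\{X_{\ge q'}\}_{q'>q}$ is an intersection-closed cover of $X_{>q}$. In both arguments the converse is a downward induction over $[1]^n$.

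\textbf{The paper's key step.} The paper gets the costalk cleanly from $\V\circ i_q^!\simeq i_q^*\circ\V$ together with contractibility of $X_q$; this is preferable to your sketch via compactly supported sections on a basic. It then uses Lemma \ref{lem:greaterq_conical}: $X_{>q}\subset X_{\ge q}$ is an $\NN$-stratified homotopy equivalence, since both are equivalent to a generic basic of type $p$. The forward direction is then immediate. In the converse, vanishing gives $F(q)\simeq F(X_{>q})\simeq F(p)$, and every $m_0$-preserving arrow out of $q$ is inverted by 2-out-of-3 with the inductive hypothesis.

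\textbf{Your key step.} You replace this geometric input by pure combinatorics: pick one $m_0$-preserving direction $k$, and use a stable cube-splitting $Q_0\to Q_1$ along $k$ whose components away from the initial vertex are already equivalences. This needs nothing beyond the exodromy description. In fact your appeal to Lemma \ref{lem:greaterq_conical} in the forward direction is superfluous: constancy along $m_0$-preserving directions comes from Theorem \ref{thm:filtration_exodromy}, not from that lemma.

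\textbf{Checks you should make explicit.} Your route relies on three facts that are true but should be stated.
\begin{enumerate}
\item The direction $k$ stays $m_0$-preserving at every $q'\ge q$. This holds because $q_{n-m_0(q)}=1$ persists upward, so a position $k<n-m_0(q)$ never enters the trailing run of zeros.
\item Every $m_0$-preserving arrow is a composite of $m_0$-preserving edges. This holds because $m_0$ is monotone on $[1]^n$.
\item At non-discriminant $q$ there are no $m_0$-preserving edges at all, so nothing needs to be shown there.
\end{enumerate}

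\textbf{Minor slips.} The shift in the costalk is by $\dim X_q$, the number of ones in $q$, not by the codimension; this does not affect vanishing. Also, every direction in the trailing zero run of $q$ changes $m_0$, not a single distinguished one.
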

\begin{proof}
        For an ordered partition $q \in [1]^n$ let $i_q:X_q \to X$ denote the inclusion of the stratum.
        Because $\V \circ i_q^! \simeq i_q^* \circ \V$ and $\V$ is an equivalence, the preimage of
        $\coSh_{\PP}(\SP^{n}(\Rg);\D)^0$ under $\V$ is given by precisely those $F \in \Sh_{\PP}(\SP^n(\Rg);\D)$
        such that $i_q^!F \simeq 0$ for all strata $q \in [1]^n$ lying in the discriminant.
        For arbitrary $q \in [1]^n$ the sheaf $i_q^! F$ is locally constant on a contractible and we compute
        \[
                i_q^!F(X_q) \simeq \fib(F(X_{\ge q}) \to F(X_{>q})).
        \]
        by Remark \ref{rem:strata_computation}.
        
	Let now first $F \in \ShnRgC$ be a constructible sheaf with respect to the filtration stratification. Then, for
        a stratum $q \in [1]^n$ in the discriminant, the map $F(X_{\ge q}) \to F(X_{>q})$ is an equivalence because
        the inclusion $X_{>q} \to X_{\ge q}$ is an $\NN$-stratified homotopy equivalence by Lemma \ref{lem:greaterq_conical}.

	Conversely, suppose that $F \in \ShPnRgC$ is a constructible sheaf
	with respect to the discriminant stratification $\SP^n(\Rg) \to \PP$,
	and that
	\[
		i_q^! F \simeq 0
	\]
        for all $q \in [1]^n$ in the discriminant.
        We wish to show that $F$ is $\NN$-constructible. We will establish this
        inductively by showing that, for $q \in [1]^n$, the restriction of $F$
        to the open subspace $X_{\geq q}$ is $\NN$-constructible with respect to the filtration stratification,
	assuming that $F|X_{> q}$ is already $\NN$-constructible.

        We distinguish two cases. If $q \in [1]^n$ does not lie in the discriminant, then
        $\NN$-constructibility on $X_{\ge q}$ imposes no additional condition and follows immediately from
        $\NN$-constructibility on $X_{>q}$ and $\PP$-constructibility on $X_{\ge q}$.

        In the second case $q \in [1]^n$ does lie in the discriminant. In this case, we conclude
        \[
                0 \simeq i_q^!F(X_q) \simeq \fib(F(X_{\ge q}) \to F(X_{>q})),
        \]
        so that $F(X_{\ge q}) \to F(X_{>q})$ is an equivalence.
        It follows from Lemma \ref{lem:greaterq_conical} and Remark \ref{rem:strata_computation} that $F|X_{\ge q}$ is
        $\NN$-constructible.
\end{proof}
\begin{defi}\label{defi:chain_complex}
	We denote by
\[
        \Ch_{[0.n]}(\D) \subset
	\Fun(([1]^n)^{\op},\D)
\]
the full subcategory spanned by the diagrams $D: ([1]^n)^{\op} \to \D$
concentrated in the $n+1$ vertices of the form $(1, \dots, 1, 0,
\dots, 0)$. It is the category of chain complexes
concentrated in the interval $[0,n]$.
\end{defi}
\begin{ex}
	We explain in the case $n=3$ how Definition \ref{defi:chain_complex}
	captures the notion of a coherent chain complex. The objects of
        $\Ch_{[0,3]}(\D)$ are diagrams of the form
\[\begin{tikzcd}[sep=small]
	& 0 && 0 \\
	{A_3} && {A_2} \\
	& 0 && {A_0} \\
	0 && {A_1}
	\arrow[from=1-2, to=1-4]
	\arrow[from=1-2, to=3-2]
	\arrow[from=1-4, to=3-4]
	\arrow[from=2-1, to=1-2]
	\arrow[from=2-1, to=2-3]
	\arrow[from=2-1, to=4-1]
	\arrow[from=2-3, to=1-4]
	\arrow[from=2-3, to=4-3]
	\arrow[from=3-2, to=3-4]
	\arrow[from=4-1, to=3-2]
	\arrow[from=4-1, to=4-3]
	\arrow[from=4-3, to=3-4]
\end{tikzcd}\]
with $A_i \in \D$. The face
\[\begin{tikzcd}
	{A_3} & {A_2} \\
	0 & {A_1}
	\arrow[from=1-1, to=1-2]
	\arrow[from=1-1, to=2-1]
	\arrow[from=1-2, to=2-2]
	\arrow[from=2-1, to=2-2]
\end{tikzcd}\]
of the cube encodes a null-homotopy for the morphism $d^2:A_3 \to A_1$.
Similarly, the face
\[\begin{tikzcd}
	{A_2} & {A_1} \\
	0 & {A_0}
	\arrow[from=1-1, to=1-2]
	\arrow[from=1-1, to=2-1]
	\arrow[from=1-2, to=2-2]
	\arrow[from=2-1, to=2-2]
\end{tikzcd}\]
encodes a null-homotopy for $d^2: A_2 \to A_0$. By composition with identity morphisms,
these null-homotopies then
yield null-homotopies for $d^3: A_3 \to A_0$, and the total cube ought to be understood
as a homotopy between these homotopies.
\end{ex}
It remains to explicitly compute the chain complex associated to a filtered
object. At the level of triangulated
categories a procedure for constructing a chain complex from a filtered object
is described in \cite[Remark 1.2.4.3]{lurie:ha}. We now show that Verdier
duality realizes this construction geometrically at an $\infty$-categorical level.
\begin{prop}
	Let $F \in \ShnRgC$ be a constructible sheaf.
        Then
        \[
                \V(F)(X_{\ge k}) \simeq \fib(F(X_{\ge k}) \to F(X_{\ge k+1}))[-k].
        \]
        (If $k =n$, then $X_{\ge n+1} = \emptyset$.)
\end{prop}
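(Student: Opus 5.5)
We plan to compute the value of the Verdier dual cosheaf on the open $X_{\ge k}$ by reducing to a single stratum. For $F \in \ShnRgC$, the cosheaf $\V(F)$ is $\PP$-constructible and, by Proposition \ref{prop:verdier_filtration_image}, vanishes on the discriminant. It is therefore supported on the locus of ordered partitions of the form $(1,\dots,1,0,\dots,0)$, which is the image of $\Ch_{[0,n]}(\D)$ in Definition \ref{defi:chain_complex}. Inside $X_{\ge k}$ (the open union of the filtration strata of index $\ge k$), the only such non-discriminant stratum of minimal filtration degree is the stratum $X_{p_k}$ with $p_k = (1,\dots,1,0,\dots,0)$ having $k$ ones. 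These are the configurations $x_1 > \dots > x_k > 0$ with all remaining coordinates zero, an open $k$-cell. Our first step is the cosheaf analogue of Remark \ref{rem:strata_computation}: the value $\V(F)(X_{\ge k})$ should be the costalk of $\V(F)$ at the minimal point type $p_k$ of $X_{\ge k}$. This uses that $X_{\ge k}$ is the star of $p_k$ with respect to the filtration, and that $\V(F)$ is constructible and vanishes on the remaining strata that could also be minimal in the $[1]^n$-sense.

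The second step is to compute that costalk using $\V \circ i_q^! \simeq i_q^* \circ \V$. With $q = p_k$, we get $\V(F)_{p_k} \simeq \V(i_q^! F)$, where $i_q^!F$ is a locally constant sheaf on the contractible $k$-dimensional stratum $X_q$. Verdier duality of a constant sheaf with value $A$ on a contractible $k$-manifold has global cosections $A[-k]$; the shift comes from the orientation/dimension and is the same shift that produces $F_2[-2]$ in Example \ref{exa:verdier}. So the value is $(i_q^!F)(X_q)[-k]$.

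The third step identifies $(i_q^!F)(X_q)$. By the proof of Proposition \ref{prop:verdier_filtration_image}, we have $(i_q^!F)(X_q) \simeq \fib(F(X_{\ge q}) \to F(X_{>q}))$. It remains to identify $X_{\ge q}$ with $X_{\ge k}$ and $X_{>q}$ with $X_{\ge k+1}$ up to $\NN$-stratified homotopy equivalence, so that $F$ takes the same values on them. For $X_{\ge q}$, this follows by combining Lemma \ref{lem:greaterq_conical}, which says that inclusions of generic basics into $X_{>q'}$ for discriminant $q'$ are equivalences, with $\NN$-constructibility of $F$. For $X_{>q}$, a point of $X_{>q}$ near $X_q$ either acquires an extra nonzero coordinate, landing in $X_{\ge k+1}$, or splits a coincidence that does not exist; so $X_{>q}$ and $X_{\ge k+1}\cap X_{\ge q}$ agree up to stratified homotopy, via straight-line homotopies as in Lemma \ref{lem:greaterq_conical}.

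We expect the main obstacle to be the first step: making precise that the cosheaf value on the non-basic open $X_{\ge k}$ equals the single costalk. We would first try to exhibit $X_{\ge k}$ as stratified homotopy equivalent to a conical basic $\SP^{n-k}(I_0)+I_1+\dots+I_k$ with minimal stratum $p_k$, and then use that cosheaf values on conical basics are costalks. The case $k=n$ is degenerate: there $X_{\ge n+1}=\emptyset$, so $F(X_{\ge n+1}) \simeq 0$.
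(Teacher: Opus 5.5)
Your three steps are the paper's three steps: reduce to the costalk at $Q=(1,\dots,1,0,\dots,0)$, use $\V\circ i_Q^!\simeq i_Q^*\circ\V$ together with the shift $[-k]$ on the cell $X_Q\cong\RR^k$, and identify $i_Q^!F(X_Q)\simeq\fib(F(X_{\ge Q})\to F(X_{>Q}))$. However, your justification of the first step fails, and under your reading of $X_{\ge k}$ the identity is actually false.

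You take $X_{\ge k}$ to be the filtration open $\{x_k>0\}$. You then discard its other $[1]^n$-minimal (discriminant) strata because $\V(F)$ vanishes there. But vanishing of $\V(F)$ on a closed stratum $Z$ means $i_Z^!F\simeq 0$. Removing $Z$ from an open $U$ changes $\Gamma_c(U;F)$ by $\Gamma_c(Z;F|_Z)$, which is governed by $i_Z^*F$, not $i_Z^!F$.

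A counterexample: take $n=2$, $k=1$ and $F$ constant with value $A$. Then $\{x_1>0\}=\SP^2(\Rg)\setminus\{0\}\cong(0,\infty)\times[0,1]$, so $\V(F)(\{x_1>0\})\simeq A[-1]$. The right-hand side is $\fib(A\to A)[-1]=0$. The discrepancy is exactly $\Gamma_c$ of the diagonal $\{x_1=x_2>0\}$, which is $A[-1]\neq 0$. More generally you get $F_1[-1]$ instead of $\fib(F_1\to F_2)[-1]$. For $k=n=2$ you get $\Gamma_c(\{x_1\ge x_2>0\};F_2)=0$ instead of $F_2[-2]$.

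Your fallback, exhibiting $\{x_k>0\}$ as equivalent to a basic $\SP^{n-k}(I_0)+I_1+\dots+I_k$, only holds $\NN$-stratifiedly. That does not help for $\V(F)$, which is $\PP$-constructible but in general not $\NN$-constructible.

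The paper obtains the first step from Remark \ref{rem:strata_computation}. So it reads $X_{\ge k}$ as the $[1]^n$-upset $X_{\ge Q}=\{x_1>\dots>x_k>x_{k+1}\}$ (with $x_{n+1}=0$), whose unique minimal stratum is $Q$. For this open your fallback is correct in the strong sense: a basic in $\B_Q$ includes into $X_{\ge Q}$ by a $[1]^n$-stratified homotopy equivalence. Hence $\V(F)(X_{\ge Q})$ is the costalk at $Q$ by constructibility alone, and vanishing on the discriminant plays no role in this step.

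The right-hand side is insensitive to the reading because $F$ is $\NN$-constructible. One needs $F(X_{\ge Q})\simeq F(\{x_k>0\})$ and $F(X_{>Q})\simeq F(\{x_{k+1}>0\})$, and in fact $X_{>Q}=X_{\ge Q}\cap\{x_{k+1}>0\}$. With this correction your steps 2 and 3 go through exactly as in the paper.
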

\begin{proof}
        Let $Q$ denote the partition $k=1 + \dots +1$. By remark \ref{rem:strata_computation} and the computations
        in the proof of Proposition \ref{prop:verdier_filtration_image} we have
        \[
                \V(F)(X_{\ge k}) \simeq i_Q^*\V( F)(X_Q) \simeq \V(i_Q^!F)(X_Q)
        \]
        Since $F$ is $\NN$-constructible, we have $i_Q^!F(X_Q) \simeq \fib(F(X_{\ge k}) \to F(X_{\ge k+1}))$.
        The formula now follows from the well-known fact that Verdier duality for locally constant sheaves on $X_Q \cong \RR^k$ is
        given by the $k$-fold shift on the stalk.
\end{proof}

\subsection{\texorpdfstring{$!$}{shriek}-sheaves and completeness}
\label{sub:shriek_real}

The {\em right} adjoint $(i_k)^!$ of $(i_k)_*$ can be described as follows: it
associates to a $(k+1)$-step filtration object $\{F_i\}_{0 \le i \le k+1}$ the
$k$-step filtration
\begin{equation}
    \label{eq:right}
    \fib(F_0 \to F_{k+1}) \to \fib(F_1 \to F_{k+1}) \to \dots \to  \fib(F_k \to F_{k+1}).
\end{equation}
Define the $\infty$-category 
\[
    \Shc^!(\Rg;\D) := \lim_{k \ge 0} i^!_k
\]
of constructible $!$-sheaves as the inverse limit over these right adjoints
$i_k^!$. We further note the adjunction
\[
    (j_k)_* : \Shc(X_{\le k};\D) \longleftrightarrow \Shc^*(\RR_{\ge 0};\D) : j_k^!
\]
where, in terms of filtrations, $(j_k)_*$ is given by extending a filtration by
zeros and $j_k^!$ is given by formula \eqref{eq:right}.

\begin{prop}
    \label{prop:realshriek_sub}
        We may identify the $\infty$-category $\Shc^!(\Rg;\D)$ of
        constructible $!$-sheaves with the full subcategory of
        $\Shc(\Rg;\D)$ consisting of those sheaves $F$ satisfying one of the following equivalent conditions:
        \begin{enumerate}[label=(\roman *)]
            \item\label{shriek:1} The canonical map
                \[
                   \colim_{k \in \NN} (j_k)_*(j_k)^! F  \simeq F 
                \]
                is an equivalence.
            \item\label{shriek:2} Expressing $F$ as a $\NN$-filtration $\{F_k\}$, we have
                \[
                    \colim_{k \in \NN} F_{k} \simeq  0.
                \]
        \end{enumerate}         
\end{prop}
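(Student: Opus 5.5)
We work throughout with exit path representations: by Corollary \ref{cor:exit_filtered_star}, $\Shc^*(\SPi(\Rg);\D)\simeq\Fun(\NN,\D)$, and each $\Shc(X_{\le k};\D)\simeq\Fun(\NN_{\le k},\D)$. In these terms $(j_k)_*$ extends a $k$-step filtration by zeros, and $j_k^!$ sends $\{F_i\}_{i\in\NN}$ to $\{\fib(F_i\to F_\infty)\}_{i\le k}$ with $F_\infty:=\colim_i F_i$. Before using this formula, we will check it: it is the right adjoint of extension by zero, computed as the limit over $j\ge k+1$ of the formula \eqref{eq:right}. Since $\NN_{>k}$ is filtered, that limit is the colimit of the tail. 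Here we use that $\D$ is stable, so fibers commute with sequential colimits.

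The plan has three steps.

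\textbf{Step 1: identify the limit category.} We first describe $\lim_k i_k^!$ concretely. An object is a compatible family $F^{(k)}\in\Fun(\NN_{\le k},\D)$ with $i_k^!F^{(k+1)}\simeq F^{(k)}$. We send $F\in\Fun(\NN,\D)$ to the family $(j_k^!F)_k$. This is compatible, because $j_{k+1}\circ i_k=j_k$ and so $i_k^!j_{k+1}^!\simeq j_k^!$. It defines a functor $\Phi:\Fun(\NN,\D)\to\Shc^!$. This functor has a left adjoint $\Psi$, given on a compatible family by $\colim_k (j_k)_*F^{(k)}$, where the transition maps come from the counits $(i_k)_*i_k^!\to\id$. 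This is the standard description of a limit along right adjoints as a colimit along the left adjoints in $\mathrm{Pr}^R$, or equivalently one checks the adjunction directly.

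\textbf{Step 2: show $\Psi$ is fully faithful.} Since $\Phi\Psi\simeq\id$ and $\Psi$ is the left adjoint, $\Psi$ is fully faithful. To see $\Phi\Psi\simeq\id$, we compute $j_m^!\colim_k (j_k)_*F^{(k)}$. The functor $j_m^!$ commutes with sequential colimits: in the formula above, fibers and colimits commute in stable $\D$. For $k\ge m$ we have $j_m^!(j_k)_*=i^!_{m,k}$, the iterated right adjoint. Hence the colimit stabilizes at $F^{(m)}$, giving $\Phi\Psi\simeq\id$. The essential image of $\Psi$ consists of exactly those $F$ for which the counit $\Psi\Phi F=\colim_k (j_k)_*j_k^!F\to F$ is an equivalence. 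This is condition \ref{shriek:1}, and it identifies $\Shc^!$ with the stated full subcategory.

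\textbf{Step 3: show \ref{shriek:1} is equivalent to \ref{shriek:2}.} We evaluate the counit at level $i$. For $k\ge i$, the object $(j_k)_*j_k^!F$ has $i$-th term $\fib(F_i\to F_\infty)$, and this is constant in $k$. So the colimit has $i$-th term $\fib(F_i\to F_\infty)$, and the counit at level $i$ is the map $\fib(F_i\to F_\infty)\to F_i$. This map is an equivalence for all $i$ if and only if $F_\infty\simeq 0$. For the direction \ref{shriek:1}$\Rightarrow$\ref{shriek:2}, we can argue more directly: take the colimit over $i$ and use $\colim_i\fib(F_i\to F_\infty)\simeq\fib(F_\infty\to F_\infty)=0$.

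We expect the main obstacle to be the bookkeeping in Step 1. This means making precise the identification of the limit along the $i_k^!$ with the family $(j_k^!F)_k$, and checking the formula for $j_k^!$ and its commutation with colimits. Everything else is a direct computation with filtrations in the stable category $\D$.
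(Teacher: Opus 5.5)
\textbf{The formula for $j_k^!$.} Your overall architecture is sound, but the explicit formula you use for $j_k^!$ is wrong, and Step~3 as written depends on it.

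The right adjoint of extension by zero is $j_k^!F=\{\fib(F_i\to F_{k+1})\}_{i\le k}$. This is what the paper means when it says $j_k^!$ is given by \eqref{eq:right}. In the recollement for the sieve $\NN_{\le k}\subset\NN$, $j_k^!F$ is the restriction of the fiber of $F\to R$, where $R$ is the right Kan extension of $F|_{\NN_{>k}}$. At $i\le k$, $R$ is the limit of a covariant diagram over $\NN_{>k}$, i.e.\ its value $F_{k+1}$ at the initial object. Filteredness of $\NN_{>k}$ does not turn this limit into a colimit. Equivalently, iterating \eqref{eq:right} from any level $n>k$ down to level $k$ gives $\fib(F_i\to F_{k+1})$, independently of $n$.

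Your formula also contradicts your own compatibility claim $i_k^!j_{k+1}^!\simeq j_k^!$. Applying $i_k^!$ to $\{\fib(F_i\to F_\infty)\}_{i\le k+1}$ yields $\{\fib(F_i\to F_{k+1})\}_{i\le k}$, not $\{\fib(F_i\to F_\infty)\}_{i\le k}$.

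\textbf{What survives and how to repair Step~3.} Steps~1 and~2 only use formal facts, so they stand:
$j_{k+1}\circ i_k=j_k$; full faithfulness of $(j_k)_*$; and the fact that $j_m^!$ commutes with sequential colimits. The last still holds for the correct formula, since $j_m^!$ is pointwise evaluation followed by fibers in stable $\D$.

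Step~3 must be fixed. The $i$-th term of $(j_k)_*j_k^!F$ is $\fib(F_i\to F_{k+1})$, which is \emph{not} constant in $k$. Its colimit over $k$ is $\fib(F_i\to F_\infty)$, because fibers commute with sequential colimits in stable $\D$. With this, the counit at level $i$ is $\fib(F_i\to F_\infty)\to F_i$, as you claim, and your proof that (i) and (ii) are equivalent goes through unchanged.

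\textbf{Comparison with the paper.} After the repair, your route differs from the paper's mainly in packaging. The paper does not construct the adjunction abstractly. It models $\Shc^!$ and $\Shc^*$ by diagram categories $M\subset\Fun(\NN\times\NN,\D)$ and $\widetilde M\subset\Fun(\NN^{\rhd}\times\NN,\D)$. There, biCartesian squares between rows encode the $i_k^!$, and the row at $\infty$ is the $*$-sheaf. The adjunction then becomes restriction and left Kan extension along $\NN\times\NN\subset\NN^{\rhd}\times\NN$, so the counit at $(\infty,j)$ is the pointwise colimit $\colim_i F(i,j)\to F(\infty,j)$.

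Your version builds $\Psi\dashv\Phi$ formally and proves full faithfulness explicitly via $j_m^!(j_k)_*\simeq i^!_{m,k}$. This is more general and makes it transparent why the embedding is fully faithful. However, the $\mathrm{Pr}^{L}$/$\mathrm{Pr}^{R}$ argument needs $\D$ presentable, while the standing assumption is only bicomplete. Without presentability you must verify the adjunction by hand, as you indicate. Both arguments ultimately compute the same counit $\colim_k\fib(F_i\to F_{k+1})\to F_i$.
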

\begin{proof}
    The canonical map in \ref{shriek:1} is in fact the counit of an adjunction
    \begin{equation}\label{eq:shriekadj}
        \Shc^!(\Rg;\D) \llra \Shc^*(\Rg;\D)
    \end{equation}
    which can be constructed as a special case of \cite[4.6]{tashi:cores}, so
    the fact that it is an equivalence amounts to the statement that we have a
    fully faithful embedding $\Shc^!(\Rg;\D) \subset \Shc^*(\Rg;\D)$. Instead
    of setting up the argument in this general formal context, we provide a
    quick direct argument in terms of the identification 
    \[
        \Shc^*(\Rg;\D) \simeq \Fun(\NN,\D)
    \]
    Namely, denote by $\NN^{\rhd}$ the poset of natural numbers with a final
    element $\infty$ adjoined, and denote by
    \[
        \widetilde{M} \subset \Fun(\NN^{\rhd} \times \NN, \D) 
    \]
    the full subcategory consisting of diagrams $F:\NN^{\rhd} \times \NN \to
    \D$ satisfying 
    \begin{enumerate}
        \item\label{sd:1} for $i<j \in \NN$, we have $F(i,j) \simeq 0$, 
        \item\label{sd:2} for $i \ge j \in \NN$, and $i < k \le \infty$ the square
                \[
                \begin{tikzcd}
                    F(i,j) \ar{r}\ar{d} & F(i, j+1)\ar{d} \\
                    F(k,j) \ar{r} & F(k, j+1) 
                \end{tikzcd}
                \]
                is biCartesian.
    \end{enumerate}
    Denote further by 
    \[
        M \subset \Fun(\NN \times \NN, \D) 
    \]
    the full subcategory of diagrams $F$ satisfying the conditions \ref{sd:1}
    and \ref{sd:2} (with $k < \infty$). We may interpret the $k$th row of a
    diagram in $M$ as an object in $\Shc(X_{\le k};\D)$ and the remaining data
    comprising the diagram as exhibiting an object in $\lim_{k} i_k^!$ yielding an equivalence
    \[
        M \simeq \Shc^!(\Rg;\D).
    \]
    Further, restriction to $\{\infty\} \times \NN$ yields an equivalence $M
    \simeq \Fun(\NN,\D)$. 
    The adjunction \eqref{eq:shriekadj} then corresponds to the adjunction
    \[
        j_!: M \llra \widetilde{M}: j^*
    \]
    given by restriction and left Kan extension along the inclusion $j: \NN
    \times \NN \subset \NN^{\rhd} \times \NN$. It is now straighforward to
    explicitly verify the statements of the proposition in terms of this
    description.
\end{proof}

\begin{thm}
    \label{prop:realshriek}
    The analytic expansion equivalence \eqref{cor:star_equivalence}
            restricts to an equivalence of $!$-sheaves with complete analytic
        sheaves so that \eqref{cor:star_equivalence} can be augmented to a
            diagram
            \begin{equation}\label{eq:real_augmented}
                 \begin{tikzcd}
                     \Shc(\SPi(\Rg); \D) \arrow[r, "\An"{above}, "\simeq"{below}] &  \Shan(\Rg, \D) \\
                     \Shc^!(\SPi(\Rg); \D) \arrow[u, phantom, sloped, "\subset"] \arrow[r, "\simeq"{above}]& \Shan(\Rg, \D)^c \arrow[u,phantom, sloped, "\subset"]\\
                 \end{tikzcd}
            \end{equation}
            which can be identified with the diagram
            \begin{equation}\label{eq:real_augmented_ident}
                 \begin{tikzcd}
                    \Fun(\NN,\D) \arrow[r, "\simeq"{above}] & \D_{\Delta_+} \\
                    \Fun(\NN,\D)^0 \arrow[r, "\simeq"{above}]\arrow[u, phantom, sloped, "\subset"]  & (\D_{\Delta_+})^{\on{c}}\arrow[u, phantom, sloped, "\subset"]  \\
                 \end{tikzcd}
            \end{equation}
            where \begin{itemize}
                \item $\Fun(\NN,\D)^0 \subset \Fun(\NN,\D)$ consists of filtrations $F$ satisfying $\colim F \simeq 0$,
                \item $(\D_{\Delta_+})^{\on{c}} \subset \D_{\Delta_+}$ consists of simplicial objects $G$ satisfying $G(\emptyset) \simeq \colim G|\Delta$,
            \end{itemize}
\end{thm}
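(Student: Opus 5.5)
We work throughout with the combinatorial model. By Corollary \ref{cor:exit_filtered_star} and Example \ref{exa:real} we identify $\Shc^*(\SPi(\Rg);\D)\simeq\Fun(\NN,\D)$ and $\Shan(\Rg;\D)\simeq\D_{\Delta_+}$. Under these identifications the equivalence of Corollary \ref{cor:star_equivalence} is given by $G_n=\tfib q_n^*F$, by Example \ref{exa:realanalyticexplansion}. By Proposition \ref{prop:realshriek_sub}, the $!$-sheaves correspond to $\Fun(\NN,\D)^0$, the filtrations with vanishing colimit. The completeness condition for analytic sheaves on $\Rg$ becomes $G(\emptyset)\simeq\colim G|\Delta$, since the localization $\DD(\Rg)\to\Delta_+$ of Proposition \ref{prop:intervals_localization} restricts to a localization $\DD^*(\Rg)\to\Delta$, and colimits are invariant under localization. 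It therefore suffices to show one thing: for a filtration $F$ with associated augmented simplicial object $G$, the map $G_{-1}=F_0\to\colim_{\Delta^{\op}}G$ is an equivalence if and only if $\colim_k F_k\simeq 0$.

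To prove this, we use the polynomial truncation tower. Let $F^{\le k}$ be the filtration $F_0\to\dots\to F_k\to 0\to\cdots$, i.e. $(j_k)_*j_k^*F$; it corresponds to a sheaf supported on $\SP^k(\Rg)$. By Theorem \ref{thm:polytopoly} its analytic expansion is polynomial of degree $\le k$, and by Proposition \ref{prop:stable_complete} it is therefore complete. Note that passing from $F$ to $F^{\le k}$ does not change $G_n$ for $n<k$, since only $F_{|I|}$ with $|I|\le n+1$ enters $\tfib q_n^*F$.

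Next we reduce to a comparison of colimits. Both $\colim_{\Delta^{\op}}$ and $\tfib$ are exact, so we may compare the cofiber of $G_{-1}\to\colim G$ for $F$ with the corresponding cofibers for the $F^{\le k}$, all of which vanish. To make this precise, write $F$ as the fiber of a map $F\to \underline{F_\infty}$, where $\underline{F_\infty}$ is the constant filtration on $F_\infty:=\colim F$. The fiber $F'$ has vanishing colimit. By Proposition \ref{prop:realshriek_sub}\ref{shriek:1}, $F'$ is a colimit of the $(j_k)_*j_k^!F'$, each of which is supported on $\SP^k$. Since analytic expansion commutes with colimits (it is an exact functor built from finite limits, compatible with filtered colimits in the stable setting), the expansion of $F'$ is a colimit of polynomial, hence complete, sheaves. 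Completeness is a colimit condition and so is stable under colimits; thus $F'$ has complete expansion. The statement thereby reduces to the constant filtration $\underline{A}$.

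For the constant filtration $\underline{A}$, each cube $q_n^*\underline{A}$ is constant, so $G_n=0$ for $n\ge 0$ while $G_{-1}=A$. Hence $\underline{A}$ is complete if and only if $A\simeq 0$. Combining this with the previous step, the fiber sequence $F'\to F\to\underline{F_\infty}$ shows that $F$ is complete if and only if $F_\infty\simeq 0$, which is the claim.

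The main obstacle is justifying the colimit commutation in the previous step. Concretely, one must check that $\tfib q_n^*$ and the right Kan extension defining $G$ commute with sequential colimits in $\Fun(\NN,\D)$. This holds because each $G_n$ depends only on finitely many $F_i$, and because in a stable category finite limits commute with colimits.
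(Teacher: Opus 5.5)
Your proof is correct, but it takes a genuinely different route from the paper's.

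The paper works with the sections with support $\Gamma_{X_{\le k}}(F)\simeq\fib(F_0\to F_{k+1})$. It identifies each of them with the partial colimit of $F^{\an}$ over the truncation $\Delta_{\le k}^{\op}$, an identification it says is clear from the proof of Proposition \ref{prop:Rg_polynomification_equivalence}. Passing to the colimit in $k$ then gives directly $\colim_k F_k\simeq\cof(\colim F^{\an}|\Delta^{\op}\to F^{\an}(\emptyset))$.

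You never compute partial colimits. Instead you split $F$ along $F'\to F\to\underline{F_\infty}$ and exhaust the $!$-part $F'$ by the finitely supported sheaves $(j_k)_*j_k^!F'$. Theorem \ref{thm:polytopoly} together with Proposition \ref{prop:stable_complete} shows each of these has complete expansion. Since completeness is closed under colimits, only the constant filtration has to be computed by hand.

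What each route buys:
\begin{itemize}
\item Your argument rests only on results stated earlier in the paper plus formal exactness. It yields the same formula $\cof(\colim_{\Delta^{\op}}G\to G_{-1})\simeq\colim_k F_k$ as a byproduct, and it avoids the lightly justified skeleton identification.
\item The paper's route records finer information: $\Gamma_{X_{\le k}}(F)$ is the $k$-skeleton $\colim_{\Delta^{\op}_{\le k}}F^{\an}$. In other words, the identification in Remark \ref{rem:luriedk} is Lurie's skeletal-filtration functor.
\end{itemize}

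A few small corrections:
\begin{itemize}
\item The comparison map goes $\colim_{\Delta^{\op}}G\to G_{-1}$, not the other way.
\item It is $F'$, not $F$, that you define as the fiber of $F\to\underline{F_\infty}$.
\item The step you single out as the main obstacle is automatic. $\An$ is an equivalence by Corollary \ref{cor:star_equivalence}, so it preserves all colimits and fiber sequences, and colimits on both sides are computed pointwise.
\item Your paragraph on $F^{\le k}=(j_k)_*j_k^*F$ is never used.
\end{itemize}
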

\begin{proof}
    We may identify $\Fun(\NN,\D)^0$ with $\Fun(\NN,\D)$ via
    \begin{equation}\label{eq:lurieident1}
        \Fun(\NN,\D)^0 \to \Fun(\NN,\D), \{F_k\}_{k \in \NN} \mapsto \{ \fib(F_0 \to F_{k+1}) \}_{k \in \NN}
    \end{equation}
    with inverse given by
    \[
        \{\tilde{F}_k\}_{k \in \NN} \mapsto \{ \cofib(\tilde{F}_{k-1} \to \colim \tilde{F}) \}_{k \in \NN}
    \]
    setting $F_{-1} = 0$. These coordinate changes can be naturally seen in
    terms of constructible sheaves. Namely, for every sheaf $F \in
    \Shc(\Rg;\D)$ we have a diagram
    \[
    \begin{tikzcd}
        \Gamma_{X_{\le 0}}(F)\ar{d} \ar{r} & \Gamma_{X_{\le 1}}(F) \ar{dd}\ar{r} & \cdots \ar{r} & \Gamma_{X_{\le k}}(F) \ar{dddd} \ar{r} & \cdots \ar{r} & \Gamma(X;F) \ar{d} \\
                                    0\ar{rrrrr}&&&&& \Gamma(X \setminus X_{\le 0} ;F) \ar{d} \\
                                               &0\ar{rrrr} &&&&\Gamma(X \setminus X_{\le 1} ;F)\ar{d} \\
                                     &&&&& \vdots \ar{d}\\
                                     &&&0 \ar{rr} &&\Gamma(X \setminus X_{\le k} ;F)\ar{d} \\
                                     &&&&& \vdots
    \end{tikzcd}
    \]
    where the indicated squares are exact triangles. In fact, the category
    $\Shc(X;\D)$ is equivalent to the category of such diagrams. The
    subcategory $\Shc^!(X;\D)$ gets, via the projection 
    \[
        F \mapsto \{ \Gamma_{X_{\le k}}(F) \}_{k \in \NN}
    \]
    identified with $\Fun(\NN,\D)^0$ and, via the projection
    \[
        F \mapsto F_k := \{ \Gamma(X \setminus X_{\le k-1} ;F) \}_{k \in \NN}
    \]
    with $\Fun(\NN,\D)$. Here, we set $X_{\le -1} = \emptyset$. The equivalence
    between $!$-sheaves and complete sheaves now follows from 
    \begin{enumerate}
        \item the observation that the sections with support $\Gamma_{X_{\le k}}(F)$ can be
            identified with the colimit of the restriction to $\Delta_{\le k}$ of
            the analytic expansion of $F$ (which is clear from the
            proof of Proposition \ref{prop:Rg_polynomification_equivalence}),
        \item and the computation
            \begin{align*}
                \colim_k F_k & \simeq \colim_k \cof(\Gamma_{X_{\le k}}(F) \to F(X))\\
                            & \simeq \cof(\colim_k \Gamma_{X_{\le k}}(F) \to F(X))\\
                            & \simeq \cof(\colim F^{\an}|\Delta^{\op} \to F^{\an}(\emptyset)).
            \end{align*}
    \end{enumerate}
    
\end{proof}

\begin{rem}
    \label{rem:luriedk}
    Note that we have an equivalence
    \begin{equation}\label{eq:lurieident2}
        \D_{\Delta} \simeq (\D_{\Delta_+})^{\on{c}}
    \end{equation}
    by means of left Kan extension. 
    Under the identifications \eqref{eq:lurieident1} and
    \eqref{eq:lurieident2}, the middle row of \eqref{eq:real_augmented_ident}
    translates to an equivalence
    \begin{equation}\label{eq:luriedk}
        \Fun(\NN,\D) \simeq \D_{\Delta}
    \end{equation}
    which recovers Lurie's Dold-Kan correspondence from \cite{lurie:ha}.
\end{rem}

\subsection{Verdier duality for \texorpdfstring{$!$}{shriek}-sheaves and \texorpdfstring{$t$}{t}-exactness}
\label{sub:realverdier}

In this section, we explain how the classical Dold-Kan correspondence,
formulated for abelian categories as opposed to stable $\infty$-categories,
arises in our geometric setup. Given a $t$-structure on the stable
$\infty$-category $\D$ with heart $\A$, it induces a standard one (defined
levelwise) on the category $\D_{\Delta}$ of simplicial objects with heart
$\A_{\Delta}$. However, the equivalence \eqref{eq:luriedk} does not identify
this $t$-structure with the standard one on $\Fun(\NN,\D)$ (with heart
$\Fun(\NN,\A)$) but rather a non-standard one. As we will now see, this latter
$t$-structure arises geometrically in our sheaf-theoretic setup: it can be
described as the ''Verdier dual'' $t$-structure on the category of sheaves. 

As the space $\SPi(\Rg)$ is not locally compact, we cannot make use of Verdier
duality in its usual form. But, as argued in \cite[Lecture 21]{lurie:tamagawa},
the situation has a remedy. Above, we have defined $!$-sheaves as the inverse
limit over the sequence
\[
\begin{tikzcd}
    \cdots & \ar[swap]{l}\Sh(\SP^{k-1}(\Rg);\D)  & \ar[swap]{l}{i_{k-1}^!}  \Sh(\SP^k(\Rg);\D)  & \ar[swap]{l}{i_k^!} \Sh(\SP^{k+1}(\Rg);\D) & \ar[swap]{l}\cdots
\end{tikzcd}
\]
Since the spaces $\SP^k(\Rg)$, $k \in \NN$, are locally compact (and Hausdorff), we may apply
Verdier duality to each of the terms of this sequence, yielding an equivalence
with the inverse limit over the $*$-pullbacks
\[
\begin{tikzcd}
    \cdots & \ar[swap]{l}\coSh(\SP^{k-1}(\Rg);\D)  & \ar[swap]{l}{i_{k-1}^*}  \coSh(\SP^k(\Rg);\D)  & \ar[swap]{l}{i_k^*} \coSh(\SP^{k+1}(\Rg);\D) & \ar[swap]{l}\cdots
\end{tikzcd}
\]
of {\em cosheaf} categories. Thus, passing to limits, covariant Verdier duality
yields an equivalence
\begin{equation}\label{eq:shriek-verdier}
   \V:\Sh^!(\SPi(\Rg);\D) \simeq \coSh^*(\SPi(\Rg);\D) =: \coSh(\SPi(\Rg);\D).
\end{equation}

\begin{thm}
    \label{thm:shriekverdier}
    \begin{enumerate}
        \item The restriction of $\V$ to $\PP$-constructible $!$-sheaves induces an equivalence
            \[
                    \Sh_{\PP}^!(\SPi(\Rg),\D) \overset{\simeq}{\lra} \coSh_{\PP}(\SPi(\Rg),\D).
            \]
        \item Restricting further to the full subcategory of
            $\NN$-constructible $!$-sheaves, we obtain an equivalence
            \[
                \Sh_{\NN}^!(\SPi(\Rg),\D) \overset{\simeq}{\lra} \coSh_{\PP}(\SPi(\Rg),\D)^0
            \]
            where $\coSh_{\PP}(\SPi(\Rg),\D)^0$ denotes the full subcategory of
            cosheaves whose restriction to the discriminant is $0$.
        \item There is an equivalence
            \[
                \coSh_{\PP}(\SPi(\Rg),\D)^0 \overset{\simeq}{\lra} \Ch_{\ge 0}(\D)
            \]
            of stable $\infty$-categories, where $\Ch_{\ge 0}(\D) \simeq \lim_n \Ch_{[0,n]}(\D)$
            is the limit along restriction morphisms.
    \end{enumerate}
\end{thm}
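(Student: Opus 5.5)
The plan is to deduce all three statements levelwise on the finite symmetric products $\SP^n(\Rg)$ and then pass to inverse limits. The limits are the $!$-limit $\lim_n i_n^!$ on the sheaf side and the $*$-limit $\lim_n i_n^*$ on the cosheaf side. The input is the equivalence \eqref{eq:shriek-verdier}, which is itself a limit of the levelwise Verdier dualities $\V_n$ of Theorem \ref{thm:verdier_duality_constructibility}, together with the compatibility $\V \circ i^! \simeq i^* \circ \V$ for the closed inclusions $i_n:\SP^n(\Rg)\subset \SP^{n+1}(\Rg)$.

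For (1), I would first check that the closed inclusions $i_n$ are compatible with the discriminant stratifications: $\SP^n(\Rg)$ is a union of $\PP_{\le n}$-strata of $\SP^{n+1}(\Rg)$. Consequently $i_n^!$ preserves $\PP$-constructible sheaves and $i_n^*$ preserves $\PP$-constructible cosheaves. Since each $\V_n$ restricts to an equivalence between $\Sh_{\PP}(\SP^n(\Rg);\D)$ and $\coSh_{\PP}(\SP^n(\Rg);\D)$, the restricted towers are equivalent. Taking limits of full subcategories, which are stable under the transition functors, gives (1).

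For (2), I would apply Proposition \ref{prop:verdier_filtration_image} at each level $n$: $\V_n$ identifies $\Sh_{\NN}(\SP^n(\Rg);\D)$ with the cosheaves vanishing on the discriminant. It then remains to see that both subcategories are compatible with the transition maps. On the cosheaf side this is clear, because the discriminant of $\SP^n(\Rg)$ is the intersection of $\SP^n(\Rg)$ with the discriminant of $\SP^{n+1}(\Rg)$, and $i_n^*$ computes stalks. On the sheaf side it follows from the explicit formula \eqref{eq:right} for $i_n^!$ on filtrations: this formula shows that $i_n^!$ preserves $\NN$-constructibility. Alternatively, it follows formally from the cosheaf side via $\V i^! \simeq i^*\V$. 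An object of the $!$-limit is then $\NN$-constructible exactly when each level is, which gives (2).

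For (3), I would use the exit path description $\coSh_{\PP}(\SP^n(\Rg);\D)\simeq \Fun(([1]^n)^{\op},\D)$ obtained by dualizing Proposition \ref{prop:real_discriminant_exit_path_cube}. Under this description, vanishing on the discriminant means vanishing on all vertices $q\in[1]^n$ corresponding to ordered partitions with some part $m_i\ge 2$ among $i\ge 1$. These are exactly the vertices that are not of the form $(1,\dots,1,0,\dots,0)$, so the subcategory becomes $\Ch_{[0,n]}(\D)$ as in Definition \ref{defi:chain_complex}. I would then identify $i_n^*$ with restriction along the face $[1]^n\subset[1]^{n+1}$ given by setting the last coordinate to $0$, which matches the restriction $\Ch_{[0,n+1]}(\D)\to\Ch_{[0,n]}(\D)$. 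Passing to limits yields $\coSh_{\PP}(\SPi(\Rg),\D)^0\simeq \lim_n \Ch_{[0,n]}(\D)=\Ch_{\ge 0}(\D)$.

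The main obstacle is the naturality of the exit path equivalences with respect to the transition functors $i_n^*$ for cosheaves (equivalently $i_n^!$ for sheaves). One must check carefully which face of $[1]^{n+1}$ corresponds to $\SP^n(\Rg)$, given the convention $x_{n+1}=0$, and that entrance path restriction agrees with $*$-pullback of cosheaves. I would verify this using Remark \ref{rem:strata_computation}: a conical basic of $\SP^n(\Rg)$ centred in stratum $q$ extends to a basic of $\SP^{n+1}(\Rg)$ centred in the stratum $(q,0)$, with compatible minimal strata.
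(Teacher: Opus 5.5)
Your proposal is correct and follows the paper's route: levelwise Verdier duality for $\PP$-constructible (co)sheaves (Theorem \ref{thm:verdier_duality_constructibility}), Proposition \ref{prop:verdier_filtration_image} to identify the $\NN$-constructible part with cosheaves vanishing on the discriminant, and the exit-path cube of Proposition \ref{prop:real_discriminant_exit_path_cube}, under which $*$-restriction along the face $q\mapsto(q,0)$ becomes stupid truncation, followed by passage to limits. You spell out the compatibility with the transition functors, which the paper leaves implicit. Of your two arguments that $i_n^!$ preserves $\NN$-constructibility, use the one via $\V i^!\simeq i^*\V$, since \eqref{eq:right} already presupposes this.
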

\begin{proof}
        This is a consequence of Theorem \ref{thm:verdier_duality_constructibility}, Proposition
        \ref{prop:verdier_filtration_image} and the exit path computation in Proposition
        \ref{prop:real_discriminant_exit_path_cube}, along with the observation
        that $*$-restriction of cosheaves corresponds to the (stupid)
        truncation of chain complexes. 
\end{proof}

We conclude by establishing the $t$-exactness of $\An \circ \V$.

\begin{thm}
    \label{thm:texact}
    Suppose the $\D$ comes equipped with a $t$-structure with heart $\A$. 
    \begin{enumerate}
        \item The equivalence
        \begin{equation}\label{eq:sheaf_chaindk}
            \An \circ \V: \coSh_{\PP}(\SPi(\Rg),\D)^0 \to \Shan(\Rg;\D)^c, G \mapsto (\An \circ \V (G))|\DD^*(\RR_{>0})
        \end{equation}
        is $t$-exact with respect to standard $t$-structures where we identify
        complete analytic sheaves with their restriction to the poset of nonempty
        opens.
    \item Under the identifications from Theorem \ref{thm:shriekverdier} and
        Remark \ref{rem:luriedk}, the equivalence \eqref{eq:chaindk} induces an
        equivalence
        \begin{equation}
            \label{eq:chaindk}
            \Ch_{\ge 0}(\D) \to \D_{\Delta}, B_{\bullet} \mapsto A_{\bullet}
        \end{equation}
        where, for $[m] \in \Delta$, we have
        \begin{equation}
            \label{eq:dkformula}
            A_m \simeq \bigoplus_{0 \le k \le m} B_{k}^{\binom{ m }{ k }}
        \end{equation}
    \item By passage to hearts, the equivalence \eqref{eq:chaindk} induces an equivalence
        \[
            \Ch_{\ge 0}(\A) \to \A_{\Delta}, B_{\bullet} \mapsto A_{\bullet}
        \]
        which coincides with the classical Dold-Kan correspondence for the
        abelian category $\A$. 
    \end{enumerate} 
\end{thm}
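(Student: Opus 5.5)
The plan is to compute the composite $\An\circ\V$ explicitly on a single truncation level $\SP^n(\Rg)$, and then pass to the limit over $n$. All three claims should follow from one explicit formula.

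\textbf{Step 1: the values of $\Fan$.} Let $G\in\coSh_{\PP}(\SP^n(\Rg),\D)^0$ correspond to a chain complex $B_\bullet\in\Ch_{[0,n]}(\D)$, and let $F=\V^{-1}(G)$. By Theorem \ref{thm:shriekverdier} and Proposition \ref{prop:verdier_filtration_image}, $F$ is an $\NN$-constructible sheaf. From the proposition computing $\V(F)(X_{\ge k})$ we get
\[
B_k\simeq \fib(F_k\to F_{k+1})[-k],
\]
where $F_k=F(X_{\ge k})$. Now evaluate $\Fan$ on $U=D_0\cup\dots\cup D_m$, a union of $m+1$ intervals. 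By Example \ref{exa:realanalyticexplansion}, $\Fan(U)$ is the total fiber of the cube $I\mapsto F_{|I|}$ over $\P(\{0,\dots,m\})$. This cube is pulled back along the cardinality map $q$ from a filtered object, so I will compute its total fiber by filtering $F$ by its stupid truncations, using stability of $\D$. Concretely, $F$ is an iterated extension of the filtered objects $\fib(F_k\to F_{k+1})$ placed in degree $k$, that is, of $B_k[k]$ placed in degree $k$. The total fiber of $q^*$ of such an object is computed by cube inversion (Lemma \ref{lem:cubeinversion}, Remark \ref{rem:informal}). The vertices with $|I|=k$ contribute $\binom{m+1}{k}$ copies of $B_k[k]$, which sit in a cube whose remaining vertices are zero, so the total fiber gets desuspended $k$ times.

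\textbf{Step 2: the main obstacle.} A naive count gives multiplicity $\binom{m+1}{k}$, whereas the claimed formula \eqref{eq:dkformula} has $\binom{m}{k}$. The discrepancy is exactly the augmentation: I need to restrict to $\DD^*$ and use completeness. The step I expect to be hardest is the bookkeeping showing that, once we pass to the normalized form $\tilde F_k=\fib(F_0\to F_{k+1})$ of \eqref{eq:lurieident1}, the total fiber of the subcube of $k$-element subsets containing a fixed index $0$ contributes $\binom{m}{k}\cdot B_k$ with all shifts cancelling. The shift $[-k]$ from Verdier duality should cancel against the $[k]$ coming from the total fiber.

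My first approach is direct: show that the filtration of $A_m$ by degree splits. It splits because each graded piece is a cube supported on a single layer $\{|I|=k\}$, and such a cube has total fiber equal to a direct sum over its vertices. This yields $A_m\simeq\bigoplus_k B_k^{\binom mk}$ naturally at each level $n\ge m$, compatibly with the truncations. That establishes part (2).

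\textbf{Step 3: $t$-exactness and the heart.} Since $A_m$ is a finite direct sum of the $B_k$, the functor is $t$-exact levelwise for the standard $t$-structures. The truncation functors of each $t$-structure are defined pointwise, and the property ``$B_k\in\D^{\le0}$ for all $k$ iff $A_m\in\D^{\le0}$ for all $m$'' follows from the formula, because $B_m$ is a summand of $A_m$. This gives part (1).

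Restricting to hearts yields an equivalence $\Ch_{\ge0}(\A)\simeq\A_\Delta$. To identify it with the classical Dold–Kan correspondence, I would use that both equivalences send $B_\bullet$ to $A_m=\bigoplus_{[m]\twoheadrightarrow[k]}B_k$, and that the normalized complex of $A_\bullet$ recovers $B_\bullet$. The latter holds because the summand $B_m$ in $A_m$ (the vertex $I=\{0,\dots,m\}$) is the intersection of the kernels of the face maps. I would check this last point by tracing how the face maps $d_i$ act on the cube, namely by deleting the interval $D_i$.
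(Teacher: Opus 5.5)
Your Step 1 rests on a decomposition that is false. The "discrepancy" between $\binom{m+1}{k}$ and $\binom{m}{k}$ is an artifact of that error, not an augmentation effect.

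The objects $\fib(F_k\to F_{k+1})$ are the graded pieces $(i_k)_*i_k^!F$ of the stratification filtration. As filtered objects these pieces are $\fib(F_k\to F_{k+1})$ placed constantly on $\{0,\dots,k\}$ and zero above; they are not concentrated at the single vertex $k$. There is a filtration with single-vertex pieces, coming from $j_!j^*\to\mathrm{id}\to i_*i^*$, but its pieces are the $F_k$ themselves. Already for $n=1$, $m=0$ your count predicts an extension of $B_0$ and $B_1$, whereas $\Fan(D)=\fib(F_0\to F_1)=B_0$.

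With the correct pieces, the cube attached to $E=B_k[k]$ is $E$ at every $I$ with $|I|\le k$ and $0$ above. Its total fiber is $\fib\bigl(E\to E^{|\mathrm{sk}_{k-1}\Delta^m|}\bigr)\simeq E[-k]^{\binom{m}{k}}$, because the $(k-1)$-skeleton of $\Delta^m$ is a wedge of $\binom{m}{k}$ spheres $S^{k-1}$. So $\binom{m}{k}$ appears directly, with no correction needed. Your proposed correction in Step 2 could not have worked anyway. $A_m$ is $\Fan(U)$ for a nonempty $U$, while restriction to $\DD^*$, completeness, and the reparametrization \eqref{eq:lurieident1} concern only the value at $\emptyset$ or the parametrization of the source. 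None of them changes $\Fan(U)$, and your sentence "this yields $\binom mk$" contradicts your own single-layer computation.

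Even with the right graded pieces, "the filtration splits because each graded piece is a direct sum" is not an argument. Split graded pieces do not make a filtration split; you would have to rule out extensions between the $B_k$ for different $k$. This is exactly what the paper supplies geometrically, and it is the idea missing from your proposal.

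Set $A=\Rg\setminus U=A_0\cup\dots\cup A_{m+1}$. Then $\Fan(U)\simeq\Gamma_c(\SP^n(A);i^*G)$, and $\SP^n(A)$ is a disjoint union of the clopen pieces $r_0A_0+\dots+r_{m+1}A_{m+1}$, so the value is a direct sum over these pieces.
\begin{itemize}
\item Pieces with $r_j\ge 2$ for some bounded gap $1\le j\le m$ vanish, since $G$ vanishes on the discriminant.
\item Pieces with $r_{m+1}>0$ vanish by non-compactness.
\item Each remaining piece $(n-k)A_0+A_{j_1}+\dots+A_{j_k}$ contributes $B_k$.
\end{itemize}
Thus $\binom mk$ counts $k$-subsets of the $m$ bounded gaps between the intervals, not subsets of the intervals. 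Algebraically, you could instead use the Dold--Kan splitting in the idempotent-complete additive category $h\D$ and then identify the normalized pieces with $B_k$.

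Once (2) is established, your derivation of (1) is fine, including reflection via the summand $B_m$ of $A_m$. For (3), matching $\bigoplus_{[m]\twoheadrightarrow[k]}B_k$ objectwise does not identify the functors. Your normalized-complex check must also recover the differential. Also, the summand $B_m$ does not correspond to the vertex $I=\{0,\dots,m\}$, which in your indexing has $m+1$ elements.
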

\begin{proof}
    We will verify that the functor \eqref{eq:chaindk} followed by evaluation
    at $[m] \in \Delta$ is given by the claimed formula \eqref{eq:dkformula}.
    This implies all other claims of the theorem since finite direct sums
    are $t$-exact. It suffices to verify \eqref{eq:dkformula} for cosheaves
    supported on a finite symmetric product $\SP^n(\Rg)$ and for $m \le n$. To this end, let $G \in
    \coSh_{\PP}(\SP^n(\Rg),\D)^0$ be a cosheaf corresponding to a coherent
    chain complex $B_{\bullet}$. Let further $U \in \DD_{m+1}(\RR_{>0})$
    be an open subset consisting of $m+1$ open intervals and denote
    \[
        A := \Rg \setminus U = A_0 \cup A_1 \cup \dots \cup A_{m+1}
    \]
    where the components $A_i$ are ordered by the natural order of $\Rg$. In particular,
    $A_0 = [0, a]$ and $A_{m+1} = [b,\infty)$ for certain $a < b \in \Rg$. We compute
    \begin{align*}
        \An(\V(G))(U) & = \Gamma_{\SP^n(A)} \circ \V (G) \\
                    & \simeq \Gamma \circ i^! \circ \V (G)\\
                    & \simeq \Gamma_c(\SP^n(A); i^*G)
    \end{align*}
    where $i: \SP^n(A) \subset \SP^n(\Rg)$ denote the closed embedding. Now the
    closed subset $\SP^n(A)$ is a union of the components
    \[
        \underbrace{A_0 + \dots + A_0}_{r_0} + \underbrace{A_1 + \dots + A_1}_{r_1} + \dots + \underbrace{A_{m+1} + \dots + A_{m+1}}_{r_{m+1}}
    \]
    where $r_k \ge 0$ with $\sum_k r_k = n$ 
    so that $\An(\V(G))(U)$ splits into the direct sum of sections in $i^*G$
    with compact support on these components. These compactly supported
    sections vanish on components that satisfy one of the following conditions:
    \begin{itemize}
        \item $r_i > 1$ for some $0<i<m+1$, since the sections can be identified
            with a stalk of the cosheaf on the discriminant (where the cosheaf
            vanished by assumption), 
        \item $r_i \le 1$, for all $0<i<m+1$, and $r_{m+1} > 0$ since the corresponding sections can be identified with compactly
            supported sections of a constant cosheaf (with value $B_{n-r_0}$) on the noncompact subset 
            \[
                    \underbrace{0 + \dots + 0}_{r_0} + \dots + \underbrace{  {A_{m+1} + \dots +A_{m+1}}}_{r_{m+1}},
            \]
    \end{itemize}
    cf. Figure \ref{fig:components}. The arguments for these latter vanishing
    results use the same techniques as in the proof of Lemma
    \ref{lem:extend_zero_disk}, we omit the details.
    Therefore, the only contributions to $\An(\V(G))(U)$ come from components of the form
    \[
        \underbrace{A_0 + \dots + A_0}_{n-k} +  A_{j_1} + \dots + A_{j_k}
    \]
    where $0 \le k \le n$ and $0 < j_1 < \dots < j_k < m+1$. Compactly supported
    sections on such a component can be identified with sections of $i^*G$ on the compact orthotope
    \[
        \underbrace{0 + \dots + 0}_{n-k} +  A_{j_1} + \dots + A_{j_k}
    \]
    of the constant cosheaf with value $B_k$ yielding the desired formula
    \eqref{eq:dkformula}.
\end{proof}

\begin{figure}[ht]
    \centering
\begin{tikzpicture}[scale=.7]
  \def\xmax{8}
  \def\ymax{8}

  \fill[black!10] (0,0) -- (\xmax,0) -- (\xmax,\ymax) -- cycle;

  \filldraw[black] (0,0) circle (2pt);

  \begin{scope}
    \clip (0,0) -- (\xmax,0) -- (\xmax,\ymax) -- cycle;

    \fill[black!40] (0,0) rectangle (1,1);
    \fill[black!40] (2,2) rectangle (3,3);
    \fill[pattern=dots] (2,2) rectangle (3,3);
    \fill[black!40] (4,4) rectangle (5,5);
    \fill[black!40, pattern=dots] (4,4) rectangle (5,5);
    \fill[black!40] (6,6) rectangle (\xmax,\ymax);
    \fill[black!40, pattern=dots] (6,6) rectangle (\xmax,\ymax);

    \fill[black!40] (2,0) rectangle (3,1);
    \fill[black!40] (4,0) rectangle (5,1);
    \fill[black!40] (6,0) rectangle (\xmax,1);
    \fill[black!40, pattern=dots] (6,0) rectangle (\xmax,1);

    \fill[black!40] (4,2) rectangle (5,3);
    \fill[black!40] (6,2) rectangle (\xmax,3);
    \fill[black!40, pattern=dots] (6,2) rectangle (\xmax,3);

    \fill[black!40] (6,4) rectangle (\xmax,5);
    \fill[black!40, pattern=dots] (6,4) rectangle (\xmax,5);

  \end{scope}

  \draw[thick] (0,0) -- (\xmax,0);

  \foreach \t in {1,2,3,4,5,6} { \draw (\t,0) -- (\t,-0.12); }
  \node[below] at (1.5,-0.12) {$D_1$};
  \node[below] at (3.5,-0.12) {$D_2$};
  \node[below] at (5.5,-0.12) {$D_3$};

\end{tikzpicture}
    \caption{The components of the complement of $U +
    \Rg$ in $\SP^2(\Rg)$, the ones with trivial contributions to $\An(\V(G))(U)$ are dotted.}%
    \label{fig:components}
\end{figure}

\newpage

\section{\texorpdfstring{$\SP^{\infty}(\CC)$}{spic} and the paracyclic Dold-Kan correspondence}

Let again $\D$ be a bicomplete {\em stable} $\infty$-category.
We discuss how the analytic expansion functor is related to the paracyclic Dold-Kan
correspondence.

Let $C = C(S^1) \cong \CC$. To obtain an equivalence from analytic expansion,
we will have to use a slightly modified notion of analytic sheaf, as explained now.
We consider the action of the group $\RR_+$ of positive reals on $\CC^*$ by
multiplication. We call $U \in \DD(\CC^*)$ {\em radial} if it is
$\RR_+$-invariant and denote by $\DDr(\CC^*) \subset \DD(\CC^*)$ the subset of
radial opens. We refer to the homotopy invariant presheaves on $\DDr(\CC^*)$
as {\em radial analytic sheaves}. They form an $\infty$-category denoted by
$\Shanr(\CC^*;\D)$.

Note that we obtain an identification of posets
\[
    \DDr(\CC^*) \lra \DD(S^1), U \mapsto U \cap S^1
\]
so that we may identify radial analytic sheaves on $\CC^*$ with analytic
sheaves on $S^1$. 

\subsection{The results}
\label{sub:complex_results}

Let $\D$ be a bicomplete stable $\infty$-category.

\begin{thm}
    \label{thm:complexanalytic}
        The analytic expansion functor induces a tower of equivalences

            \begin{equation}\label{eq:complex_expansion}
                 \begin{tikzcd}
                     \Shc^*(\SPi(\CC);\D) \arrow[r, "\An"{above}, "\simeq"{below}] &  \Shanr(\CC^*;\D) \\
                     \Shc^{*!}(\SPi(\CC); \D) \arrow[u, phantom, sloped, "\subset"] \arrow[r, "\simeq"{above}]& \Shanr(\CC^*, \D)^c \arrow[u,phantom, sloped, "\subset"]\\
                     \Shc^*(\SP^{k}(\CC); \D) \arrow[u,phantom, sloped, "\subset"]\arrow[r, "\simeq"{above}] &  \Shkr(\CC^*, \D)\arrow[u,phantom, sloped, "\subset"]
                 \end{tikzcd}
            \end{equation}
            \begin{itemize}
                    \item $\Shc^{*!}(\SPi(\CC); \D)$: $*!$-sheaves (c.f. \S \ref{sub:starshriek})
                    \item $\Shanr(\CC^*;\D) \cong \Shan(S^1,\D)$: radial analytic sheaves,
                    \item $\Shanr(\CC^*;\D)^{\c} \cong \Shan(S^1,\D)^{\c}$: complete radial analytic sheaves,
                    \item $\Shkr(\CC^*;\D) \cong \Sh^{\le k}(S^1,\D)^{\c}$: radial $k$-polynomial sheaves,
            \end{itemize}
            Via exit path representations, these yield the following combinatorial tower of equivalences:

            \begin{equation}\label{eq:complex_combinatorial}
                 \begin{tikzcd}
                         \Fun(\NN_{S^1}^{\triangleleft},\D) \arrow[r, "\An"{above}, "\simeq"{below}] &  \D_{\Lambda_{\infty,+}} \\
                         \Fun(\NN_{S^1}^{\triangleleft},\D)^0 \arrow[u, phantom, sloped, "\subset"] \arrow[r, "\simeq"{above}]
                        & (\D_{\Lambda_{\infty}})^{\c} \arrow[u,phantom, sloped, "\subset"]\\
                         \Fun((\Delta^{k-1}_{S^1})^{\triangleleft},\D) \arrow[u,phantom, sloped, "\subset"]\arrow[r, "\simeq"{above}] &  \D_{\Lambda_{\infty,+}}^{(k)} \arrow[u,phantom, sloped, "\subset"]
                 \end{tikzcd}
         \end{equation}
         where
        \begin{itemize}
        \item $\NN_{S^1} \cong \bigstar_{n \in \NN} \Bop \ZZ$: infinite join of $\Bop \ZZ$,
        \item $\Fun(\NN_{S^1},\D)^0$: diagrams $F:\NN_{S^1}^{\triangleleft} \to \D$ satisfying $\colim F \simeq 0$.
        \item $\Delta^{k-1}_{S^1} \cong \bigstar_{i=1}^{k} \Bop \ZZ$: $k$-fold join of $\Bop \ZZ$,
        \item $(\D_{\Lambda_{\infty,+}})^{\on{c}} \subset \D_{\Delta_+}$ consists of paracyclic objects $G$ satisfying $G(\emptyset) \simeq \colim G|\Lambda_{\infty}$,
        \item $\D_{\Lambda_{\infty,+}}^{(k)} \subset \D_{\Lambda_{\infty,+}}$ consists of
                    simplicial objects $G$ which are right Kan extensions of
                    their restrictions to the full subcategory of $\Lambda_{\infty,+}$ spanned by $\{\emptyset, [0], [1], \dots, [k-1]\}$.
        \end{itemize}
\end{thm}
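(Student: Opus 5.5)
We follow the strategy used for $\SPi(\Rg)$ in \S\ref{sub:analytic_expansion_real} and \S\ref{sub:shriek_real}: first prove the truncated equivalence on each finite symmetric product, then pass to limits. The radial variant enters as follows. For the truncated statement we take the decomposable poset $\BB = \DDr_{\le n}(\CC^*)^*$ of nonempty radial unions of at most $n$ sectors. By Corollary \ref{cor:analytification_equivalence_criterion}, the restricted analytic expansion $\AA_{\BB}$ is an equivalence as soon as the restriction
\[
\Sh_{\NN}(\SP^n(\CC);\D) \to \Fun_{\NN}(\BB_{\cap}^{\op},\D)
\]
is an equivalence. The opens in $\BB_\cap$ have the form $D_1+\dots+D_k+\SP^{n-k}(\CC)$ with $D_i$ disjoint open sectors.

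To prove this restriction is an equivalence, we factor it through the basis $\B$ of Remark \ref{rem:basics}, using Theorem \ref{thm:filtration_exodromy}. The argument copies the proof of Proposition \ref{prop:Rg_polynomification_equivalence}. First, we show an analogue of Lemma \ref{lem:Rg_ignore_discriminant_equivalence}: $\NN$-constructible presheaves on $\B$ are right Kan extended from the subbasis $\B_c$ of basics without multiplicities away from the cone point. Second, we show that $\NN$-constructible presheaves on $\B_c\cup\BB_\cap$ are left Kan extended from $\BB_\cap$. For this we check that, for $B=\SP^{m_0}(D_0)+D_1+\dots+D_r\in\B_c$, the comma category $\BB_\cap^{\op}/B$ is weakly contractible, not merely that it has a final object. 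The reason is that we may replace each small disk $D_i$ by any radial sector containing it and disjoint from the others, and the choice of sector is no longer canonical as it was for intervals in $\Rg$.

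The main obstacle is this contractibility. We would handle it with the van Kampen argument from Proposition \ref{prop:intervals_localization}. The relevant comma categories cover an open subset of the configuration space of $r$ points on $S^1$ that is homotopy equivalent to a point: once the cyclic positions of the $D_i$ are fixed, the remaining freedom is convex. We also need the homotopy invariance $F(B)\simeq F(B')$ for $\NN$-equivalent basics. Here the strata $\SP^k(\CC^*)\simeq S^1$ are not contractible, so we must verify that the relevant inclusions, from a basic $B$ to $D_1+\dots+D_r+\SP^{m_0}(\CC)$ with $D_i$ sectors, are $\NN$-stratified homotopy equivalences. We would do this by radial retraction of each $D_i$ onto a small disk, in the spirit of Lemma \ref{lem:greaterq_conical}. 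The bottom row of \eqref{eq:complex_expansion} then follows from Theorem \ref{thm:polytopoly} together with Proposition \ref{prop:kan}, after identifying $\DDr(\CC^*)\cong\DD(S^1)$.

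Passing to the $*$-limit over $n$ and using Proposition \ref{prop:approx} gives the top row. The middle row is the analogue of Theorem \ref{prop:realshriek}. We identify $*!$-sheaves with those $*$-sheaves whose colimit of sections over the complements $X\setminus X_{\le k}$ vanishes. We then compare $\Gamma_{X_{\le k}}(F)$ with the colimit of $F^{\an}$ restricted to $\DD_{\le k}$, and conclude with the same cofiber computation as before. The combinatorial tower \eqref{eq:complex_combinatorial} follows in two steps. On the sheaf side we compute the exit path $\infty$-category of $\SP^k(\CC)$ as a cone on an iterated join of copies of $\Bop\ZZ$, arguing as in Proposition \ref{prop:exit_paths_real_case} with the strata $\simeq S^1$. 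On the other side we use Proposition \ref{prop:s1_intervals_localization}, which identifies $\Shan(S^1;\D)$ with $\D_{\Lambda_{\infty,+}}$.
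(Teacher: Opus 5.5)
\paragraph{Gaps in the truncated step.} Your truncated argument transplants the proof of Proposition \ref{prop:Rg_polynomification_equivalence}, but both of its Kan-extension steps fail over $\CC$.

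\emph{The analogue of Lemma \ref{lem:Rg_ignore_discriminant_equivalence} is false.} That lemma rests on the contractibility of configuration spaces of intervals. Take $n=2$ and $B=\SP^2(D)$ for a disk $D\subset\CC^*$. The basics of $\B_c$ inside $B$ are exactly the $D_a+D_b$ with $D_a,D_b\subset D$ disjoint. They cover $\Conf_2(D)\simeq S^1$. So for a constructible $F$ with $F(B)\simeq A$, the right Kan extension from $\B_c$ returns $\Gamma(\Conf_2(D);A)\simeq A\oplus A[-1]$ rather than $A$. Over $\CC$ the discriminant has real codimension $2$; it is not a boundary.

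\emph{The left Kan extension from $\BB_\cap$ fails at basics whose disks have overlapping angular projections.} These basics cannot be avoided: small basic neighbourhoods $D_1+D_2$ of $1+2\in\SP^2(\CC)$ are of this kind. A radial open containing $D_i$ contains $\RR_+D_i$, so two such disks never lie in disjoint sectors. Hence every $W\in\BB_\cap$ containing $D_1+D_2$ is either $\SP^2(\CC)$ or some $V+\SP^1(\CC)$, of minimal stratum $0$ or $1$. In fact the type-$2$ opens of $\BB_\cap$ do not even cover $X_2$. Take a constructible $F$ whose exit-path representation vanishes at $0$ and $1$: your Kan extension gives $0$ at $D_1+D_2$, whereas $F(D_1+D_2)\neq 0$.

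Your diagnosis also points the wrong way. When angular projections are disjoint, $\sum_i\RR_+D_i+\SP^{m_0}(\CC)$ is a canonical minimum, exactly as over $\Rg$. Weak contractibility of $\BB_\cap^{\op}/B$ is automatic, since $\SP^n(\CC)$ is initial there. What you actually need is cofinality of the same-type objects, and that fails.

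\paragraph{How the paper avoids this, and the middle row.} This is why the paper inserts the poset $\BB_{\m}$ of radial opens \emph{with multiplicities}. It first shrinks the basis so that angular projections are equal or disjoint. Then $\sum_i\SP^{m_i}(\RR_+D_i)+\SP^{m_0}(\CC)$, with multiplicities added for disks sharing a sector, is initial among opens of $\BB_{\m}$ containing $B$ and is $\NN$-equivalent to $B$. This gives Lemma \ref{lem:complex_case_expand_disks} directly from $\B$, with no multiplicity-free subbasis. Multiplicities are then dropped at the level of localizations, not by a Kan extension. Both $\BB_{\m}^{\op}$ and $\BB_\cap^{\op}$ localize to $(\Delta^{n-1}_{S^1})^{\triangleleft}$ via the functor $L$, which records winding data (Proposition \ref{prop:exit_paths_complex_case}, Lemma \ref{lem:drop_multiplicities_complex_case}). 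The relevant spaces there are angular configuration spaces lifted to $\SP^t(\RR)$, such as $\{0<|x_i-x_j|<1\}$, which are convex. That is where your one-dimensional intuition belongs. The same $L$, not convexity of $B\cap X_k$ (typically $\simeq S^1$ here), is what handles the exit-path computation.

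For the middle row, $\Sh^{*!}$ is the limit along the vanishing-cycle functors $\Phi_{e_k}=a^!b^*$, not along $i_k^!$. Identifying it with $*$-sheaves of vanishing colimit is the content to be proved. Proposition \ref{prop:realshriek_sub} relied on the explicit fibre formula for $i_k^!$, which has no analogue for $\Phi_{e_k}$. The paper works on the analytic side instead:
\begin{itemize}
\item $\An$ intertwines $\Phi_{e_k}$ with functors $i^{*!}_{k-1}$ built from unions whose configurations meet $\{\prod z_i=1\}$ (Lemma \ref{lem:explicit_vanishing});
\item complete analytic sheaves are the limit along these functors (Lemma \ref{lem:starshriekempty}, Proposition \ref{prop:starshriek_analytic}).
\end{itemize}
The $\colim F\simeq 0$ description is only deduced afterwards, by comparison with the real case.
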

\begin{proof}
        The exit path computations for the truncated case and $*$-sheaves are contained in \S \ref{sub:exit_paths_complex_case}.
        The proof of the analytic expansion equivalence in these cases is given in \S \ref{sub:complex_analytic}.
        The equivalence for $*!$-sheaves is discussed in \S \ref{sub:starshriek}.
\end{proof}

\begin{thm}
    \label{thm:perverset}
    Suppose $\D$ is a presentable stable $\infty$-category
    and comes equipped with an accessible left complete $t$-structure with heart $\A$. Then
   the equivalence \eqref{eq:complex_expansion} yields an
    equivalence 
    \[
            \Perv^{*!}(\SPi(\CC);\A) \overset{\simeq}{\lra} \A_{\Lambda_{\infty}}
    \]
    of abelian categories identifying perverse sheaves with paracyclic objects.
    The terms of the paracyclic object corresponding to a perverse sheaf $F$
    are given by the vanishing cycles $\{\Phi_{z_1^k + z_2^k + \dots}(F)_0\}_{k
    \in \NN}$ at the origin with respect to the power sums.
\end{thm}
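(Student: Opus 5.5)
The plan is to run the argument of Theorem \ref{thm:texact} again, with the perverse $t$-structure in place of the Verdier dual cosheaf $t$-structure. We establish the claim degreewise on finite symmetric products and then pass to the $*!$-limit. On $\SP^n(\CC) \cong \CC^n$, with the stratification by the flag \eqref{eq:cnfilt}, we have the $\NN$-constructible perverse $t$-structure. Its heart $\Perv_{\NN}(\SP^n(\CC);\A)$ is classified by iterated Beilinson gluing via the diagrams \eqref{eq:diag_perverse} truncated at $n$. On the other side, radial polynomial sheaves of degree $\le n$ are identified, by Theorem \ref{thm:complexanalytic}, with $\D^{(n)}_{\Lambda_{\infty,+}}$, which carries the levelwise $t$-structure. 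We must show that the truncated equivalence $\An$, after passing to completions as in \eqref{eq:complex_expansion}, is $t$-exact for these two $t$-structures. Given this, we pass to the limit over $n$. For this we use that the $*!$-transition functors preserve perverse hearts, i.e. are $t$-exact. This is the shifted $!$/$*$ restriction to a hyperplane composed with the appropriate nearby-cycle identification. By accessibility and left completeness of the $t$-structure on $\D$, the limit of the hearts is then the heart of the limit.

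The key step is the pointwise computation of $\An(F)$ on a radial open $U = \RR_+\cdot(D_0\cup\dots\cup D_m)$, where the $D_i$ are arcs of $S^1$. By definition, $\An(F)(U) = \Gamma_{\SP^n(\CC\setminus U)}(F)$. The complement $\CC \setminus U$ is a union of $m+1$ closed sectors $A_0,\dots,A_m$ meeting at the origin. We choose the sectors to be the angular regions where $\mathrm{Re}(z^{m+1})\le 0$; this is possible up to the stratified homotopy invariance of Proposition \ref{prop:homotopy_equivalence_constructible_sheaves} and Theorem \ref{thm:filtration_exodromy}. The subset $\SP^n(\CC\setminus U)$ then sits inside the locus where the power sum $p_{m+1} = z_1^{m+1}+\dots$ has nonpositive real part. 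Using conicity (the $\RR_+$-action contracts everything to the origin), we aim to identify
\[
  \Gamma_{\SP^n(\CC\setminus U)}(F) \simeq \Gamma_{\{\mathrm{Re}\, p_{m+1} \le 0\}}(F)_0 \simeq \Phi_{p_{m+1}}(F)_0[-1].
\]
The second equivalence is the standard description of vanishing cycles as local cohomology with supports in a half-space. The shift is absorbed by the $[-1]$ normalisation of perverse vanishing cycles. Since $\Phi_f[-1]$ is $t$-exact for the perverse $t$-structure on sheaves along a function $f$ (Gabber), and taking the stalk at the origin of a sheaf supported there is exact, we obtain that $F \mapsto \An(F)([m])$ is $t$-exact. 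This yields both $t$-exactness and the identification of the terms $\{\Phi_{z_1^k+\dots}(F)_0\}$ in a single step.

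The main obstacle is the middle identification. One must check that the constructible set $\SP^n(\bigcup A_i)$ has the same local cohomology at $0$ as the half-space $\{\mathrm{Re}\,p_{m+1}\le 0\}$, at least for $\NN$-constructible $F$. The $\NN$-stratification ignores the discriminant, so we try to deform one closed set into the other through a family of closed sets compatible with the strata $\SP^k(\CC^*)$. For this we mimic Lemma \ref{lem:extend_zero_disk} and the component decomposition used in Theorem \ref{thm:texact}. If this direct comparison fails, we fall back on the Beilinson-gluing description. There we compute both sides explicitly on the classifying data \eqref{eq:diag_perverse}: the value at $[m]$ should be a finite sum of the $B_i$ with differentials built from $d,\delta$. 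In that description exactness is visible, because sums and kernels of the invertible operators $\id - d\delta$ are exact. Finally, we compare morphisms to identify the resulting abelian equivalence with $\A_{\Lambda_\infty}$.
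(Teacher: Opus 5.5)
Your architecture is the paper's: identify $\An(F)$ on the radial open with $k$ sectors with $\Phi_{p_k}(F)_0$, where $p_k$ is the $k$th power sum. Then deduce $t$-exactness from the perverse $t$-exactness of vanishing cycles, and pass to the limit of hearts along the $t$-exact transition functors $\Phi_{e_k}$. These transition functors are just vanishing cycles along the last elementary symmetric coordinate, so no nearby-cycle identification is involved. But the proposal is incomplete at exactly the two places where geometry enters.

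\textbf{The comparison lemma.} The step you call the main obstacle is Lemma \ref{lem:vanishing_homotopy}, and you only state it as an aim. Your suggested tools do not supply it. In Theorem \ref{thm:texact} the set $\SP^n(\Rg\setminus U)$ splits into components because the $A_i$ are disjoint intervals. Here the closed sectors all contain the origin, so $\SP^n(\CC\setminus U)$ is connected and there is no component decomposition. Lemma \ref{lem:extend_zero_disk} only shrinks the cone neighbourhood of $0$ and says nothing about a half-space $\{\Re p_k \gtrless 0\}$.

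The missing idea is to work with the open complements. With your sign one has $\{\Re p_k>0\}\subset U+\SP^{n-1}(\CC)$, and by Proposition \ref{prop:homotopy_equivalence_constructible_sheaves} it suffices to show this inclusion is an $\NN$-stratified homotopy equivalence. The paper does this in two steps:
\begin{enumerate}
\item First shrink $U$ to $\{\Re z^k>1\}$ in your convention (Lemma \ref{lem:shrink_disks}).
\item Then, using the parametrized Lemma \ref{lem:symmetric_homotopy}, scale the summands lying in the shrunken sectors radially by the configuration-dependent factor $1+\sum_i |z_i|^k$.
\end{enumerate}
After the scaling, the $k$th powers of those summands dominate, forcing $\Re p_k>0$. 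No point moves to or from $0$, and collisions are invisible to the $\NN$-stratification, so strata are preserved. A deformation whose size depends on the whole configuration in this way is the essential ingredient your sketch lacks.

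\textbf{The support statement.} Your step ``taking the stalk at the origin of a sheaf supported there is exact'' tacitly assumes that $\Phi_{p_k}(F)$ is supported at $0$. Conicity only identifies global sections with the stalk at $0$; it does not give this support claim. Without it, the stalk at $0$ of a perverse sheaf on the hypersurface $\{p_k=0\}$ is not $t$-exact.

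The paper proves the support claim in Corollary \ref{cor:analytification_is_vanishing} by a local homotopy at nonzero points of $\{p_k=0\}$, moving a nonzero summand radially inside a small conical basic. Equivalently: near such a point $F$ is pulled back from the factor at the origin, while $p_k$ is submersive in the directions of the nonzero summands, so the vanishing cycles vanish there.

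With these two lemmas in place your argument goes through, and the Beilinson-gluing fallback is unnecessary.
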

\begin{proof}
        The proof is given in \S \ref{sub:complex_perverse}.
\end{proof}

\subsection{Exit path representations for constructible sheaves on \texorpdfstring{$\SP^{\infty}(\CC)$}{spic}}
\label{sub:exit_paths_complex_case}
In this section we analyze constructible sheaves on $\SP^{\infty}(\CC)$.
We begin the analysis for $\SP^n(\CC)$, fixing $n \in \NN$ for now.
Let $r:\CC^* \to S^1, z \mapsto \frac{z}{|z|}$ denote the standard retraction.
A basis of disks in $\CC^*$ is given by the collection of radial segments, i.e.
\[
        D(I,a,b) = \{z \in \CC | |z| \in (a,b), r(z) \in I\}
\]
for $ 0 \le a < b \le \infty$ and $I \subsetneq S^1$ an open interval.
Let $\B$ denote the corresponding basis of conical disks $\sum_{i=0}^k \SP^{m_i}(D_i) \subset \SP^n(\CC)$ of
Remark \ref{rem:basics}.

We shrink this basis slightly by imposing the additional requirement
that $r(D_i) \cap r(D_j) \neq \emptyset$ implies $r(D_i) = r(D_j)$.

 We set $\BB = \DDrn(\CC) \setminus \{\emptyset\}$, the poset of nonempty radial opens,
yielding again posets $\BB_{\cap},\BB_{\cup} \subset \U(\SP^n(\CC))$.
We also introduce a superposet of $\BB_{\cap}$, which more closely resembles $\B$ by
admitting multiplicities:
\[
        \BB_{\m} = \{\sum_{i \in I} \SP^{m_i}(D_i) + \SP^{n-|I|}(\CC) | \bigcup_{i \in I} D_i \in \BB, \sum_{i \in I} m_i \le n\}.
\]
where, as usual, we require $m_i \ge 1$ for $i \ge 1$.
We may compute exit paths via this poset as well:
\begin{lem}
        \label{lem:complex_case_expand_disks}
        Restriction induces an equivalence
        \[
                \Sh_{\NN}(\SP^n(\CC);\D) \simeq \Fun_{\NN}(\BB_{\m}^{\op},\D).
        \]
\end{lem}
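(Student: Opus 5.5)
We will deduce the lemma from the exodromy statement Theorem \ref{thm:filtration_exodromy}, which identifies $\Sh_{\NN}(\SP^n(\CC);\D)$ with $\Fun_{\NN}(\B^{\op},\D)$ via restriction, inverse given by right Kan extension. The plan is the same as in the proof of Lemma \ref{lem:Rg_ignore_discriminant_equivalence}: find a common refinement and compare both posets through Kan extensions. Concretely, we consider the poset $\B \cup \BB_{\m} \subset \U(\SP^n(\CC))$ and the full subcategory $\Fun_{\NN}((\B\cup\BB_{\m})^{\op},\D)$. Restricting a sheaf to this poset lands there, since both $\B$ and $\BB_{\m}$ consist of opens of $\SP^n(\CC)$ and sheaves are right Kan extended from $\B$.

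\emph{Step 1.} We show that restriction $\Fun_{\NN}((\B\cup\BB_{\m})^{\op},\D)\to \Fun_{\NN}(\B^{\op},\D)$ is a trivial Kan fibration, i.e. every constructible presheaf on the union is right Kan extended from $\B$. For $V=\sum_{i\in I}\SP^{m_i}(D_i)+\SP^{n-|I|}(\CC)\in\BB_{\m}$, the comma category $\B/V$ (basics contained in $V$) has to be compared with the value $F(V)$. Since $V$ is itself a union of basics $\sum_i \SP^{m_i}(D_i')+\SP^{m_0}(D_0)+\dots$, and by Theorem \ref{thm:filtration_exodromy} constructible presheaves on $\B$ extend to sheaves, it suffices to check that $F(V)\simeq F(B)$ for a suitable basic $B\subset V$ of the same minimal $\NN$-stratum, namely $B=\sum_i \SP^{m_i}(D_i)+\SP^{n-\sum m_i}(C_\epsilon)$ with $C_\epsilon$ a small cone around the origin disjoint from the $D_i$ (possible since radial $D_i$ can be shrunk in radius without changing homotopy type, using $\NN$-constructibility to move along the radial direction). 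The key point is that $B\subset V$ is an $\NN$-stratified homotopy equivalence: a radial contraction $z\mapsto tz$ on the free summands, combined with radially pushing the $D_i$ outward, deforms $V$ into $B$ while preserving the minimal stratum $\sum m_i$. This is the analogue of Lemma \ref{lem:greaterq_conical} and Lemma \ref{lem:extend_zero_disk}, and we would invoke Proposition \ref{prop:homotopy_equivalence_constructible_sheaves} to turn it into an equivalence of sections.

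\emph{Step 2.} We show that restriction $\Fun_{\NN}((\B\cup\BB_{\m})^{\op},\D)\to\Fun_{\NN}(\BB_{\m}^{\op},\D)$ is a trivial Kan fibration, the functors being left Kan extensions. For a basic $B=\SP^{m_0}(D_0)+\sum_{i=1}^k\SP^{m_i}(D_i)\in\B$ we exhibit a final object of $\BB_{\m}^{\op}/B$, i.e. a minimal element of $\BB_{\m}$ containing $B$: take $\widehat D_i$ the radial hull $\RR_+\cdot D_i$ (using the shrinking of the basis, so that distinct radial hulls are disjoint or equal; equal hulls merge the multiplicities) and set $\widehat B=\sum_i\SP^{m_i}(\widehat D_i)+\SP^{n-\sum m_i}(\CC)$. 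One checks that any element of $\BB_{\m}$ containing $B$ contains $\widehat B$, by the same point-pushing argument as in the proof of Theorem \ref{thm:stable_interchange}; and $B\subset\widehat B$ preserves the minimal stratum, so constructibility is respected by the Step 1 equivalence.

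Combining the two steps via two-out-of-three with Theorem \ref{thm:filtration_exodromy} gives the claim. The main obstacle is Step 2's finality: when radial hulls of distinct $D_i$ coincide, the resulting $\widehat B$ may contain $B$ only after merging multiplicities, and one must verify carefully (with the extra requirement on $r(D_i)$) that $\widehat B$ is genuinely initial among radial supersets; I would first treat the case of pairwise distinct angular intervals and then reduce the merged case to it.
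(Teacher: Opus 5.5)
Your proposal matches the paper's proof: the same span through $\B\cup\BB_{\m}$, then right Kan extension from $\B$, then left Kan extension from $\BB_{\m}$ with the radial hull $\sum_i \SP^{m_i}(r^{-1}(r(D_i)))+\SP^{m_0}(\CC)$ as the minimal element of $\BB_{\m}$ containing $B$, then two-out-of-three. The right Kan extension step rests on $B\subset V$ being an $\NN$-stratified homotopy equivalence, which is exactly Corollary \ref{cor:extended_disk_type}, so you need not build the radial homotopy by hand. The merged-multiplicity case you single out is not a real obstacle: the paper simply adds the multiplicities of coinciding hulls, and minimality follows directly from the counting criterion of Lemma \ref{lem:intersection_inclusions}, because for radial $V_j$ one has $D_i\subset V_j$ if and only if $r^{-1}(r(D_i))\subset V_j$.
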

\begin{proof}
        By Exodromy, the restriction functor
        \[
                \Sh_{\NN}(\SP^n(\CC);\D) \to \Fun_{\NN}(\B^{\op},\D)
        \]
        is an equivalence.
        We proceed by showing that both restrictions in the span
        \[
            \begin{tikzcd}
            {\Fun_{\NN}(\B^{\op},\D)} & {\Fun_{\NN}((\B \cup\BB_{\m})^{\op},\D)} & {\Fun_{\NN}(\BB_{\m}^{\op},\D)} 
            \arrow[from=1-2, to=1-1]
            \arrow[from=1-2, to=1-3]
        \end{tikzcd}
        \]
    are trivial Kan fibrations. The claim then follows by $2$-out-of-$3$.

        We show that the functors in $\Fun_{\NN}((\B \cup\BB_{\m})^{\op},\D)$ are precisely
        the right Kan extensions of functors in $\Fun_{\NN}(\B^{\op},\D)$.  By Theorem \ref{thm:exodromy}, the right Kan
        extensions of constructible presheaves on $\B$ are constructible sheaves. Then so is any intermediate Kan extension.

        Conversely, a constructible presheaf $F$ on $\B \cup \BB_{\m}$ is equivalent to the right Kan extension of $F|\B^{\op}$
        by Corollary \ref{cor:extended_disk_type}.

        We show that the functors in $\Fun_{\NN}((B \cup\BB_{\m})^{\op},\D)$ are the left Kan extensions of constructible presheaves
        on $\BB_{\m}$: For any $B=\sum_{i=0}^r \SP^{m_i}(D_i) \in \B$ the open set
        \[
                \sum_{i=1}^r \SP^{m_i}(r^{-1}(r(D_i))) + \SP^{m_0}(C) \in \BB_{\m}
        \]
        is initial in the comma category $B/\BB_{\m}$. (Note that in the above we may have $r(D_i)=r(D_j)$ in which case the
        multiplicities are understood to be added together.)
        This inclusion is again an $\NN$-stratified homotopy equivalence by
        Lemma \ref{lem:extend_zero_disk}. By Lemma \ref{lem:union_inclusion}, this implies that a presheaf on $B \cup\BB_{\m}$ is
        $\NN$-constructible if and only if it is a left Kan extension.
\end{proof}
By Lemma \ref{lem:complex_case_expand_disks} and Exodromy, we may compute the category of constructible sheaves by localizing
$\BB_{\m}$:
\begin{prop}
        \label{prop:exit_paths_complex_case}
        Let $\Delta^{n-1}_{S^1} =(\Bop \ZZ)^{*n}$ denote the $n$-fold join of $\Bop \ZZ$.
        The objects of $(\Delta^{n-1}_{S^1})^{\triangleleft}$ are the natural numbers $0 \le k \le n$.
        There is a functor
        \[
                L:(\BB_{\m})^{\op} \to (\Delta^{n-1}_{S^1})^{\triangleleft}
        \]
        of $1$-categories, which is an $\infty$-categorical localization at the class of stratified homotopy equivalences.
\end{prop}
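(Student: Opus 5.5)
We construct $L$ explicitly and then check the localization criterion \cite[A.6.4]{DKS:milnor}, as in Propositions \ref{prop:exit_paths_real_case} and \ref{prop:intervals_localization}. First the target. The join $(\Bop\ZZ)^{*n}$ is the $1$-category with objects $1,\dots,n$, endomorphism monoid $\ZZ$ at each object, and morphisms $i \to j$ for $i<j$ forming a $\ZZ\times\ZZ$-biset isomorphic to $\ZZ$; adding the cone point $0$ gives $(\Delta^{n-1}_{S^1})^{\triangleleft}$. Concretely, a morphism $k \to l$ with $k \le l$ is a homotopy class of paths in a fixed universal cover $\RR \to S^1$ modulo the deck action, with morphisms out of $0$ unique. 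On objects we set
\[
L\Big(\sum_{i\in I}\SP^{m_i}(D_i)+\SP^{n-|I|}(\CC)\Big)=\sum_{i\in I} m_i,
\]
the minimal $\NN$-stratum met by the open. On morphisms, an inclusion $B' \subset B$ in $\BB_{\m}$ must give $L(B)\to L(B')$. Each radial disk $D_i$ determines an arc $r(D_i)\subset S^1$, and we record a lift of this arc. The inclusion of a smaller-multiplicity open into a larger one identifies the arcs of $B$ with arcs of $B'$ (by the shrinking of the basis, the arcs are either equal or disjoint), and the lifts determine an integer winding, hence an element of the hom-set. We check functoriality directly, and we check that homotopy equivalences (same type, nested arcs) go to identities.

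The localization criterion then requires three things.

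\textbf{(1) Inverted morphisms.} The morphisms sent to isomorphisms must be exactly the $\NN$-stratified homotopy equivalences. This follows from Theorem \ref{thm:filtration_exodromy}, Lemma \ref{lem:extend_zero_disk} and Corollary \ref{cor:extended_disk_type}: an inclusion is an equivalence iff the total multiplicity away from $0$ agrees.

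\textbf{(2) Contractible fibers.} For each $k$, the category $(\BB_{\m})_k^{\simeq}$ of opens of type $k$ with homotopy equivalences must have weak homotopy type $\Bop\ZZ \simeq S^1$ for $k\ge1$, and be contractible for $k=0$. Equivalently, the fiber over the object $k$ must be weakly contractible after passing to comma categories. We prove this with Lurie's van Kampen theorem \cite[A.3.1]{lurie:ha}. Intersecting with the stratum $X_k=\SP^k(\CC^*)$ gives a cover
\[
B \mapsto B \cap \SP^k(\CC^*),
\]
which is filtered over points because radial sums intersect in radial sums. The covered subspace deformation retracts to $\SP^k(\CC^*)$, which by Example \ref{exas:spks} is homotopy equivalent to $S^1$ (via the product of the roots, i.e.\ $\CC^{k-1}\times\CC^*$). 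This matches $\Bop\ZZ$, and the identification is compatible with $L$ on loops: a full rotation of the arcs maps to the generator.

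\textbf{(3) Coinitiality.} For each $B$, the inclusion of the fiber over $k$ into the comma category $k/L$ (over $B$) must be coinitial. The relevant comma categories cover the space
\[
B \cap \SP^k(\CC^*)
\]
together with a choice of lift datum. That intersection is a product of symmetric products of sectors, and after fixing lifts it is contractible: each $\SP^{m}(D)$ of a disk is contractible, and the lift choice kills the monodromy. Van Kampen again gives weak contractibility.

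The main obstacle is step (2)/(3) together with the well-definedness of $L$ on morphisms: identifying the winding data correctly so that the monodromy of $\SP^k(\CC^*)\simeq S^1$ matches the $\ZZ$-torsor of morphisms in the join. To pin down the convention, we will first treat $n=1$, where $\BB_{\m}$ is essentially the radial disks plus $\CC$ and the localization is $(\Bop\ZZ)^{\triangleleft}$. For general $n$ we then reduce inductively using the join structure: the full subcategories $\{\le k\}$ localize to the $k$-fold join, and cofinality arguments as in Proposition \ref{prop:exit_paths_real_case} pass to the next stratum.
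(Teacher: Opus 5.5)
\textbf{The definition of $L$ on morphisms is wrong.} In the join $(\Bop\ZZ)^{*n}$, whether of $1$-categories or of simplicial sets, there is exactly one morphism $i\to j$ for $i<j$, since mapping spaces from one join factor to a later one are contractible. So the only nontrivial morphisms are the endomorphisms $\Aut(k)\cong\ZZ$. The category you describe, with a $\ZZ$-torsor of morphisms $i\to j$, is a different category, and the statement would be false for it.

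Accordingly, your $L$ puts the winding data on the cross-level inclusions, where there is nothing to record, and sends the same-level homotopy equivalences to identities. Such an $L$ cannot be a localization:
\begin{itemize}
\item Cross-level morphisms of $(\BB_{\m})^{\op}$ strictly raise the level, while the inverted morphisms preserve it.
\item Hence every zig-zag between two objects of level $k$ consists of same-level morphisms only.
\item If all of these go to identities, the induced functor on the localization hits no non-identity element of $\Aut(k)$ for $k\ge 1$.
\end{itemize}
This also contradicts your own step (2), which needs a full rotation to map to a generator.

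The missing idea is the paper's definition. Cross-level morphisms go to the unique arrow. For a same-level inclusion $B\subset B'$ at level $t$, fix the cut $\Rg\subset\CC$. If the cut misses the arcs of $B'$, the image is the identity. Otherwise the cut lies in some component $D_1'$ of $B'$, and $L(B\subset B')\in\Aut(t)\cong\ZZ$ is the number, counted with multiplicity, of disks of $B$ inside $D_1'$ lying above the cut. Equivalently, enumerate $\pi_0(p^{-1}U_{S^1}(B))$ by an order-preserving, deck-equivariant map $\phi_B\colon\ZZ\to\pi_0(p^{-1}U_{S^1}(B))$ whose fibres have the sizes of the multiplicities, and take the resulting shift.

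\textbf{Step (3) misidentifies the space.} The nontrivial case is $V\in\BB_{\m}$ of level $v<t$, together with the category $M_V$ of pairs $(B,a)$ with $B\subset V$ of level $t$ and $a\in\ZZ$. Here $V\cap\SP^t(\CC^*)$ is not a product of symmetric products of sectors: $t-v$ of the points range freely over $\CC^*$. Moving one of them once around the origin gives a loop on which the product of the points has winding number one, so this space is not simply connected. That the $\ZZ$-datum unwinds it to a contractible space is exactly what must be proved. The paper proves it as follows:
\begin{itemize}
\item It maps $M_V$ via $\chi$ to a cover of a subspace $Y\subset\SP^t(\RR)$.
\item It shows $Y$ is contractible through a second cover $\{Y_a\}_{a\in\ZZ}$.
\item It needs a separate homotopy for the case where the last interval is the deck translate of the first.
\item It uses $v+1\le t$ to control the intersections $Y_a\cap Y_{a+1}$.
\end{itemize}

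Your route for step (2) is a legitimate alternative to the paper's direct contractibility argument in $\SP^t(\RR)$. There you identify the nerve of the level-$k$ objects with $\SP^k(\CC^*)\simeq S^1$ by van Kampen, then check the induced map to $\Bop\ZZ$ on $\pi_1$. It works only once $L$ is correctly defined on same-level morphisms. Your closing inductive reduction via the join structure is too vague to replace any of this.
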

\begin{proof}
        The functor $L$ is given on objects by
        \[
                L(\sum_{i=1}^k \SP^{m_i}(D_i) +\SP^{m_0}(\CC)) = n-m_0 \in (\Delta^{n-1}_{S^1})^{\triangleleft}
        \]
        On morphisms, if $i:B \hookrightarrow B' \in \BB_{\m}$, there are two cases. If $L(D) \neq L(V)$, then
        $L(i):L(B) \to L(B')$ is the unique arrow between these objects. If $L(B)=L(B')=t$,
        then we may write
        \[
                B=\sum_{i=1}^k \SP^{m_i}(D_i)+\SP^{n-t}(\CC) \subset B' =\sum_{j=1}^r \SP^{l_j}(D_j') + \SP^{n-t}(\CC).
        \]
        Before defining $L$ on these morphisms, we introduce some auxillary notation. For $B \in \BB_{\m}$ given as above, let
        \[
                U_{S^1}(B) = \bigcup_{i=1}^k r(D_i) \in \DD(S^1)
        \]
        denote the union of the projections of the $D_i$ to $S^1$. Letting $p:\RR \to S^1, x \mapsto e^{2 \pi i x}$ denote the
        standard universal covering, we also denote $U_{\RR}(B)= p^{-1}(U_{S^1}(B))$.
        To define $L$ we choose the cut $\Rg \subset \CC$.
        If $1 \notin U_{S^1}(B')$, we set $L(i)=\id$.
        Otherwise, we may assume $1 \in D_1'$. Counting with multiplicities, there are $l_1$ disks among the $D_i$ with
        $D_i \subset D'_1$. The morphism $L(i) \in \Aut(t) \cong \ZZ$ is then given by counting,
        with multiplicities, the number of the $D_i$ lying entirely in the
        component of $D_1' \setminus \Rg$ above the cut. It is easy to see that this assignment
        yields a functor.

        We show that $L$ is an $\infty$-categorical localization by checking the criterion
        \cite[A.6.2]{DKS:milnor}. It follows from Lemma \ref{lem:intersection_inclusions}, that
        the morphisms inverted by $L$ are precisely the stratified homotopy equivalences.
        For the second condition, we must show that the groupoid $L^{-1}(t)/\Bop \ZZ$ of morphisms
\[\begin{tikzcd}
	B & {t}
	\arrow["a", from=1-1, to=1-2]
\end{tikzcd}\]
        is weakly contractible.
        We introduce a functor $\chi:L^{-1}(t)/\Bop \ZZ \to \U(\SP^{t}(\RR))$.
        For $B \in L^{-1}(t)$ as above there is a unique map $\phi_B:\ZZ \to \pi_0(U_{\RR}(B))$ with the following properties:
        \begin{enumerate}
                \item $\phi_B$ is weakly increasing with respect to the natural order on
                        $\pi_0(U_{\RR}(B))$.
                \item $|\phi_B^{-1}(W)| = m_i$ for a component $W \in \pi_0 (U_{\RR}(B))$ with
                        $p(W)=r(D_i)$.
                \item $\phi_B(0) \subset \RR_{>0}$ and $\phi_B(-1) \nsubset \RR_{>0}$.
        \end{enumerate}
        We now define
        \[
                \chi:L^{-1}(t)/\Bop \ZZ \to \U(\SP^{t}(\RR)), \,
                (B,a) \mapsto \sum_{i=0}^{t-1}\phi_B(a+i) \subset \SP^{t}(\RR).
        \]
        where $\sum_{i=0}^{t-1}\phi_B(a+i) \cong \{x_1 \ge \dots \ge x_t \in \RR^t | x_i \in \phi_B(a+i) \}$ under the standard
        embedding of $\SP^t(\RR)$.
        Functoriality of $\chi$ is a consequence of our definitions.
        We have
        \[
                \bigcup_{(B,a) \in L^{-1}(t)/\Bop \ZZ }\chi(B,a)
                = \{ \sum_i x_i \in \SP^{t}(\RR) : |x_i -x_j| < 2 \text{ for all } i,j \}.
        \]
        because any point on the right-hand side is contained in $\chi(B,a)$ for some
        $B=\SP^{t}(r^{-1}(S^1 \setminus \{y\})) + \SP^{n-t}(\CC)$ and $a \in \ZZ$.
        This space is convex, hence contractible.

        The functor $\chi$ is filtered over points: If
        \[
                s \in (\sum_{i=0}^{t-1} \phi_B(a+i)) \cap  (\sum_{i=0}^{t-1}\phi_{B'}(b+i)) \subset \SP^{t}(\RR)
        \]
        for $(B,a),(B',b) \in L^{-1}(t)/\Bop \ZZ$, then $s \in \sum_{i} \phi_B(a+i) \cap \phi_{B'}(b+i)$.
        Therefore $B \cap B' \in L^{-1}(t)$, $x \in \chi(B \cap B', \max(a,b))$ and there are unique morphisms
        $(B \cap B',\max(a,b)) \to (B,a)$ and $(B \cap B',\max(a,b)) \to (B',b)$. By Lurie's van-Kampen theorem, the
        category $L^{-1}(t)/\Bop \ZZ$ is weakly contractible.

        Thirdly, for $V \in \BB_{\m}$ with $L(V) =v$ and $v <t$ we must show that the
        full subcategory $M_V \subset L^{-1}(t)/\Bop \ZZ$ spanned by objects $(B,a)$ with $B \subset V$ is weakly contractible.
        It suffices to show that the restriction $\chi|M_V$ covers a contractible subspace $Y$.
        To show that $Y$ is contractible, we consider another cover of $Y$ by opens $Y_a$ for $a \in \ZZ$.
        To define $Y_a$, we denote
        \[
                \sum_{i=0}^{v-1}\phi_V(a+i) + \SP^{t-v}(\RR) = \sum_{j=1}^{l} \SP^{m_j}(I_j) + \SP^{t-v}(\RR),
        \]
        where the $I_j$ are ordered by the natural order of $\RR$.
        Now
        \[
                \begin{split}
                        Y_a&=\sum_{j=1}^{l} \SP^{m_j}(I_j) + \SP^{t-v}(\RR) \\
                &\cap \{x_1 \ge \dots \ge x_{t} \in \SP^t(\RR) |  \exists S \subset \{1,\dots,t\} \text{ with }
                |S| =m_1, x_i \in I_1 \text{ for } i \in S \text{ and } x_1 < x_i +1 \text{ for }  x_i \in S \} \\
                &\cap \{x_1 \ge \dots \ge x_{t} \in \SP^t(\RR) |  \exists S \subset \{1,\dots,t\} \text{ with }
                |S| =m_l, x_i \in I_l \text{ for } i \in S \text{ and } x_t + 1 > x_i \text{ for }  x_i \in S \}. \\
                \end{split}
        \]
        To show that $Y_a$ is contractible, we note that the inclusion
        $Y_a \subset \sum_{j=1}^{l} \SP^{m_j}(I_j) + \SP^{t-v}(\RR)$ is a homotopy equivalence:
        Suppose $I_1 = (y,z)$. The inclusion is an equality unless $I_l=(y+1,z+1)$ (the translate of $I_1$).
        In that case, a homotopy inverse is induced by a homeomorphism
        $g:\RR \to \RR$ such that
        \[
                \begin{split}
                        g((-\infty,y)) &\subset (\frac{y+z}{2},z),\\
                        g((z,\infty)) &\subset (y+1,1+\frac{y+z}{2}),\\
                        g|_{[z,y+1]} &=\id.
                \end{split}
        \]
        Contractibility of $Y_a$ follows because $\sum_{j=1}^{l} \SP^{m_j}(I_j) + \SP^{t-v}(\RR)$ is contractible
        by Lemma \ref{lem:extend_zero_disk}.
        For $a \in \ZZ$ the intersection $Y_a \cap Y_{a+1}$ is also contractible by essentially the same argument
        (and because $v +1 \le t$).
        Finally, if $Y_a \cap Y_{b}$ for $a < b$, then $|\phi_V(\{a,a+1, \dots,b\})| \le 2$ and we find
        $Y_a \cap Y_b = Y_a \cap Y_{a+1} \cap \dots \cap Y_{b}$. By another application of the van-Kampen theorem,
        the space $Y$ is also contractible.

\end{proof}
\begin{cor}
        There is an equivalence $\Sh_{\NN}(\SP^{n}(\CC); \D) \simeq \Fun((\Delta^{n-1}_{S^1})^{\triangleleft},\D)$ for each
        $n \in \NN$ and an equivalence $\Sh^*_{\NN}(\SPi(\RR); \D) \simeq \Fun(\NN_{S^1}^{\triangleleft},\D)$.
\end{cor}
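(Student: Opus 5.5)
The first equivalence will be assembled from three earlier results. By Theorem \ref{thm:filtration_exodromy}, restriction gives $\Sh_{\NN}(\SP^n(\CC);\D)\simeq\Fun_{\NN}(\B^{\op},\D)$. Lemma \ref{lem:complex_case_expand_disks} upgrades this to $\Sh_{\NN}(\SP^n(\CC);\D)\simeq\Fun_{\NN}(\BB_{\m}^{\op},\D)$.

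The key point is that $\NN$-constructible presheaves on $\BB_{\m}$ are exactly the presheaves inverting the $\NN$-stratified homotopy equivalences in $\BB_{\m}$. This is how Definition \ref{defi:constructible_presheaf} is used throughout, together with Lemma \ref{lem:union_inclusion}. Given that, Proposition \ref{prop:exit_paths_complex_case} applies: $L:\BB_{\m}^{\op}\to(\Delta^{n-1}_{S^1})^{\triangleleft}$ is an $\infty$-categorical localization at precisely this class. By the universal property of localizations, pullback along $L$ is fully faithful with essential image $\Fun_{\NN}(\BB_{\m}^{\op},\D)$. Composing these steps gives $\Sh_{\NN}(\SP^n(\CC);\D)\simeq\Fun((\Delta^{n-1}_{S^1})^{\triangleleft},\D)$.

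For the infinite symmetric product (the statement's $\SPi(\RR)$ is evidently meant to read $\SPi(\CC)$), $\Sh^*_{\NN}(\SPi(\CC);\D)$ is the inverse limit of the categories $\Sh_{\NN}(\SP^n(\CC);\D)$ along $*$-restrictions $i_n^*$. The equivalences above must be compatible with these transition functors. Under them, $i_n^*$ should correspond to restriction along the inclusion
\[
(\Delta^{n-1}_{S^1})^{\triangleleft}=(\Bop\ZZ)^{*n}\,{}^{\triangleleft}\hookrightarrow(\Bop\ZZ)^{*(n+1)}\,{}^{\triangleleft}=(\Delta^{n}_{S^1})^{\triangleleft},
\]
the inclusion of the first $n$ join factors, i.e. objects $0,\dots,n$.

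To check this, I would first use that, for $B=\sum\SP^{m_i}(D_i)+\SP^{m_0}(\CC)\in\BB_{\m}(\SP^{n+1}(\CC))$, the intersection $B\cap\SP^n(\CC)$ is the corresponding basic $\sum\SP^{m_i}(D_i)+\SP^{m_0-1}(\CC)$, provided $m_0\ge 1$. Second, by Lemma \ref{lem:fundamental} and Lemma \ref{lem:extend_zero_disk}, $F(B)\simeq i_n^*F(B\cap\SP^n(\CC))$ for constructible $F$. Hence $i_n^*$ is computed by restricting presheaves along the poset map $B\mapsto B\cap\SP^n(\CC)$ on basics with $m_0\ge1$. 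This map is compatible with $L$, since $n+1-m_0=n-(m_0-1)$ and the winding-number labels on automorphisms are unchanged.

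Passing to the limit, we get $\lim_n\Fun((\Delta^{n-1}_{S^1})^{\triangleleft},\D)\simeq\Fun(\colim_n(\Delta^{n-1}_{S^1})^{\triangleleft},\D)=\Fun(\NN_{S^1}^{\triangleleft},\D)$. Here $\NN_{S^1}=\bigstar_{n}\Bop\ZZ$ is the filtered union of the finite joins, so $\Fun(-,\D)$ turns this colimit into a limit.

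I expect the main obstacle to be the compatibility check: showing that the localization functors $L$ for consecutive $n$ form a strictly commuting square with the intersection map. Concretely, the choice of cut $\Rg$ and the counting of disks above the cut have to be independent of the ambient $n$. I would verify this directly from the definition of $L$ on automorphisms, noting that only the disks $D_i$, and not the cone summand, contribute to the count.
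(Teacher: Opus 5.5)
Your proposal is correct and follows the paper's proof. For finite $n$ you combine exodromy with Lemma~\ref{lem:complex_case_expand_disks} and the localization $L$ of Proposition~\ref{prop:exit_paths_complex_case}, and you get the $*$-sheaf version by passing to the limit along $*$-restrictions; your explicit check that $L$ matches $B \mapsto B \cap \SP^n(\CC)$ with the inclusion of the first $n$ join factors (and your reading of $\SPi(\RR)$ as $\SPi(\CC)$) makes precise a compatibility the paper leaves implicit, though the lemma identifying the morphisms inverted by $L$ on $\BB_{\m}$ is Lemma~\ref{lem:intersection_inclusions} rather than Lemma~\ref{lem:union_inclusion}.
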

\begin{proof}
        The finite version follows from Lemma \ref{lem:complex_case_expand_disks} and Proposition \ref{prop:exit_paths_complex_case}.
        The version for $*$-sheaves follows by passing to the limit along $*$-restrictions.
\end{proof}
We also analyze the localization of radial analytic sheaves:
\begin{prop}\label{prop:s1_intervals_localization}
        For $k \in \NN \cup \{\infty\}$ there is a functor
\[
L: \DD_{\le k}(S^1) \to (\Lambda_{\infty,+})_{< k}
\]
exhibiting $(\Lambda_{\infty,+})_{<k}$ as the $\infty$-categorical localization along the class of homotopy equivalences.
\end{prop}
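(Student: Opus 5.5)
We mirror the proof of Proposition \ref{prop:intervals_localization}, replacing intervals in $\Rg$ by arcs in $S^1$ and passing to the universal cover $p:\RR \to S^1$ to handle the rotations. First we define $L$. On objects, a union $U \in \DD_{j}(S^1)$ of $j$ arcs goes to $[j-1]$, and $\emptyset$ goes to the augmentation object. We use the model of $\Lambda_\infty$ in which morphisms $[m] \to [l]$ are $\ZZ$-equivariant weakly monotone maps $\ZZ \to \ZZ$ with $f(x+m+1)=f(x)+l+1$. For an inclusion $U \subset V$, the lift $p^{-1}(U) \subset p^{-1}(V)$ induces a $\ZZ$-equivariant monotone map $\pi_0 p^{-1}(U) \to \pi_0 p^{-1}(V)$. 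To identify these component sets with $\ZZ$, we fix, as in the proof of Proposition \ref{prop:exit_paths_complex_case}, a cut point $1 \in S^1$. The components are then enumerated by the rule ``$\phi(0)$ is the first component meeting $\RR_{>0}$'', or more precisely the first whose left endpoint is $\ge 0$. Functoriality holds because composition of the induced maps on $\pi_0$ of the lifts is strict, and the enumerations only change the representation by conjugation.

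Next we verify the three conditions of the criterion \cite[A.6.2]{DKS:milnor}, in the same order as before.

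\textbf{(1)} $L$ inverts exactly the homotopy equivalences. An inclusion is a homotopy equivalence iff it is bijective on components, and this holds iff the induced map on $\pi_0$ of lifts is an automorphism of $[j-1]$ in $\Lambda_\infty$.

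\textbf{(2)} For each $j \le k$, the groupoid $([j-1]/L)^{\iso}$ must be weakly contractible. Its objects are pairs $(U,a)$, where $U$ consists of $j$ arcs and $a$ is an identification of $\pi_0 p^{-1}(U)$ with $\ZZ$. We map such a pair to the open subset $\chi(U,a)=\sum_{i=0}^{j-1}\phi_U(a+i) \subset \SP^j(\RR)$, exactly as in the construction of $\chi$ in the proof of Proposition \ref{prop:exit_paths_complex_case}, specialized to multiplicities $1$ and no cone summand. The union of the images is the convex set of configurations $x_1 > \dots > x_j$ with $x_1 - x_j < 1$, which is contractible. The family is filtered over points: intersecting two arcwise systems containing a common point gives an object mapping to both. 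Lurie's van Kampen theorem \cite[A.3.1]{lurie:ha} then gives contractibility.

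\textbf{(3)} For each object $(V, f:[j-1]\to L(V))$, the comma category $([j-1]/L)^{\iso}/(V,f)$ must be weakly contractible. By the same cover, this category maps to the subspace of the above convex set whose $i$-th coordinate lies in the lift $\phi_V(f(i))$. We must show that this subspace is contractible.

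We expect step (3) to be the main obstacle. When $f$ hits the same arc $\phi_V(c)$ several times, or wraps around so that $\phi_V(c)$ and its translate $\phi_V(c+|V|)$ both occur, the subspace is no longer evidently convex. The constraint $x_1-x_j<1$ interacts with the translates. Here we would imitate the argument with the sets $Y_a$ in the proof of Proposition \ref{prop:exit_paths_complex_case}: deform by a homeomorphism $g$ of $\RR$ squeezing the complementary region to reduce to a product of symmetric powers of intervals, which is contractible. We expect this reduction to be cleaner than in that proof, since here there is no cone summand. Finally, the case $k=\infty$ follows since the localizations for finite $k$ are compatible with the inclusions, and a filtered union of localizations along full subcategories is a localization.
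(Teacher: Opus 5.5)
Your construction of $L$ and your steps (1) and (2) match the paper's proof. It uses the same normalization of $\phi_U$ via the cut at $1$, the same functor $\chi$ into $\SP^{j}(\RR)$, the same convex configuration space of points with spread $<1$, and the same van Kampen argument. (In (2), ``intersecting'' two arc systems should mean taking small arcs around the common points, since $U\cap U'$ may have more than $j$ components.)

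\textbf{The gap is step (3).} You leave it open and propose to imitate the $Y_a$/squeezing argument from Proposition \ref{prop:exit_paths_complex_case}. That detour is unnecessary, because the obstacle you anticipate does not exist: the covered space is convex. Suppose $V$ has $m+1$ arcs, so that $\phi_V(c+m+1)=\phi_V(c)+1$ and $f(i+j)=f(i)+m+1$. Since $f$ is weakly increasing and the intervals $\phi_V(c)$ are disjoint and increasing in $c$, a configuration lies in $\sum_{i=0}^{j-1}\phi_V(f(i))$ if and only if its sorted coordinates $y_0\le\dots\le y_{j-1}$ satisfy $y_i\in\phi_V(f(i))$. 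This is your own description, and it already exhibits the space as an intersection of convex sets. A repeated value of $f$ only places consecutive coordinates in the same interval. Wrap-around only places $y_{j-1}$ in the translate $\phi_V(f(0))+1$. Intersecting with $y_0<\dots<y_{j-1}$, $y_{j-1}-y_0<1$ adds only linear inequalities, so the result is a convex subset of $\RR^{j}$.

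The $Y_a$ cover was needed in Proposition \ref{prop:exit_paths_complex_case} for a different reason. There, morphisms between distinct objects of $(\Bop\ZZ)^{*n}$ are unique, so the category $M_V$ carries no analogue of $f$, and the covered space is a union of pieces over all $a\in\ZZ$. Here the morphism $f$ of $\Lambda_\infty$ fixes the lift, so a single convex piece suffices.

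You also need nonemptiness, which you do not mention; an empty space is not contractible. It follows from periodicity. Since $f(j-1)\le f(j)=f(0)+m+1$, the interval $\phi_V(f(j-1))$ lies weakly below $\phi_V(f(0))+1$. Hence $\inf\phi_V(f(j-1))<\sup\phi_V(f(0))+1$. Now choose $y_0$ near the right end of $\phi_V(f(0))$ and the remaining $y_i$ increasing in their intervals; this gives a point with spread $<1$. This is exactly how the paper finishes.

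With these two observations step (3) is complete. Your reduction of $k=\infty$ to finite $k$ via the filtered union is fine, though the criterion also applies to $k=\infty$ directly.
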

\begin{proof}
        We again denote $p:\mathbb{R} \to S^1, x \mapsto \exp(2\pi i x)$ and fix for each $U \in \DD_{\le k+1}(S^1)$
        the unique isomorphism $\phi_U: \ZZ \cong \pi_0(p^{-1}(U))$ which is weakly increasing with respect to the
        natural order and such that $\phi_U(0) \subset \RR_{>0}$, but $\phi_U(-1) \nsubset \RR_{>0}$.
        The functor $L$ is given on objects by mapping $U \in \DD_l(S^1)$ to $\langle l -1 \rangle$.
        On a morphism $\iota:U \subset V$ the map $L(\iota)$ is defined by commutativity of the following square:
        \[\begin{tikzcd}
	{\pi_0(p^{-1}(U))} & {\pi_0(p^{-1}(V))} \\
	\ZZ & \ZZ
	\arrow["{\pi_0(\iota)}", from=1-1, to=1-2]
	\arrow["{\phi_U}", from=2-1, to=1-1]
	\arrow["{L(\iota)}"', from=2-1, to=2-2]
	\arrow["{\phi_V}"', from=2-2, to=1-2]
\end{tikzcd}\]

The first condition of the localization criterion \cite[A.6.2]{DKS:milnor} is immediate. To prove the other two points
of the criterion, we first give a description of the categories $\langle l \rangle/L$. An object of $\langle l \rangle/L$
is given by a tuple $(U \in \DD_m(S^1),f:\ZZ \to \ZZ)$ with $m \le k+1$ and $f$ weakly increasing and periodic (in the sense
that $f(i+l+1)=f(i)+m+1$ for all $i \in \ZZ$.)
A morphism $(U,f:\mathbb{Z} \to \mathbb{Z}) \to (V,g: \mathbb{Z} \to \mathbb{Z})$ amounts to a commutative diagram
\[
\begin{tikzcd}
        \mathbb{Z} \arrow[r, "f"] \arrow[rd, "g"'] & \mathbb{Z} \arrow[r, "\phi_U"] \arrow[d, "L(\iota)"] & \pi_0(p^{-1}(U))
        \arrow[d, "\pi_0(\iota)"] \\
                                                   & \mathbb{Z} \arrow[r, "\phi_V"']                      & \pi_0(p^{-1}(V))
\end{tikzcd}
\]
where $\iota:U \to V$ is the inclusion map.
The remainder of the proof mirrors that of Proposition \ref{prop:exit_paths_complex_case}.
To establish the weak contractibility of $(\langle l \rangle /L)^{\iso}$, we utilize the functor
\[
\chi:(\langle l \rangle/L)^{\iso} \to \U(\SP^{l+1}(\mathbb{R})),
        (V \in \DD_{l+1}(S^1),f:\mathbb{Z} \to \mathbb{Z}) \mapsto \sum_{i=0}^{l} \phi_V(f(i)) \subset \SP^{l+1}(\RR)
\]
which may be regarded as a restriction of the the functor in that proof.
This map is a cover of the contractible (convex) configuration space
\[
        C=\{x_0 + \dots + x_{l} \in \SP^{l+1}(\RR) | 0 < |x_i-x_j|<1 \text{ for all } i \neq j\},
\]
by contractibles and is filtered over points, hence $(\langle l \rangle /L)^{\iso}$ is weakly contractible by \cite[A.3.1]{lurie:ha}.
%

Finally, the inclusion $j_{\langle l \rangle }: (\langle l \rangle /L)^{\iso} \to \langle l \rangle /L$
is cofinal, because each comma category
\[
        E=(\langle l \rangle /L)^{\iso}/(U,f:\mathbb{Z} \to \ZZ)
\]
is weakly contractible:
This follows by another application of the van-Kampen theorem to $\chi|E$. The image of this functor covers the space
\[
        C \cap \sum_{i=0}^l \phi_U(f(i)) \subset \SP^{l+1}(\RR),
\]
which is contractible. Indeed, it is nonempty because $\inf(\phi_U(f(l))) < \max(\phi_U(f(0)))+1$ (at worst $\phi_U(f(l))$ may be the
shift of $\phi_U(f(0))$), and it is convex as a subset of $\RR^{l+1}$.

\end{proof}
\subsection{The analytic expansion equivalence}
\label{sub:complex_analytic}
In this section we first show that the truncated radial analytic expansion functors are an equivalence by using the criterion from
Corollary \ref{cor:analytification_equivalence_criterion} with $\BB=\DDrn(\CC^*) \setminus \{\emptyset\}$ as before.
\begin{lem}\label{lem:drop_multiplicities_complex_case}
        The restriction functor yields an equivalence
        \[
                \Fun_{\NN}(\BB_{\m}^{\op},\D) \simeq \Fun_{\NN}(\BB_{\cap}^{\op},\D).
        \]
\end{lem}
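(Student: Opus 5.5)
We follow the pattern of Lemma \ref{lem:Rg_ignore_discriminant_equivalence}. The goal is to show that the $\NN$-constructible presheaves on $\BB_{\m}$ are exactly the right Kan extensions of $\NN$-constructible presheaves on $\BB_{\cap} \subset \BB_{\m}$. Concretely, $\BB_{\cap}$ is the subposet of $\BB_{\m}$ with all multiplicities $m_i = 1$ for $i \ge 1$. Once this is established, restriction is a trivial Kan fibration onto its image, and that image is all of $\Fun_{\NN}(\BB_{\cap}^{\op},\D)$.

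Fix $B = \sum_{i=1}^k \SP^{m_i}(D_i) + \SP^{n-t}(\CC) \in \BB_{\m}$, where $t = \sum m_i$ and the $D_i$ are radial disks. The right Kan extension at $B$ is the limit over the comma category $B/\BB_{\cap}^{\op}$, i.e. over the poset of $W^{\Sigma J} + \SP^{n-|J|}(\CC) \in \BB_{\cap}$ containing $B$. We single out the subposet $C_B$ of those elements with $|J| = t$ of the form
\[
  W_1 + \dots + W_t + \SP^{n-t}(\CC), \qquad W_j \subset D_{i(j)},
\]
where exactly $m_i$ of the radial disks $W_j$ lie in $D_i$. Such an element has the same minimal $\NN$-stratum $t$ as $B$, and for an $\NN$-constructible presheaf all the corresponding restriction maps are equivalences. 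The argument then has three steps.

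\textbf{Step 1 (coinitiality).} We show that $C_B \subset B/\BB_{\cap}^{\op}$ is coinitial. By Quillen's Theorem A it suffices that, for each object $V$ of the comma category, the elements of $C_B$ contained in $V$ form a weakly contractible poset. To see that this poset is nonempty we apply Lemma \ref{lem:intersection_inclusions}: if $B \subset V^{\Sigma J} + \SP^{n-|J|}(\CC)$, then each $V_j$ contains subdisks of some $D_i$, and we can refine inside these. Weak contractibility follows from the filteredness argument used below. We also need the inclusion of an element of $C_B$ into $B$ to be an $\NN$-stratified homotopy equivalence. For this we invoke Lemma \ref{lem:extend_zero_disk} together with Corollary \ref{cor:extended_disk_type}; this is the same mechanism that already appeared in the proof of Lemma \ref{lem:complex_case_expand_disks}.

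\textbf{Step 2 (weak contractibility of $C_B$).} We use Lurie's van Kampen theorem \cite[A.3.1]{lurie:ha}. The poset $C_B$ is an open cover of the subspace $B_c \subset B$ of divisors having no multiplicity outside the cone part. The cover is filtered over points: the intersection of two elements of $C_B$ containing a point is again such an element, after intersecting the radial disks $W_j$ pairwise in a matched way. It remains to see that $B_c$ is contractible. Up to the contractible factor $\SP^{n-t}(\CC)$, the space $B_c$ is a product over $i$ of unordered configuration spaces of $m_i$ distinct points in the disk $D_i \cong \RR^2$.

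Here lies the main obstacle. Unlike in the real case, configuration spaces in $\RR^2$ are \emph{not} contractible; they are braid group classifying spaces. The fix is to exploit radiality: the $W_j$ are radial segments $D(I,a,b)$, and the shrunken basis forces the $W_j$ inside a given $D_i$ to have the same angular projection $r(W_j) = r(D_i)$, differing only in their radial intervals. So the relevant subspace should not be the full configuration space. Instead we take the subspace $B_c' \subset B_c$ of configurations in which the $m_i$ points lying in $D_i$ have pairwise distinct moduli $|z|$, and we work with the cover of $B_c'$ by those elements of $C_B$ whose radial intervals inside each $D_i$ are disjoint. Via the coordinates (argument, modulus), $B_c'$ is a product of an open cube (the arguments) with a configuration space of $m_i$ points in an interval (the moduli). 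This space is convex after ordering the moduli, hence contractible, exactly as in the proof of Proposition \ref{prop:intervals_localization}. I would redefine $C_B$ to be exactly this cover and verify that it still satisfies the coinitiality of Step 1, since any radial $W$ can be shrunk radially. With these choices, pointwise filteredness of the cover is clear.

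\textbf{Step 3 (conclusion).} An arbitrary $F \in \Fun(\BB_{\m}^{\op},\D)$ whose restriction to $\BB_{\cap}$ is $\NN$-constructible is a right Kan extension of that restriction if and only if $F(B) \simeq F(B')$ for all $B' \in C_B$. By Step 1 this condition is equivalent to $\NN$-constructibility of $F$ on $\BB_{\m}$, which gives the claimed equivalence.
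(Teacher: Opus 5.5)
There is a genuine gap: both geometric steps of your proposal fail as written. Your overall strategy is a legitimate alternative to the paper's. You show that $\NN$-constructible presheaves on $\BB_{\m}$ are right Kan extended from $\BB_{\cap}$, as in Lemma~\ref{lem:Rg_ignore_discriminant_equivalence}. The paper instead reruns the localization criterion of Proposition~\ref{prop:exit_paths_complex_case} to show that $L|\BB_{\cap}$ is still a localization onto $(\Delta^{n-1}_{S^1})^{\triangleleft}$.

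\textbf{Step 2 misidentifies the objects.} $\BB$ consists of $\RR_+$-invariant opens, so every disk occurring in $\BB_{\cap}$ or $\BB_{\m}$ is a full sector $r^{-1}(J)$ with $J \subsetneq S^1$ an open arc. Consequently:
\begin{itemize}
\item there are no radial intervals, and disjoint radial disks have disjoint angular projections;
\item the ``shrunken basis'' condition concerns the conical basis $\B$, not $\BB$;
\item your redefined $C_B$ (equal angular projection, disjoint moduli) therefore does not lie in $\BB_{\cap}$ at all.
\end{itemize}
The obstacle you identified is also illusory. The $W \in C_B$ do not cover $B_c$; they cover only configurations having, in each $D_i$, $m_i$ summands of pairwise distinct \emph{argument}. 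By Lemma~\ref{lem:intersection_inclusions}, $C_B \cong \prod_i P_{m_i}(r(D_i))$, where $P_m(I)$ is the poset of $m$ pairwise disjoint open subarcs of the arc $I$, ordered componentwise. Each factor is weakly contractible by the argument of Proposition~\ref{prop:intervals_localization}. Distinct arguments, lifted to $\RR$, are exactly what the paper uses, via $\{\sum x_i \in \SP^t(\RR) : 0 < |x_i - x_j| < 1\}$.

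\textbf{Step 1 checks the wrong direction.} For presheaves, the right Kan extension at $B$ is a limit over the $V \in \BB_{\cap}$ with $V \subset B$, as your own $C_B$ reflects, not over those containing $B$. Quillen's Theorem~A therefore requires the following: for every such $V = V_1 + \dots + V_l + \SP^{n-l}(\CC)$, possibly with $l > t$ and more than $m_i$ sectors inside $D_i$, the poset $\{W \in C_B : V \subset W\}$ must be weakly contractible. The condition you verify (elements of $C_B$ inside some $V \supset B$) is vacuous.

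The correct condition is not formal. Take $m_1 = 2$ and let $V$ have three sectors $V_1, V_2, V_3 \subset D_1$ in angular order. Let $W$ have its two sectors in $D_1$ containing $V_1$ and $V_2$ respectively. Let $W'$ have its two sectors containing $V_2$ and $V_3$, with the first sector of $W'$ disjoint from $V_1$. Then $W$ and $W'$ have neither a common upper bound nor a common lower bound in this poset. So a genuine covering or nerve argument is needed at this point, and your proposal does not supply it. The paper's localization approach avoids this comma-category analysis entirely.
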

\begin{proof}
The restriction $L|(\BB_{\cap} \setminus \{\emptyset\})$ of the localization functor
        \[
                L: \BB_{\m} \to (\Delta^{n-1}_{S^1})^{\triangleleft}
        \]
        from Proposition \ref{prop:exit_paths_complex_case} is also a localization functor: The proof is largely the same as
        the one given there: The opens in a fiber now cover the (contractible) configuration space
        \[
                C=\{\sum_{i}x_i \in \SP^{t}(\RR): 0 < |x_i -x_j |< 1  \text{ for all }i \neq j\}.
        \]
        and in the last part of the proof the space $Y_a$ is replaced with the intersection
\[
Y_a = C \cap \sum_{i=0}^{v-1}\phi_V(a+i) + \SP^{t-v}(\RR).
\]
(We now only need to consider the case where $V$ has no multiplicies.)
By the proof of Proposition \ref{prop:exit_paths_complex_case}, the space
\[
        \tilde{Y}_a   = \{\sum_{i}x_i \in \SP^{t}(\RR): |x_i -x_j |< 1  \text{ for all }i,j\} \cap
        \sum_{i=0}^{v-1}\phi_V(a+i) + \SP^{t-v}(\RR)
\]
(allowing multiplicities) is contractible. The inclusion $Y_a \subset \tilde{Y}_a$ is a homotopy equivalence.
A homotopy inverse is given by
\[
        \tilde{Y}_a \to Y_a, x_1 \ge \dots \ge x_t \mapsto \sum_i (x_i + \frac{t-i}{t}\epsilon(x_1,\dots,x_t))
\]
where $\epsilon:\tilde{Y}_a \to \RR_{>0}$ is a sufficiently small continuous function such that the above is well-defined.
The consecutive intersections $Y_a \cap Y_{a+1}$ are contractible by essentially the same argument and all other intersections
are empty.

\end{proof}
\begin{cor}
        Radial analytic expansion yields an equivalence
    \[
         \begin{tikzcd}
                 \Sh_{\NN}(\SP^{n}(\CC); \D) \arrow[r, "\An"{above}, "\simeq"{below}] &  \Shnr(\CC, \D)
         \end{tikzcd}
    \]
    and in the limit we have
    \[
         \begin{tikzcd}
                 \Shc^*(\SPi(\CC); \D) \arrow[r, "\An"{above}, "\simeq"{below}] &  \Shanr(\CC, \D).
         \end{tikzcd}
    \]
\end{cor}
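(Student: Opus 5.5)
The plan is to apply Corollary \ref{cor:analytification_equivalence_criterion} with the decomposable poset $\BB = \DDrn(\CC^*) \setminus \{\emptyset\}$ of nonempty radial unions of at most $n$ disks. By the corollary, it suffices to show that the restriction functor
\[
    \Sh_{\NN}(\SP^n(\CC);\D) \lra \Fun_{\NN}(\BB_{\cap}^{\op},\D)
\]
is an equivalence. I would factor this restriction as the composite of two restrictions:
\[
    \Sh_{\NN}(\SP^n(\CC);\D) \lra \Fun_{\NN}(\BB_{\m}^{\op},\D) \lra \Fun_{\NN}(\BB_{\cap}^{\op},\D).
\]
The first is an equivalence by Lemma \ref{lem:complex_case_expand_disks}, and the second by Lemma \ref{lem:drop_multiplicities_complex_case}. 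So the composite is an equivalence, and by the corollary the restricted analytic expansion $\AA_{\BB}$ is an equivalence onto $\Fun_{\NN}((\BB^{\triangleright})^{\op},\D)$.

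It remains to identify the target with $\Shnr(\CC^*;\D)$. Decomposability \eqref{eq:partition_closure} of $\BB$ is clear, since sub-unions of radial unions are radial. $\NN$-constructible presheaves on $\BB^{\triangleright} = \DDrn(\CC^*)$ are precisely the homotopy-invariant ones, which one checks via Lemma \ref{lem:union_inclusion}. These are radial analytic sheaves defined on unions of at most $n$ disks. Right Kan extension along $\DD^{\mathrm{r}}_{\le n} \subset \DDr$ then identifies them with radial polynomial sheaves of degree $\le n$, by the radial analogue of Proposition \ref{prop:kan}, whose proof is purely combinatorial and carries over verbatim. One also has to check that this identification is compatible with the full analytic expansion $U \mapsto \fib(F(X) \to F(U+X))$. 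For this, use Lemma \ref{lem:fundamental} to identify $F(U + \SPi(\CC))$ with $F(U + \SP^{n-1}(\CC))$ for sheaves supported on $\SP^n(\CC)$. On unions of more than $n$ disks, Theorem \ref{thm:polytopoly} shows that $\Fan$ is polynomial of degree $\le n$, hence it agrees with the right Kan extension.

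For the infinite statement, I would pass to the limit along $*$-restrictions $\SP^{n}(\CC) \subset \SP^{n+1}(\CC)$. The key point is that analytic expansion intertwines the $*$-restriction $i_n^*$ with the polynomial approximation $\tau_n$ restricted to $\Sh^{\le n+1}$. This follows because both sides agree on $\DD^{\mathrm{r}}_{\le n}$, again by Lemma \ref{lem:fundamental}, and $\tau_n$ is determined by that restriction. Proposition \ref{prop:approx}, in its radial form, identifies $\Shanr(\CC^*;\D)$ with $\lim_n \Shnr(\CC^*;\D)$ along the $\tau$'s. Taking limits of the levelwise equivalences then gives the claim.

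The main obstacle is this compatibility of the squares: one needs a coherent natural equivalence $\An \circ i_n^* \simeq \tau_n \circ \An$, not just an objectwise one. I would construct it at the level of presheaves on $\DD^{\mathrm{r}}_{\le n}$, where both functors are literally restriction of the same fiber construction. Coherence is then automatic, and the mismatch disappears after right Kan extension.
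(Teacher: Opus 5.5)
Your proposal is correct and follows the paper's own argument. The truncated equivalence comes from Corollary~\ref{cor:analytification_equivalence_criterion}, with the restriction factored through $\BB_{\m}$ via Lemma~\ref{lem:complex_case_expand_disks} and Lemma~\ref{lem:drop_multiplicities_complex_case}, and the $*$-sheaf version comes from passing to inverse limits along $*$-restrictions; your extra checks (identifying the target with radial polynomial sheaves of degree $\le n$ via right Kan extension from $\DDrn(\CC^*)$ and Theorem~\ref{thm:polytopoly}, and the compatibility $\An \circ i_n^* \simeq \tau_n \circ \An$ needed in the limit) spell out steps the paper leaves implicit.
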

\begin{proof}
        The truncated claim follows from the criterion in Corollary \ref{cor:analytification_equivalence_criterion},
        Lemma \ref{lem:complex_case_expand_disks} and Lemma \ref{lem:drop_multiplicities_complex_case}. The version for $*$-sheaves follows by taking inverse limits along $*$-restrictions.
\end{proof}
\subsection{Comparison with the real equivalence}
The classical paracyclic Dwyer-Kan equivalence descends to the simplicial correspondence via restrictions.
In this section, we indicate how one can achieve this in the geometric context.
On the side of polynomial sheaves, we utilize the description via $S^1$ and the inclusion
\[
        j:\RR_{>0} \to S^1, t \mapsto \exp(2i\pi \frac{t}{1+t})
\]
(corresponding to a cut at the positive real axis.)
For $n \in \NN$, we denote by $s_n=\SP^n(s)$ the $n$-fold symmetric product of the standard inclusion.

\begin{prop}\label{prop:restrict_to_simplex_finite}
        For $n \in \NN$ the diagram
\[\begin{tikzcd}
	{\Sh_{\NN}(\SP^{n}(\CC);\D)} & {\Sh^{\le n}(S^1,\D)} \\
	{\Sh_{\NN}(\SP^{n}(\Rg);\D)} & {\Sh^{\le n}(\RR_{>0},\D)}
	\arrow["\AA", from=1-1, to=1-2]
	\arrow["{s_n^*}"', from=1-1, to=2-1]
	\arrow["{j^*}", from=1-2, to=2-2]
	\arrow["\AA", from=2-1, to=2-2]
\end{tikzcd}\]
        commutes up to homotopy.
\end{prop}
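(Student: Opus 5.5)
The plan is to build a natural comparison map between the two composites in the square and then check that it is an equivalence. The check only needs to happen on opens of type \ref{type:2}, where everything is controlled by exodromy.

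\textbf{Straightening the embedding.} The standard inclusion $s\colon \Rg \to \CC$ lands on the positive real axis, while $j$ sends $\RR_{>0}$ into $S^1\setminus\{1\}$, i.e.\ to the other side of the cut. So $s$ does not directly see the radial opens $r^{-1}(j(U))$. I would replace $s$ by the spiral embedding
\[
s'\colon \Rg \to \CC,\quad t \mapsto t\cdot j(t)\quad (s'(0)=0).
\]
This is a closed embedding with $r\circ s'|_{\RR_{>0}} = j$. It is isotopic to $s$ through closed embeddings $\Rg \to \CC$ fixing $0$ (wind the spiral back). Taking $n$-fold symmetric products gives an isotopy $s_n \simeq s'_n$ of maps respecting the $\NN$-filtration stratification. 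By homotopy invariance (Proposition \ref{prop:homotopy_equivalence_constructible_sheaves}), it yields a natural equivalence $s_n^* \simeq {s'_n}^*$ on $\Sh_{\NN}(\SP^n(\CC);\D)$. It therefore suffices to prove the claim with $s'_n$ in place of $s_n$.

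\textbf{The comparison map.} For $U \in \DD_{\le n}(\RR_{>0})$ we have
\[
s'_n\bigl(U + \SP^{n-1}(\Rg)\bigr) \subset r^{-1}(j(U)) + \SP^{n-1}(\CC),
\]
and similarly $s'_n(\SP^n(\Rg)) \subset \SP^n(\CC)$. Restriction therefore induces maps $F(r^{-1}j(U)+\SP^{n-1}(\CC)) \to {s'_n}^*F(U+\SP^{n-1}(\Rg))$, compatible with the maps from the global sections. Passing to fibers gives a transformation
\[
j^*\AA(F) \Rightarrow \AA({s'_n}^*F),
\]
natural in $U$ and $F$, because everything is defined by restriction maps of presheaves. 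By Corollary \ref{cor:analytification_equivalence_criterion} and Theorem \ref{thm:stable_interchange}, both analytic expansions are obtained from the values on the families $\BB_{\cap}$ by the same (co)limit formulas along biCartesian \v{C}ech cubes. The transformation is compatible with these cubes, since $s'_n$ carries $D_i + \SP^{n-1}(\Rg)$ into $r^{-1}j(D_i)+\SP^{n-1}(\CC)$ and preserves the intersections $D^{\Sigma I}+\SP^{n-|I|}$. So it suffices to check that the analogous maps
\[
F\bigl((r^{-1}j(D))^{\Sigma I} + \SP^{n-|I|}(\CC)\bigr) \lra {s'_n}^*F\bigl(D^{\Sigma I}+\SP^{n-|I|}(\Rg)\bigr)
\]
are equivalences. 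Here $D_i$ are disjoint intervals in $\RR_{>0}$, and we also need the global sections.

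\textbf{The key step.} The source is computed by Lemma \ref{lem:fundamental} together with Lemma \ref{lem:complex_case_expand_disks}/exodromy: it is the value of $F$ at the exit-path object $|I|$, i.e.\ the stalk-type value on the stratum $X_{|I|}$ at a point of $s'_n(D^{\Sigma I})$. For the target, ${s'_n}^*F$ is $\NN$-constructible. By Lemma \ref{lem:fundamental} and Remark \ref{rem:strata_computation}, its value on $D^{\Sigma I}+\SP^{n-|I|}(\Rg)$ is its value on the $|I|$-stratum at the same point. The restriction map between them is the cospecialization along a point in $s'_n(D^{\Sigma I})$, viewed in a conical basic neighborhood, so it is an equivalence. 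The global-sections case is the same argument at the cone point, with stratum $0$.

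I expect the main obstacle to be coherence: making the isotopy equivalence $s_n^*\simeq {s'_n}^*$ natural in $F$, and compatible with the $\infty$-categorical restriction maps entering $\AA$, rather than merely objectwise. I would handle this by regarding the isotopy as a single stratified map $H\colon \SP^n(\Rg)\times[0,1]\to \SP^n(\CC)$ and using that $H^*F$ is pulled back from $\SP^n(\Rg)$. The two ends are then identified by the equivalence of $\Sh_{\NN}$ on a product with $[0,1]$ with $\Sh_{\NN}$ of the factor, which is functorial in $F$.
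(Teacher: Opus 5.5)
Your proposal is correct and follows essentially the same route as the paper. You replace $s_n$ by the symmetric power of the spiral $t \mapsto t\,j(t)$ via stratified homotopy invariance, then show that the restriction map $F(r^{-1}j(U)+\SP^{n-1}(\CC)) \to (s'_n)^*F(U+\SP^{n-1}(\Rg))$ is an equivalence by covering with the opens $\sum_{i\in I} r^{-1}j(D_i)+\SP^{n-|I|}(\CC)$, on which constructible sheaves are computed by stalks. The differences are cosmetic: the paper notes that the preimage of the radial open under the spiral is exactly $U+\SP^{n-1}(\Rg)$ (your inclusion suffices), and it leaves implicit the global-sections case and the naturality of $s_n^*\simeq (s'_n)^*$, which you spell out.
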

\begin{proof}

Of course, $s_n$ is stratified-homotopic to the standard embedding, so the following comparison result also holds for
the standard embedding by homotopy-invariance.
On the other side, we must give a corresponding stratified inclusion $\Rg \to \CC$.
To mirror the behaviour of $j$, we use another embedding
\[
        t:\Rg \to \CC, t \mapsto t\exp(2i\pi \frac{t}{1+t}).
\]
given by a spiral.
Note that $\Sp^n(t) \simeq s_n$ in an $\NN$-stratified way, so by homotopy invariance \cite[Theorem 3.4]{volpe2025verdier},
we have $s_n^* \simeq \SP^n(t)^*$ on $\NN$-constructible sheaves, so we may as well
prove the proposition for $t_n :=\SP^n(t)$
        For an $D \in \DD(\RR_{>0})$ let $D^{\RR_+} \subset \CC$ denote the radial open with
        $D^{\RR_+} \cap S^1 = j(D)$. By the choice of our inclusions, we have
        $t_n^{-1}(D^{\RR_+}+\SP^{n-1}(\CC))=D+\SP^{n-1}(\Rg)$, so for
        $F \in \Sh_{\NN}(\SP^{n}(\CC);\D)$:
        \[
                t_n^* F(D + \SP^{n-1}(\Rg) ) =  (t_n)_{*} t_n^* F(D^{\RR_+}+\SP^{n-1}(\CC))  .
        \]
        It thus suffices to show that the natural arrow
        \[
                F(D^{\RR_+}+\SP^{n-1}(\CC)) \to  (t_n)_{*} t_n^* F(D^{\RR_+}+\SP^{n-1}(\CC))
        \]
        is an equivalence. This follows easily using the standard covering of $D^{\RR_+}+ \SP^{n-1}(\CC)$ by
        the opens $\sum_{i \in I} D^{\RR_{+}}_i + \SP^{n-|S|}(\CC)$ where $D^{\RR_{+}}_i$ are the components of $D^{\RR_{+}}$,
        since the value of a constructible sheaf on these opens is given by an appropriate stalk.
\end{proof}

\begin{cor}\label{cor:star-para-simplex-comparison}
        The square
\[\begin{tikzcd}
	{\Sh^*_{\NN}(\SP^{\infty}(\CC);\D)} & {\Sh^{\an}(S^1,\D)} \\
	{\Sh^*_{\NN}(\SP^{\infty}(\Rg);\D)} & {\Sh^{\an}(\RR_{>0},\D)}
	\arrow["\AA", from=1-1, to=1-2]
        \arrow["{s_{\infty}^*}"', from=1-1, to=2-1]
	\arrow["{j^*}", from=1-2, to=2-2]
	\arrow["\AA", from=2-1, to=2-2]
\end{tikzcd}\]
        commutes up to homotopy.
\end{cor}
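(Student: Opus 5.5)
The plan is to deduce the square from the finite-level squares of Proposition \ref{prop:restrict_to_simplex_finite} by passing to inverse limits over $n$. By definition, $\Sh^*_{\NN}(\SPi(\CC);\D)$ and $\Sh^*_{\NN}(\SPi(\Rg);\D)$ are the limits of the towers $\{\Sh_{\NN}(\SP^n(-);\D)\}_n$ along the $*$-restrictions $i_n^*$. By Proposition \ref{prop:approx}, $\Sh^{\an}(S^1,\D)$ and $\Sh^{\an}(\RR_{>0},\D)$ are the limits of the towers $\{\Sh^{\le n}\}_n$ along the polynomial approximations $\tau_{n-1}$. The horizontal equivalences $\AA$ in the corollary were constructed as limits of the truncated equivalences. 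So it suffices to show three things:
\begin{enumerate}
\item the vertical functors $s_\infty^*$ and $j^*$ are the limits of the level-wise functors $s_n^*$ and $j^*$;
\item the finite squares of Proposition \ref{prop:restrict_to_simplex_finite} assemble into a map of towers, i.e.\ a natural transformation of functors $\NN^{\op}\to \Cat_\infty$;
\item then taking limits gives the claimed homotopy commutative square.
\end{enumerate}

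Step one is the compatibility of the vertical maps with the transition functors. On the sheaf side, $s_{n+1}$ restricts to $s_n$ along the inclusions $\SP^n\subset \SP^{n+1}$ (adding the base point $0$), so $i_n^* s_{n+1}^*\simeq s_n^* i_n^*$ canonically. On the polynomial side, $j^*$ is restriction along the poset map $\DD(\RR_{>0})\to \DD(S^1)$, $U\mapsto j(U)$, which preserves cardinality. It therefore maps $\DD_{\le n}$ into $\DD_{\le n}$, and restriction along it commutes with $\tau_{n-1}$ because right Kan extension along $\DD_{\le n-1}\subset\DD_{\le n}$ is computed pointwise on comma categories which $j$ identifies. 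I would check this last claim via the formula in Remark \ref{rem:weissk}, or more simply by evaluating on $U\in \DD_{\le n}(\RR_{>0})$, where nothing needs extending. Likewise $\AA$ intertwines $i_n^*$ with $\tau_{n-1}$: for $U\in\DD_{\le n-1}$, the value $\fib(F(\SP^n)\to F(U+\SP^{n-1}))$ agrees with the one computed on $\SP^{n-1}$ after $*$-restriction, by Lemma \ref{lem:fundamental} and the cube description in the proof of Theorem \ref{thm:polytopoly}.

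The main obstacle is step two: the homotopies produced in Proposition \ref{prop:restrict_to_simplex_finite} must be chosen coherently in $n$, not just level by level. To avoid higher coherence bookkeeping, I would redo that argument directly at the level of $\SPi$, using the spiral embedding $t$. For $D\in \DD(\RR_{>0})$ one has $t_\infty^{-1}(D^{\RR_+}+\SPi(\CC))=D+\SPi(\Rg)$. The natural map
\[
F(D^{\RR_+}+\SPi(\CC))\to (t_\infty)_*t_\infty^*F(D^{\RR_+}+\SPi(\CC))
\]
is then a single natural transformation of functors in $F$ and $D$. Its restriction to each level is the equivalence already established, and limits of equivalences are equivalences. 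Taking fibers from the global sections, which are identified in the same way, gives a natural equivalence $j^*\AA(F)\simeq \AA(s_\infty^*F)$, after replacing $s_\infty$ by $t_\infty$ using stratified homotopy invariance as in the finite case.
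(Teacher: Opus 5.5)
Your proposal is correct and follows the paper's route. The paper deduces the square from Proposition~\ref{prop:restrict_to_simplex_finite} by passing to inverse limits, using only that $s_n^*$ and $j^*$ commute with the $*$-restrictions (and with the polynomial truncations). Your extra step, which realizes the comparison as a single unit transformation $F \to (t_\infty)_*t_\infty^*F$ at the level of $\SPi$, is a sensible way to make explicit the coherence of the level-wise homotopies; the paper leaves that coherence implicit.
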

\begin{proof}
        This follows from Proposition \ref{prop:restrict_to_simplex_finite} by noting that $s_n^*$ and $j^*$ commute with
        $*$-restrictions.
\end{proof}

\begin{rem}
        Tracing through the localization arguments, one sees that on the level of combinatorial (exit path) categories,
        the restriction $i_{\infty}^*$ and $j^*$ correspond to restrictions along the inclusions
        $\NN \to \NN_{S^1}^{\triangleleft}$ (taking initial object to initial object)
        and $\Delta_+ \subset \Lambda_{\infty,+}$ respectively.
\end{rem}

\subsection{\texorpdfstring{$*!$}{starshriek}-Sheaves}
\label{sub:starshriek}

Recall that, for a complex manifold $X$ and a holomorphic function $f$, we may
define a vanishing cycles functor as follows. We set $X_0 = f^{-1}(0)$ and
$X_{\Rg} = f^{-1}(\Rg)$ and consider the natural filtration
\[
    X_0 \overset{a}{\subset} X_{\Rg} \overset{b}{\subset} X.
\]
We define
\[
    \Phi_{f}: \Sh(X;\D) \to \Sh(X_0;\D), F \mapsto a^!b^*F.
\]

\begin{defi}
    \label{prop:vanishing_completion}
    For $k \ge 1$, let $e_k = z_1 z_2 \dots z_k$ denote the $k$th elementary symmetric
    polynomial in $k$ variables. We consider the sequence
    \[
        \Sh(\SP^0(\CC);\D) \overset{\Phi_{e_1}}{\lla} \Sh(\SP^1(\CC);\D) \overset{\Phi_{e_2}}{\lla} \Sh(\SP^2(\CC);\D) \lla  \dots
    \]
    and define its inverse limit
    \[
        \Sh^{*!}(\SP^{\infty}(\CC);\D) := \lim_{k \ge 0} \Sh(\SP^k(\CC);\D).
    \]
\end{defi}
We also introduce a restriction map on polynomial functors on $S^1$ which will correspond to the above vanishing cycles:
Let $\DD_{k+1}^{1}(S^1) \subset \DD_{k+1}(S^1)$ denote the set of those unions $\bigcup_{i=1}^{k+1} D_i \subset S^1$
such that
\[
        \sum_{i=1}^{k+1} D_i \cap \{\sum z_i \in \SP^{k+1}(S^1)| \prod_i z_i = 1\} \neq \emptyset.
\]
There is a diagram
\[\begin{tikzcd}
	& {\DD_{\le k}(S^1)} \\
        {\DD^{*}_{\le k}(S^1) \cup \DD_{k+1}^{1}(S^1)} & {\DD_{\le k}(S^1) \cup \DD_{k+1}^{1}(S^1)} \\
	{\DD(S^1)}
	\arrow["c", hook, from=1-2, to=2-2]
	\arrow["d", hook, from=2-1, to=2-2]
	\arrow["e"', hook, from=2-1, to=3-1]
\end{tikzcd}\]
We define
\[
i^{*!}_k:\Sh^{\an}(S^1;\D) \to \Sh^{\le k}(S^1,\D)\]
by right Kan extension of the composite
\[
        i^{*!}_kF=c^* d_{!} e^* F.
\]
In other words, we first restrict to the values on unions of at most $k$ disjoint disks and certain unions of $k+1$ disjoint disks,
then extend to $\emptyset$ by means of a left Kan extension and finally forget the values at the remaining unions of $k+1$ disjoint
disks (before extending to all disks by right Kan extension).

To better understand the above formula we make the following computation:
\begin{lem}
        \label{lem:starshriekempty}
        We denote $\DD=\DD^{*}_{\le k}(S^1) \cup \DD_{k+1}^{1}(S^1)$.
        For $D_1 \cup \dots \cup D_{k+1} \in \DD_{k+1}^1(S^1)$.and $F \in \Sh^{\an}(S^1;\D)$. A functor
        $G:(\DD^{\triangleleft})^{\op} \to \D$ with $G|\DD^{\op}=e^*F|\DD^{\op}$ is a left Kan extension of $e^* F$
        if and only if the difference cube $\delta^{k+1}(G)(\emptyset;D_1,\dots,D_{k+1})$ is biCartesian.
\end{lem}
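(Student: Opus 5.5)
The plan is to use the pointwise formula for the left Kan extension along the inclusion $\DD^{\op} \subset (\DD^{\triangleleft})^{\op}$, where the cone point is $\emptyset$. With the convention of the proof of Proposition \ref{prop:stable_complete}, we pass to $\D^{\op}$ or simply unwind $\op$'s. The only new object is $\emptyset$, so $G$ is a left Kan extension of $e^*F$ exactly when $G(\emptyset) \simeq \colim_{\DD^{\op}} e^*F$, via the canonical map. The statement then amounts to identifying this colimit with the colimit of the punctured difference cube $\{F(D_I)\}_{\emptyset \neq I \subset \{1,\dots,k+1\}}$. By stability of $\D$ (Remark \ref{rem:tfib_stable}), this identification is the same as biCartesianness of $\delta^{k+1}(G)(\emptyset;D_1,\dots,D_{k+1})$. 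All its vertices other than $G(\emptyset)$ lie in $\DD$ by decomposability: subunions of a union in $\DD^1_{k+1}(S^1)$ are nonempty unions of at most $k$ disks.

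So the key step is a cofinality statement. The inclusion of the punctured cube $P = \{D_I\}_{\emptyset \neq I}$ into $\DD$ should be cofinal for the colimit over $\DD^{\op}$, up to replacing objects of $\DD$ by homotopy-equivalent ones on which $F$ takes equivalent values. I would follow the proof of Proposition \ref{prop:stable_complete}. First cover $\DD$ by the subposets $\DD_T$ of unions meeting a finite set $T \subset S^1$. These are indexed by finite sets $T=\{t_1,\dots,t_{k+1}\}$ with $t_i \in D_i$ and $\sum t_i$ in the locus $\prod z_i = 1$, or more generally by $T$ with $\sum T$ in $\sum D_i \cap \{\prod z_i = 1\}$. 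Then write $\colim_{\DD^{\op}}$ as a colimit of colimits over $\DD_T \cap \DD$. Each of these is computed as the colimit of a punctured cube $I \mapsto \colim F|\bigcap_{t\in I}\DD_{\{t\}}$, which by the coinitiality argument there, using analyticity of $F$, is equivalent to $I \mapsto F(D_I)$. One difference from Proposition \ref{prop:stable_complete} is that there we used polynomiality for all $l>k$. Here no polynomiality of $F$ is assumed, but $\DD$ contains no unions of more than $k+1$ disks, so only the $(k+1)$-point sets $T$ occur and no further cubes arise.

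The main obstacle will be showing that the indexing category of admissible $T$ (equivalently, the relevant part of $\sum D_i \cap \{\prod z_i=1\}$, or the full locus in $\SP^{k+1}(S^1)$ met by $\DD^1_{k+1}$) is weakly contractible. We also need that the transition maps between the cube colimits for different $T$ are equivalences, so that the colimit of colimits collapses to a single $\colim P$. For weak contractibility I would use Lurie's van Kampen theorem \cite[A.3.1]{lurie:ha}, as in Proposition \ref{prop:s1_intervals_localization}. The relevant space is the intersection of a product of arcs (lifted to $\RR^{k+1}$) with the hyperplane $\sum x_i \in \ZZ$ determined by $\prod z_i=1$. This intersection is convex and nonempty, hence contractible, and the cover is filtered over points by intersecting disks. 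Homotopy invariance of $F$ then identifies the transition maps, giving $\colim_{\DD^{\op}} e^*F \simeq \colim P$. This proves both implications.
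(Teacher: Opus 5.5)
\textbf{There is a genuine gap.} Your reduction is right: it suffices to show $\colim_{\DD^{\op}}F\simeq\colim_{\emptyset\neq I}F(D_I)$. The argument for this fails at the step you flagged as the main obstacle.

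\emph{The contractibility you prove is only local.} A lifted box of arcs meets $\{\sum x_i\in\ZZ\}$ in a convex set, so each single piece $\sum_i D_i'\cap\{\prod z_i=1\}$ is contractible. But $\DD$ contains unions of disks all around the circle, so your indexing has to range over all admissible configurations. Two things are therefore needed, and neither is addressed:
\begin{enumerate}
\item contractibility of the global space $\Conf^1_{k+1}(S^1)=\{\sum z_i\in\Conf_{k+1}(S^1)\mid \prod z_i=1\}$;
\item a covering/cofinality condition: for each $U\in\DD$ with at most $k$ components, the admissible configurations with a point in $U$ form a contractible space.
\end{enumerate}
Your argument never uses $\prod z_i=1$ essentially, since lifted boxes are convex without the hyperplane. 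Yet this condition is exactly what makes the lemma true. With $\DD_{k+1}(S^1)$ in place of $\DD^1_{k+1}(S^1)$ the relevant space becomes $\Conf_{k+1}(S^1)\simeq S^1$. Already for $k=0$ the colimit of $F$ over all arcs is then the homotopy coinvariants of the monodromy, not $F(D_1)$.

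\emph{The indexing of your cover does not support a ``colimit of colimits'' formula.}
\begin{itemize}
\item If $T$ ranges over admissible $(k+1)$-point sets, there are no morphisms between different $T$. For fixed $U\in\DD$ the admissible $T$ meeting $U$ then form a discrete set with more than one point (for $k\ge 1$), which is not weakly contractible.
\item If $T$ is restricted to $\sum_i D_i\cap\{\prod z_i=1\}$ for the given $D_i$, the $\DD_T$ do not cover $\DD$.
\item Enlarging $T$ and ordering by inclusion as in Proposition~\ref{prop:stable_complete} does not go through directly. The cube terms with $|I|\ge k+2$ no longer reduce to $F(D_I)$, because $\DD$ contains no unions of more than $k+1$ disks.
\end{itemize}

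\emph{How the paper handles both points.} It adjoins the sums $\UU=\{\sum_i D_i'\mid \bigcup_i D_i'\in\DD^1_{k+1}(S^1)\}\subset\U(\SP^{k+1}(S^1))$ as new objects. Left Kan extension along $u\colon\DD\to\DD\cup\UU$ gives $u_!F(\sum_i D_i')\simeq\colim_{\emptyset\neq I}F(\bigcup_{i\in I}D_i')$. By analyticity of $F$, these values are connected by equivalences. It then suffices to show that $\UU^{\op}\to(\DD\cup\UU)^{\op}$ is cofinal and that $\UU$ is weakly contractible. Both are proved by van Kampen over $\Conf^1_{k+1}(S^1)$ and its subspaces of configurations with a point in a given $U$.

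Global contractibility can be seen directly: cyclically ordered lifts $x_1<\dots<x_{k+1}<x_1+1$ with $\sum x_i=0$ form a convex set mapping homeomorphically onto $\Conf^1_{k+1}(S^1)$.
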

\begin{proof}
       We utilize the natural embedding $\DD \subset \U(\SP^{k+1}(S^1))$. Letting $\UU = \{ \sum_i D_i' | \bigcup D_i' \in \DD_{k+1}^1 \} \subset \U(\SP^{k+1}(S^1))$ denote the sums corresponding to the unions of the special unions of $k+1$ disks, we study the inclusion $u:\DD \to \DD \cup \UU$.
        We observe that, for $\bigcup_i D_i' \in \DD_{k+1}^1(S^1)$, there is a natural equivalence
        \[
                u_! F (\sum_i D_i') \simeq \colim_{\emptyset \neq I \subset [k+1]} F(\bigcup_{i \in I} D_i')
        \]
        because the punctured difference cube is cofinal in the comma category. In particular, $u_! F | \UU$ takes values
        in $\D^{\simeq}$.
        To establish the lemma, it therefore suffices to prove the following:
        \begin{enumerate}
                \item The inclusion $\UU^{\op} \to (\DD \cup \UU)^{\op}$ is cofinal.
                \item \label{item:UU_contractible} The poset $\UU$ is weakly contractible.
        \end{enumerate}
        For the proof, we use a covering argument. Let
        \[
                \Conf_{k+1}^1(S^1) = \Conf_{k+1}(S^1) \cap \{\sum_i z_i | \prod_i  z_i =1\} \subset \SP^{k+1}(S^1)
        \]
        denote the space of configurations obeying the additional restriction $\prod_i z_i =1$.
        There is a covering $\UU \to \Conf_{k+1}^1(S^1), U \mapsto U \cap \Conf_{k+1}^1(S^1)$. It is clearly a covering by contractibles that is filtered over points, so to prove the second point it remains to note that the space $\Conf_{k+1}^1(S^1)$ is contractible.
        To prove the first point, we use Quillen's Theorem A and restrict the above covering to the relevant comma category.
        For $\bigcup_{i=1}^m D_i'$ with $m \le k$, the restriction to the comma category covers the contractible space
        \[
                \{\sum_i z_i \in \Conf_{k+1}^1(S^1) | \exists i,j: z_i \in D_j'\}.
        \]

\end{proof}
\begin{prop}\label{prop:starshriek_analytic}
        There is a canonical equivalence
        \[
                \Sh^{\an}(S^1;\D)^c \simeq \lim_k i^{*!}_{k-1}|\Sh^{\le k}(S^1,\D)
        \]
        between complete analytic sheaves on $S^1$ and the inverse limit.
\end{prop}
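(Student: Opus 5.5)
We construct a functor $\Sh^{\an}(S^1;\D)^c \to \lim_k i^{*!}_{k-1}|\Sh^{\le k}(S^1,\D)$ from the polynomial approximations and show it is an equivalence. Proposition \ref{prop:approx} identifies $\Sh^{\an}(S^1;\D)$ with the limit of the tower $\{\Sh^{\le k}(S^1;\D)\}_k$ along the truncations $\tau_{k-1}$, via $G \mapsto \{\tau_k G\}$. The candidate map sends a complete sheaf $G$ to the same system $\{\tau_k G\}$. We must then check two things: that the transition maps $i^{*!}_{k-1}$ agree with $\tau_{k-1}$ on the relevant objects, and that this identifies the completeness condition.

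\textbf{Step 1: when $i^{*!}_{k-1}$ and $\tau_{k-1}$ agree.} Let $H \in \Sh^{\le k}(S^1;\D)$. By construction, $i^{*!}_{k-1}H$ and $\tau_{k-1}H = (i_{k-1})_*(i_{k-1})^*H$ agree on $\DD^*_{\le k-1}(S^1)$. They can differ only in the value at $\emptyset$:
\begin{itemize}
\item for $\tau_{k-1}H$ it is $H(\emptyset)$;
\item for $i^{*!}_{k-1}H$ it is the left Kan extension $d_!$, computed by Lemma \ref{lem:starshriekempty}.
\end{itemize}
That lemma, whose proof identifies the value as a colimit over the weakly contractible poset $\UU$ of punctured cubes, gives the following. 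The natural comparison $i^{*!}_{k-1}H \to \tau_{k-1}H$ (or its reverse) is an equivalence if and only if $H(\emptyset)$ is the colimit of the punctured $k$-cubes $\{H(D_I)\}_{\emptyset\neq I}$ for $D_1\cup\dots\cup D_k \in \DD^1_k(S^1)$. Equivalently, the $k$th difference cube at $\emptyset$ is biCartesian, which in the stable setting means its total fiber vanishes.

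\textbf{Step 2: rewriting the limit.} An object of $\lim_k i^{*!}_{k-1}$ is a compatible system $\{H_k\}$ with $i^{*!}_{k-1}H_k \simeq H_{k-1}$. The $H_k$ agree on nonempty opens of $\DD_{\le k-1}$, so they assemble into a single functor $G^*$ on $\DD^*(S^1)^{\op}$. Each $H_k$ is then recovered as the right Kan extension of its restriction to $\DD^*_{\le k}$ together with a value at $\emptyset$. That value is forced inductively, as $d_!$ of the $(k{+}1)$-disk data, to equal $\colim G^*|\DD^*_{\le k}$. Here we use Proposition \ref{prop:stable_complete}: since $H_k$ is polynomial of degree $\le k$ it is complete, so $H_k(\emptyset) \simeq \colim H_k|\DD^*$. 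Cofinality (the coinitiality argument in that proof) reduces this colimit to $\DD^*_{\le k}$.

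Hence the limit is equivalent to $\Fun_{\simeq}(\DD^*(S^1)^{\op},\D)$, the homotopy-invariant presheaves on nonempty opens. The inverse functor adjoins the value $\colim G^*$ at $\emptyset$. This is exactly the description of $\Sh^{\an}(S^1;\D)^c$: a complete analytic sheaf is the left Kan extension of its restriction to $\DD^*$. Finally, we check that under this identification the map of the first paragraph is the evident one. This uses $\colim_k \colim G|\DD^*_{\le k} \simeq \colim G|\DD^*$, since $\DD^*$ is the filtered union of the $\DD^*_{\le k}$.

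\textbf{Main obstacle.} The hardest step will be Step 2's compatibility at $\emptyset$: showing that the value of $d_!$ at $\emptyset$, computed only over the special unions $\DD^1_k$, agrees with $\colim H|\DD^*_{\le k-1}$ and not merely with something equivalent to it abstractly. For this I would first use cofinality of $\UU^{\op}$ in $(\DD\cup\UU)^{\op}$, taken from the proof of Lemma \ref{lem:starshriekempty}. I would then redo the covering argument of Proposition \ref{prop:stable_complete} with $\Conf^1_k(S^1)$ in place of all configurations, so that the comparison maps are natural in $H$.
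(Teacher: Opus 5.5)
Your end state, where the limit is identified with homotopy-invariant presheaves on $\DD^*(S^1)$ and these with complete analytic sheaves, is the right one and matches the paper. However, the comparison functor you build is wrong, and so is the formula you use for the value at $\emptyset$.

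\textbf{The candidate functor does not land in the limit.} The family $\{\tau_k G\}$ is not an object of $\lim_k i^{*!}_{k-1}$. On one side, $\tau_{k-1}G(\emptyset)=G(\emptyset)$. On the other, by the definition of $d_!$, $i^{*!}_{k-1}(\tau_k G)(\emptyset)\simeq \colim G|(\DD^*_{\le k-1}(S^1)\cup\DD^1_k(S^1))^{\op}$. Take $k=1$: $\DD^*_{\le 0}(S^1)$ is empty and $\DD^1_1(S^1)$ is the cofiltered poset of arcs containing $1$. So $i^{*!}_0\tau_1 G$ is constant with value $G(D)$ for an arc $D$, while $\tau_0G$ is constant with value $G(\emptyset)$. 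Completeness does not force $G(D)\simeq G(\emptyset)$. Let $G$ be the right Kan extension of the presheaf on $\DD_{\le 1}(S^1)$ with $G(\emptyset)=A\neq 0$ and $G(D)=0$ on arcs. It is polynomial of degree $\le 1$, hence complete by Proposition~\ref{prop:stable_complete}. Yet $i^{*!}_0\tau_1G=i^{*!}_0G\simeq 0$, while $\tau_0G$ is constant with value $A$. Your own Step 1 criterion detects this: for $H=\tau_kG$ it would require the $k$th difference cubes at special configurations to be biCartesian, and they are not in general. Proposition~\ref{prop:approx} computes the limit of the $\tau$-tower, which is all of $\Sh^{\an}(S^1;\D)$. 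The $i^{*!}$-tower is a different tower precisely because the values at $\emptyset$ change.

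\textbf{The same error appears in Step 2.} The formula $H_k(\emptyset)\simeq\colim G^*|\DD^*_{\le k}$ is false; for $k=0$ it gives $0$. The cofinality claim behind it also fails: for the sheaf above, $\colim G|\DD^*_{\le 1}(S^1)\simeq 0$ but $\colim G|\DD^*(S^1)\simeq A$. The covering argument of Proposition~\ref{prop:stable_complete} uses cubes on more than $k$ disks, whose values are limits of lower data and are not seen on $\DD^*_{\le k}$. So the ``main obstacle'' you isolate is not a technical point to overcome but a false statement.

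\textbf{The fix is what the paper does.}
\begin{enumerate}
\item Use the map $G\mapsto\{i^{*!}_kG\}$.
\item Observe that $H_k(\emptyset)$ is, by definition, the colimit of $H_k$ over $\DD^*_{\le k}(S^1)\cup\DD^1_{k+1}(S^1)$.
\item Apply Lemma~\ref{lem:starshriekempty}. The left Kan extended value at $\emptyset$ makes the $(k+1)$-cubes at special configurations biCartesian, so right Kan extension from $\DD_{\le k}$ recovers the values on $\DD^1_{k+1}$. Hence restriction gives $\Sh^{\le k}(S^1,\D)\simeq\Fun^{\an}((\DD^*_{\le k}(S^1)\cup\DD^1_{k+1}(S^1))^{\op},\D)$.
\item Under these equivalences, $i^{*!}_{k-1}$ becomes restriction along $\DD^*_{\le k-1}\cup\DD^1_k\subset\DD^*_{\le k}\cup\DD^1_{k+1}$.
\item These posets exhaust $\DD^*(S^1)$, so the limit is $\Fun^{\an}(\DD^*(S^1)^{\op},\D)\simeq\Sh^{\an}(S^1;\D)^c$.
\end{enumerate}
The comparison map is then levelwise restriction, and no identification of colimit values is needed.
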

\begin{proof}
        The $i^{*!}_k$ induce a map which we show to be an equivalence.
        By Lemma \ref{lem:starshriekempty} restriction yields an equivalence
        \[
                \Sh^{\le k}(S^1,D) \simeq \Fun^{\an}((\DD_{\le k}^*(S^1) \cup \DD_{k+1}^1(S^1))^{\op},\D).
        \]
        where $\Fun^{\an}((\DD_{\le k}^*(S^1) \cup \DD_{k+1}^1(S^1))^{\op},\D)$ is the subcategory of functors inverting homotopy
        equivalences.
        This equivalence commutes with $i^{*!}_{k-1}$ and restriction along the inclusion
        \[
        j_{k-1}:\DD_{\le k-1}^*(S^1) \cup \DD_{k}^1(S^1) \subset \DD_{\le k}^*(S^1) \cup \DD_{k+1}^1(S^1) \]
        so that
        \[
        \lim_k i^{*!}_{k-1}  \simeq \lim_k j_{k-1}^* \]
        The induced map $\Sh^{\an}(S^1)^{\op},\D)^{\c} \to \lim_k j_{k-1}^*$ is plainly an equivalence as it is given at each
                level by restriction.
\end{proof}
\begin{lem}
        \label{lem:explicit_vanishing}
        The diagram
        \[\begin{tikzcd}
	{\Sh(\SP^k(\CC);\D)} & {\Sh^{\le k}(S^1,\D)} \\
	{\Sh(\SP^{k-1}(\CC);\D)} & {\Sh^{\le k-1}(S^1,\D)}
	\arrow["\AA", from=1-1, to=1-2]
	\arrow["{\Phi_{e_k}}", from=1-1, to=2-1]
        \arrow["{i^{*!}_{k-1}}", from=1-2, to=2-2]
	\arrow["\AA", from=2-1, to=2-2]
\end{tikzcd}\]
commutes up to homotopy.
\end{lem}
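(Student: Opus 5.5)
We compare the two composites by evaluating them on opens $U = D_1\cup\dots\cup D_m \in \DD_{\le k-1}(S^1)$, identified with radial opens $U^{\RR_+}$ in $\CC^*$. By Proposition \ref{prop:kan}, both composites land in $\Sh^{\le k-1}(S^1,\D)$, which is determined by restriction to $\DD_{\le k-1}(S^1)$; by Lemma \ref{lem:starshriekempty} the restriction may be taken to $\DD^*_{\le k-1}(S^1)\cup \DD^1_{k}(S^1)$, with the value at $\emptyset$ recovered by left Kan extension. So it suffices to produce a natural equivalence on these opens. We then check that the value at $\emptyset$ is the colimit prescribed by $d_!$, i.e. that the $k$th difference cube at $\emptyset$ is biCartesian for the disks in $\DD^1_k$.

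For $F \in \Sh(\SP^k(\CC);\D)$, put $X=\SP^k(\CC)$, $X_0 = e_k^{-1}(0) = \SP^{k-1}(\CC)$ and $X_{\Rg} = e_k^{-1}(\Rg)$. Then $\Phi_{e_k}F = a^!b^*F$, and its value on $V + \SP^{k-2}(\CC)$ is the fiber of
\[ b^*F((V+\SP^{k-2}(\CC))^\sim) \to b^*F((V+\SP^{k-2}(\CC))^\sim \setminus X_0), \]
where $(-)^\sim$ denotes a suitable open neighborhood in $X_{\Rg}$, e.g. $(V + \SP^{k-1}(\CC))\cap X_{\Rg}$. The analytic expansion of $\Phi_{e_k}F$ at $U$ is $\fib(\Phi_{e_k}F(X_0)\to \Phi_{e_k}F(U+\SP^{k-2}(\CC)))$, so it is a total fiber of a square of values of $b^*F$.

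The main step identifies these values of $b^*F$ with values of $F$ on basics. First, open subsets of $X_{\Rg}$ of the form $(W+\SP^{k-1}(\CC))\cap X_{\Rg}$ should carry the same sections of $b^*F$ as $W+\SP^{k-1}(\CC)$; this is the same homotopy-invariance argument as Lemma \ref{lem:fundamental}, using Lemma \ref{lem:extend_zero_disk}. Second, the open part $X_{\Rg}\setminus X_0$ lies in the top stratum, where $F$ is locally constant. By the radial retraction, its intersection with the relevant opens deformation retracts onto configurations $\sum z_i$ in $\SP^k(S^1)$ with $\prod z_i = 1$. Thus the sections there are computed by descent over sums $D'_1+\dots+D'_k$ with $\bigcup D'_i \in \DD^1_k(S^1)$. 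Cover these by the \v{C}ech cubes of Theorem \ref{thm:polytopoly}, and use contractibility of $\Conf^1_k(S^1)$ and the filtered-cover argument from the proof of Lemma \ref{lem:starshriekempty}. This identifies $b^*F(X_{\Rg}\setminus X_0)$ with the colimit over the punctured $k$th difference cube of $\AA(F)$. That colimit is exactly the left Kan extension $d_!$ at $\emptyset$, as Lemma \ref{lem:starshriekempty} computes. On the opens $U$ with $U\ne\emptyset$ the two sides agree via the identification from the first step.

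Assembling: by stability, total fibers of squares commute with the fibers defining $\Fan$, and the cube inversion Lemma \ref{lem:cubeinversion} (in the form of Remark \ref{rem:informal}) rewrites $\AA(\Phi_{e_k}F)(U)$ as $c^*d_!e^*\AA(F)(U)$, naturally in $U$ and $F$. The expected obstacle is the deformation retraction of $X_{\Rg}\setminus X_0$, intersected with $W+\SP^{k-1}(\CC)$, onto the $\prod z_i = 1$ locus in a way compatible with the filtration stratification. I would first try the map $\sum z_i \mapsto \sum z_i/|z_i|$ together with radial straight-line homotopies, checking that it preserves $e_k\in\RR_{>0}$ up to a rescaling of one coordinate.
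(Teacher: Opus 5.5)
Your architecture is the same as the paper's. You express $\AA(\Phi_{e_k}F)$ through $b^*F$; the paper does this via the natural span $(i_{k-1})_*\Phi_{e_k}F\to b_*b^*F\leftarrow F$, which also makes naturality automatic. You then reduce, via Lemma \ref{lem:starshriekempty}, to the nonempty opens plus the value at $\emptyset$.

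The gap is the step at $\emptyset$. You claim that descent over sums $D'_1+\dots+D'_k$ identifies $b^*F(X_{\Rg}\setminus X_0)$ with $\colim_{\emptyset\neq I}\AA(F)(D_I)$, the $d_!$-value. This cannot be right. The space $X_{\Rg}\setminus X_0$ lies in the top stratum and retracts onto the contractible locus $\{\prod z_i=1\}$. So $b^*F(X_{\Rg}\setminus X_0)$ is just the generic stalk, i.e.\ $F(D_1+\dots+D_k)$ for any $\bigcup D_i\in\DD^1_k(S^1)$. No descent over the top stratum can produce $\AA(F)$, which involves $F(X)$. Already for $k=1$, with $F$ on $\CC$ having stalk $F_0$ at $0$ and generic stalk $L$, your claim reads $L\simeq\fib(F_0\to L)$. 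Fed into your assembly, it would give $\AA(\Phi_{e_k}F)(\emptyset)\simeq\fib\bigl(F(X)\to d_!e^*\AA(F)(\emptyset)\bigr)$ instead of $d_!e^*\AA(F)(\emptyset)$.

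The correct chain, which is the paper's third observation together with the final sentence of its proof, is
\[
\AA(\Phi_{e_k}F)(\emptyset)=\Phi_{e_k}F(X_0)\simeq\fib\bigl(F(X)\to b^*F(X_{\Rg}\setminus X_0)\bigr)\simeq\fib\bigl(F(X)\to F(D_1+\dots+D_k)\bigr).
\]
This uses $b^*F(X_{\Rg})\simeq F(X)$ and the fact that $(D_1+\dots+D_k)\cap X_{\Rg}$ is nonempty and contractible for $\bigcup D_i\in\DD^1_k(S^1)$. The idea you are missing is the sheaf input that turns this into the colimit. In stable $\D$ the \v{C}ech cube of the cover $\{D_i+\SP^{k-1}(\CC)\}$ is biCartesian, and as in Theorem \ref{thm:stable_interchange} this gives $F(D_1+\dots+D_k)\simeq\colim_{\emptyset\neq I}F(D_I+\SP^{k-1}(\CC))$. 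Applying $\fib(F(X)\to -)$ over this weakly contractible indexing poset yields $\colim_{\emptyset\neq I}\AA(F)(D_I)$. Lemma \ref{lem:starshriekempty} identifies that colimit with $d_!e^*\AA(F)(\emptyset)$.

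There is a separate gap for nonempty $U$. Set $\widetilde U=(U+\SP^{k-1}(\CC))\cap X_{\Rg}$; then $\AA(\Phi_{e_k}F)(U)$ is the total fiber of a square whose top row is $b^*F(X_{\Rg})\to b^*F(\widetilde U)$. Your first step only handles that top row. You also need the bottom row $b^*F(X_{\Rg}\setminus X_0)\to b^*F(\widetilde U\setminus X_0)$ to be an equivalence. That amounts to contractibility of $\{\sum z_i\in\SP^k(S^1):\prod z_i=1,\ z_i\in U \text{ for some } i\}$, which is the paper's second observation; your outline never states it.

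Finally, the obstacle you flag is not one. The map $z_i\mapsto z_i/|z_i|$, and the straight-line homotopy to it, move each $z_i$ by a positive real factor. So they preserve $\arg e_k$ and stay inside $X_{\Rg}\setminus X_0$ automatically.
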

\begin{proof}
        For a sheaf $F \in \Sh(\SP^k(\CC);\D)$ there is a natural diagram
        \[
                (i_{k-1})_* \Phi_{e_k} F \to b_* b^* F \leftarrow F
        \]
        of sheaves on $\SP^k(\CC)$. 
        To simplify notation, we introduce $G=(i_{k-1})_* \Phi_{e_k} F$ and write $X=\SP^k(\CC)$.
        We make the following observations:
        \begin{enumerate}
                \item For $U = \bigcup_i D_i + \SP^{k-1}(\CC)$ with $\bigcup_i D_i \in \DD^{\Rg,*}_{\le k-1}(\CC) \cup \DD^{\Rg,1}_{k}(\CC)$ the map
                        \[
                                F(U) \to b_*b^*F(U)
                        \]
                        is an equivalence.
                \item For $\bigcup_i D_i \in \DD^{\Rg,*}_{\le k-1}(\CC)$ and $U = \bigcup_i D_i + \SP^{k-1}(\CC)$ the induced map
                \[
                        \fib(G(\SP^k(\CC)) \to G(U)) \to \fib(b_* b^* F(\SP^k(\CC)) \to b_* b^* F(U))
                \]
                is an equivalence.
        \item For $\bigcup_i D_i \in \DD^{\Rg,1}_k(\CC)$ all arrows in the induced diagram
\[\begin{tikzcd}
	{\fib(G(X) \to G(\sum_i D_i))} & {\fib(b_*b^*F(X) \to b_*b^*F(\sum_i D_i))} & {\fib(F(X) \to F(\sum_i D_i))} \\
	{\fib(G(X) \to G(\emptyset))}
	\arrow[from=1-1, to=1-2]
	\arrow[from=1-1, to=2-1]
	\arrow[from=1-3, to=1-2]
\end{tikzcd}\]
are equivalences.
        \end{enumerate}
       Because $F$ is a sheaf the punctured cube
        \[
                \{F(\bigcup_{i \in I}D_i + \SP^{k-1}(\CC) ) \}_{\emptyset \neq I \in \P(\{1,\dots,k\})^{\op}}
        \]
        has colimit $F(D_1 + \dots + D_k)$ for $\bigcup_i D_i \in \DD^{r,1}_k(\CC)$. It follows from the above observations and
        the description of $i^{*!}$ in Lemma \ref{lem:starshriekempty} that there are natural isomorphisms
       \[
               \AA(\Phi_{e_k}F) \simeq \AA(G) \simeq i^{*!}\AA(F).
       \]
\end{proof}
\begin{cor}
    \label{cor:starshriekequiv}
    Analytic expansion yields an equivalence
    \[
\Sh^{*!}(\SP^{\infty}(\CC);\D)  \simeq \Sh^{\an}(S^1;\D)^c
\]
\end{cor}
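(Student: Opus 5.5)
The plan is to assemble the equivalence levelwise and pass to the inverse limit. By definition, $\Sh^{*!}(\SP^{\infty}(\CC);\D)$ is the limit of the tower $\{\Sh(\SP^k(\CC);\D)\}_{k}$ along the vanishing cycle functors $\Phi_{e_k}$. By Proposition \ref{prop:starshriek_analytic}, $\Sh^{\an}(S^1;\D)^c$ is the limit of the tower $\{\Sh^{\le k}(S^1,\D)\}_k$ along the functors $i^{*!}_{k-1}$. Two ingredients would then give an equivalence of limits:
\begin{itemize}
\item a levelwise equivalence $\AA_k\colon \Sh(\SP^k(\CC);\D) \to \Sh^{\le k}(S^1,\D)$;
\item a coherent identification of the transition functors.
\end{itemize}

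For the levelwise statement, I take the sheaves to be $\NN$-constructible, which is implicit in the definition of $*!$-sheaves used here. The truncated radial analytic expansion equivalence, proved in the corollary of \S\ref{sub:complex_analytic}, gives $\Sh_{\NN}(\SP^k(\CC);\D) \simeq \Sh^{\le k}_{\RR_+}(\CC^*;\D)$. Under the identification $\DDr(\CC^*) \cong \DD(S^1)$, the right-hand side is $\Sh^{\le k}(S^1,\D)$. One should also check that $\Phi_{e_k}$ preserves $\NN$-constructibility. The stratification is $\CC^*$-equivariant, $e_k^{-1}(0)=\SP^{k-1}(\CC)$ is a union of strata, and $e_k^{-1}(\Rg)$ is a union of pieces on which the exit-path description of Proposition \ref{prop:exit_paths_complex_case} still applies. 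So this check is a consequence of the computation underlying Lemma \ref{lem:explicit_vanishing}.

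For compatibility with transitions, Lemma \ref{lem:explicit_vanishing} gives the homotopy-commutative squares $\AA_{k-1}\circ \Phi_{e_k} \simeq i^{*!}_{k-1}\circ \AA_k$. To pass to the limit one needs these squares to form a natural transformation of towers, i.e. a functor $\NN^{\op}\to \Fun(\Delta^1,\Cat_\infty)$. Since $\NN^{\op}$ is generated by its consecutive arrows (it is the nerve of a free category on a graph), specifying the squares individually suffices: a map of towers indexed by $\NN^{\op}$ is determined up to contractible choice by its components and the homotopies in each square. Hence the equivalences $\AA_k$ induce a functor
\[
\lim_k \AA_k\colon \Sh^{*!}(\SP^{\infty}(\CC);\D) \to \lim_k i^{*!}_{k-1}|\Sh^{\le k}(S^1,\D),
\]
and it is an equivalence because a levelwise equivalence of diagrams induces an equivalence on limits. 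Composing with Proposition \ref{prop:starshriek_analytic} gives the claim.

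Finally, I would check that the resulting functor is literally the analytic expansion functor $F\mapsto \Fan$ applied to the compatible system. Concretely, the value on a union of $\le k$ disks is computed at level $k$ by $\AA_k$, and the value at $\emptyset$ is recovered by completeness.

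The main obstacle is making the naturality in Lemma \ref{lem:explicit_vanishing} genuinely functorial, rather than objectwise as stated there. I would first try to observe that every map in that lemma's proof comes from natural transformations of functors: the units of $b^*$ and of $(i_{k-1})_*$, and the canonical comparison $a^!\to$ fiber. Together with the canonical left Kan extension $d_!$, this yields a natural equivalence of functors rather than objectwise isomorphisms, which is all the limit argument needs.
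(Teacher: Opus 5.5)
Your proposal is correct and follows the paper's route. The paper proves the corollary in one line from Proposition~\ref{prop:starshriek_analytic} and Lemma~\ref{lem:explicit_vanishing}, leaving the levelwise truncated radial expansion equivalences implicit, which is exactly the limit assembly you describe; your added points on the coherence of the squares over $\NN^{\op}$ and on $\Phi_{e_k}$ preserving $\NN$-constructibility fill in details the paper leaves tacit. For the constructibility point, the justification via Lemma~\ref{lem:explicit_vanishing} is weak; it is immediate instead in the coordinates $\SP^k(\CC)\cong\CC^k$ of Lemma~\ref{lem:t-structure-comp-real}, where $\Phi_{e_k}=a^!b^*$ and $b^*F$ is constant on the collar $\CC^{k-1}\times\RR_{>0}$.
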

\begin{proof}
        The equivalence follows from Proposition \ref{prop:starshriek_analytic} and Lemma \ref{lem:explicit_vanishing}.
\end{proof}
\begin{cor}
        The induced inclusion
\[
        \Sh^{*!}(\SP^{\infty}(\CC);\D) \subset \Sh^{*!}(\SP^{\infty}(\CC);\D) \simeq \Fun(\NN_{S^1}^{\triangleleft},\D)
\]
identifies $\Sh^{*!}(\SP^{\infty}(\CC);\D)$ with the subcategory of functors
\[
        F: \NN_{S^1}^{\triangleleft} \to \D
\]
such that $\colim F \simeq 0$.
\end{cor}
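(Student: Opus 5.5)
We read the statement as: the composite $\Sh^{*!}(\SP^{\infty}(\CC);\D) \simeq \Sh^{\an}(S^1;\D)^c \subset \Sh^{\an}(S^1;\D) \simeq \Sh^*_{\NN}(\SPi(\CC);\D) \simeq \Fun(\NN_{S^1}^{\triangleleft},\D)$ is fully faithful, with essential image the functors $F$ satisfying $\colim F \simeq 0$. The plan follows the real case, Theorem \ref{prop:realshriek}. Since analytic expansion is an equivalence $\Sh^*_{\NN}(\SPi(\CC);\D) \simeq \Shanr(\CC^*;\D)$ (the corollary in \S\ref{sub:complex_analytic}), and Corollary \ref{cor:starshriekequiv} identifies $*!$-sheaves with complete analytic sheaves, it suffices to show the following. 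Under the exit path equivalence $\Sh^*_{\NN}(\SPi(\CC);\D)\simeq \Fun(\NN_{S^1}^{\triangleleft},\D)$, a $*$-sheaf $F$ has complete analytic expansion if and only if $\colim F \simeq 0$.

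The key step is a formula of the form
\[
\cof\bigl(\colim F^{\an}|\DD^*(S^1)^{\op} \to F^{\an}(\emptyset)\bigr) \simeq \colim_{k} F(X\setminus X_{\le k-1}),
\]
which mirrors the computation in the proof of Theorem \ref{prop:realshriek}. Here $F^{\an}(\emptyset)\simeq F(X)$. We write $\colim F^{\an}|\DD^*$ as $\colim_k$ of the colimits over $\DD^*_{\le k}$. Truncating to $\SP^k(\CC)$ and using the cup-cap equivalence (Theorem \ref{thm:stable_interchange}), the colimit over $\DD^*_{\le k}$ of $\fib(F(X)\to F(U+X))$ should be $\Gamma_{X_{\le k}}(F)$. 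The reason is that $\bigcup_{U\in\DD^*_{\le k}}(U+X)$ contains $X\setminus X_{\le 0}$, and Weiss-type descent in degree $k$ recovers the sections with supports.

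Next we translate both sides into exit path data. The term $F(X\setminus X_{\le k-1})$ is the value of the exit path functor on the full subcategory of objects $\ge k$ in $\NN_{S^1}^{\triangleleft}$. Since this is an iterated join, with the remaining piece of the form $\bigstar_{n\ge k}\Bop\ZZ$, its value is the limit of the restricted diagram. The colimit over $k$ of these values should be identified with $\colim F$ over $\NN_{S^1}^{\triangleleft}$. The argument is a cofinality/stability one: in a stable category the fiber of $F(\text{cone point})\to\lim F|_{\ge k}$ computes the colimit of $F|_{<k}$ shifted appropriately. Passing to the colimit over $k$ identifies the vanishing of $\colim_k F(X\setminus X_{\le k-1})$ with $\colim F\simeq 0$.

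We expect this last identification to be the main obstacle, because the circles $\Bop\ZZ$ make the limits over the strata nontrivial, unlike the contractible strata in the $\Rg$ case. Our first attempt would be to reduce to finite stages. For $F$ supported on $\SP^n(\CC)$, we would verify $\cof(\Gamma_{X_{\le k}}F\to F(X))\simeq F(X\setminus X_{\le k-1})$ directly from recollement. We would then use Proposition \ref{prop:realshriek_sub}-style diagrams, together with exactness of filtered colimits in stable $\D$, to commute the colimit over $k$ past the limits. A fallback is to pull back along $s_\infty^*$ using Corollary \ref{cor:star-para-simplex-comparison}. This reduces completeness to the real case, since $\Delta_+\subset\Lambda_{\infty,+}$ is cofinal on colimits over nonempty objects; the condition $\colim F\simeq 0$ would then be transported along $\NN\to\NN_{S^1}^{\triangleleft}$, which requires a separate cofinality check.
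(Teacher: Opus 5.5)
\paragraph{Gap in the main route.} Your primary route breaks at its first key step. The claim that the colimit of $\Fan$ over $\DD^*_{\le k}$ is $\Gamma_{X_{\le k}}(F)$ (with either indexing) is exactly where the real argument uses contractibility of the strata: $\DD^*_{\le 1}(\RR_{>0})$ localizes to a point. That claim fails over $S^1$.

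Take the $*$-sheaf $F$ whose exit-path functor is a $\ZZ$-object $M$ (monodromy $T$) at the object $1$ of $\NN_{S^1}^{\triangleleft}$ and $0$ elsewhere.
\begin{itemize}
\item On a single sector $D$ one gets $\Fan(D)\simeq\fib(0\to M)\simeq M[-1]$, with the monodromy action.
\item Since $\DD^*_{\le 1}(S^1)$ localizes to $\Bop\ZZ$ (Proposition \ref{prop:s1_intervals_localization}), the colimit over $\DD^*_{\le 1}$ is $M_{h\ZZ}[-1]=\cof(T-1)[-1]$.
\item On the other hand, $\Gamma_{X_{\le 1}}(F)\simeq F(X)\simeq 0$ and $\Gamma_{X_{\le 0}}(F)\simeq M^{h\ZZ}[-1]$.
\end{itemize}
For $T=\id$ and $M\neq 0$ the colimit matches neither. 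Your supporting remark cannot help either, since $\bigcup_{U\in\DD^*_{\le k}}(U+X)=X\setminus X_{\le 0}$ for every $k\ge 1$, independently of $k$. So your displayed cofiber formula is not established by your argument. It is in fact true, but as far as I can see it only follows a posteriori from the comparison with the real case.

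The heuristic in your third step is also false. For constant $F\simeq E$, the infinite join $\bigstar_{m\ge k}\Bop\ZZ$ is weakly contractible, so $\fib(F(0)\to\lim F|_{\ge k})\simeq 0$, while $\colim F|_{<k}\simeq E$. The identification $\colim_k F(X\setminus X_{\le k})\simeq\colim F$ itself is true, but for a different reason. Write $\bigstar_{m>k}\Bop\ZZ=\Bop\ZZ*\bigstar_{m>k+1}\Bop\ZZ$ and restrict along $\mathrm{pt}*\bigstar_{m>k+1}\Bop\ZZ\to\bigstar_{m>k}\Bop\ZZ$. The cone point is initial there, so this gives maps $\lim F|_{>k}\to F(k+1)\to\lim F|_{>k+1}$ that interleave the two sequences. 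That repair does not rescue the first step, however.

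\paragraph{Your fallback is the paper's proof.} The argument you relegate to a fallback is exactly the paper's proof, and it is complete:
\begin{itemize}
\item By Corollary \ref{cor:star-para-simplex-comparison}, $j^*\An(F)\simeq\An(s_\infty^*F)$.
\item Since $\Delta\subset\Lambda_\infty$ is coinitial, $\colim_{\Lambda_\infty^{\op}}$ agrees with $\colim_{\Delta^{\op}}$, so $\An(F)$ is complete iff $j^*\An(F)$ is.
\item By the real case (Theorem \ref{prop:realshriek}, Proposition \ref{prop:realshriek_sub}), this holds iff $\colim_{\NN}s_\infty^*F\simeq 0$.
\item Since $s_\infty^*$ is restriction along $\NN\to\NN_{S^1}^{\triangleleft}$, which is cofinal (its comma categories, e.g.\ $\ZZ^{\mathrm{disc}}*\NN_{>k}$ under the object $k$, are filtered), this is equivalent to $\colim F\simeq 0$.
\end{itemize}
Combined with Corollary \ref{cor:starshriekequiv}, this gives the statement. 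You should make this the proof and drop the direct computation.
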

\begin{proof}
        This is a consequence of the corresponding statement for the real case in Proposition \ref{prop:realshriek_sub} and the
        comparison of Corollary \ref{cor:star-para-simplex-comparison}. To complete the proof, we note that
        the inclusion $\NN \to \NN_{S^1}^{\triangleleft}$ is cofinal (the commma categories are filtered), and the inclusion
        $\Delta \subset \Lambda_{\infty}$ is well-known to be coinitial.
\end{proof}
\begin{rem}
        As in Remark \ref{rem:luriedk}, we may use the equivalence $\Fun(\NN_{S^1}^{\triangleleft},\D)^0 \simeq \Fun(\NN_{S^1},\D)$
        to obtain an equivalence
        \[
                \Fun(\NN_{S^1},\D) \simeq \D_{\Lambda_{\infty}}
        \]
        in the style of Lurie.
\end{rem}
The following computation shows that for a $*!$-sheaf $F$, the value of $\Fan$ at $D_1 \cup \dots \cup D_k$ can be
    identified with the (stalk at $0$ of the) vanishing cycle functor of the
    sum of $k$th powers
    \[
        \Phi_{z_1^k + z_2^k + z_3^k + \dots}(F)_0.
    \]

    \begin{lem}\label{lem:vanishing_homotopy}
        For $n \ge k \ge 1$ we have a decomposition $D=\{z: \Re z^{k} < 0\} = D_1 \cup \dots \cup D_k$ into components. The inclusion
        \[
                \{\sum_{i=1}^n z_i |\Re \sum_{i=1}^n z_i^k < 0\} \subset D + \SP^{n-1}(\CC)
        \]
        is an $\NN$-stratified homotopy equivalence.
\end{lem}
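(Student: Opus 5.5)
The set $W := \{\sum_i z_i \mid \Re \sum_i z_i^k < 0\}$ is open, and I first check that it lies in $D + \SP^{n-1}(\CC)$. If $\Re \sum_i z_i^k < 0$, then some summand has $\Re z_i^k < 0$, so $z_i \in D$; the tip $0$ contributes $0^k = 0$ and never lies in $D$. Both sides are therefore open subsets of $\SP^n(\CC)$ that are unions of strata intersections. My plan is to exhibit an $\NN$-stratum-preserving deformation of $D + \SP^{n-1}(\CC)$ into $W$ that stays inside $D + \SP^{n-1}(\CC)$ and keeps $W$ inside $W$. This gives a stratified homotopy inverse, with straight-line or flow homotopies as witnesses, exactly as in Lemma \ref{lem:greaterq_conical}.

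The deformation uses the radial structure. Each $D_j$ is a radial sector, invariant under $\RR_+$, and the radial scalings $z \mapsto \lambda z$ preserve every stratum of the filtration stratification, since they fix $0$ and act freely on $\CC^*$. Given $x = \sum_i z_i \in D + \SP^{n-1}(\CC)$, I split $x = x' + x''$, where $x'$ collects the points lying in $D$. On the summands of $x'$ I apply a scaling $z \mapsto \lambda(x) z$ with $\lambda(x) \ge 1$, and leave $x''$ fixed. For $\lambda$ large, $\Re \sum z_i^k$ becomes dominated by $\lambda^k \Re \sum_{z_i \in D} z_i^k$, which is negative. So I choose
\[
  \lambda(x) = \max\Bigl(1, \bigl(2 \bigl|\textstyle\sum_{z_i \notin D} \Re z_i^k\bigr| / \bigl|\textstyle\sum_{z_i \in D} \Re z_i^k\bigr|\bigr)^{1/k}\Bigr),
\]
or a smoothed version of it, and take the homotopy $h_s(x) = \lambda_s x' + x''$ with $\lambda_s = 1 + s(\lambda(x) - 1)$. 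Along this homotopy the points in $D$ stay in $D$ and the multiset structure is preserved, so the image stays in $D + \SP^{n-1}(\CC)$ and in the same $\NN$-stratum. At $s = 1$ the image lies in $W$. Moreover, if $x \in W$ already, then scaling the negative-real-part summands up only decreases the real part, so $W$ is preserved throughout.

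The main obstacle is continuity: the split $x = x' + x''$ jumps when a point crosses $\partial D$. To handle this I replace the sharp scaling by a weighted one. Choose a continuous function $\rho \colon \CC \to [0, \infty)$ that vanishes outside $D$ and is positive on $D$, for instance $\rho(z) = \max(0, -\Re z^k)/|z|^k$, which is radially invariant. Then scale each point by $z \mapsto (1 + s\,\mu(x)\,\rho(z))\, z$, where $\mu(x)$ is a continuous function chosen large enough. This map is symmetric in the points, so it descends continuously to $\SP^n(\CC)$. It fixes $0$ and points outside $D$, and it preserves $D$ and the collisions of points only up to coinciding points moving together, so the $\NN$-stratum is unchanged. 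Since at least one point has $\rho > 0$, a suitable continuous $\mu$ exists, for example by taking a partition of unity or a sup over a compact neighborhood. This yields the required homotopy inverse.
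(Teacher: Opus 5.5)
Your proof is correct. Its core idea is the same as the paper's: push the summands lying in $D$ radially outward until $\Re\sum_i z_i^k<0$, using a deformation that fixes $0$, keeps $D+\SP^{n-1}(\CC)$ in itself, and only decreases $\Re\sum_i z_i^k$, so that it also preserves $W$. The technical route differs, however.

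The paper works in two steps. It first shrinks $D$ to $D'=\{\Re z^k<-1\}$ via Lemma \ref{lem:shrink_disks}, asserting by inspection that those homotopies restrict to $W$. It then scales the points of $D'$ by the uniform factor $1+\sum_i|z_i|^k$, extended to $\CC$ by the cutoff of Lemma \ref{lem:symmetric_homotopy}. The reason for shrinking is that $|\Re y^k|>1$ on $D'$, so a factor involving no division already works.

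You instead use a single deformation. The weight $\rho$ vanishes off $D$, which makes the scaling continuous across $\partial D$ without any shrinking. The price is an $x$-dependent amplitude $\mu(x)$. In exchange you avoid the reduction to $D'$ and the only-sketched claim that the shrinking homotopies preserve $W$, so your version is more self-contained.

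Two small points to tidy.
\begin{enumerate}
\item Your $\rho(z)=\max(0,-\Re z^k)/|z|^k$ is not continuous at $0$. This does no harm, because only $\rho(z)z$ enters the deformation and $|\rho(z)z|\le|z|$.
\item $\mu$ can be written down explicitly, without a partition of unity. Set $P(x)=\sum_i\max(0,\Re z_i^k)$ and $Q(x)=\sum_i\rho(z_i)\max(0,-\Re z_i^k)$. Both are continuous and symmetric, and $Q>0$ on $D+\SP^{n-1}(\CC)$. Take $\mu=(1+P)/Q$ and $\lambda_i=1+\mu\rho(z_i)$. Since $\lambda_i^k\ge\lambda_i\ge\mu\rho(z_i)$, at time $1$ you get $\Re\sum_i(\lambda_i z_i)^k\le P-\mu Q=-1$.
\end{enumerate}
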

\begin{proof}
        Let $D'=\{z: \Re z^{k} < -1\}$. Then the inclusion $D'+\SP^{n-1}(\CC) \subset D + \SP^{n-1}(\CC)$ is a stratified homotopy
        equivalence by Lemma \ref{lem:shrink_disks} (for appropriately chosen charts). Inspection reveals that the homotopies
        descend to the intersection with $\{\sum_{i=1}^n z_i |\Re \sum_{i=1}^n z_i^k < 0\}$.
        It therefore suffices to show that
        \[
                \{\sum_{i=1}^n z_i |\Re \sum_{i=1}^n z_i^k < 0\} \cap (D' + \SP^{n-1}(\CC)) \subset D' + \SP^{n-1}(\CC)
        \]
        is a stratified homotopy equivalence. A homotopy inverse may be found by using the parametrized version of Lemma
        \ref{lem:symmetric_homotopy}, with $(H_i)_{\sum_i z_i}(y)=y \cdot (1+\sum_i |z_i|^k)$.
\end{proof}
\begin{cor}\label{cor:analytification_is_vanishing}
        For $F \in \Sh_{\NN}(\SP^n(\CC);\D)$ we have
        \[
                \AA(F)(\{z: \Re z^{k} < 0\}) \simeq (\Phi_{z_1^k + \dots + z_n^k} F)_0
        \]
        and the sheaf $\Phi_{z_1^k + \dots + z_n^k} F$ is concentrated in $0 \in \SP^n(\CC)$.
\end{cor}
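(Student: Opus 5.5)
We combine the definition of analytic expansion with Lemma \ref{lem:vanishing_homotopy} and the recipe for vanishing cycles recalled at the start of \S\ref{sub:starshriek}. Put $f = z_1^k + \dots + z_n^k$, viewed as a holomorphic function on $\SP^n(\CC) \cong \CC^n$; by the fundamental theorem of symmetric polynomials it is a polynomial in the coordinates $e_1,\dots,e_n$. Let $X = \SP^n(\CC)$, $X_0 = f^{-1}(0)$ and $X_{\Rg} = f^{-1}(\Rg)$, with $a$, $b$ as before. The complement $X \setminus X_{\Rg}$ contains the open set $W = \{\Re f < 0\}$. The plan is to prove the two claims in the order: stalk formula first, support claim second.

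\textbf{Stalk formula.} For $D = \{\Re z^k < 0\}$ we have $\AA(F)(D) = \fib(F(X) \to F(D + \SP^{n-1}(\CC)))$. Lemma \ref{lem:vanishing_homotopy} says $W \subset D + \SP^{n-1}(\CC)$ is an $\NN$-stratified homotopy equivalence. Homotopy invariance for $\NN$-constructible sheaves (Proposition \ref{prop:homotopy_equivalence_constructible_sheaves}) then gives
\[
\AA(F)(D) \simeq \fib(F(X) \to F(W)).
\]
Next I would compare this with $(\Phi_f F)_0 = (a^! b^* F)_0$. Use the $\CC^*$-scaling $z \mapsto \lambda z$, which is compatible with the $\NN$-stratification and under which $f$ is homogeneous of degree $k$. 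Because of this conical structure, the stalk at $0$ is computed by global sections on the whole space. This gives
\[
(a^! b^*F)_0 \simeq \fib\bigl(\Gamma(X_{\Rg}; b^*F) \to \Gamma(X_{\Rg}\setminus X_0; b^*F)\bigr),
\]
and $\Gamma(X_{\Rg}; b^*F) \simeq F(X)$, since $X_{\Rg}$ is a conical closed subset containing the cone point. It remains to identify the fiber of $F(X) \to F(W)$ with this fiber.

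\textbf{The key identification.} The set $X_{\Rg} \setminus X_0 = f^{-1}(\RR_{>0})$ can be exchanged for $f^{-1}(\RR_{<0})$ by rotating with $e^{i\pi/k}$. This rotation preserves $F$ up to stratified homotopy, because the rotation action on constructible sheaves is homotopically trivial: the group is connected and preserves the strata. After that, one retracts $\{\Re f < 0\}$ onto $f^{-1}(\RR_{<0})$ along the fibres of $\arg f$. The retraction comes from flowing $\arg f$ to $\pi$ through a unit-modulus scaling of the $z_i$, and this flow is stratum preserving. This yields $\Gamma(f^{-1}(\RR_{<0}); F) \simeq F(W)$.

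\textbf{Support claim.} For $x \neq 0$ in $X_0$, write $x = m_0 \cdot 0 + \sum m_i y_i$ with some $y_i \neq 0$. Near $x$, $F$ is $\NN$-constructible and, in a conical chart, locally constant along the factor of the $y_i$ coordinates. The vanishing cycle is a fibre of the form $F(B) \to F(B \cap \{\Re f<0\})$ for a small basic $B$ of Remark \ref{rem:basics} around $x$. Moving one nonzero point $y_i$ radially changes $f$ in a direction with nonzero derivative, so $B \cap \{\Re f < 0\} \subset B$ is an $\NN$-stratified homotopy equivalence. Hence the stalk of $\Phi_f F$ at $x$ vanishes.

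\textbf{Main obstacle.} The hardest step is the key identification above, which passes from the open half-space $\{\Re f<0\}$ to the ray $f^{-1}(\RR_{\ge 0})$ in the definition of $\Phi_f$. The delicate point is that the retraction along $\arg f$ must be $\NN$-stratified near the cone point. I would first try to reuse the parametrized homotopy of Lemma \ref{lem:symmetric_homotopy}, exactly as in the proof of Lemma \ref{lem:vanishing_homotopy}.
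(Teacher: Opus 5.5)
Your proposal is correct and is essentially the paper's proof: the value comes from Lemma \ref{lem:vanishing_homotopy} plus homotopy invariance, and the support claim comes from a local stratified homotopy near nonzero points. Your conic globalization, rotation by $e^{i\pi/k}$ and $\arg f$-retraction just make explicit the passage from $\fib(F(X)\to F(\{\Re f<0\}))$ to the ray-based definition $a^!b^*$, which the paper calls immediate; this step is not delicate, since the retraction is the unit-circle action, which preserves the $\NN$-strata, and $\{\Re f<0\}$ misses the cone point.

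The only place needing care is the support claim. A purely radial move of $y_i$ need not change $\Re f$, e.g.\ when every $y_i^k$ is purely imaginary; the paper's ``summand with $\Re z_1^k<0$'' has the same slip. Moreover, your rotation trick does not localize at $x\neq 0$, so writing the stalk there via $\{\Re f<0\}$ is not justified for $a^!b^*$. Instead, rescale the cluster at $y_i$ by a complex factor $\lambda$ near $1$: this is a stratum-preserving direction along which $F$ is constant and $f$ is a submersion ($\partial_\lambda f = k\,m_i\,y_i^k\neq 0$ at $x$), and it kills the stalk of $a^!b^*F$ at $x$ directly in its ray form.
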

\begin{proof}
        Let $\sum_i z_i \in \SP^n(\CC)$ be a nonzero point with $\Re \sum_i z_i^k = 0$. It follows that there is a summand $z_1$ with
        $\Re z_1^k < 0$. Let $B = \sum_{i=0}^k \SP^{m_i}(D_i) \in \B$ denote a conical basic centered on $\sum_i z_i$. We assume
        $z_1 \in D_1$ and write $D_1 = D(I,a,b)$ for the radial segment (c.f. \S \ref{sub:exit_paths_complex_case}).
        We consider the homotopy given by moving points in $D_1$
        into the top third $D(I,\frac{2b+a}{3},b)$ of $D_1$ via straight lines and keeping all the other summands constant.
        If $I$ is sufficiently small and $b$ is sufficiently
        large compared to the diamteter of the other $D_i$, then this homotopy witnesses that the inclusion
        \[
                B \cap \{z: \Re z^{k} < 0\} \subset B
        \]
        is an $\NN$-stratified homotopy equivalence.
        Such basics $B$ form a neighborhood basis, so we may compute the stalk with them and see that it vanishes as claimed.

        The other computation follows immediately from Lemma \ref{lem:vanishing_homotopy}.
\end{proof}

\subsection{Perverse \texorpdfstring{$*!$}{*!}-sheaves and \texorpdfstring{$t$}{t}-exactness}
\label{sub:complex_perverse}
In this section, we assume that $\D$ is presentable and comes equipped with an accessible left complete $t$-structure with
heart $\A$. For $k \in \NN$, there is a perverse $t$-structure on $\Sh_{\NN}(\SP^k(\CC);\D)$
induced by the middle perversity $\mathfrak{p}(m) = -m$, see \cite[Proposition A.14]{dyckerhoff2026laxdescentfoliatedverdier}
(corresponding to the perversity $S \to - \dim_{\CC} S$ in the notation of \cite{beilinson1982faisceaux}).
We denote by $\Perv(\SP^k(\CC);\A)$ the heart of the perverse $t$-structure.
The objects of $\Perv(\SP^k(\CC);\A)$ will be called perverse sheaves.

Because vanishing cycles functors are $t$-exact with respect to these perverse
$t$-structures (c.f. \cite[Theorem 6.0.2]{Schurmann2003}) we may define
perverse sheaves on the infinite symmetric product as
\[
        \Perv^{*!}_{\NN}(\SP^{\infty}(\CC);\D) := \lim_{k \ge 0} \Perv_{\NN}(\SP^k(\CC);\A)
\]
where the limit is taken over the various vanishing cycles functors $\Phi_{e_k}$.

We note that the perverse $t$-structure is compatible with restriction to the real symmetric product: 
\begin{lem}\label{lem:t-structure-comp-real}
        The functor
        \[
                s_k^*:\Sh_{\NN}(\SP^{k}(\CC);\D) \to \Sh_{\NN}(\SP^{k}(\Rg);\D).
        \]
        is $t$-exact with respect to the perverse $t$-structure on $\Sh_{\NN}(\SP^{k}(\CC);\D)$ and the perverse $t$-structure on
        $\Sh_{\NN}(\SP^{k}(\Rg);\D)$ described in \S \ref{sub:realverdier}.
\end{lem}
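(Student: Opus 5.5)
The plan is to check $t$-exactness stratum by stratum. On both sides the $t$-structures will be described by conditions on $*$-stalks and $!$-costalks along the strata $X_j$, $0 \le j \le k$, of the filtration stratification. We then show that $s_k^*$ carries the complex conditions to the real ones, including the correct shifts.

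\textbf{The two $t$-structures.} On the complex side, the stratum of index $j$ is $\CC^{j-1} \times \CC^*$, of complex dimension $j$. Since $\Sh_{\NN}(\SP^k(\CC);\D)$ consists of sheaves that are locally constant along strata, the middle perversity $\mathfrak{p}(j) = -j$ gives the usual description: $F$ is perverse-connective if and only if each $i_j^* F$ has stalks in $\D_{\le 0}$ after the shift by $-j$. Coconnectivity is the dual condition on the costalks $i_j^! F$, with the shift by $+j$.

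On the real side, the $t$-structure of \S \ref{sub:realverdier} is transported from the standard $t$-structure on cosheaves via $\V$. I would first make it explicit using the formula
\[
    \V(F)(X_{\ge j}) \simeq \fib(F(X_{\ge j}) \to F(X_{\ge j+1}))[-j],
\]
together with Proposition \ref{prop:verdier_filtration_image}. This yields a condition on the costalks $i_j^! F$ of the real stratum $X_j \cong \RR^{j-1} \times \RR_{>0}$, with the shift $[-j]$. Using the exact triangles $i_j^! \to i_j^* \to i_j^* (\text{restriction to } X_{>j})$ and the exit path description $F_0 \to \dots \to F_k$, this should rewrite as a pair of conditions on the stalks and the costalks. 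That pair has the same shape as the complex perverse conditions, but with real dimension $j$ in place of complex dimension $j$.

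\textbf{Comparison of stalks and costalks.} The stalks pose no difficulty. The map $s_k$ sends the real stratum $j$ into the complex stratum $j$, and $s_k^*$ preserves stalks. The key computation concerns costalks. Choose a point $x$ of the real stratum $j$ and a conical basic $B$ around it, as in Remark \ref{rem:basics}. The $!$-costalk of $F$ at $x$ along the complex stratum, $\fib(F(B) \to F(B \cap X_{>j}))$, should be compared with the corresponding real costalk of $s_k^* F$. I would do this with the same homotopies as in Lemma \ref{lem:greaterq_conical} and Remark \ref{rem:strata_computation}: the complex stratum $\CC^{j-1} \times \CC^*$ near $x$ contains the real one $\RR^{j-1} \times \RR_{>0}$ as a real codimension-$j$ submanifold, with trivial normal bundle. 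For a sheaf that is locally constant along the stratum, the costalk along $\RR^j \subset \CC^j$ is the stalk shifted by $[-j]$. This shift is what converts the complex normalization ($2j$ real dimensions with middle perversity, so effectively the shift $-j$) into the real one ($j$ real dimensions, again shift $j$). So, with the correct orientation of shifts, $i_j^! s_k^* F \simeq s_k^* i_j^! F[\pm j]$ exactly compensates for the halving of the dimension. Given this, $s_k^*$ sends perverse-connective objects to real-connective ones and perverse-coconnective objects to real-coconnective ones, which proves $t$-exactness.

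\textbf{Main obstacle.} The hard part is the costalk comparison. The filtration stratification is not conical, as observed for $\SP^2(\Rg)$, so one cannot simply invoke a product decomposition normal to the stratum. I would first try to work inside the finer discriminant stratification, where the structure is conical by Proposition \ref{prop:smooth_structures}. There I would use that an $\NN$-constructible sheaf has vanishing $!$-restriction to discriminant strata, as in Proposition \ref{prop:verdier_filtration_image}, so that only the configuration-type strata $1+\dots+1$ contribute. On those strata, $s_k$ is locally the inclusion $\RR^j \subset \CC^j$ times the cone directions. As a fallback, I would verify the statement directly on exit path representations: $s_k^*$ is restriction along $\NN_{\le k} \to (\Delta^{k-1}_{S^1})^{\triangleleft}$, and one can compare both sides on generators of the connective part.
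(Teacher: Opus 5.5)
Your treatment of the connective half is fine and matches the paper. $s_k^*$ preserves stalks, and the real connective part can be tested on stalks, so right $t$-exactness is immediate.

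The gap is in the coconnective half. The costalk comparison $i_j^!s_k^*F\simeq s_k^*i_j^!F[\pm j]$, on which your argument rests, is false. Near a generic point of the real $j$-stratum there are local product decompositions $\SP^k(\Rg)\approx\RR^j\times\SP^{k-j}(\Rg)$ and $\SP^k(\CC)\approx\CC^j\times\SP^{k-j}(\CC)$. Under them $s_k$ becomes $(\RR^j\subset\CC^j)\times s_{k-j}$, and $F$ is pulled back from a sheaf $G$ on the normal slice. The $!$-restriction to the stratum only sees the normal slice, so the codimension-$j$ shift of $\RR^j\subset\CC^j$ never enters. That shift computes the $!$-restriction of $F$ to the real stratum inside $\SP^k(\CC)$, which is not what the real $t$-structure tests. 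The comparison therefore reduces to the costalk at the cone point of $s_{k-j}^*G$ versus that of $G$, and these are not related by any shift. This already fails for $k=1$. Let $u:\CC^*\hookrightarrow\CC$ and $F=u_!L$, where $L$ has stalk $V\neq 0$ and monodromy $T$ with $T-\id$ invertible. The complex costalk at $0$ is $\fib(V\xrightarrow{T-\id}V)[-1]\simeq 0$. The costalk of $s_1^*F$ at $0\in\Rg$ is $\fib(0\to V)\simeq V[-1]\neq 0$. Note also that both perversities are already $-j$ on the $j$-th stratum (complex, respectively real, dimension $j$), so there is no discrepancy for a shift to compensate.

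The missing idea is that real costalks are vanishing cycles; in the example, $V[-1]\simeq\Phi_z(F)_0$. The paper proceeds by induction on $k$. In elementary symmetric coordinates, set $a:\CC^{k-1}\subset\CC^{k-1}\times\Rg$, $b:\CC^{k-1}\times\Rg\subset\CC^k$, $t:\SP^k(\Rg)\to\CC^{k-1}\times\Rg$ and $r:\SP^{k-1}(\Rg)\subset\SP^k(\Rg)$. The square formed by $r$, $t$, $a$ and $s_{k-1}$ is Cartesian. Base change plus a stalkwise check gives $r^!t^*\simeq s_{k-1}^*a^!$ on $\NN$-constructible sheaves. The check uses that $F$ is locally constant on the contractible $\CC^{k-1}\times\RR_{>0}$, which contains the contractible $\SP^k(\RR_{>0})$. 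Hence
\[
r^!s_k^*F\simeq s_{k-1}^*a^!b^*F\simeq s_{k-1}^*\Phi_{e_k}(F).
\]
Coconnectivity of $s_k^*F$ then reduces to two conditions. The first is the easy bound $F|_{\SP^k(\RR_{>0})}\in D^{\ge -k}$ on the open stratum. The second is $r^!s_k^*F\in{}^{\mathfrak p}D^{\ge 0}$, which follows from the perverse $t$-exactness of $\Phi_{e_k}$ together with the induction hypothesis for $s_{k-1}^*$.

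Iterating, the costalk of $s_k^*F$ at the origin is $\Phi_{e_1}\cdots\Phi_{e_k}(F)$. So controlling vanishing cycles is the real content of the lemma, and your proposal never invokes it. Your exit-path fallback does not supply it either: checking generators of the connective part only reproves right $t$-exactness.
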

\begin{proof}
    By construction of the $t$-structures, it is immediate that $s_k^*$ is
    right $t$-exact. To verify that it is left $t$-exact, we proceed by
    induction on $k$ follows. 
    Passing to elementary symmetric coordinates, we identify $\SP^k(\CC)$
    with $\CC^k$, and consider the diagram 
    \begin{equation}
        \label{eq:texactsquare}
    \begin{tikzcd}
        \CC^{k-1} \ar[hook,r, "a"] & \CC^{k-1} \times \Rg \ar[hook, r, "b"] & \CC^{k}\\
        \SP^{k-1}(\Rg)  \ar[hook,u, "s_{k-1}"]\ar[hook,r,"r"] & \SP^{k}(\Rg) \ar[hook,u,"t"]\ar[hook,ur, swap, "s_{k}"]& 
    \end{tikzcd}
    \end{equation}
    of inclusions of spaces. Now suppose $F \in \Sh_{\NN}(\SP^{k}(\CC);\D)$ and
    $F \in \pD^{\ge 0}$. By definition of the perverse $t$-structure, to show $s_k^*F
    \in \pD^{\ge 0}$, we need to verify that 
    \begin{enumerate}[label=\arabic*.]
        \item\label{tex:1} $(s_k^*F)\SP^k(\RR_{>0}) \in D^{\ge -k}$, and 
        \item\label{tex:2} $r^!s_k^*F \in \pD^{\ge 0}$.
    \end{enumerate}  
    The first statement is clear, since $F|\SP^k(\CC \setminus \{0\}) \in D^{
    \ge -k}$ (with respect to the standard $t$-structure). To show the second
    statement, we first claim that $s_{k-1}^*a^! \simeq r^!t^*$ when
    restricted to $\NN$-constructible sheaves. To this end, we note that the
    commutative square in \eqref{eq:texactsquare} is pullback, so that base
    change provides an equivalence $(s_{k-1})_*r^! \simeq a^!t_*$. Therefore,
    restriction along $t$ provides a map $a^! \to a^! t_* t^* \simeq
    (s_{k-1})_*r^!t^*$ which is adjoint to our desired comparison map
    $s_{k-1}^*a^! \to r^!t^*$ . It is now readily verified (on stalks) that
    this map is indeed an equivalence: Namely, we use the recollement
    triangles for $a^!$ and $r^!$ expressing sections with support as fibers of
    the restriction map to the respective complements, noting that $F|\CC^{k-1}
    \times \RR_{>0}$ is locally constant and the inclusion $\SP^k(\RR_{>0})
    \subset \CC^{k-1} \times \RR_{>0}$ is a weak homotopy equivalence (both
    spaces are contractible). 

    This proves the claim. Statement \ref{tex:2} now follows by combining the
    perverse $t$-exactness of the vanishing cycles functor $a^!b^* \simeq
    \Phi_{e_{k}}$ and the perverse $t$-exactness of $s_{k-1}^*$ which holds by
    the induction hypothesis. 
\end{proof}

We will further consider the standard $t$-structure on each $\Sh^{\le
k}(S^1,\D)$ which is the one obtained by pulling back (via $j^*$) the
$t$-structure on $\Sh^{\le k}(\Rg,\D)$.

\begin{prop}\label{prop:complex_analytification_t_exact}
        The functor
\[\begin{tikzcd}
	{\Sh_{\NN}(\SP^{k}(\CC);\D)} & {\Sh^{\le k}(S^1,\D)}
	\arrow["\AA", from=1-1, to=1-2]
\end{tikzcd}\]
is $t$-exact.
\end{prop}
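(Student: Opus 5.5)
We reduce the statement to the real case already treated in \S\ref{sub:realverdier}. By definition, the $t$-structure on $\Sh^{\le k}(S^1,\D)$ is pulled back along $j^*$ from the one on $\Sh^{\le k}(\Rg,\D)$. Hence $\AA$ is $t$-exact if and only if $j^*\circ \AA$ is.

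By Proposition \ref{prop:restrict_to_simplex_finite} we have $j^*\circ\AA \simeq \AA\circ s_k^*$. By Lemma \ref{lem:t-structure-comp-real}, $s_k^*$ is $t$-exact from the perverse $t$-structure on $\Sh_{\NN}(\SP^k(\CC);\D)$ to the perverse $t$-structure on $\Sh_{\NN}(\SP^k(\Rg);\D)$ of \S\ref{sub:realverdier}. It therefore remains to show that the real analytic expansion
\[
\AA:\Sh_{\NN}(\SP^k(\Rg);\D)\to\Sh^{\le k}(\Rg,\D)
\]
is $t$-exact for that perverse $t$-structure on the source.

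We use the same structure in both directions. One point must be checked, or arranged by choice of conventions: the perverse $t$-structure on $\SP^k(\Rg)$ (with conditions \ref{tex:1}, \ref{tex:2} as used in the proof of Lemma \ref{lem:t-structure-comp-real}) should coincide, under Verdier duality $\V$ and Proposition \ref{prop:verdier_filtration_image}, with the standard cosheaf $t$-structure on $\coSh_{\PP}(\SP^k(\Rg);\D)^0$. Equivalently, it corresponds to the levelwise $t$-structure on $\Ch_{[0,k]}(\D)$. We would verify this by the stratum-by-stratum description. A locally constant object on the stratum $X_{\ge j}\setminus X_{\ge j+1}\cong\RR^j$ sits in perverse degree governed by $-j$, and Verdier duality on that stratum shifts by $[-j]$, as in the formula $\V(F)(X_{\ge j})\simeq\fib(F(X_{\ge j})\to F(X_{\ge j+1}))[-j]$. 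So the perverse conditions on $!$-restrictions to strata translate exactly into the conditions that the chain terms $B_j$ of $\V(F)$ lie in $\D^{\le 0}$, respectively $\D^{\ge 0}$. We do this by induction on strata, using the recollement for $X_{\le k-1}\subset X_{\le k}$.

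With this identification in hand, Theorem \ref{thm:texact} applies. The composite $\AA\circ\V^{-1}$ sends $B_\bullet$ to $A_\bullet$ with $A_m\simeq\bigoplus_{k'\le m}B_{k'}^{\binom m{k'}}$. This is levelwise a finite direct sum of the $B_j$, hence $t$-exact for the standard $t$-structures. Truncating to $\SP^k$ changes nothing, since only $m\le k$ matters for polynomial sheaves of degree $\le k$: they are right Kan extensions from $\DD_{\le k}$, and the $t$-structure is checked on these values.

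\textbf{Main obstacle.} We expect the hard part to be matching the perverse $t$-structure on $\SP^k(\Rg)$ from \S\ref{sub:realverdier} with the Verdier-dual cosheaf $t$-structure, including getting the shift conventions right. A secondary subtlety is that Theorem \ref{thm:texact} is phrased for complete sheaves restricted to nonempty opens. Since polynomial sheaves are complete (Proposition \ref{prop:stable_complete}), the value at $\emptyset$ is a colimit over values at nonempty opens. We would therefore check that the standard $t$-structure on $\Sh^{\le k}(\Rg,\D)$ is likewise determined on nonempty opens, and if needed we would define it that way.
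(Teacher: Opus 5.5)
Your argument is correct, but it takes a different route from the paper's.

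\textbf{The paper's route.} The paper proves this in one line from Corollary \ref{cor:analytification_is_vanishing}. The value of $\AA(F)$ on the radial open $\{\Re z^m<0\}$ is the stalk at $0$ of $\Phi_{z_1^m+\dots+z_k^m}F$, which is supported at the origin. Vanishing cycles are perverse $t$-exact, and on sheaves supported at a point the perverse $t$-structure is the standard one. So every nonempty value of $\AA(F)$ is a $t$-exact functor of $F$.

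\textbf{Your route.} You transport the problem along $j^*$ and $s_k^*$ to $\SP^k(\Rg)$, using Proposition \ref{prop:restrict_to_simplex_finite} and Lemma \ref{lem:t-structure-comp-real}. You then read off $t$-exactness from the formula $A_m\simeq\bigoplus B_{k'}^{\binom m{k'}}$ of Theorem \ref{thm:texact}.

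\textbf{Comparison.} The paper's argument stays on $\SP^k(\CC)$ and at the same time produces the power-sum vanishing-cycle description used for Theorem \ref{thm:perverset}. The price is the homotopy argument of Lemma \ref{lem:vanishing_homotopy} and $t$-exactness of $\Phi$ for the power sums. Your argument avoids power sums and connects the complex statement directly to the classical Dold--Kan formula. However, it uses more of the paper: the comparison square, the real Verdier-duality computation, and Lemma \ref{lem:t-structure-comp-real}, whose proof already relies on $t$-exactness of $\Phi_{e_k}$. It also needs the real perverse $t$-structure to agree with the Verdier-dual one, which the paper only asserts.

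\textbf{The obstacle you flag does go through.} Write $F$ as $F_0\to\cdots\to F_k$ with $F_{k+1}=0$, and set $B_j=\fib(F_j\to F_{j+1})[-j]$.
\begin{itemize}
\item The conditions in the proof of Lemma \ref{lem:t-structure-comp-real} say $F\in\pD^{\ge 0}$ if and only if $\fib(F_j\to F_{j+1})\in\D^{\ge -j}$ for all $j$. This is exactly $B_j\in\D^{\ge 0}$.
\item The other half is imprecise in your sketch: $\pD^{\le 0}$ is defined by $*$-restrictions, namely $F_j\in\D^{\le -j}$, not by $!$-restrictions. A descending induction starting from $F_k=\fib(F_k\to 0)$ shows this is equivalent to $\fib(F_j\to F_{j+1})\in\D^{\le -j}$ for all $j$, that is, to $B_j\in\D^{\le 0}$.
\end{itemize}

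Your caveat about $\emptyset$ is also correct. $\AA(F)(\emptyset)$ is the stalk of $F$ at the origin, and for perverse $F$ this need not lie in the heart. So the $t$-structure on $\Sh^{\le k}$ has to be read on nonempty opens, and the paper's proof implicitly uses the same convention.
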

\begin{proof}
This is an immediate consequence of Corollary \ref{cor:analytification_is_vanishing}:
Vanishing cycles are $t$-exact with respect to the perverse $t$-structures and it follows immediately from the definition of the
perverse $t$-structure that it induces the standard $t$-structure on the subcategory of sheaves which are concentrated in $0$.
\end{proof}
\begin{thm}
        For a presentable stable $\infty$-category $\D$ equipped with an accessible left complete $t$-structure with heart $\A$, the
        analytic expansion equivalence
        \[
                \Sh^{*!}_{\NN}(\SP^{\infty}(\CC);\D)  \simeq \Sh^{\an}(S^1;\D)^c
        \]
        of Corollary \ref{cor:starshriekequiv} restricts to an equivalence
        \[
                \Perv^{*!}_{\NN}(\SP^{\infty}(\CC);\D) \simeq \A_{\Lambda_{\infty}}
        \]
        of abelian categories.
\end{thm}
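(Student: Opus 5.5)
The plan is to show that the equivalence of Corollary \ref{cor:starshriekequiv} is $t$-exact, with the inverse limit of the perverse $t$-structures on the left and the standard (levelwise) $t$-structure on complete analytic sheaves on the right. Once this is established, passing to hearts gives the claim. The heart on the right will then be identified with $\A_{\Lambda_{\infty}}$ through the combinatorial description of Theorem \ref{thm:complexanalytic} and Proposition \ref{prop:s1_intervals_localization}.

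First I would work at finite level. By Proposition \ref{prop:complex_analytification_t_exact}, the functor $\AA\colon \Sh_{\NN}(\SP^k(\CC);\D)\to \Sh^{\le k}(S^1,\D)$ is $t$-exact, and it is an equivalence. An exact equivalence between stable categories that is $t$-exact in both directions identifies the $t$-structures: since $\AA$ preserves both the connective and coconnective parts, so does its inverse, by the orthogonality characterization. Hence $\AA$ restricts to an equivalence of hearts $\Perv_{\NN}(\SP^k(\CC);\A)\simeq \Sh^{\le k}(S^1,\D)^{\heartsuit}$. Next I use Lemma \ref{lem:explicit_vanishing}, which says that $\AA$ intertwines $\Phi_{e_k}$ with $i^{*!}_{k-1}$. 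As $\Phi_{e_k}$ is perverse $t$-exact, the functor $i^{*!}_{k-1}$ is $t$-exact on the polynomial side. Taking inverse limits of the heart equivalences along these compatible exact functors, which is legitimate because the limit of hearts is computed levelwise, gives $\Perv^{*!}_{\NN}(\SP^{\infty}(\CC);\A)\simeq \lim_k \Sh^{\le k}(S^1,\D)^{\heartsuit}$ along $i^{*!}_{k-1}$.

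It then remains to identify the right-hand limit with $\A_{\Lambda_{\infty}}$. Via Proposition \ref{prop:starshriek_analytic}, the limit $\lim_k i^{*!}_{k-1}$ is $\Sh^{\an}(S^1;\D)^c$. Each level is identified with $\Fun^{\an}((\DD^*_{\le k}(S^1)\cup\DD^1_{k+1}(S^1))^{\op},\D)$, and on such functors the $t$-structure is pointwise on the nonempty opens. This is the step I expect to be the main obstacle. The standard $t$-structure on $\Sh^{\le k}(S^1,\D)$ was defined by pulling back along $j^*$ to $\Sh^{\le k}(\Rg,\D)$, where it is pointwise on $\Delta$-objects by Theorem \ref{thm:texact}. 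One must therefore check that it coincides with the pointwise $t$-structure on nonempty radial opens. I would argue this using the fact that every nonempty union of disks in $S^1$ is homotopy equivalent to one in the image of $j$, so that the conditions can be tested pointwise after $j^*$. Granting this, an object of the limit heart is a complete analytic sheaf whose values on nonempty opens all lie in $\A$. Its restriction to $\DD^*(S^1)$ is thus a functor $\Lambda_{\infty}^{\op}\to\A$, using the localization of Proposition \ref{prop:s1_intervals_localization} restricted to $\Lambda_{\infty}$. The value at $\emptyset$ is forced as a colimit by completeness and is therefore not an independent datum. Conversely, any $\A$-valued paracyclic object left Kan extends uniquely to a complete analytic sheaf. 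This gives $\A_{\Lambda_{\infty}}$, and both categories are abelian by construction.
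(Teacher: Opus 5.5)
Your proposal is correct and follows essentially the same route as the paper. The paper combines the $t$-exactness of the finite-level expansions (Proposition \ref{prop:complex_analytification_t_exact}) with the observation that the limit of hearts along the $i^{*!}_{k-1}$ is $\A_{\Lambda_{\infty}}$ (Proposition \ref{prop:starshriek_analytic}), and you additionally spell out details the paper dismisses as ``essentially by definition'': the inverse of $\AA$ being $t$-exact, compatibility via Lemma \ref{lem:explicit_vanishing}, and the comparison of the $j^*$-pulled-back $t$-structure with the pointwise one. Your remaining claim, that the $t$-structure transported to $\Fun^{\an}((\DD^*_{\le k}(S^1)\cup\DD^1_{k+1}(S^1))^{\op},\D)$ is pointwise, follows from the same formal principle you use in your first step, since restriction is a $t$-exact equivalence.
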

\begin{proof}
        By Proposition \ref{prop:complex_analytification_t_exact} it suffices to note that, essentially by definition, the limit
        of hearts on the right-hand side of the equivalence (under the $i^{*!}$) is given by $\A_{\Lambda_{\infty}}$
        (c.f. Proposition \ref{prop:starshriek_analytic}).
\end{proof}
\appendix

\section{Exit paths on smooth conical manifolds}
\label{section:exit_paths}

\subsection{Conical manifolds}%
\label{sub:conical_manifolds}

We wish to place ourselves into a geometric context that allows us to study
constructible sheaves on suitable stratified spaces via exit paths. In this
section, we review the necessary background material.

\begin{defi}[\cite{lurie:ha}]
	Let $A$ be a partially ordered set. We endow the set $A$ with the Alexandroff topology by declaring:
	\[
	U\subset A \text{ is open, if and only if $U$ is upwards closed, i.e. $a\in U$ and $a\le b$ implies $b\in U$.}
	\]
    We define an {\em $A$-stratification} of $X$ to be a continuous map $s: X \to A$.
    For $a \in A$, the subspace $X_a = s^{-1}(a) \subset X$ is called the {\em
    $a$-stratum} of the stratification. We further denote e.g. $X_{\le a} =
    s^{-1}(A_{\le a})$ where $A_{\le a} = \{b\in A | b\le a\}$.
    A map of stratified spaces $X \to A$ and $Y \to B$ is a commutative square
    \[
    \begin{tikzcd}
        X \ar{r}\ar{d} & Y \ar{d}\\
        A \ar{r} & B
    \end{tikzcd}
    \]
    of continuous maps where the vertical maps are the respective stratifications of $X$ and $Y$.
\end{defi}

We will use the theory of conical manifolds (for terminology cf. \cite{KSW25assembly}), which
encompasses the symmetric product spaces we will study, and enjoys multiple
desirable properties. Fundamental to this theory is the notion of the cone of a
stratified space.

\begin{defi}
    Let $s:X\to A$ be a stratified space. The {\em open cone} $C(X)$ of $X$ is a stratified space with stratification
	\[
	s^\triangleleft : C(X) \to A^\triangleleft = A \cup \{-\infty\}.
	\]
	It is given as follows:
	\begin{enumerate}
		\item  As a set $C(X) = \{*\}\sqcup (X\times \RR_{>0})$.
		\item $U\subset C(X)$ is open if and only if $U\cap (X\times \RR_{>0})$ is open and if $* \in U$ implies
                        that $X\times (0,\epsilon)\subset U$ for some $\epsilon > 0$.
                \item The stratification is given by $s^\triangleleft(*)=-\infty \in A^{\triangleleft}$ and $s^\triangleleft(x,t) = s(x)$ for other points.
	\end{enumerate}
\end{defi}
We introduce conical disks, the theory's analogue of open balls.
\begin{defi}
    A stratified space $s:X \to A$ is a {\em conical disk}, if there exists a stratified second-countable compact Hausdorff space $r:Z \to A$ and $n \geq 0$ such that $X$ is isomorphic, as a stratified space, to the stratified space $r^{\triangleleft}: \RR^{n} \times C(Z) \to A^{\triangleleft}$,
	where the stratification is inherited from that on the cone, i.e. $r^{\triangleleft}(u,y)=r^{\triangleleft}(y)$.
\end{defi}
A conical manifold is a stratified space that is locally modeled on stratified disks.
\begin{defi}
    A stratified space $s:X \to A$ is a {\em conical manifold} if the closure of $\im s \subset A$ is a Noetherian poset and
        every point $x \in X$ has an open neighborhood $x \in U_x$ that,
        considered with the induced stratification, is a conical disk and such that $x$ in the minimal stratum of $U_x$.
        We call $U_x$ a conical neighborhood of $x$.
\end{defi}

\begin{rem}
    \label{rem:basis} Let $X$ be a conical manifold. Then the
    collection of open subsets which are conical disks forms a basis for the topology
    of $X$.
\end{rem}

In \cite{aft17} the authors also introduce (with somewhat different terminology) a theory of smooth conical manifolds
which refines the above notion of topological conical manifolds just as the notion of smooth manifolds refines that of
topological manifolds. All the conical manifolds we investigate in this paper naturally admit the structure of a
smooth conical manifold.

The following proposition summarizes the parts of the theory that are relevant for our discussion of exit path categories.
\begin{prop}\label{prop:basics_properties}
	\begin{enumerate}
		\item There is an $\infty$-category $\Mfld$ with objects given by smooth conical manifolds and $1$-morphisms
			given by smooth stratified open embeddings of conical manifolds.

			This category admits a full subcategory $\Disk \subset \Mfld$ whose objects are given by smooth conical disks.
		\item \label{item:unique_neighborhood} For a smooth conical manifold $X$ any two conical neighborhoods of a point $x \in X$ are equivalent in $\Disk$.

			For a conical disk $U \subset X$ we denote by $X_{[U]}$ the subspace of points $x \in X$ whose conical neighborhoods are equivalent to $U$.
		\item \label{item:equal_type_are_equiv} For an inclusion $B \subset B'$ of conical disks the following are equivalent:
			\begin{enumerate}
				\item The inclusion induces an equivalence in $\Disk$.
				\item $B$ and $B'$ have the same minimal stratum.
				\item $B \simeq B'$ in $\Disk$.
				\item The inclusion is a stratified homotopy equivalence.
			\end{enumerate}
		\item \label{item:disks_compute_stratum} For a smooth conical manifold $X$ there is an equivalence
			\[
			(\Disk/X)^{\simeq} \simeq \coprod_{[U] }X_{[U]}
			\]
			describing the groupoid core of the slice category as the disjoint union of subspaces running over all the equivalence types of conical disks in $X$.
	\end{enumerate}
\end{prop}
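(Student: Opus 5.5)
The plan is to reduce every item to the foundational results of Ayala--Francis--Tanaka \cite{aft17} on smooth conically stratified manifolds (and their exposition in \cite{KSW25assembly}), translating their terminology (``basics'' $\RR^n \times C(Z)$, $\Bsc$, $\Mfld$) into ours. For the first item, I would take the topological category of smooth conically stratified manifolds and smooth open embeddings, with embedding spaces topologized as in \cite{aft17}. Its homotopy coherent nerve is the $\infty$-category $\Mfld$. The subcategory $\Disk$ is then the full subcategory on the basics $\RR^n \times C(Z)$ with $Z$ a compact smooth stratified space. Nothing needs proving here beyond matching definitions, together with Proposition \ref{prop:smooth_structures}, which guarantees that our examples live in $\Mfld$.

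For the second and third items, the key input is that the space of smooth stratified open embeddings $\RR^n \times C(Z) \hookrightarrow \RR^n \times C(Z)$ is homotopy equivalent to the space of stratified automorphisms of the germ at the cone point. In particular, any embedding of a basic into itself sending cone point to cone point is an equivalence in $\Disk$ (\cite{aft17}, the computation of $\Bsc$ and its endomorphisms). The two parts then follow from this as below.

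\begin{enumerate}
\item Given two conical neighbourhoods $U, U'$ of $x$ with $x$ in both minimal strata, I choose a smaller conical neighbourhood $V \subset U \cap U'$ of $x$, which exists by Remark \ref{rem:basis} applied to the smooth structure. I then show that $V \subset U$ and $V \subset U'$ are equivalences, because they preserve the cone point stratum. This gives item \ref{item:unique_neighborhood}.
\item For item \ref{item:equal_type_are_equiv}, I argue as follows. The implication (b)$\Rightarrow$(a) is exactly the endomorphism statement just quoted, since an inclusion of basics with the same minimal stratum can be isotoped, by radial shrinking in the cone coordinate and in $\RR^n$, to one preserving the cone point. The implication (a)$\Rightarrow$(c) is trivial.
\item The implication (c)$\Rightarrow$(b) holds because an equivalence in $\Disk$ induces a bijection of stratum posets. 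Since $B \subset B'$ is a stratified inclusion, the minimal strata must agree.
\item For (b)$\Leftrightarrow$(d), the forward direction again uses the radial shrinking homotopy, which gives a stratified deformation retraction of $B'$ into $B$. The converse holds because a stratified homotopy equivalence induces a bijection of nonempty strata, hence preserves the minimal one.
\end{enumerate}

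The last item is the main obstacle, since it is a genuinely homotopical statement. There I would invoke the result of \cite{aft17} that $\Mfld$ is generated by $\Disk$, and that for each basic $U$ the space $\Emb(U, X)$ fibres over $X_{[U]}$, via evaluation at the cone point, with fibre $\Aut(U) \simeq \Emb(U,U)$. Passing to the groupoid core of $\Disk/X$ splits it by equivalence type $[U]$. The component indexed by $[U]$ is $\Emb(U,X)/\Aut(U)$, which this fibration identifies with $X_{[U]}$. I expect the subtle point to be checking that the quotient by $\Aut(U)$ is taken homotopically and that the fibre is exactly $\Aut(U)$, i.e.\ that the space of germs of embeddings at a fixed point is contractible onto the automorphisms. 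This is precisely the ``tangent classifier'' statement of \cite{aft17}, which I would cite rather than reprove.
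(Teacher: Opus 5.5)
This is essentially the paper's route. The paper's proof just cites \cite{aft17} (Theorem 4.3.1 and Lemma 4.4.8) for all items, with \cite{volpe2025verdier} for the equivalence with (d), and your sketches unpack exactly those results, including the standard homotopy-orbit identification $\mathrm{Emb}(U,X)_{h\Aut(U)}\simeq X_{[U]}$ for the last item.

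A few minor repairs are needed if you write it out:
\begin{itemize}
\item In (b)$\Rightarrow$(a), the inclusion compares a priori different basics $\RR^n\times C(Z)\subset\RR^n\times C(Z')$. So you need the cone-locus-preserving form of the AFT statement rather than the endomorphism form. Alternatively, embed $B'$ back into a standard neighbourhood inside $B$ and use two-out-of-six.
\item For (b)$\Rightarrow$(d), simply note that (a)$\Rightarrow$(d) is immediate. Your rescaling in $B'$-coordinates need not preserve $B$, so it does not by itself give a deformation retraction onto $B$.
\item In item 2, take $V$ to be a standard neighbourhood of $x$ inside $U$. Remark \ref{rem:basis} alone does not put $x$ in the minimal stratum of $V$.
\end{itemize}
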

\begin{proof}
	Most of these statements are proved in section $4$ of \cite{aft17}, specifically
	Theorem 4.3.1 and Lemma 4.4.8 therein. The first three points of the equivalence \ref{item:equal_type_are_equiv}
        are also part of Theorem 4.3.1, the last equivalence follows
	straightforwardly from the considerations in the section, as observed in \cite{volpe2025verdier}.
\end{proof}

\subsection{Constructible sheaves and exit paths}
\label{subsec:cons_exit_paths}
Let $\D$ be a stable bicomplete $\infty$-category. We will make use of the six
functor formalism for sheaves with coefficients in $\D$ established in
\cite{volpe2025six}.
\begin{rem}\label{rem:non-stable-coefficients}
        The results of this subsection also hold for any presentable coefficient category for which
         Proposition \ref{prop:homotopy_equivalence_constructible_sheaves} holds, e.g. compactly generated presentable
         categories by \cite{haine2023homotopyinvariance}.
\end{rem}
\begin{defi}
	\label{defi:constructible_sheaf}
    Let $X$ be a locally compact space equipped with a stratification
    $X \to A$. An {\em $A$-constructible sheaf} $F: \U(X)^{\op} \to \D$ is a
    sheaf that is locally constant on each stratum, i.e., letting $i_a:X_a \to
    X$ denote the inclusion of the $a$-stratum, the sheaf $i_a^{*}F$ is a
    locally constant sheaf on $X_a$ for each $a \in A$.
	We denote by
	\[
		\Sh_A(X;\D) \subset \Sh(X;\D)
	\]
	the full subcategory spanned by the $A$-constructible sheaves.
    More generally, let $X \to P$ be a stratified space and suppose we are
    given a map of posets $P \to \NN$ such that
    \[
    X_{0} \subset X_{\le 1} \subseteq X_{\le 2} \subseteq \dots \subseteq X
    \]
    identifies $X$ with the seqential colimit of locally compact 
    spaces $X_{\le n}$. 
    Let $i_n:X_{\le n} \to X_{\le n+1}$ denote the inclusion of one closed
    subspace in the next higher subspace. We define the category of
    $P$-constructible $*$-sheaves $\Sh_P^*(X;\D)$ on $X$ as the inverse limit of the
    diagram
    \[
        \begin{tikzcd}
            \dots & {\Sh_{P_{ \le 2}}(X_{\le 2};\D)} & {\Sh_{P_{ \le 1}}(X_{\le 1};\D)} & {\Sh_{P_{0}}(X_{0};\D)}
        \arrow["{i_2^*}", from=1-1, to=1-2]
        \arrow["{i_1^*}", from=1-2, to=1-3]
        \arrow["{i_0^*}", from=1-3, to=1-4]
        \end{tikzcd}
    \]
\end{defi}
Constructible sheaves behave well with respect to stratified homotopy equivalences:
\begin{prop}\label{prop:homotopy_equivalence_constructible_sheaves}
Let $\D$ be a stable $\infty$-category.
\begin{enumerate} 
    \item Let $f:(X \to A) \to (Y \to B)$ be a stratified homotopy equivalence
        between locally compact stratified spaces, stratified over Noetherian
        posets. Then the induced functor $f^*:\Sh_B(Y;\D) \to \Sh_A(X;\D)$ is
        an equivalence.
    \item Let $U \subset V$ be an inclusion of open subsets of a stratified
        space $X \to A$ which is a stratified homotopy equivalence with
        respect to the induced stratifications. Then, for any constructible
        sheaf $F \in \Sh_A(X;\D)$, the restriction map
        \[
            F(V) \to F(U)
        \]
        is an equivalence.
\end{enumerate}
\end{prop}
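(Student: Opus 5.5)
The plan is to reduce both parts to one statement: homotopy invariance of constructible sheaves along the cylinder projection. Write $I=[0,1]$ with the trivial stratification and $p: X \times I \to X$ for the projection; $X\times I$ is stratified over $A$ via $X$. The key claim is that
\[
p^*: \Sh_A(X;\D) \lra \Sh_A(X \times I;\D)
\]
is an equivalence. Given this claim, let $H: X \times I \to Y$ be a stratified homotopy. Both inclusions $i_0,i_1: X \to X\times I$ are sections of $p$, so $i_0^*p^* \simeq \id \simeq i_1^*p^*$. Because $p^*$ is an equivalence, this gives $i_0^* \simeq i_1^*$ on $A$-constructible sheaves, and hence $H_0^* \simeq H_1^*$ on $\Sh_B(Y;\D)$. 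Pullback along a stratified map preserves constructibility, since it preserves local constancy on strata. Applying this to $g\circ f \simeq \id$ and $f \circ g \simeq \id$ for a stratified homotopy inverse $g$ shows that $f^*$ and $g^*$ are mutually inverse, which proves (1).

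To prove the key claim, I would first show that $p^*$ is fully faithful, i.e. that the unit $F \to p_*p^*F$ is an equivalence, and then that every constructible $G$ on $X\times I$ lies in the image. Since $p$ is proper, proper base change gives $(p_*G)_x \simeq \Gamma(\{x\}\times I; G|_{\{x\}\times I})$. The restriction of $G$ to $\{x\} \times I$ lies inside a single stratum, so it is locally constant on a contractible compact interval and hence constant. Therefore $(p_*G)_x \simeq G_{(x,0)}$. It follows that the counit $p^*p_*G \to G$ is an equivalence on stalks, and the unit is handled the same way. At this point I need that stalks detect equivalences of constructible sheaves. This is the step I expect to be the main obstacle: it requires hypercompleteness, or some substitute for it, for $A$-constructible sheaves on a locally compact space with Noetherian stratifying poset. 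This is exactly where the hypotheses enter, and where one needs either stability of $\D$ or the coefficient conditions of Remark \ref{rem:non-stable-coefficients}. Here I would appeal to the homotopy invariance results of Haine \cite{haine2023homotopyinvariance} and Volpe \cite{volpe2025verdier} instead of reproving the hypercompleteness statement from scratch. A minor additional point is to check that the stratification of $X\times I$ still satisfies the required Noetherianity, which is immediate since its stratifying poset is again $A$.

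For (2), let $j: U \subset V$ be the inclusion; by (1), $j^*: \Sh_A(V;\D) \to \Sh_A(U;\D)$ is an equivalence. For stable presentable $\D$, I would express sections through the $\D$-enriched mapping objects: $\Gamma(V;F) \simeq \underline{\Hom}(\mathbf{1}_V, F)$, where $\mathbf{1}_V$ is the constant sheaf (using $\Sh(V;\D) \simeq \Sh(V;\mathrm{Sp}) \otimes \D$). The constant sheaf is constructible and satisfies $j^*\mathbf{1}_V \simeq \mathbf{1}_U$. Since an equivalence preserves enriched mapping objects, the restriction map $F(V) \to F(U)$ becomes identified with the map
\[
\underline{\Hom}(\mathbf{1}_V,F|_V) \to \underline{\Hom}(j^*\mathbf{1}_V,j^*F|_V),
\]
which is therefore an equivalence. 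Alternatively, one can avoid enrichment by applying (1) to the map $V \to \pt$ factored through $U$, which compares the global sections functors $\Gamma(V;-)$ and $\Gamma(U;j^*-)$ on constructible sheaves.
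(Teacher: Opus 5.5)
The paper gives no argument for this proposition: both parts are quoted from \cite[Theorem 3.4 and Corollary 3.5]{volpe2025verdier}. You instead reconstruct the standard proof, and your two reductions are sound.

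Deducing (1) from invariance along $p\colon X\times I\to X$ via $i_0^*\simeq i_1^*$ is correct. Note that $X\times I$ is again locally compact Hausdorff over the same Noetherian $A$, and that pullback along stratified maps preserves constructibility.

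Deducing (2) from (1) is a genuine argument that the paper does not spell out. It needs neither enrichment nor presentability, which matters because the paper's $\D$ is only bicomplete and stable. For $\pi_V\colon V\to\pt$ and $M\in\D$ one has $\on{Map}(M,F(V))\simeq\on{Map}(\pi_V^*M,F)$. The constant sheaf $\pi_V^*M$ is $A$-constructible and satisfies $j^*\pi_V^*M\simeq\pi_U^*M$. So full faithfulness of $j^*$ on $\Sh_A$, together with Yoneda in $M$, identifies the restriction map $F(V)\to F(U)$ with an equivalence. Your ``alternative'' is misphrased, since (1) does not apply to $V\to\pt$. What you mean is that the right adjoints of $\pi_U^*\simeq j^*\pi_V^*$ agree on constructible sheaves, which is the same argument.

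The step that does not work as written is your proof of the cylinder claim. Your stalkwise check needs stalks to detect equivalences beyond the constructible sheaves. You apply it to $p_*p^*F$ and to $p^*p_*G$, and constructibility of $p_*G$ is part of what is being proved. So you would need stalk detection in $\Sh(X;\D)$ and $\Sh(X\times I;\D)$. That is not available in this generality, because these categories need not be hypercomplete for a locally compact $X$ of infinite covering dimension. What you actually need from the literature at this point is the cylinder statement itself, not a stalk principle. Your appeal to the references is therefore really a citation of the key claim, which puts you exactly where the paper is.

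If you want an argument that avoids stalks, one standard route goes as follows.
\begin{enumerate}
\item The unit $F\to p_*p^*F$ is an equivalence for all sheaves, not just constructible ones.
\item Whether the counit $p^*p_*G\to G$ is an equivalence is local on $X$.
\item For locally constant $G$, a tube-lemma argument covers $U\times I$ by finitely many boxes $U\times J_k$ on which $G$ is constant. Full faithfulness of pullback along $U\times J_k\to U$ then shows that the gluing data descend to $U$.
\item Constructible $G$ are then handled by Noetherian induction over the strata via recollement. This is where the Noetherian hypothesis genuinely enters.
\end{enumerate}
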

\begin{proof}
	The first part is proved in \cite[Theorem 3.4]{volpe2025verdier} and
	the second part is a special case of Corollary 3.5 therein.
\end{proof}

We recall a characterization of constructible sheaves on (smooth) conical manifolds.
\begin{prop}\label{prop:constructible_sheaf_conical_manifold}
    Let $F:\U(X) \to \D$ be a sheaf on a (smooth) conical manifold $X \to A$.
    Then $F$ is constructible if and only if the following conditions holds:
    The restriction $F(B') \to F(B)$ along any inclusion $B \subset B'$ of conical disks which is a stratified homotopy
    equivalence is an equivalence.
\end{prop}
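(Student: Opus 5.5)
One direction is immediate from Proposition \ref{prop:homotopy_equivalence_constructible_sheaves}(2). If $F$ is constructible and $B \subset B'$ is an inclusion of conical disks which is a stratified homotopy equivalence, then $F(B') \to F(B)$ is an equivalence. So the content lies in the converse. Let $F$ be a sheaf inverting all such inclusions. I plan to show that $i_a^*F$ is locally constant on each stratum $X_a$. The first step reduces this to a local statement: local constancy is local on $X_a$, so it suffices to fix $x \in X_a$ and a conical neighborhood $U_x \cong \RR^n \times C(Z)$ with $x$ in the minimal stratum. One then checks that $i_a^*F$ restricted to $U_x \cap X_a \cong \RR^n \times \{*\}$ is constant.

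For this I use the basis of conical disks (Remark \ref{rem:basis}). Since $F$ is a sheaf, $i_a^*F$ has stalks
\[
(i_a^*F)_y \simeq \colim_{y \in B} F(B)
\]
for $y \in U_x \cap X_a$, with $B$ ranging over conical disks containing $y$. We may shrink this to conical disks whose minimal stratum is $a$, i.e.\ conical neighborhoods of $y$. By Proposition \ref{prop:basics_properties}\ref{item:equal_type_are_equiv}, inclusions among such disks are stratified homotopy equivalences. By hypothesis, $F$ sends them to equivalences, so this filtered colimit is essentially constant and $(i_a^*F)_y \simeq F(B)$ for any single such $B$. In particular, for $y$ in a small disk $B \subset U_x$ centered on a point of $X_a$, the sections $F(B)$ map isomorphically to every stalk $(i_a^*F)_y$ with $y \in B \cap X_a$. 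These are compatible because any two conical neighborhoods of $y$ inside $B$ contain a common smaller one. It follows that $i_a^*F|_{B\cap X_a}$ is the constant sheaf with value $F(B)$. For the comparison, take the map from the constant presheaf $F(B)$ to $i_a^*F$ on $B \cap X_a$ and check it on stalks. Since $B \cap X_a \cong \RR^n$ and these $B$ cover $X_a$, local constancy follows.

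I expect the main obstacle to be the identification of $i_a^*F$ on $B\cap X_a$ with a constant sheaf, as opposed to only knowing its stalks. The $*$-pullback involves a colimit over all opens of $X$ containing an open of $X_a$, not only over conical disks. For unbounded coefficients this also raises a hypercompleteness/descent issue. I would handle it in the smooth setting via Proposition \ref{prop:basics_properties}\ref{item:disks_compute_stratum}. This identifies $(\Disk/X)^{\simeq}$ with $\coprod X_{[U]}$, so the restriction of $F$ to conical disks of type $[U]$ factors through a functor on the space $X_{[U]}$, namely a local system. One can then invoke the exodromy-type identification of Theorem \ref{thm:exodromy} (which presumably uses exactly this criterion) to conclude that $F$, being the right Kan extension of its restriction to disks, has locally constant restriction to strata. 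If that is circular, I would instead argue directly with the cofinality of conical neighborhoods of points of $B\cap X_a$ among opens of $X$ containing them.
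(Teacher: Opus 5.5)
The paper gives no argument of its own here; it cites \cite[Proposition 3.13]{volpe2025verdier}. Your forward direction and your stalk computation are fine: a conical disk $B$ with minimal stratum $a$ is a conical neighbourhood of every $y\in B\cap X_a$, so $F(B)\to(i_a^*F)_y$ is an equivalence for all such $y$.

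The gap is the step you yourself flag. Concluding that $\underline{F(B)}\to i_a^*F|_{B\cap X_a}$ is an equivalence ``by checking on stalks'' requires the stalk functors to be jointly conservative on $\Sh(B\cap X_a;\D)$. That holds for, say, compactly generated $\D$, since sheaves on $\RR^n$ are hypercomplete and maps out of compact generators commute with stalks. It is not available for the arbitrary stable bicomplete $\D$ of the paper: a map $d\to G(W)$ that becomes null in every stalk need not be locally null unless $d$ is compact.

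Neither of your rescues closes this gap. Going through Theorem~\ref{thm:exodromy} is circular. In the proof of Corollary~\ref{cor:constructibility_Kan} the paper only checks that same-type inclusions of conical disks are inverted and then concludes constructibility, i.e.\ it uses precisely the implication you are proving. Proposition~\ref{prop:basics_properties}\ref{item:disks_compute_stratum} gives a local system on $X_{[U]}$, but not its identification with $i_a^*F$, which is the same issue. Cofinality of conical neighbourhoods of \emph{points} just reproduces stalks.

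What is missing is a computation of \emph{sections} of $i_a^*F$ over opens of the stratum. Write $B\cong\RR^n\times C(Z)$ and $S=B\cap X_a=\RR^n\times\{*\}$. For a convex open $W\subset S$ and continuous $\epsilon\colon W\to\RR_{>0}$, consider the tubes $T_\epsilon(W)=W\times\{*\}\cup\{(w,z,t): w\in W,\ z\in Z,\ 0<t<\epsilon(w)\}$.
\begin{itemize}
\item These tubes are cofinal among open neighbourhoods of $W$ in $X$, by compactness of $Z$ plus a partition of unity.
\item Each tube is a conical disk with minimal stratum $W$ (rescale the cone coordinate by $\epsilon$). Hence all inclusions among them, and into $B$, are inverted by $F$.
\item Consequently the presheaf pullback satisfies $F(B)\xrightarrow{\sim}\colim_{V\supset W}F(V)$, compatibly with restriction along convex $W'\subset W$.
\end{itemize}
So on the intersection-stable basis of convex opens of $S$, the presheaf pullback agrees with the constant sheaf on $F(B)$. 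Its sheafification $i_a^*F|_S$ is therefore constant. No stalks or hypercompleteness are needed; for non-presentable $\D$, run the same argument with compact convex sets and $\mathcal K$-sheaves. With this replacement your direct local argument becomes a self-contained substitute for the citation.
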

\begin{proof}
	This is \cite[Proposition 3.13]{volpe2025verdier}.
\end{proof}
This result leads us to make the following more general definition.
\begin{defi}\label{defi:constructible_presheaf}
        Let $X \to A$ be a stratified space, and $\O \subset \U(X)$ a subposet of opens.
        A presheaf $F: \O^{\op} \to \D$ is called $A$-constructible, if for any inclusion
        $U \subset U'$ in $\O$, which is a stratified homotopy equivalence, the induced
        morphism $F(U') \to F(U)$ is an equivalence in $\D$.
        The $\infty$-category of $A$-constructible presheaves
        \[
                \Fun_A(\O^{\op},\D) \subset\Fun(\O^{\op},\D)
        \]
        is spanned by the $A$-constructible presheaves.
\end{defi}

Following Lurie, we define:
\begin{defi}
        The category of {\em hypersheaves}
        \[
                \Hyp_A(X;\D) \subset \Sh_A(X;\D)
        \]
        is the full subcategory spanned by sheaves $F$ satisfying the following descent condition:
        \begin{itemize}
                \item For any open subset $U \subset X$ and diagram $\chi:D \to \U(U)^{\op}$ of $1$-categories
                        such that $\chi^{-1}(x)$ is weakly contractible for all $x \in U$, the restriction induces an equivalence
                        \[F(U) \simeq \lim F \circ \chi.\]
        \end{itemize}
\end{defi}

\begin{thm}\label{thm:exodromy}
	Let $X$ be a (smooth) conical manifold and $\B$ a basis consisting of conical disks.
	The restriction
	\[
                \Hyp_A(X,\D) \to \Fun_A(\B^{\op},\D)
	\]
	is an equivalence of $\infty$-categories. More precisely, the functors in $\Hyp_A(X,\D)$ are exactly the right Kan extensions of functors in $\Fun_A(\B,\D)$.
\end{thm}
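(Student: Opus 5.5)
The plan is to deduce Theorem \ref{thm:exodromy} from Lurie's exodromy theorem together with a descent argument for bases. First, reduce to the basis of \emph{all} conical disks. Let $\B_{\mathrm{all}} \supset \B$ be the poset of all conical disks in $X$. Restriction $\Fun_A(\B_{\mathrm{all}}^{\op},\D) \to \Fun_A(\B^{\op},\D)$ should be a trivial fibration, with inverse given by right Kan extension. By the pointwise formula, this amounts to showing that for a conical disk $U$ the inclusion of $\B_{/U}$ into $(\B_{\mathrm{all}})_{/U}$ is coinitial on constructible presheaves. I would check this as follows. Since $\B$ is a basis, $\B_{/U}$ contains conical disks around a point of the minimal stratum of $U$. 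Such a disk is of the same type as $U$, so by Proposition \ref{prop:basics_properties}\ref{item:equal_type_are_equiv} its inclusion into $U$ is a stratified homotopy equivalence. Hence for constructible $F$ the value $F(U)$ is computed by any such disk, and the relevant comma category is covered by the disks of minimal type. That comma category is weakly contractible, because it covers the contractible minimal stratum of $U$ and is filtered over points (van Kampen, \cite[A.3.1]{lurie:ha}).

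Second, identify $\Fun_A(\B_{\mathrm{all}}^{\op},\D)$ with $\Fun(\Exit(X),\D)$. Via Proposition \ref{prop:basics_properties}, localizing $\B_{\mathrm{all}}$ at the stratified homotopy equivalences recovers $\Disk/X$, localized at its equivalences, which is the exit path $\infty$-category of \cite[A.6]{lurie:ha}/\cite{aft17}. To verify this I would use the localization criterion \cite[A.6.4]{DKS:milnor} as in Proposition \ref{prop:real_discriminant_exit_path_cube}. The required weak contractibility of fibers and comma categories again comes from \ref{item:disks_compute_stratum}: the groupoid core of disks of a fixed type is the stratum piece $X_{[U]}$, and the comma categories cover contractible neighborhoods in a filtered way.

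Third, compare with hypersheaves. Lurie's exodromy \cite[A.9.3]{lurie:ha} identifies constructible hypersheaves with $\Fun(\Exit(X),\D)$, at least for presentable $\D$; for stable bicomplete $\D$ I would pass through the tensor description $\Sh(X;\mathrm{Sp})\otimes\D$. I would then check that this identification is restriction to conical disks, i.e. that $F(U)\simeq F_x$ for $x$ in the minimal stratum, which is Proposition \ref{prop:constructible_sheaf_conical_manifold}. For the converse, I need the right Kan extension $G$ of a constructible $F$ on $\B$ to be a hypersheaf. For a hypercover-type diagram $\chi$ with weakly contractible fibers, the limit of $G\circ\chi$ is a limit over disks refining $\chi$, and contractibility of the fibers lets us replace it by the limit over $\B_{/U}$, which is $G(U)$ by definition. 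Finally, $G$ is constructible because its restriction to $\B$ is, by Proposition \ref{prop:constructible_sheaf_conical_manifold}.

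I expect the main obstacle to be the hyperdescent of the right Kan extension. Matching an arbitrary $\chi$ with contractible point-fibers to the basis requires a Quillen A/cofinality argument on the category of pairs (disk, element of $D$ over it). I would try Lurie's \cite[A.3.1]{lurie:ha} pointwise, using that each pair's comma category is weakly contractible because the point-fibers are. The coefficient generality, meaning stable bicomplete but not presentable, is a secondary technical point.
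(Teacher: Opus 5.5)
Your Step 2 is essentially the paper's Proposition \ref{prop:explicit_localization}, specialised to the basis of all disks. The problem is elsewhere: the two places where you actually compute limits over the basis are argued by cofinality directly on $\B$, and that does not work.

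\emph{Step 1.} The basics of the same type as $U$ are not coinitial in $\B/U$. Take $X=\Rg$ stratified by $\{0\}<\RR_{>0}$, $U=[0,10)$, and $\B$ the conical disks of diameter $<1$. The basic $(5,5.5)$ lies in no $[0,\epsilon)$. So knowing that a constructible $F$ is constant on the weakly contractible poset of minimal-type basics does not determine $\lim_{(\B/U)^{\op}}F$.

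\emph{Hyperdescent step.} The comma category of $\rho\colon\G\to(\B/U)^{\op}$ over a basic $B$ consists of pairs $(d,B')$ with $B\subset B'\subset\chi(d)$. It is therefore empty unless $B$ lies in a single $\chi(d)$, and contractible point-fibres do not prevent this. For example, take the \v{C}ech cover of $\RR$ by $(-\infty,1)$ and $(0,\infty)$ with $B=(-1,2)$. Your sketch of this step also never uses that $F$ is constructible, and it must: right Kan extensions of arbitrary presheaves on a basis are not even sheaves. Finally, Proposition \ref{prop:constructible_sheaf_conical_manifold} does not give constructibility of $G$ from that of $G|\B^{\op}$, since it presupposes that $G$ is a sheaf and quantifies over all conical disks.

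\emph{The missing idea.} The paper uses constructibility to pass through the localization over every open, not just over $X$. Proposition \ref{prop:explicit_localization} applies to the basis $\B/V$ of any open $V$, so $F|(\B/V)^{\op}$ factors through $(\Disk_{/V})^{\op}$, and localizations are coinitial. This yields two things.
\begin{enumerate}
\item[(i)] $\lim_{(\B/U)^{\op}}F\simeq\lim_{(\Disk_{/U})^{\op}}F\simeq F(U)$, because $\id_U$ is terminal. This is exactly what your Step 1 and the constructibility of the Kan extension need (Corollary \ref{cor:constructibility_Kan}).
\item[(ii)] Hyperdescent reduces to coinitiality of $\colim_{d}(\Disk_{/\chi(d)})^{\op}\to(\Disk_{/U})^{\op}$. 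Since $(\Disk_{/U})^{\op}\to\Disk^{\op}$ is a left fibration, this can be checked on groupoid cores (dual of \cite[Lemma A.3.1]{KSW25assembly}). Those cores are the spaces of points $\coprod_{[W]}\chi(d)_{[W]}\to\coprod_{[W]}U_{[W]}$, and only there does contractibility of point-fibres apply, via van Kampen.
\end{enumerate}

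With this in place, the detour through Lurie's exodromy is unnecessary. If you instead want Lurie's theorem to carry the weight, your converse becomes formal: if $G\simeq H|\B^{\op}$ for a constructible hypersheaf $H$, then hyperdescent for the cover $\B/U$ shows $H$ is the right Kan extension of $G$. But you would then need two further things:
\begin{enumerate}
\item[(a)] a natural comparison between Lurie's stalk functor and your restrict-and-localize functor, not just the objectwise check $F(U)\simeq F_x$;
\item[(b)] a reduction for non-presentable $\D$ other than $\Sh(X;\mathrm{Sp})\otimes\D$, which is not defined in that case.
\end{enumerate}
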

Hyperdescent is therefore already encoded in the constructibility condition and constructibility can be checked on a basis.
Most of the remaining section is dedicated to the proof of Theorem \ref{thm:exodromy}. It is a variation of
\cite[Theorem 5.3.1]{KSW25assembly} and our proof largely follows theirs. However, parts of the arguments
are simplified by the fact that we are studying bases of disks rather than disjoint unions thereof.

The first step in the proof is a more explicit computation of the localization
of $\B$ at the set of stratified homotopy equivalences.
\begin{prop}\label{prop:explicit_localization}
	The functor $\B \to \Disk_{/X}$ is the localization at the subposet $\J_{\B}$ of
	inclusions of conical disks of the same isomorphism type.
\end{prop}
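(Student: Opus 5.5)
We use the localization criterion \cite[A.6.2]{DKS:milnor}, as in the proofs of Propositions \ref{prop:intervals_localization} and \ref{prop:exit_paths_complex_case}. Concretely, the task is to show that $\B \to \Disk_{/X}$, $B \mapsto (B \hookrightarrow X)$, exhibits $\Disk_{/X}$ as $\B[\J_{\B}^{-1}]$. There are three conditions to check: the functor inverts exactly $\J_{\B}$; the fibers over objects of the localized category are weakly contractible; and the relevant inclusions of fibers into comma categories are (co)final.

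\textbf{Step 1 (inverted morphisms).} We first show that the morphisms sent to equivalences are exactly those in $\J_{\B}$. By Proposition \ref{prop:basics_properties}\ref{item:equal_type_are_equiv}, an inclusion $B\subset B'$ of conical disks becomes an equivalence in $\Disk$ if and only if $B$ and $B'$ have the same minimal stratum, i.e.\ are of the same type. An equivalence in $\Disk_{/X}$ between two objects lying over $X$ is detected by the underlying morphism in $\Disk$. So the inverted morphisms are precisely $\J_{\B}$.

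\textbf{Step 2 (contractible fibers).} Fix an object $(U\to X)$ of $\Disk_{/X}$, of type $[U]$. The fiber $\B_{[U]}$ of basics of that type, with morphisms in $\J_{\B}$, must be weakly contractible. By Proposition \ref{prop:basics_properties}\ref{item:disks_compute_stratum}, the component of $(\Disk/X)^{\simeq}$ containing the class of $U$ is the space $X_{[U]}$. We therefore consider the functor $\B_{[U]} \to \U(X_{[U]})$, $B \mapsto B\cap X_{[U]}$.
\begin{itemize}
\item Each $B\cap X_{[U]}$ is the minimal stratum of a conical disk $\RR^n\times C(Z)$, hence $\cong\RR^n$ and contractible.
\item This cover is filtered over points: given $x\in B\cap B'\cap X_{[U]}$, the basis property yields a basic $B''\subset B\cap B'$ centered at $x$, which has the same type by Proposition \ref{prop:basics_properties}\ref{item:unique_neighborhood}.
\end{itemize}
Lurie's van Kampen theorem \cite[A.3.1]{lurie:ha} then identifies the weak homotopy type of $\B_{[U]}$ with that of the union of the cover, namely $X_{[U]}$. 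Hence the fiber over $(U\to X)$ is identified with the corresponding path component of $(\Disk_{/X})^{\simeq}$. This is exactly the form of the second condition in the criterion: its classifying space matches the mapping space/core in the target. It is not literal contractibility.

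\textbf{Step 3 (cofinality).} For each object $B'\in\B$ and each type, consider the inclusion of $(\B_{[U]})_{/B'}$ into the comma category. Its fibers are covered by $B\cap B'\cap X_{[U]}$, filtered over points as in Step 2, with union $B'\cap X_{[U]}$. Again by van Kampen, the comma category has the homotopy type of the corresponding mapping space $\Map_{\Disk}(U,B')$. That mapping space is in turn computed by $B'_{[U]}$, by Proposition \ref{prop:basics_properties}\ref{item:disks_compute_stratum} applied to $B'$.

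\textbf{Main obstacle.} The hard part is Step 2/3's comparison: matching the homotopy type of these posets with the mapping spaces of $\Disk_{/X}$, which need not be discrete. For this I would rely on Proposition \ref{prop:basics_properties}\ref{item:disks_compute_stratum}, applied both to $X$ and to each basic $B'$. I would then follow the corresponding argument of \cite[Theorem 5.3.1]{KSW25assembly}, which becomes simpler here because we work with single disks rather than disjoint unions.
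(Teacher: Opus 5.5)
Your geometric input is the same as the paper's. Step~1 is exactly how the paper identifies the inverted morphisms. The van Kampen argument in Step~2 is also the paper's argument: the cover $B\mapsto B\cap X_{[U]}=B_{[B]}$ by contractible opens, cofiltered over points by the basis property and Proposition~\ref{prop:basics_properties}\ref{item:unique_neighborhood}. You run it one type at a time instead of over $\coprod_{[U]}X_{[U]}$.

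The gap is in the categorical reduction that should turn this computation into a localization statement. The criterion \cite[A.6.2]{DKS:milnor}, as used elsewhere in the paper (always with $1$-categorical targets such as $\Delta_+$ or $\Lambda_{\infty,+}$), asks two things for every object $d$ of the target: literal weak contractibility of $(d/f)^{\iso}$, and coinitiality of $(d/f)^{\iso}\subset d/f$. You replace the first condition by ``$|\B_{[U]}|$ has the homotopy type of the corresponding component of $(\Disk_{/X})^{\simeq}$'', and no criterion licenses this. Read correctly in the $\infty$-setting, weak contractibility of all $(d/f)^{\iso}$ says that the \emph{map} $|\J_{\B}|\to(\Disk_{/X})^{\simeq}$ induced by $B\mapsto(B\subset X)$ is an equivalence. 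An abstract equivalence $|\B_{[U]}|\simeq X_{[U]}$ does not give this. You still have to check that, under Proposition~\ref{prop:basics_properties}\ref{item:disks_compute_stratum}, the induced map is the van Kampen comparison map. The paper does this by noting that each $(\Disk_{/B})^{\simeq}_{\operatorname{term}}$ is contractible, so $|\J_{\B}|\simeq\colim_{B\in\J_{\B}}(\Disk_{/B})^{\simeq}_{\operatorname{term}}$. By naturality of \ref{item:disks_compute_stratum}, the map to $(\Disk_{/X})^{\simeq}$ then becomes $\colim_{B}B_{[B]}\to\coprod_{[U]}X_{[U]}$.

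Step~3 does not verify a valid condition. It is indexed by objects $B'$ of the source rather than by objects of the target. Its key identification of $\operatorname{Map}_{\Disk}(U,B')$ with $B'_{[U]}$ is also false. Proposition~\ref{prop:basics_properties}\ref{item:disks_compute_stratum} computes the \emph{core} $(\Disk_{/B'})^{\simeq}$, whose $[U]$-component is $\operatorname{Map}_{\Disk}(U,B')_{h\Aut(U)}$. Here $\Aut(U)$ is generally not contractible: for an unstratified $U\cong\RR^n$ it is $\simeq O(n)$, while $B'_{[U]}\simeq *$.

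The ``main obstacle'' you flag is therefore real, but it comes from routing the argument through mapping spaces. The paper avoids mapping spaces entirely by using \cite[Lemma A.2.1]{KSW25assembly}. Once the inverted morphisms are known to be exactly $\J_{\B}$, both conditions of that criterion reduce to a single groupoid statement: $|\J_{\B}|\to(\Disk_{/X})^{\simeq}$ is an equivalence. That statement is your Step~2, made natural as above.
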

\begin{proof}
	We check the criterion of \cite[Lemma A.2.1]{KSW25assembly}. By statement \ref{item:equal_type_are_equiv} of Proposition
	\ref{prop:basics_properties}, the morphisms that are sent to equivalences are precisely those in $\J_{\B}$. Both parts of the
	criterion thus reduce to showing that the induced functor
	\[
		\J_{\B} \to (\Disk_{/X})^{\simeq}
	\]
	becomes an equivalence of groupoids after geometric realization. We note that for every conical disk $B \in \B$, the slice category $\Disk_{/B}$ is weakly contractible, since $\id_{B}$ is the final object. Therefore, we compute
	\[
		|\J_{\B}| \simeq \colim_{B \in \J_{\B}}(\Disk_{/B})_{\operatorname{term}}^{\simeq}
	\]
	where $(\Disk_{/B})_{\operatorname{term}}^{\simeq}$ is the contractible groupoid of disks of embeddings of disks
        of the same type as $B$ in $B$.

	To show that the map
	\[
		\colim_{B \in \J_{\B}}(\Disk_{/B})_{\operatorname{term}}^{\simeq} \to (\Disk_{/X})^{\simeq}
	\]
	is an equivalence, we utilize fact \ref{item:disks_compute_stratum} of Proposition \ref{prop:basics_properties}.
	It allows us to equivalently study the map
	\[
		\colim_{B \in \J_{\B}} B_{[B]} \to \coprod_{[U]} X_{[U]}
	\]
	This map is an equivalence by Lurie's van Kampen theorem. Indeed, it comes from the covering
	\[
		\J_{\B} \to \U(\coprod_{[U]} X_{[U]})
	\]
	obtained by regarding $B_{[B]}$ as an open subset of the factor $X_{[B]}$.
	The condition that the fiber over each point $x \in \coprod_{[U]} X_{[U]}$ must be weakly contractible is satisfied: Given
	$B, B' \in \B$ of the same isomorphism type such that $x \in B_{[B]} \cap B'_{[B]}$, there exists $V \in \B$ with
	$x \in V \subset B \cap B'$ since $\B$ is a basis. $V$ is then also of the same isomorphism type by \ref{item:unique_neighborhood} of Proposition \ref{prop:basics_properties}. Thus the fiber is cofiltered.
\end{proof}
\begin{cor}
	\label{cor:constructibility_Kan}
	Any right Kan extension $F: \U(X)^{\op} \to \D$ of a constructible $F|{\B^{\op}} \in \Fun_A(\B^{\op},\D)$ is itself constructible.
\end{cor}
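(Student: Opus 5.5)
The plan is to check constructibility in the sense of Definition \ref{defi:constructible_presheaf}. By Proposition \ref{prop:constructible_sheaf_conical_manifold} (or directly by that definition) we have to show the following. For an inclusion $U \subset U'$ of opens that is a stratified homotopy equivalence, the map $F(U') \to F(U)$ must be an equivalence. Here $F$ is the right Kan extension of a constructible $F|\B^{\op}$. By the pointwise formula, $F(U) \simeq \lim F|(\B/U)^{\op}$ and $F(U') \simeq \lim F|(\B/U')^{\op}$.

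First I pass to the localization. Since $F|\B^{\op}$ inverts $\J_{\B}$, Proposition \ref{prop:explicit_localization} lets it factor through $\B \to \Disk_{/X}$ as a functor $\bar F$. I then identify $\lim F|(\B/U)^{\op}$ with a limit of $\bar F$ over $(\Disk_{/U})^{\op}$. The claim is that $\B/U \to \Disk_{/U}$ is again a localization at $\J_{\B/U}$, because $\B/U$ is a basis of conical disks for $U$. Proposition \ref{prop:explicit_localization} applied to the conical manifold $U$ gives exactly this. Limits over a localization of a diagram inverting the localized maps agree with limits over the localized category, since localizations are cofinal and coinitial. So $F(U) \simeq \lim_{(\Disk_{/U})^{\op}} \bar F$, and likewise for $U'$.

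Next I reduce to the exit-path $\infty$-category. The value of a constructible sheaf on $U$ is its limit over the exit paths of $U$. Under the AFT identification, $\Disk_{/U}$ is equivalent to the exit path $\infty$-category $\Exit(U)$, which is a stratified homotopy invariant. Hence $\Disk_{/U} \to \Disk_{/U'}$ is an equivalence of $\infty$-categories whenever $U \subset U'$ is a stratified homotopy equivalence. The induced map on limits, which is the restriction $F(U') \to F(U)$, is therefore an equivalence.

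The main obstacle is this last invariance step. Proposition \ref{prop:basics_properties} only provides the groupoid core description \ref{item:disks_compute_stratum}, not full invariance of $\Disk_{/U}$. I would first try to get invariance from Proposition \ref{prop:homotopy_equivalence_constructible_sheaves}. Right Kan extension from $\B$ lands in hypersheaves on $U$, which by the argument above are equivalent to $\Fun(\Disk_{/U}^{\op},\D)$. The inclusion induces an equivalence $\Sh_A(U';\D)\simeq \Sh_A(U;\D)$, which one would then apply to $F$.

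This seems circular, since constructibility of $F$ is what is being proved. The fallback is an argument that is local in the stratified homotopy. Cover $U'$ by conical disks from $\B$. The Kan extension condition computes $F(U')$ via Čech descent over $\B/U'$. The homotopy then deforms each basic into a basic of the same type contained in $U$, and such inclusions are inverted by item \ref{item:equal_type_are_equiv} of Proposition \ref{prop:basics_properties}. This yields coinitiality of $\B/U \to \B/U'$ after localization, checked via Quillen's Theorem A on the fibers, which are cofiltered as in the proof of Proposition \ref{prop:explicit_localization}.
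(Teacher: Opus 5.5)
The gap is the step you flag yourself. Your argument needs $\Disk_{/U} \to \Disk_{/U'}$ to be an equivalence (or at least coinitial) for an \emph{arbitrary} open inclusion $U \subset U'$ that is a stratified homotopy equivalence, and nothing in the paper supplies this. Proposition \ref{prop:basics_properties} only gives the groupoid core of the slice, not this invariance, and your attempt via Proposition \ref{prop:homotopy_equivalence_constructible_sheaves} is circular, as you note. The fallback fails as stated. A stratified homotopy moves points, not open sets, so it does not ``deform each basic into a basic of the same type contained in $U$''. A basic $B \subset U'$ may even be disjoint from $U$, and morphisms in $\Disk_{/U'}$ are embeddings up to isotopy rather than inclusions. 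Finally, the comma categories you would feed into Quillen's Theorem A have no pointwise cofilteredness of the kind used in Proposition \ref{prop:explicit_localization}, where the fibres were basics containing a fixed point.

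The missing idea is that you are proving more than is required. Proposition \ref{prop:constructible_sheaf_conical_manifold}, which you cite, only asks for invariance along inclusions of \emph{conical disks} that are stratified homotopy equivalences. By Proposition \ref{prop:basics_properties}\,\ref{item:equal_type_are_equiv}, these are the inclusions of disks of the same type, and they are equivalences in $\Disk$. This disk-level condition is what the paper proves here and what Theorem \ref{thm:exodromy} uses alongside Proposition \ref{prop:hypersheaf_condition}.

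For this condition your own first step already suffices. Let $U$ be a conical disk and $B \in \B$ with $B \subset U$ of the same type. Then $(B \hookrightarrow U)$ is equivalent to $\id_U$ in $\Disk_{/U}$, hence terminal. So $F(U) \simeq \lim_{(\Disk_{/U})^{\op}} \bar F$ is evaluation at $B$, and $F(U) \to F(B)$ is an equivalence.

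For a general same-type inclusion $U \subset V$ of disks, pick $x$ in the minimal stratum of $U$. Choose $B' \in \B$ with $x \in B' \subset V$, then $B \in \B$ with $x \in B \subset B' \cap U$; both automatically have the same type as $U$ and $V$. Conclude by two-out-of-three, using constructibility of $F|\B^{\op}$ on $B \subset B'$. This is the paper's argument.

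Your stronger statement for arbitrary opens then follows without circularity. Proposition \ref{prop:hypersheaf_condition}, whose proof does not use this corollary, makes $F$ a hypersheaf. The disk-level condition then makes it a constructible sheaf by Proposition \ref{prop:constructible_sheaf_conical_manifold}. Part (2) of Proposition \ref{prop:homotopy_equivalence_constructible_sheaves} then gives invariance along arbitrary open stratified homotopy equivalences.
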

\begin{proof}
	For any inclusion $B \hookrightarrow U$ of conical disks of the same type with $B \in \B$ the induced morphism $F(U) \to F(B)$ is an equivalence:
	Indeed, by the above theorem
	\[
		F(U) \simeq \lim_{({\B/U})^{\op}}F \simeq \lim_{({\Disk_{/U}})^{\op}} F
	\]
	and the arrow $U \to B$ is an equivalence in $\Disk^{\op}$, hence final in $(\Disk_{/U})^{\op}$.

	For an arbitrary inclusion $U \hookrightarrow V$ of conical disks of the same type, we choose $B,B' \in \B$ of the same type as $V$ and $U$ that fit into a commutative square
	\[\begin{tikzcd}
		U & V \\
		B & {B'}
		\arrow[from=1-1, to=1-2]
		\arrow[from=2-1, to=1-1]
		\arrow[from=2-1, to=2-2]
		\arrow[from=2-2, to=1-2]
	\end{tikzcd}\]
	The vertical and bottom arrows induce equivalences under $F$, so inclusion $U \subset V$ does too.
\end{proof}
\begin{prop}
	\label{prop:hypersheaf_condition}
	A functor $F:\U(X)^{\op} \to \D$ that is a right Kan extension of its restriction $F|{\B^{\op}} \in \Fun_A(\B^{\op},\D)$ is automatically a hypersheaf.
\end{prop}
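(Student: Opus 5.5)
To prove Proposition \ref{prop:hypersheaf_condition}, I fix an open $U \subset X$ and a diagram $\chi: D \to \U(U)^{\op}$ with weakly contractible fibers $\chi^{-1}(x)$. The goal is to show that $F(U) \to \lim F\circ\chi$ is an equivalence. Since $F$ is the right Kan extension of $F|\B^{\op}$, for every open $V$ we have $F(V) \simeq \lim_{(\B/V)^{\op}} F$. I will therefore rewrite both sides as limits of $F|\B^{\op}$ over suitable categories and compare the indexing categories.

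\textbf{Step 1: rewrite $\lim F\circ\chi$.} Define the category $E$ of pairs $(d, B)$ with $d \in D$ and $B \in \B$ such that $B \subset \chi(d)$; it is the Grothendieck construction of $d \mapsto \B/\chi(d)$. Computing limits fiberwise (\cite[4.2.10]{lurie:htt}) gives
\[
  \lim F\circ\chi \simeq \lim_{E^{\op}} F\circ p,
\]
where $p: E \to \B/U$ is the projection.

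\textbf{Step 2: reduce to a statement about localizations.} It remains to show that $\lim_{(\B/U)^{\op}} F \to \lim_{E^{\op}} F\circ p$ is an equivalence. Because $F|\B^{\op}$ is constructible, it factors through the localization $\B/U \to \Disk_{/U}$ of Proposition \ref{prop:explicit_localization}, applied to the conical manifold $U$ with basis $\B/U$. It therefore suffices to show that $p$ induces an equivalence after localizing both source and target at the disk inclusions of the same type. Equivalently, $p$ must become cofinal (for limits, coinitial in the appropriate variance) after localization. By Quillen's Theorem A, this amounts to showing that for each conical disk $W \in \Disk_{/U}$ the relevant comma category of the localized $E$ over $W$ is weakly contractible.

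\textbf{Step 3: the main obstacle, weak contractibility of the comma categories.} I will mimic the proof of Proposition \ref{prop:explicit_localization} and pass to groupoid cores. Using item \ref{item:disks_compute_stratum} of Proposition \ref{prop:basics_properties}, the core of the comma category over a type $[W]$ is computed by the van Kampen covering
\[
  (d, B) \mapsto B_{[W]} \subset U_{[W]}.
\]
The fiber of this covering over a point $x$ is the category of pairs $(d, B)$ with $x \in B \subset \chi(d)$ and $B$ of type $[W]$. This fiber maps to $\chi^{-1}(x)$, and the map has cofiltered fibers: given two basics containing $x$ of type $[W]$, there is a smaller basic of the same type, by item \ref{item:unique_neighborhood} and the fact that $\B$ is a basis. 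Hence the fiber is weakly equivalent to $\chi^{-1}(x)$, which is contractible by hypothesis. Lurie's van Kampen theorem then identifies the realization with $U_{[W]}$, matching $\Disk_{/U}$, exactly as in Proposition \ref{prop:explicit_localization}.

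The delicate point is handling the non-core morphisms, that is, the general cofinality in the localization rather than just the cores. I would treat this as in the third condition of \cite[A.6.2]{DKS:milnor}: repeat the same covering argument on the subspaces $W_{[V]}$ for each conical disk $W \subset U$. This is the step I expect to require the most care.
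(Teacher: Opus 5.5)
Your Steps 1 and 2 follow the paper's proof: the Grothendieck construction of $d \mapsto \B/\chi(d)$, fiberwise limits, and passage to the localizations $\B/\chi(d) \to \Disk_{/\chi(d)}$ and $\B/U \to \Disk_{/U}$. The proof breaks off at Step 3, which is where the real work lies.

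What you need is that $\colim_{d}(\Disk_{/\chi(d)})^{\op} \to (\Disk_{/U})^{\op}$ is coinitial. This colimit is the localization of your $E$ at Cartesian edges and fiberwise same-type inclusions; you should specify this class rather than leave "the localized $E$" undefined.

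Despite how you announce it, your van Kampen argument does not compute any Quillen A comma category. It computes the realization of the type-$[W]$ part of $E$, i.e.\ it shows $\colim_d \chi(d)_{[W]} \simeq U_{[W]}$, which is an equivalence on groupoid cores. You then correctly note that the non-core morphisms remain, and defer them to an unspecified covering argument modelled on the third condition of \cite[A.6.2]{DKS:milnor}. As written, this is not an argument. The comma categories over objects of $\Disk_{/U}$ involve its mapping spaces, which are far from posetal: by item~\ref{item:disks_compute_stratum} of Proposition~\ref{prop:basics_properties} the core of $\Disk_{/U}$ is $\coprod_{[W]} U_{[W]}$, so objects have automorphism spaces $\Omega U_{[W]}$. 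You would have to identify these comma categories explicitly before any covering of spaces $W_{[V]}$ could apply.

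The missing idea is that no separate treatment of non-core morphisms is needed, because everything lives over $\Disk$. The slice $\Disk_{/U} \to \Disk$ is a right fibration, so $(\Disk_{/U})^{\op} \to \Disk^{\op}$ is a left fibration. The dual of \cite[Lemma A.3.1]{KSW25assembly} then reduces coinitiality of the comparison functor to the statement that the induced map on groupoid cores,
\[
  \colim_{d \in D^{\op}} \coprod_{[W]} \chi(d)_{[W]} \lra \coprod_{[W]} U_{[W]},
\]
is an equivalence. Concretely, this is the fiberwise statement over each $V \in \Disk$ after passing to $\operatorname{Aut}(V)$-orbits. The equivalence holds by Lurie's van Kampen theorem: $\chi(d)_{[W]} = \chi(d) \cap U_{[W]}$, so the fiber of this cover over $x$ is $\chi^{-1}(x)$, which is weakly contractible by hypothesis.

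So your Step 3 computation is exactly the right input. Without this reduction, or a fully worked-out substitute, the proposal does not reach the required coinitiality. A minor point: in Step 2, ``induces an equivalence after localization'' and ``becomes cofinal'' are not equivalent conditions, and only the latter is needed.
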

\begin{proof}
	Suppose we are given a hypercovering $\chi:D \to (\U(X)/U)^{\op}$ of an open set $U \subset X$.
	We must show that $F$ satisfies descent for this hypercovering. Let $\G$ denote the Cartesian Grothendieck construction
	of the functor
	\[
		D \to \Cat_{\infty}, d \mapsto (\B/\chi(d))^{\op}
	\]
	and let $\rho: \G \to (\B/U)^{\op}$ denote the induced map.
	Then $\lim F \circ \chi \simeq \lim F \circ \rho$ since $F \circ \chi$ is a right Kan extension along the projection to $D$. Indeed, the pointwise formula for a Kan extension reduces to the fiberwise requirement $F(\chi(d)) \simeq \lim_{(\B/\chi(d))} F$, as the projection is Cartesian. This is satisfied because $F$ is a right Kan extension of its restriction to $\B^{\op}$.

	As localizations are coinitial, we also have
	\[
		\lim F \circ \chi \simeq \lim F|\colim_{d \in D^{\op}}(\B/\chi(d))^{\op}
	\]
	We now consider the following commutative diagram in $\Cat_{\infty}$:
	\[\begin{tikzcd}
		{\colim_{d \in D^{\op}}(\B/\chi(d))^{\op}} & {(\B/U)^{\op}} & \D \\
		{\colim_{d \in D^{\op}}(\Disk_{/\chi(d)})^{\op}} & {(\Disk_{/U})^{\op}}
		\arrow[from=1-1, to=1-2]
		\arrow[from=1-1, to=2-1]
		\arrow["F", from=1-2, to=1-3]
		\arrow[from=1-2, to=2-2]
		\arrow[from=2-1, to=2-2]
		\arrow[from=2-2, to=1-3]
	\end{tikzcd}\]
	The vertical arrows are localizations by Proposition \ref{prop:explicit_localization}. Since localizations are coinitial
	and
	\[
		F(U) \simeq \lim_{(\B/U)^{\op}} F,
	\]
	it suffices to show that the bottom vertical arrow is also coinitial.
	Since $(\Disk_{/U})^{\op} \to (\Disk)^{\op}$ is a left fibration, we may apply the dual of \cite[Lemma A.3.1]{KSW25assembly}
	to instead show that the induced map of groupoid cores is an equivalence, which is identified with
	\[
		\colim_{d \in D^{\op}}\coprod_{[W]}\chi(d)_{[W]} \to \coprod_{[V]}U_{[V]}
	\]
	This is again an equivalence by Lurie's van Kampen theorem: By definition of a hypercovering it is a Kan complex over each fiber, so the condition on the covering is satisfied.
\end{proof}

\begin{proof}[Proof of Theorem \ref{thm:exodromy}]
The Theorem follows immediately from Corollary \ref{cor:constructibility_Kan} and Proposition \ref{prop:hypersheaf_condition}.
\end{proof}

In the above, we have seen how to identify constructible hypersheaves with constructible presheaves on a basis.
Given an intersection-stable basis of conical disks, the same also holds true for ordinary sheaves.
\begin{cor}\label{cor:cons_sheaves_exodromy_intersection_stable}
	Let $\B$ be a basis of conical disks which is closed under intersections. Then the forgetful functor
	\[
		\Sh_A(X;\D) \to \Fun_A(\B^{\op},\D)
	\]
	is an equivalence of categories.
\end{cor}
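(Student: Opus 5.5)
The plan is to compare both categories with constructible hypersheaves and then show that, for an intersection-stable basis, every constructible sheaf is automatically a hypersheaf. By Theorem \ref{thm:exodromy}, restriction $\Hyp_A(X;\D) \to \Fun_A(\B^{\op},\D)$ is an equivalence whose inverse is right Kan extension along $\B^{\op} \subset \U(X)^{\op}$. The forgetful functor $\Sh_A(X;\D) \to \Fun_A(\B^{\op},\D)$ factors through this equivalence once $\Sh_A(X;\D) = \Hyp_A(X;\D)$ is known. So it suffices to prove that every $A$-constructible sheaf $F$ is the right Kan extension of $F|\B^{\op}$. By Proposition \ref{prop:hypersheaf_condition}, such a right Kan extension is then a hypersheaf, and the two restriction functors agree.

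Fix an open $U \subset X$. The poset $\B/U$ of basic opens contained in $U$ covers $U$, because $\B$ is a basis. Since $\B$ is closed under intersections, $\B/U$ is closed under finite nonempty intersections. Hence $\B/U$ is a covering sieve-like family, and ordinary (Čech) descent for $F$ should give
\[
F(U) \simeq \lim_{(\B/U)^{\op}} F.
\]
This is exactly the pointwise formula for the right Kan extension.

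To make this precise, I will use the standard fact that for a sheaf $F$ and any family $\mathcal{V} \subset \U(U)$ covering $U$ and closed under finite intersections, we have $F(U) \simeq \lim_{\mathcal{V}^{\op}} F$. The argument runs as follows.
\begin{enumerate}
\item Let $S$ be the sieve generated by $\mathcal{V}$, namely the opens contained in some member of $\mathcal{V}$. Sheaf descent gives $F(U) \simeq \lim_{S^{\op}} F$.
\item The inclusion $\mathcal{V}^{\op} \subset S^{\op}$ is cofinal. For $W \in S$, the category of $V \in \mathcal{V}$ with $W \subset V$ is nonempty and closed under pairwise intersections, so it is cofiltered and hence weakly contractible. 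The claim then follows from Quillen's Theorem A.
\end{enumerate}

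Applying this with $\mathcal{V} = \B/U$ shows that $F$ is the right Kan extension of its restriction. Conversely, the inverse functor $\Fun_A(\B^{\op},\D) \to \Sh_A(X;\D)$ is right Kan extension, which lands in constructible hypersheaves, and hence in $\Sh_A(X;\D)$, by Corollary \ref{cor:constructibility_Kan} and Proposition \ref{prop:hypersheaf_condition}. Both composites are therefore equivalent to the identity: one by the Kan extension property just shown, the other because $\B^{\op} \subset \U(X)^{\op}$ is fully faithful.

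The main obstacle is the cofinality step (2): the justification that sheaf descent for the generated sieve is equivalent to descent along the nonempty intersection-closed family. The point to check is that the relevant slices are cofiltered. This is exactly where intersection-stability is used, and without it one only gets the hypersheaf statement.
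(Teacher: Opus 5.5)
Your proposal is correct and follows the paper's proof, which likewise shows that a constructible sheaf is right Kan extended from an intersection-stable basis and then invokes Theorem \ref{thm:exodromy} and Proposition \ref{prop:hypersheaf_condition}; your sieve-plus-cofinality argument just spells out the paper's one-line claim that ``a sheaf on an intersection-stable basis is the right Kan extension of its restriction.'' One small repair is needed in step (2): for $W=\emptyset$ the slice is all of $\B/U$, which is not closed under intersections (disjoint basics) and need not be weakly contractible (e.g.\ for disconnected $U$), so first remove $\emptyset$ from the sieve, which does not change the limit because $F(\emptyset)$ is terminal and hence $F|S^{\op}$ is right Kan extended from the nonempty members.
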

\begin{proof}
	Any constructible hypersheaf is a sheaf. Conversely, a sheaf on an intersection-stable basis is the right Kan extension of its restriction, so any constructible sheaf is a constructible hypersheaf by the theorem.
\end{proof}
We are also interested in coarsening the stratification of a conical manifold, i.e. in diagrams
\[
	X \to A \to P
\]
where $X \to A$ is a conical manifold and $A \to P$ is a surjective map of posets.
Of course, in general the stratified space $X \to P$ need not be a conical manifold, but we nevertheless wish to express exit paths
on $X \to P$ by a variant of Theorem \ref{thm:exodromy}.
\begin{prop}\label{prop:coarsen_stratification_cons_sheaves}
	Let $X \to A \to P$ be a coarsening of a conical
	manifold, and let $\B$ denote an intersection-stable basis of conical disks with respect to $X \to A$.
        Suppose that the coarsening satisfies the following additional assumption:
        \begin{enumerate}[label=(M)]
        \item\label{item:conical_extra_condition}
	Any inclusion of conical disks $B \subset B'$ in $\B$ which have the same minimal stratum in $P$ (not necessarily in $A$) is
        a stratified homotopy equivalence with respect to the stratification induced by $P$.
\end{enumerate}
	Then, for any intersection-stable basis of conical disks $\B$ for $X \to A$, there is an equivalence
	\[
		\Sh_P(X;\D) \to \Fun_P(\B^{\op},\D).
	\]
\end{prop}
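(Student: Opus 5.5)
The plan is to reduce to Corollary \ref{cor:cons_sheaves_exodromy_intersection_stable} for the finer stratification $X \to A$, and then to check that $P$-constructibility can be read off on the basis $\B$. First note that any $P$-constructible sheaf is $A$-constructible, because the strata of $X \to P$ are unions of strata of $X \to A$ and local constancy on a $P$-stratum restricts to local constancy on each $A$-stratum inside it. So $\Sh_P(X;\D) \subset \Sh_A(X;\D)$ is a full subcategory, and $\Fun_P(\B^{\op},\D) \subset \Fun_A(\B^{\op},\D)$ is one as well: an inclusion in $\B$ with the same minimal $A$-stratum is an $A$-stratified, hence $P$-stratified, homotopy equivalence by Proposition \ref{prop:basics_properties}\ref{item:equal_type_are_equiv}. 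Corollary \ref{cor:cons_sheaves_exodromy_intersection_stable} gives $\Sh_A(X;\D) \simeq \Fun_A(\B^{\op},\D)$ via restriction, with inverse right Kan extension. It therefore suffices to show that this equivalence identifies the two full subcategories.

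One direction is straightforward. If $F \in \Sh_P(X;\D)$ and $B \subset B'$ in $\B$ is a $P$-stratified homotopy equivalence, then $F(B') \to F(B)$ is an equivalence by Proposition \ref{prop:homotopy_equivalence_constructible_sheaves}(2), applied to the $P$-stratification. Here $X_{\le n}$ is locally compact and the relevant posets are Noetherian. Hence $F|\B^{\op}$ lies in $\Fun_P(\B^{\op},\D)$.

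The main obstacle is the converse. Given $G \in \Fun_P(\B^{\op},\D)$, we must show that its right Kan extension $F$, which is already an $A$-constructible sheaf, is locally constant on each $P$-stratum $X_p$. The plan is to use condition \ref{item:conical_extra_condition}. For $x \in X_p$, let $\B_x \subset \B$ be the basics containing $x$ whose minimal $P$-stratum is $p$; these form a neighborhood basis of $x$. For $B \subset B'$ in $\B_x$, condition \ref{item:conical_extra_condition} together with $P$-constructibility of $G$ gives $F(B') \simeq F(B)$. Thus the stalk $(i_p^*F)_y$ is computed by $F(B)$ for any $y \in B \cap X_p$ with $B$ of minimal $P$-stratum $p$.

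Next we show that for such $B$ the restriction $i_p^*F|_{B \cap X_p}$ is constant with value $F(B)$. Every point $y \in B \cap X_p$ has a basic $B_y \subset B$ around it, still of minimal $P$-stratum $p$ since the minimal stratum cannot go down inside $B$ and contains $y$. For such $y$, the composite $F(B) \to F(B_y) \to (i_p^*F)_y$ is an equivalence. Hence the canonical map from the constant sheaf with value $F(B)$ to $i_p^*F|_{B\cap X_p}$ is an equivalence on stalks. Since $i_p^*F$ is a hypersheaf here, being pulled back from an $A$-constructible hypersheaf on a conical manifold, stalkwise equivalence suffices. The sets $B \cap X_p$ cover $X_p$, so $i_p^*F$ is locally constant and $F \in \Sh_P(X;\D)$.

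The delicate points are checking that the minimal $P$-stratum behaves well under shrinking basics, and the hypercompleteness needed for the stalkwise argument. For the latter, I would fall back on Theorem \ref{thm:exodromy}, which shows that right Kan extensions from $\B$ are hypersheaves.
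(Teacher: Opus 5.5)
Your proposal is correct and follows the same route as the paper. You restrict the $A$-stratified exodromy equivalence for the intersection-stable basis to the two full subcategories, get one inclusion from homotopy invariance (Proposition~\ref{prop:homotopy_equivalence_constructible_sheaves}), and get the other from condition (M), which makes the right Kan extension constant on $B \cap X_p$ for a basic $B$ having $x$ in its minimal stratum; you spell out this last step stalkwise in more detail than the paper's one-line argument.

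One justification should be changed. Your stalkwise check needs hypercompleteness of both $i_p^*F$ and the constant sheaf on $B \cap X_p$. That is better obtained from the finite-dimensionality of the spaces involved than from $F$ being a hypersheaf on $X$, which by itself says nothing about the constant sheaf.
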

\begin{proof}
	By $\cap$-stability of $\B$ and Theorem \ref{thm:exodromy}, restriction yields an equivalence
	\[
		\Sh_A(X;\D) \to \Fun_A(\B^{\op},\D).
	\]
	It therefore remains to show that the essential image of the subcategory $\Sh_P(X;\D) \subset \Sh_A(X;\D)$ under
	the restriction functor is precisely $\Fun_P(\B^{\op},\D)$. It follows from Proposition
	\ref{prop:homotopy_equivalence_constructible_sheaves} that any $P$-constructible sheaf
        restricts to a $P$-constructible presheaf on the basis.
	Conversely, it suffices to establish that the right Kan extension $F$ of a $P$-constructible presheaf $F|\B^{\op}$ is a
	$P$-constructible sheaf. For $p \in P$ and $x \in X_p$ let $B$ be a conical disk containing $x$ in its minimal $A$-stratum.
	Then the pullback $j_p^*F$ along $j_p:X_p \to X$ is constant on the contractible open set $B \cap X_p \subset X_p$ by
        \ref{item:conical_extra_condition}.
\end{proof}

\subsection{Verdier duality}
To identify filtered objects with chain complexes via sheaf constructions, we will utilize the Verdier dual construction
from \cite[Section 5.5.5]{lurie:ha}, which we briefly recall. The coefficient category $\D$ is again assumed to be bicomplete and
stable.

\begin{thm}\cite[Theorem 5.5.5.1]{lurie:ha}
	Let $X$ be a locally compact Hausdorff space. There is a canonical equivalence of $\infty$-categories
	\[
		\V:\Sh(X;\D) \to \coSh(X;\D)
	\]
	where $\coSh(X;\D)$ denotes the $\infty$-category of $\D$-valued cosheaves on $X$, i.e. $\coSh(X;\D) = \Sh(X;\D^{\op})^{\op}$.
\end{thm}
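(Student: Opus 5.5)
The plan is to pass through compact subsets and to use stability of $\D$ to turn fiber sequences into cofiber sequences. The proof has three steps.

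\textbf{Step 1: reduce sheaves to $\mathcal{K}$-sheaves.} For a locally compact Hausdorff space $X$, I would first identify $\Sh(X;\D)$ with the $\infty$-category of $\mathcal{K}$-sheaves. These are functors $G$ on the poset of compact subsets $K \subset X$ (contravariant in $K$) satisfying three conditions:
\begin{enumerate}
\item $G(\emptyset)\simeq 0$;
\item for $K,K'$ compact, the square formed by $G(K\cup K')$, $G(K)$, $G(K')$ and $G(K\cap K')$ is Cartesian;
\item $G(K)\simeq \colim_{K\subset K'^{\circ}} G(K')$.
\end{enumerate}
The comparison functor sends a sheaf $F$ to $K\mapsto \colim_{U\supset K}F(U)$. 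Its inverse sends $G$ to the functor $U\mapsto \lim_{K\subset U}G(K)$. The argument is the usual one: local compactness lets compact neighbourhoods compute stalks and sections on opens, and the Mayer--Vietoris condition for compacta is equivalent to descent for finite covers. Descent for arbitrary covers then follows by writing a general cover as a filtered union of finite ones.

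Applying the same statement to $\D^{\op}$ identifies $\coSh(X;\D)=\Sh(X;\D^{\op})^{\op}$ with $\mathcal{K}$-cosheaves. These are covariant functors on compacta with $G(\emptyset)\simeq 0$, coCartesian Mayer--Vietoris squares, and the continuity condition $G(K)\simeq\lim_{K\subset K'^{\circ}}G(K')$.

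\textbf{Step 2: construct $\V$ and its inverse.} Given a sheaf $F$, I would define the cosheaf of compactly supported sections
\[
\V(F)(U)=\Gamma_c(U;F)=\colim_{K\subset U}\fib\bigl(F(U)\to F(U\setminus K)\bigr).
\]
Equivalently, on compacta one sets $\V(F)(K)=\Gamma_K(X;F)=\fib(F(X)\to F(X\setminus K))$ and extends by the continuity colimit. The Mayer--Vietoris square for $\Gamma_K$ is the fiber of the map between the Cartesian squares for $F(X)$ (constant) and $F(X\setminus(-))$. The latter square is Cartesian because $\{X\setminus K, X\setminus K'\}$ covers $X\setminus(K\cap K')$. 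Since $\D$ is stable, Cartesian squares are coCartesian, which gives the cosheaf condition. The continuity condition follows by commuting filtered colimits with fibers.

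The inverse $\V^{-1}$ is built by the dual formula. To a cosheaf $G$ one assigns
\[
U\mapsto \lim_{K\subset U}\cof\bigl(G(X\setminus K)\to G(X)\bigr),
\]
suitably continued as above; this is exactly the construction of Step 2 carried out in $\D^{\op}$.

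\textbf{Step 3: check the composites are the identity.} I would check that both composites are equivalent to the identity naturally. At the level of $\mathcal{K}$-objects this reduces to the stable identity $\cof(\fib(a\to b)\to a)\simeq b$, applied to the restriction maps $F(X)\to F(X\setminus K)$. The identification also uses continuity, to replace opens of the form $X\setminus K$ by limits over compacta.

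The main obstacle is Step 3, and specifically the interchange of colimits over compacta with limits over neighbourhoods. This is where local compactness and the continuity axiom are used essentially. I would handle it by proving first that both constructions preserve the full subcategories defined by the continuity conditions, so that every comparison can be tested on compact $K$. There the verification is the finite stable identity above.
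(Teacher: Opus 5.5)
The paper gives no argument of its own here. It cites Lurie and records only the formula $\V(F)(U)=\Gamma_c(U;F)$, which your construction reproduces. There is, however, a genuine gap in Step~3, which is where the content of the theorem lies, and a wrong justification in Step~2.

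\textbf{The gap in Step~3.} Step~3 does not reduce to the identity you name. Applying your inverse to $\V(F)$ at a compact $K$ gives
\[
\cof\bigl(\V(F)(X\setminus K)\to \V(F)(X)\bigr)=\cof\bigl(\Gamma_c(X\setminus K;F)\to \Gamma_c(X;F)\bigr).
\]
These are compactly supported sections over two non-compact opens, and the cofibre has to be identified with the $\mathcal{K}$-sheaf value $\colim_{U\supset K}F(U)$. The identity $\cof(\fib(a\to b)\to a)\simeq b$ applied to $F(X)\to F(X\setminus K)$ does not do this, for two reasons:
\begin{itemize}
\item it outputs $F(X\setminus K)$, not $\colim_{U\supset K}F(U)$;
\item $\Gamma_c(X;F)\neq F(X)$ unless $X$ is compact; for the constant sheaf $A$ on $\RR$ one gets $A[-1]$ versus $A$.
\end{itemize}
For compact $X$ your idea does work. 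There $\Gamma_c(X;F)=F(X)$, so you can apply the identity to $F(X)\to F(X\setminus L)$ for compacta $L\subset X\setminus K$ and pass to the colimit. Reducing to compact $K$, as you propose, is also fine. The problem is that for general locally compact $X$ the computation at compact $K$ needs an excision equivalence you do not supply:
\[
\cof\bigl(\Gamma_c(X\setminus K;F)\to \Gamma_c(X;F)\bigr)\simeq \colim_{U\supset K}F(U).
\]

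\textbf{One way to prove the excision equivalence.}
\begin{itemize}
\item Write the map as the colimit, over pairs of compacta $L'\subset L$ with $L'\cap K=\emptyset$, of $\Gamma_{L'}(X;F)\to\Gamma_L(X;F)$.
\item Use the relative identity $\cof(\fib(a\to b)\to\fib(a\to c))\simeq\fib(b\to c)$ for $a\to b\to c$. Applied to $F(X)\to F(X\setminus L')\to F(X\setminus L)$, it gives the cofibre as $\fib(F(X\setminus L')\to F(X\setminus L))$.
\item Pairs of the form $(L\setminus V,L)$, with $V\supset K$ open and $V\subset L$, are cofinal.
\item For such pairs $X\setminus L'=(X\setminus L)\sqcup V$, so descent for this disjoint cover identifies the cofibre with $F(V)$. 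The colimit over $V$ is the claim.
\end{itemize}
Carried out on this poset of pairs, the argument also produces the comparison as a natural transformation, not just an objectwise equivalence. By the $\D\leftrightarrow\D^{\op}$ symmetry of your construction, the other composite is the same statement for $\D^{\op}$.

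\textbf{The slip in Step~2.} Your justification of continuity there is wrong. The $\mathcal{K}$-cosheaf condition is the \emph{limit} condition $\Gamma_K(X;F)\simeq\lim_{K\subset K'^{\circ}}\Gamma_{K'}(X;F)$, as you yourself wrote in Step~1. It does not follow from commuting colimits with fibres. It follows from two facts:
\begin{itemize}
\item the sheaf property of $F$ for the directed cover $X\setminus K=\bigcup_{K\subset K'^{\circ}}(X\setminus K')$, where local compactness gives $K=\bigcap K'$;
\item fibres commute with limits.
\end{itemize}
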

Explicitly, this equivalence is given pointwise by taking compact sections with support, that is, for a
sheaf $F:\U(X)^{\op} \to \D$, the cosheaf $\V(F):\U(X) \to \D$ is given by the assignment
\[
	U \mapsto \Gamma_{c}(U;F) = \colim_{K \subset U} \Gamma_K(X;F) =
        \colim_{K \subset U} \fib( F(U) \to F(U \setminus K))
\]
where the indexing diagram runs over all compact subsets $K \subset U$.

By definition, Verdier duality interchanges inverse image and exceptional inverse image:
\begin{defi}
        \label{def:verdier_duality_interchange}
        Let $f:X \to Y$ be a map between locally compact Hausdorff spaces. The exceptional inverse
        image $f^!: \Sh(Y;\D) \to \Sh(X;\D)$ is the unique functor fitting into the natural equivalence
        \[
                \V \circ f^! \simeq f^* \circ \V.
        \]
        where $f^*:\coSh(Y;\D) \to \coSh(X;\D)$ denotes the inverse image functor
        of cosheaves.
\end{defi}

For smooth conical manifolds the Verdier dual contruction is compatible with
constructibility.
\begin{thm}\cite[Proposition 4.6]{volpe2025verdier}
	\label{thm:verdier_duality_constructibility}
Let $X \to A$ be a smooth conical manifold. Verdier duality restricts to an
equivalence
	\[
		\V:\Sh_A(X;\D) \simeq \coSh_A(X;\D)
	\]
between constructible sheaves and constructible cosheaves (defined
dually).
\end{thm}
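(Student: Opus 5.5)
This follows from the constructible form of Lurie's Verdier duality, \cite[Theorem 5.5.5.1]{lurie:ha}. The plan is to show that $\V$ and its inverse $\V^{-1}$ (taking a cosheaf $G$ to $U \mapsto \lim_{K\subset U}$ of the dual ``sections with compact support'') both preserve constructibility. Constructibility should be tested on conical disks, using Proposition \ref{prop:constructible_sheaf_conical_manifold} for sheaves and its dual for cosheaves. Everything is local, so it suffices to work on a conical disk $B = \RR^n \times C(Z)$ and its basics.

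\textbf{Step 1: stalks and costalks via basics.} For a constructible sheaf $F$ and a conical disk $B\subset X$ with cone point stratum $a$, compute
\[
\V(F)(B) = \Gamma_c(B;F) \simeq \colim_{K\subset B} \fib\bigl(F(B)\to F(B\setminus K)\bigr).
\]
The compact sets can be taken cofinally of the form $\overline{B'}$ with $B'\subset B$ a shrunken conical disk of the same type. Then $B\setminus \overline{B'}$ is stratified homotopy equivalent to a collar $\RR^n\times C(Z)\setminus \overline{\text{ball}}$. This collar retracts onto the link $S^{n-1} \ast Z$, stratified as a ``link of $B$''. Proposition \ref{prop:homotopy_equivalence_constructible_sheaves} makes every term of the colimit independent of the choice. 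Hence the colimit is the constant value $\fib(F(B)\to F(\partial\text{-collar}))$, which is a stratified homotopy invariant of $B$ alone.

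\textbf{Step 2: invariance under same-type inclusions.} Let $B\subset B''$ be an inclusion of conical disks with the same minimal stratum. By Proposition \ref{prop:basics_properties}\ref{item:equal_type_are_equiv} it is a stratified homotopy equivalence, and it is in fact isotopic to a diffeomorphism within the smooth structure of \cite{aft17}. The collars therefore correspond, and Step 1 shows that $\V(F)(B)\to\V(F)(B'')$ is an equivalence. The dual criterion then makes $\V(F)$ constructible. The same argument with limits in place of colimits, i.e.\ in $\D^{\op}$, shows that $\V^{-1}$ preserves constructible cosheaves. Since $\V$ is already an equivalence on all sheaves, it restricts to an equivalence on the constructible subcategories.

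\textbf{Main obstacle.} The hard part is Step 1: justifying that the complements $B\setminus\overline{B'}$ are stratified homotopy equivalent to the fixed link collar, compatibly in $B'$. This needs the smooth conical structure, namely a radial vector field on the cone $C(Z)$ together with the $\RR^n$ factor, rather than just topological conicality. I would first try to build these stratified isotopies from the explicit charts of \cite{aft17}, and then check that the resulting indexing diagram is filtered so that the colimit is the constant value.
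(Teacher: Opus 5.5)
The paper does not prove this statement; it quotes it from Volpe. Your reduction is a sound route. You test constructibility on conical disks with Proposition~\ref{prop:constructible_sheaf_conical_manifold} and its dual, compute $\Gamma_c$ on a disk through a chart exhaustion using Proposition~\ref{prop:homotopy_equivalence_constructible_sheaves}, and handle $\V^{-1}$ by running the same argument with coefficients in $\D^{\op}$. Step~1 is correct provided ``shrunken'' means the standard balls of one fixed chart of $B$, whose complements are link $\times$ interval.

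\textbf{The gap is Step~2}, and that is where all the content lies. Knowing that $B\subset B''$ is isotopic to a diffeomorphism does not by itself say anything about the extension-by-zero map $\Gamma_c(B;F)\to\Gamma_c(B'';F)$ for a fixed sheaf $F$. Converting the isotopy into such a statement needs a parametrized argument (a family over $[0,1]$ and base change for compactly supported sections), which you do not give. You also cannot simply reuse Step~1 inside $B''$. The compacts $\overline{B'}$ produced by the chart of $B$ are not standard for the chart of $B''$. For an arbitrary conical sub-disk with compact closure, the complement of the closure need not be a collar: the interior of Alexander's horned ball in $\RR^3$ is an open $3$-disk whose closure has non-simply-connected complement. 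So ``the collars therefore correspond'' is exactly the unproved claim.

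\textbf{The fix needs no isotopy theory: interleave the two exhaustions.}

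Set-up: pick $p$ in the common minimal stratum inside $B$. Translate both charts in their $\RR^n$-factor so that they are centred at $p$, and write $B_r\subset B$ and $B''_r\subset B''$ for the standard balls. Choose radii in this order:
\begin{enumerate}
\item $c<c'$ with $\overline{B''_{c'}}\subset B$;
\item $b$ with $\overline{B_b}\subset B''_c$;
\item $a$ with $\overline{B''_a}\subset B_b$;
\item $d$ with $\overline{B''_{c'}}\subset B_d$.
\end{enumerate}

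First equivalence: cover $B\setminus\overline{B''_a}$ by $B\setminus\overline{B''_c}$ and $B''_{c'}\setminus\overline{B''_a}$. Their intersection $B''_{c'}\setminus\overline{B''_c}$ includes into $B''_{c'}\setminus\overline{B''_a}$ by a stratified homotopy equivalence. The sheaf condition and homotopy invariance then show that $F(B\setminus\overline{B''_a})\to F(B\setminus\overline{B''_c})$ is an equivalence.

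Second equivalence: by Step~1, $F(B\setminus\overline{B_b})\to F(B\setminus\overline{B_d})$ is an equivalence.

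Two-out-of-six: apply it to the chain of restrictions $F(B\setminus\overline{B''_a})\to F(B\setminus\overline{B_b})\to F(B\setminus\overline{B''_c})\to F(B\setminus\overline{B_d})$. This gives $\Gamma_{\overline{B''_a}}(B;F)\simeq\Gamma_{\overline{B_b}}(B;F)\simeq\Gamma_c(B;F)$.

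Comparison with $B''$: excision gives $\Gamma_{\overline{B''_a}}(B;F)\simeq\Gamma_{\overline{B''_a}}(B'';F)$. Step~1 for $B''$ gives $\Gamma_{\overline{B''_a}}(B'';F)\simeq\Gamma_c(B'';F)$. These identifications are compatible with corestriction, so $\V(F)(B)\to\V(F)(B'')$ is an equivalence.

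The dual argument handles $\V^{-1}$. Smoothness then enters only through the two imported inputs (homotopy invariance and the disk criterion), not through isotopies.
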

Definition \ref{def:verdier_duality_interchange} implies that
for any map
\[\begin{tikzcd}
	X & Y \\
	A & B
	\arrow["f", from=1-1, to=1-2]
	\arrow[from=1-1, to=2-1]
	\arrow[from=1-2, to=2-2]
	\arrow[from=2-1, to=2-2]
\end{tikzcd}\]
of conical manifolds, the exceptional inverse image descends to a functor
$f^!:\Sh_B(Y;\D) \to \Sh_A(X;\D)$ of constructible sheaves.
\section{Homotopies on symmetric products}
\label{sec:technical}
Let $C=C(N)$ be an open cone on a closed manifold. In this subsection, we fix $n \in \NN$ and compile homotopy arguments
on $\SP^n(C)$. As in Remark \ref{rem:basics}, a basis $\B$ of $\SP^n(C)$ is given by sums
\[
        \sum_{i=0}^k \SP^{m_i}(D_i) =\{\sum_{i=1}^n x_i |  D_j \text{ contains
        exactly $m_j$ of the $x_j$ for all $k$} \}
\]
with $D_1, \dots,D_k$ pairwise disjoint disks, $m_i>0$ for all $i>0$, $D_0=C_{\epsilon}(N)$ and $D_1, \dots,D_k$ disjoint
from $D_0$ if $m_0 >0$.
\begin{lem}\label{lem:filtration_basics_homotopy}
        Let $B= \sum_{i=0}^k \SP^{m_i}(D_i) \in \B$ and $B' = \sum_{j=0}^r \SP^{l_j}(D_j') \in \B$ be basic opens in $\SP^n(C)$
        with $B' \subset B$. This inclusion is an $\NN$-stratified homotopy equivalence if and only if
        $B$ and $B'$ have the same minimal stratum, i.e. if $m_0 = l_0$.
\end{lem}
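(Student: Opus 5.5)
The forward direction is formal, and I would dispose of it first. An $\NN$-stratified homotopy equivalence induces a bijection on the sets of occupied strata in the $\NN$-stratification. The minimal $\NN$-stratum meeting $B=\sum_{i=0}^k \SP^{m_i}(D_i)$ is $n-m_0$. Indeed, if $m_0>0$, then $B$ contains points with exactly $m_0$ summands at the tip $*$ (all $m_0$ points of $D_0=C_\epsilon(N)$ placed at $*$), and no point of $B$ can have more. If $m_0=0$, then $B\subset \SP^n(C^*)$ and the minimal stratum is $n$. Since the inclusion $B'\subset B$ preserves strata and is a stratified homotopy equivalence, the minimal occupied strata must agree, so $l_0=m_0$.

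For the converse, assume $l_0=m_0$ and construct an explicit stratified deformation. The idea is that both $B$ and $B'$ deformation retract, through $\NN$-stratum-preserving homotopies, onto ``model'' neighbourhoods of a common point. The first step is to reduce to the case of a single disk type. Write $B'\subset B$ and note that each $D'_j$ with $j\ge1$ is contained in a unique $D_i$, by connectedness and because of the multiplicity count in the definition of the basics. Also $D'_0\subset D_0$ when $l_0>0$. Since $B$ is a product-like open $\prod_i \SP^{m_i}(D_i)$, with the stratification depending only on the $D_0$-factor (the number of points at $*$), it suffices to treat each factor separately. The factors are:
\begin{itemize}
\item[(a)] for $i\ge1$, the inclusion $\sum_{D'_j\subset D_i}\SP^{l_j}(D'_j)\subset \SP^{m_i}(D_i)$, where $D_i\subset C^*$ lies entirely in one stratum, so only an ordinary homotopy equivalence is needed;
\item[(b)] the cone factor $\SP^{m_0}(D'_0)\subset \SP^{m_0}(D_0)$, together with the disks $D'_j\subset D_0\setminus\{*\}$ if any.
\end{itemize}
Some care is needed, since $D_0$ may contain some $D'_j$ with $j\ge 1$. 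Counting summands then shows this is impossible when $l_0=m_0$: all $m_0$ points in $D_0$ must lie in $D'_0$.

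Case (a) is contractibility. The open $\SP^{m}(D)$ for a disk $D\cong\RR^d$ is contractible, by straight-line contraction to $m\cdot 0$. A sum $\sum_j \SP^{l_j}(D'_j)$ of disjoint sub-disks is a product of such spaces and hence also contractible. So the inclusion is a homotopy equivalence of spaces lying in a single stratum.

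Case (b) is $\SP^{m_0}(C_{\epsilon'}(N))\subset \SP^{m_0}(C_\epsilon(N))$. Here I would use the radial rescaling $t\cdot(x,s)=(x,ts)$ on the cone, applied to all summands simultaneously. This is continuous on the symmetric product, fixes $*$, and preserves the number of points at $*$. It therefore gives a stratified homotopy inverse $\SP^{m_0}(C_\epsilon)\to\SP^{m_0}(C_{\epsilon'})$, together with stratum-preserving homotopies to the identity via $t\in[\epsilon'/\epsilon,1]$.

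Finally, products of stratified homotopy equivalences (with the stratification additive in the $D_0$-factor and trivial on the others) are stratified homotopy equivalences, which gives the claim. I expect the main obstacle to be the bookkeeping in the reduction step: justifying that $B'\subset B$ forces the disks of $B'$ to distribute into those of $B$ compatibly with multiplicities, and handling the degenerate case $m_0=0$ with $D_0$ absent.
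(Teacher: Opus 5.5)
Your proof is correct and is essentially the paper's argument. Both use the product decomposition $B \cong \prod_i \SP^{m_i}(D_i)$, the contractibility of $\SP^{m}(D)$ for disks $D \subset C^*$, and the fact that shrinking the cone $D_0' \subset D_0$ is a $[1]$-stratified homotopy equivalence. The only differences are the following. The paper projects $B' \subset B$ onto the cone factors $\SP^{m_0}(D_0') \subset \SP^{m_0}(D_0)$ and concludes by 2-out-of-3, whereas you write the inclusion as a product of factor inclusions. Your explicit bookkeeping ($D_0' \subset D_0$, and no $D_j'$ with $j \ge 1$ inside $D_0$) is exactly what the paper's commutative square tacitly requires.
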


\begin{proof}
        The forward implication is obvious. To prove the other direction, we note that there is a stratified homeomorphism
        $B \cong \prod_i \SP^{m_i}(D_i) \subset \prod_i \SP^{m_i}(C)$. The projection $\prod_i \SP^{m_i}(D_i) \to \SP^{m_0}(D_0)$
        is an $\NN$-stratified homotopy equivalence, because $\SP^m(D)$ is contractible for a disk $D \in \DD_1(C^*)$
        by functoriality. If $m_0 = l_0$, it follows that there is a commutative square
\[\begin{tikzcd}
	{B'} & B \\
	{\SP^{m_0}(D_0')} & {\SP^{m_0}(D_0)}
	\arrow[from=1-1, to=1-2]
	\arrow[from=1-1, to=2-1]
	\arrow[from=1-2, to=2-2]
	\arrow[from=2-1, to=2-2]
\end{tikzcd}\]
where the vertical arrows are $\NN$-stratified homotopy equivalences. The bottom arrow is an $\NN$-stratified homotopy equivalence
as well because $D_0' \subset D_0$ is a $[1]$-stratified homotopy equivalence.
\end{proof}
We now turn to the related families of opens in $\SP^n(C)$.
\begin{lem}
        \label{lem:shrink_disks}
        For $\bigcup D_i \in \DD_{\le n}(C^*)$ choose charts $\phi_i$ for $D_i$.
        Let $D_i' = \phi^{-1}(B_K(0)) \subset C$ denote the
        open disk of radius $K$ in these charts. The following inclusions are stratified homotopy equivalences
        with respect to the discriminant stratification (for all multiplicities $m_i$):
\[
        \begin{split}
                i_{\cap}:\sum_{i=1}^k \SP^{m_i}(D_i') +\SP^{m_0}(C) &\subset \sum_{i=1}^k \SP^{m_i}(D_i) + \SP^{m_0}(C) \\
                i_{\cup}:\bigcup D_i' + \SP^{n-1}(C) &\subset \bigcup D_i + \SP^{n-1}(C)
        \end{split}
\]
\end{lem}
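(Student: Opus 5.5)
The plan is to write down explicit stratum-preserving homotopy inverses built from radial shrinkings inside the charts, and to check that they respect the discriminant stratification. For each $i$ we fix, in the chart $\phi_i\colon D_i \cong \RR^d$, an isotopy $h^i_s\colon D_i \to D_i$, $s \in [0,1]$. It starts at $h^i_0 = \id$ and ends at an embedding $h^i_1$ with image in $D_i'$. Each $h^i_s$ restricts to the identity on a smaller disk $\phi_i^{-1}(B_{K/2}(0))$. Concretely, $h^i_s = \phi_i^{-1}\circ g_s\circ\phi_i$, where $g_s$ is a radial reparametrization of $\RR^d$ shrinking it into $B_K(0)$. We extend $h_s = \coprod_i h^i_s$ by the identity on $C \setminus \bigcup_i D_i$. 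This does not give a continuous map of $C$. Instead, we use it only on points lying in the $D_i$, which is legitimate because membership of a summand in a given $D_i$ is locally constant on the relevant opens.

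For $i_{\cap}$: a point of $\sum_{i} \SP^{m_i}(D_i) + \SP^{m_0}(C)$ is a formal sum in which exactly (or at least, depending on the convention for the open) $m_i$ summands lie in $D_i$. We define $H_s$ by applying $h^i_s$ to those summands in $D_i$ and leaving the remaining summands fixed. Points of other $D_j$ that lie among the $\SP^{m_0}(C)$ summands are also moved by $h^j_s$. This is still well defined and continuous on the open, since any summand in $D_j$ can be moved by $h^j_s$ regardless of which block it is assigned to. The map is therefore the symmetric product of a self-map of $C$ restricted to the open set $\bigcup D_i \cup (C\setminus \overline{\bigcup D_i})$, glued continuously. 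Two facts matter here. Since the $h^i_s$ are injective and the $D_i$ are pairwise disjoint, $H_s$ preserves coincidences of summands, so it preserves the discriminant type in $\PP$. Since $h^i_s$ never moves points to the tip $*$, it also preserves the multiplicity at $*$. Hence $H_s$ is a stratified map. At $s=1$ it lands in $\sum_i \SP^{m_i}(D_i') + \SP^{m_0}(C)$, and it preserves this smaller open for all $s$ because $h^i_s(D_i')\subset D_i'$ for a radial shrinking. So $H_1$ is a homotopy inverse to $i_{\cap}$, with $H_s$ and its restriction providing both homotopies.

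The same homotopy treats $i_{\cup}$. The condition ``at least one summand in $\bigcup D_i$'' is preserved by $H_s$, and $H_1$ sends it into ``at least one summand in $\bigcup D_i'$'', by the same injectivity and stratum-preservation argument.

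The main obstacle is continuity of $H_s$ across the boundary $\partial D_i$. A summand of the $\SP^{m_0}(C)$ part, or in the $\cup$ case an arbitrary summand, may approach $\partial D_i$ from inside while $h^i_s$ is not the identity near $\partial D_i$. To handle this, I would replace the global shrinking by a homotopy compactly supported in each $D_i$. Choose a slightly larger radius $K' > K$ and an isotopy that is the identity outside $\phi_i^{-1}(B_{K''}(0))$ for some $K''$ large. This isotopy pushes $\phi_i^{-1}(B_{K''}(0))$ into $D_i'$ only up to a collar and is therefore continuous on all of $C$. Then compare the two opens through the intermediate open defined with radius $K''$, using that by stratified homotopy invariance (Proposition \ref{prop:homotopy_equivalence_constructible_sheaves}-style arguments) an increasing union of such equivalences is again one. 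Equivalently, exhaust $D_i$ by the disks of radius $R\to\infty$, apply compactly supported isotopies of $C$ (which now genuinely induce stratified self-homotopies of $\SP^n(C)$ by functoriality), and pass to the colimit along a Milnor/van Kampen argument. This technical gluing is where I expect most of the care to go.
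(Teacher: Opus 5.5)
There is a genuine gap, and it sits exactly where you locate the main obstacle. The justification in your first paragraph is false. On $\sum_i \SP^{m_i}(D_i)+\SP^{m_0}(C)$ the surplus summands, and on $\bigcup_i D_i+\SP^{n-1}(C)$ all summands but one, can sit on or cross $\partial D_i$. So membership of a summand in $D_i$ is not locally constant, and the homotopy $H_s$ built from a shrinking of all of $D_i$ together with the identity outside is discontinuous there.

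Your proposed repair does not close this. A homotopy induced by a single compactly supported isotopy of $C$ (identity outside $\phi_i^{-1}(B_R(0))$) can never carry the large open into the small one: the summands of a point lying in $D_i$ may all have chart norm larger than $\max(R,K)$, and they stay fixed. You are therefore forced into an exhaustion $U_K\subset U_R\subset U_{R'}\subset\cdots$ and a passage to the union. That passage (``an increasing union of such equivalences is again one'') is only asserted, and the tools you cite do not provide it. Proposition \ref{prop:homotopy_equivalence_constructible_sheaves} takes stratified homotopy equivalences as input rather than producing them, and Lurie's van Kampen theorem only identifies weak homotopy types. Exhaustion together with $F(\bigcup_R U_R)\simeq\lim_R F(U_R)$ would give only the weaker statement that restriction of constructible sheaves along $i_\cap$ and $i_\cup$ is an equivalence. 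The stated stratified homotopy equivalence would still require a stratified telescope (numerable cover) argument, which is absent. Explicit homotopies are also reused later, e.g.\ in Lemma \ref{lem:vanishing_homotopy}, where they must be seen to descend to a subspace.

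The missing idea is to let the support of the shrinking depend continuously on the point, which removes the limit altogether. The paper constructs homotopies $H_{R_1,\dots,R_k}\colon C\times[0,1]\to C$, continuous in the radii, with these properties:
they are the identity outside $\bigcup_i D_i$ and where $\|\phi_i(x)\|\ge 2R_i$;
on $\|\phi_i(x)\|\le R_i$ they are the radial contraction $\phi_i(x)\mapsto\bigl((1-t)+t\tfrac{K}{1+\|\phi_i(x)\|}\bigr)\phi_i(x)$ into $B_K(0)$.
For $s$ in the large open of $i_\cap$ it sets $R_i(s)=\|\phi_i(y_{m_i})\|+1$, where $y_1,y_2,\dots$ are the summands of $s$ in $D_i$ ordered by increasing chart norm. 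This is continuous in $s$, and $(s,t)\mapsto\sum_j H_{R(s)}(x_j,t)$ is then continuous on the whole open: near any $s$ it fixes all summands of large chart norm, in particular those near $\partial D_i$. At $t=1$ it moves the $m_i$ innermost summands into $D_i'$, and it preserves both opens throughout. For $i_\cup$ all radii are set to $1+\min_{i,j}\|\phi_i(x_j)\|$, the minimum over summands $x_j\in D_i$.

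Your stratum-preservation reasoning (injectivity of each time slice, disjointness of the $D_i$, nothing moved to $*$) then applies to this homotopy. This requires the interpolation on the annulus $R_i\le\|\phi_i(x)\|\le 2R_i$ to be chosen radially monotone, e.g.\ after replacing $R_i(s)$ by $\max(R_i(s),K)$.
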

\begin{proof}
        We begin by constructing an auxilliary family of homotopies: For each collection of real numbers $R_1, \dots,R_k >0$
        there is a homotopy
        \[
                H_{R_1,\dots,R_k}:C \times [0,1] \to C
        \]
        such that
        \[
                H_{R_1,\dots,R_k}(x,t)=\begin{cases}
                        x \text{ if } x \notin \bigcup_i D_i \text{ or } x \in D_i \text{ and } ||\phi_i(x)|| \ge 2R_i \\
                        \phi_i^{-1}((1-t)\phi_i(x)+t\frac{K}{1+||\phi_i(x)||}\phi_i(x)) \text{ if } ||\phi_i(x)|| \le R_i
                \end{cases}
        \]
        Indeed, we can extend the above on each chart by linearly interpolating between the two cases in the remaining
        region. For such an extension the values of $H_{R_1,\dots,R_k}$ are continuous in the $R_1,\dots,R_k$.

        We now construct the witnessing homotopy for $i_{\cap}$:
        For $s=\sum_{j} x_j \in \sum_{i=1}^k \SP^{m_i}(D_i) + \SP^{m_0}(C) $ let $y_1, \dots,  y_l \in D_i$ denote the $l \geq m_i$  (not necessarily distinct) summands of $s$ which lie in $D_i$.
        Suppose further that they are ordered by ascending $||\phi_i(y_j)||$.
        We denote $R_i(s) = ||\phi_i(y_{m_i})||+1$. Note that these are continuous functions in $s$. The homotopy
        \[
                H_{\cap}:  (\sum_{i=1}^k \SP^{m_i}(D_i) + \SP^{m_0}(C)) \times [0,1] \to \sum_{i=1}^k \SP^{m_i}(D_i) + \SP^{m_0}(C)
        \]
        given by the continuous assignment
        \[
                (s= \sum_i x_i,t) \mapsto \sum_i H_{R_1(s),\dots,R_k(s)}(x_i,t)
        \]
        witnesses that $H_{\cap}(-,1)$ is a homotopy inverse to the $i_{\cap}$.

        Similarly, for $s\in \bigcup D_i + \SP^{n-1}(C)$ let $R(s) =1 + \min_{i,j} ||\phi_i(x_j)||$ (The minimum is taken over
        all pairs where $x_j \in D_i$). The homotopy

        \[
                H_{\cup}: (\bigcup D_i + \SP^{n-1}(C)) \times [0,1] \to \bigcup D_i + \SP^{n-1}(C)
        \]
        with
        \[
                (s=\sum_i x_i,t) \mapsto \sum_i H_{R(s),\dots,R(s)}(x_i,t)
        \]
        exhibits $H_{\cup}(-,1)$ as a homotopy inverse to $i_{\cup}$.
\end{proof}
        We note the following version of an argument appearing in the proof of Lemma \ref{lem:shrink_disks}:
\begin{lem}
        \label{lem:symmetric_homotopy}
        Let $D_i$ be open disks in $C^*$ and $D_i'=\phi_i^{-1}(B_K(0))$ as in Lemma \ref{lem:shrink_disks}.
        For homotopies $H_i:D_i' \times [0,1] \to D_i'$ there is a homotopy $H_i:C \times [0,1] \to C$ such that
        $H$ is constant on $C \setminus \bigcup_i D_i$ and $H|D_i' = H_i$.

        If the $H_i$ are continuously parameterized by a space $X$, then $H$ may also be chosen as a continuous function of $X$.
\end{lem}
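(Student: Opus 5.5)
The plan is to build $H$ chart by chart. The disks $D_i$ are pairwise disjoint, so it suffices to produce, for each $i$ separately, a homotopy $\widetilde H_i: C\times[0,1]\to C$ with three properties: it is stationary outside $D_i$, it maps $D_i$ into $D_i$ at all times, and it agrees with $H_i$ on $D_i'$. We then set $H(x,t)=\widetilde H_i(x,t)$ for $x\in D_i$ and $H(x,t)=x$ otherwise. Continuity of $H$ follows because each $\widetilde H_i$ is the identity on a neighbourhood of $\partial D_i$. So we transport everything through $\phi_i$ and work in $\RR^d$, where $D_i$ corresponds to all of $\RR^d$ and $D_i'$ to the ball $B_K(0)$.

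Working in $\RR^d$, we use the same radial interpolation device as in the proof of Lemma \ref{lem:shrink_disks}.
\begin{enumerate}
\item Fix a homeomorphism $\psi:B_K(0)\cong\RR^d$ that is radial and equal to the identity on $B_{K/2}(0)$. We may assume, after a preliminary reparametrization, that $H_i$ is the given homotopy on the slightly larger region we actually need to control. The subtle point is that the lemma only asks for agreement on $D_i'$.
\item Define $\widetilde H_i(y,t)$ by three cases. For $\|y\|<K$ set $\widetilde H_i(y,t)=H_i(y,\lambda(y)t)$. For $K\le\|y\|\le 2K$, interpolate linearly in the chart between $H_i(Ky/\|y\|,\lambda(y)t)$ and $y$. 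For $\|y\|\ge 2K$ set $\widetilde H_i(y,t)=y$. Here $\lambda$ is a cutoff with $\lambda\equiv1$ on $B_K(0)$ and $\lambda\equiv 0$ outside $B_{2K}(0)$.
\end{enumerate}
Boundary points of $B_K$ are not in the domain of $H_i$, so for step (2) to make sense we first need $H_i$ to extend continuously to the closed ball. We obtain this by replacing $K$ with a slightly larger radius $K'$ and assuming $H_i$ is defined on $B_{K'}$. Alternatively, we precompose with a radial shrinking that is the identity on $B_K$. If neither option is available, we reparametrize the model so that $D_i'$ sits inside a larger ball on which $H_i$ lives.

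The case $\|y\|<K$ gives $\widetilde H_i|_{D_i'}=H_i$, and the last case makes the homotopy stationary near the boundary of the chart. The main obstacle is the collar region $K\le\|y\|\le2K$. There the linear interpolation between $H_i(\cdot,t)$ restricted to the sphere and the identity must be well defined and continuous. This holds because the chart is all of $\RR^d$, so convex combinations stay inside $D_i$ and no further care is needed. For the parametrized version, we note that every formula above depends continuously on $H_i$ in the compact-open topology. So if $H_i$ is a continuous family over $X$, then $H$ is continuous on $X\times C\times[0,1]$.
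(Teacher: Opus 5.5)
Your construction is the same as the paper's. You make $H$ stationary where $\|\phi_i(y)\|\ge 2K$ and fill in the collar $K\le\|\phi_i(y)\|\le 2K$ in each chart separately. Your radial linear interpolation is an explicit solution of the extension problem the paper leaves to ``standard extension arguments''. Your pasting argument is also correct: each $\widetilde H_i$ is the identity off the compact set $\phi_i^{-1}(\overline{B_{2K}(0)})$.

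The gap is in how you treat $\partial D_i'$. None of your three remedies is legitimate:
\begin{itemize}
\item $H_i$ is only given on $D_i'=\phi_i^{-1}(B_K(0))$, so ``assume $H_i$ is defined on $B_{K'}(0)$'' changes the hypotheses.
\item ``Let $D_i'$ sit in a larger ball on which $H_i$ lives'' changes the hypotheses in the same way.
\item A continuous map on $\overline{B_K(0)}$ that is the identity on $B_K(0)$ and lands in $B_K(0)$ cannot exist, since by continuity it fixes the boundary sphere. So the radial-shrinking option is impossible.
\end{itemize}
The homeomorphism $\psi$ of step (1) is never used. Any construction through it would give agreement with $H_i$ only on $B_{K/2}(0)$, not on $D_i'$.

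In fact no argument can close this gap from the stated hypotheses. If $H$ is continuous on $C\times[0,1]$ with $H|D_i'=H_i$, then $H$ restricted to $\overline{D_i'}\times[0,1]$ is a continuous extension of $H_i$, so such an extension is necessary. For a counterexample, write things in the chart $\phi_i$ and let $f(y)=\tfrac K2\sin\bigl(1/(K-\|y\|)\bigr)e$ for a unit vector $e$. Then $H_i(y,t)=(1-t)y+tf(y)$ starts at the identity and stays in $B_K(0)$. But for each $t>0$ it oscillates as $\|y\|\to K$, so no such $H$ exists.

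The lemma therefore has to be read with a tacit hypothesis: each $H_i$ extends continuously to $\overline{D_i'}\times[0,1]$, and in the parametrized case the family extends continuously to $X\times\overline{D_i'}\times[0,1]$. The paper's proof uses this implicitly as well. It holds in the application in Lemma \ref{lem:extend_zero_disk}, where $H_i$ is the straight-line contraction to $\phi_i^{-1}(0)$, i.e.\ the restriction of $(y,t)\mapsto(1-t)y$ on the whole chart.

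State this hypothesis and drop step (1) and the three remedies. Your step (2) is then a complete proof, including the parametrized version, because the interpolation formula is jointly continuous in the parameter. The $\lambda$-rescaling of time is harmless but unnecessary.
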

\begin{proof}
        We specify $H(y,t)=y$ if $||\phi_i(y)|| \ge 2K$. Continuity is then apparent if we solve the ensuing extension problem
        in each chart individually. Here, existence follows from standard extension arguments for manifolds. Closer inspection
        shows the additional statement.
\end{proof}
\begin{lem}\label{lem:extend_zero_disk}
        For $\bigcup_{i=1}^{r}D_i \in \DD_{\le n} (C^{*})$ let $D_i' \subset D_i$ be open balls in a chart as above. There is
        $* \in D_0'=C_{\epsilon}(N)$ with $D_0' \cap D_i' = \emptyset$ such that the inclusion
        \[
                \SP^{m_0}(D_0') + \sum_{i=1}^r \SP^{m_i}(D_i') \subset \SP^{m_0}(C) + \sum_{i=1}^r \SP^{m_i}(D_i')
        \]
        in $\SP^n(C)$ is an $\NN$-stratified homotopy equivalence.
\end{lem}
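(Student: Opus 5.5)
We construct an explicit $\NN$-stratified deformation retraction of the larger open onto the smaller one. Set $W = \SP^{m_0}(C) + \sum_{i=1}^r \SP^{m_i}(D_i')$ and $W' = \SP^{m_0}(D_0') + \sum_{i=1}^r \SP^{m_i}(D_i')$. First choose $\epsilon>0$ so small that the closed cone $\overline{C_{2\epsilon}(N)}$ is disjoint from the closures $\overline{D_i'}$. This is possible because each $\overline{D_i'} = \phi_i^{-1}(\overline{B_K(0)})$ is compact and contained in $C^*$. Put $D_0' = C_\epsilon(N)$.

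The key tool is the radial contraction of the cone. Let $c_\lambda: C \to C$, $[x,s] \mapsto [x, \lambda s]$, for $\lambda \in (0,1]$. It fixes the tip and preserves $C^*$, so it is compatible with the $\NN$-stratification: the number of summands equal to $*$ is unchanged. Applied naively to all summands it would move the points in the $D_i'$. We therefore localize it using Lemma \ref{lem:symmetric_homotopy}, or directly with a cutoff. Choose a homotopy $H: C\times[0,1]\to C$ with the following properties:
\begin{itemize}
\item $H$ is the identity on a neighbourhood of $\bigcup_i \overline{D_i'}$;
\item $H_t$ preserves the tip and $C^*$ for all $t$;
\item $H_1$ maps $\overline{C_{R}(N)}$ into $C_\epsilon(N)$ for a radius $R$.
\end{itemize}
Since $R$ depends on the point, we take it parametrized. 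For $s\in W$, let $R(s)$ be $1$ plus the maximal radial coordinate among those $m_0$ summands of $s$ lying outside $\bigcup D_i'$, chosen by the same ordering trick as in Lemma \ref{lem:shrink_disks}. This quantity is continuous in $s$. Define $\mathcal{H}(s,t) = \sum_j H_{R(s)}(x_j,t)$ for $s=\sum_j x_j$, where $H_R$ is a cone contraction that squeezes $\overline{C_R(N)}\setminus \bigcup D_i'$ into $C_\epsilon(N)$ while fixing a neighbourhood of the $D_i'$.

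We then verify four properties of $\mathcal{H}$.
\begin{enumerate}
\item It stays in $W$: summands in $D_i'$ are fixed, so the counts $m_i$ are preserved.
\item It preserves $W'$.
\item $\mathcal{H}_1(W)\subset W'$.
\item It is $\NN$-stratified, since summands equal to $*$ stay at $*$ and no summand of $C^*$ is sent to $*$.
\end{enumerate}
It follows that $\mathcal{H}_1$ is a stratified homotopy inverse to the inclusion $W'\subset W$.

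The main obstacle is the tension between the two halves of the cone argument. The contraction must send points near the $D_i'$ to $C_\epsilon(N)$ radially, yet it must fix the $D_i'$ themselves. A radial path from a point far out in $C^*$ to $C_\epsilon(N)$ may pass through some $D_i'$. To handle this, I would first use a homotopy of $C^*\setminus\bigcup D_i'$ that is supported away from the $D_i'$. Such a homotopy exists by Lemma \ref{lem:symmetric_homotopy}, since the complement of finitely many disjoint closed balls in the cone deformation retracts onto a neighbourhood of the tip union those balls. Alternatively, one can compose with the shrinking homotopy $H_\cup$/$H_\cap$ from Lemma \ref{lem:shrink_disks} to make the $D_i'$ arbitrarily small, after which radial paths can be rerouted around them.

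A secondary point is to keep the $m_i$ count exact. Summands outside $D_i'$ must never enter $D_i'$, which holds because $H$ is the identity near $\overline{D_i'}$ and is a homotopy through maps preserving the complement. This gives the required $\NN$-stratified homotopy equivalence.
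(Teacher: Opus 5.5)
There is a genuine gap: a homotopy that keeps the summands lying in the $D_i'$ fixed cannot deform $W$ into $W'$, so your property 3 fails. The $m_0$ free summands coming from $\SP^{m_0}(C)$ may themselves lie in some $D_i'$. Take $r=1$, $m_0=m_1=1$, and the point $s=x+y$ with $x,y\in D_1'$. Then $s$ lies in $W$ and $\mathcal{H}$ fixes it. But $s\notin W'$, because every point of $W'$ has a summand in $D_0'$, and $D_0'$ is disjoint from $D_1'$. Since summands are unordered, there is no continuous way to decide which $m_i$ of the summands in $D_i'$ to keep and which to expel.

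Your secondary requirement, that summands outside the $D_i'$ never enter them, is also fatal when $\dim C=1$, e.g.\ for $C=\Rg$. A summand lying to the right of $D_i'$ can only reach $C_\epsilon(N)=[0,\epsilon)$ by crossing $D_i'$, so no rerouting exists. Finally, $H_R$ is inconsistent as specified. Boundary points of $D_i'$ lie in $\overline{C_R(N)}\setminus\bigcup_i D_i'$, so they must be sent into $C_\epsilon(N)$, yet $H$ is the identity near $\overline{D_i'}$.

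The missing idea is to collapse each $D_i'$ to its centre $c_i=\phi_i^{-1}(0)$ rather than fixing it pointwise.
\begin{itemize}
\item Apply Lemma \ref{lem:symmetric_homotopy} to contractions $H_i$ of $D_i'$ onto $c_i$. This gives a homotopy $H$ of $C$ that preserves each $D_i'$, fixes $*$, maps $C^*$ into itself, and is supported in a compact subset of $C^*$.
\item Its symmetric power deforms $W$ into the image of the closed embedding $j\colon\SP^{m_0}(C)\to W$, $s\mapsto s+\sum_i m_i c_i$. Surplus summands in $D_i'$ simply land at $c_i$ inside the $\SP^{m_0}(C)$-part. Hence $j$ is an $\NN$-stratified homotopy equivalence.
\item Choose $D_0'=C_\epsilon(N)$ disjoint from the support of $H$. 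The same homotopy then shows that $j\colon\SP^{m_0}(D_0')\to W'$ is an $\NN$-stratified homotopy equivalence as well.
\item The lemma follows by two-out-of-three from the inclusion $\SP^{m_0}(D_0')\subset\SP^{m_0}(C)$.
\end{itemize}
That last inclusion is where your radial contraction $c_\lambda$ belongs. There it acts on $\SP^{m_0}(C)$ alone, so there are no disks to avoid.
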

\begin{proof}
        There is a stratified inclusion $j:\SP^{m_0}(C) \to \sum_{i=1}^r \SP^{m_i}(D_i')+ \SP^{m_0}(C)$ given by
        $s \mapsto s + \sum_{i \geq 1} m_i \cdot \phi_i^{-1}(0)$.
        Let $H_i$ denote a homotopy contracting $D_i'$ to the center point of the chart.
        Lemma \ref{lem:symmetric_homotopy} yields a homotopy $H$ on $C$ and the symmetric product of this homotopy witnesses
        that $j$ is a stratified homotopy equivalence.
        The set where $H$ is non-constant is contained in a closed set away from $*$ by construction, so there is
        an $\epsilon$-cone $* \in D_0' \subset C$ which is held constant by $H$.
        $H$ then also witnesses that the restriction $j|\SP^{m_0}(D_0')$ is a stratified homotopy equivalence.
        The inclusion $D_0' \subset C$ is also a stratified homotopy equivalence, so that $\SP^{m_0}(D_0') \subset \SP^{m_0}(C)$
        is a stratified homotopy equivalence as well. In conclusion, we have a commutative square
\[\begin{tikzcd}
        \SP^{m_0}(D_0') + \sum_{i=1}^r \SP^{m_i}(D_i') & \SP^{m_0}(C) + \sum_{i=1}^r\SP^{m_i}(D_i') \\
	{\SP^{m_0}(D_0')} & {\SP^{m_0}(C)}
	\arrow[from=1-1, to=1-2]
	\arrow[from=2-1, to=1-1]
	\arrow[from=2-1, to=2-2]
	\arrow[from=2-2, to=1-2]
\end{tikzcd}\]
where the bottom and vertical arrows are stratified homotopy equivalences, hence also the top arrow.
\end{proof}
\begin{cor}\label{cor:extended_disk_type}
        For $\bigcup_{i=1}^s V_i \in \DD_{\le n}(C^*)$ and $l_1,\dots,l_s >0$ we denote $l_0=n-\sum_{i=1}^s l_i$.
        Any inclusion $\sum_{i=0}^r \SP^{m_i}(D_i) \subset \sum_{i=1}^s \SP^{l_i}(V_i) + \SP^{m_0}(C)$ of a basic open
        $B=\sum_{i=0}^k \SP^{m_i}(D_i) \in \B$ with $m_0=l_0$ is an $\NN$-stratified homotopy equivalence.
\end{cor}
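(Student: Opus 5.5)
We reduce the statement to Lemma \ref{lem:filtration_basics_homotopy} and Lemma \ref{lem:extend_zero_disk} by factoring the inclusion through an intermediate open. Write $B = \SP^{m_0}(D_0) + \sum_{i=1}^k \SP^{m_i}(D_i)$ and $B'' = \sum_{j=1}^s \SP^{l_j}(V_j) + \SP^{l_0}(C)$, with $m_0 = l_0$. We must show that $B \subset B''$ is an $\NN$-stratified homotopy equivalence.

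First, we choose a basic open inside $B$ and compare it with $B''$. For each $j$, we pick a small chart ball $V_j' \subset V_j$; the disks $D_i$ contained in $V_j$ carry total multiplicity $l_j$, by counting points near a configuration in the minimal stratum of $B$. We then choose $V_j'$ so that it meets the union of these $D_i$. Next, Lemma \ref{lem:shrink_disks}, which is a stratified homotopy equivalence for the discriminant stratification and hence for the coarser $\NN$-stratification, shows that
\[
\sum_j \SP^{l_j}(V_j') + \SP^{l_0}(C) \subset B''
\]
is an $\NN$-stratified homotopy equivalence. Lemma \ref{lem:extend_zero_disk} then yields a small cone $D_0'$ such that $B''' := \SP^{l_0}(D_0') + \sum_j \SP^{l_j}(V_j') \subset \sum_j \SP^{l_j}(V_j') + \SP^{l_0}(C)$ is an $\NN$-stratified homotopy equivalence. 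Composing, $B''' \subset B''$ is one as well, and $B'''$ is a basic open with minimal stratum $l_0$.

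Second, we compare $B$ with $B'''$. Both are basic opens with minimal $\NN$-stratum $m_0 = l_0$, and both are contained in $B''$. They are not necessarily nested, so we pass to a common smaller basic open $B_1 \subset B \cap B'''$ with the same minimal stratum. To build $B_1$, we take a point in the minimal stratum of $B$, namely $m_0 \cdot * + \sum_i m_i \cdot y_i$ with $y_i \in D_i$, and first deform it within the minimal stratum so that it lies in $B'''$. This is possible because each $D_i \subset V_j$ for a unique $j$, and we may take the $y_i$ inside $V_j'$ if $V_j'$ is chosen appropriately. Alternatively, one can shrink each $D_i$ to a smaller disk $D_i' \subset D_i \cap V_j'$; this requires $D_i \cap V_j' \neq \emptyset$, which we arrange when choosing $V_j'$. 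A basic neighbourhood $B_1$ of such a point, chosen small enough, lies in $B \cap B'''$. Lemma \ref{lem:filtration_basics_homotopy} then shows that $B_1 \subset B$ and $B_1 \subset B'''$ are $\NN$-stratified homotopy equivalences.

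Finally, we apply two-out-of-three for stratified homotopy equivalences. The composite $B_1 \subset B''' \subset B''$ is an equivalence, and it equals the composite $B_1 \subset B \subset B''$; since $B_1 \subset B$ is an equivalence, so is $B \subset B''$. The main obstacle is the bookkeeping in choosing $V_j'$ so that the shrunken configuration still lies in both opens. The multiplicity count, which says that the $D_i$ inside $V_j$ have total multiplicity exactly $l_j$ because $m_0 = l_0$ and $B \subset B''$, is the key point that makes $B_1$ sit in $B'''$.
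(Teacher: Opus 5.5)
Your argument is correct and follows the paper's proof. The paper likewise shrinks the $V_j$ via Lemma~\ref{lem:shrink_disks}, replaces $\SP^{l_0}(C)$ by a small cone via Lemma~\ref{lem:extend_zero_disk}, compares basics with equal minimal stratum via Lemma~\ref{lem:filtration_basics_homotopy}, and concludes by two-out-of-three; your $B_1$ plays the role of its $B' \subset B$. Your detour through $B'''$ and a common refinement $B_1 \subset B \cap B'''$ spells out a step the paper compresses, and it is needed, since $B'''$ itself need not lie in $B$ when several $D_i$ sit in the same $V_j$. One wording fix: $V_j'$ must be taken large enough to meet every $D_i \subset V_j$, not small.
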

\begin{proof}
        By Lemma \ref{lem:shrink_disks} and Lemma \ref{lem:extend_zero_disk} we can find a conical basic $B' \subset B$ such that
        \[
                B' \subset \sum_{i=1}^s \SP^{l_i}(V_i) + \SP^{m_0}(C)
        \]
        is a stratified homotopy equivalence. By Lemma \ref{lem:filtration_basics_homotopy} $B' \subset B$ is
        a stratified homotopy equivalence. Thus so is the claimed inclusion.
\end{proof}
\begin{lem}
        \label{lem:intersection_inclusions}
        Given unions $\bigcup_{i=1}^k D_i, \bigcup_{j=1}^r V_j \in \DD_{\le n}(C^*)$
        of disjoint disks and multiplicities $m_1, \dots, m_k \geq 1$, $l_1, \dots,l_r \geq 1$, we define $m_0 =n- \sum_i m_i$
         and $l_0 = n- \sum_j l_j$. There is an inclusion
         \[
                 \sum_{i=1}^k \SP^{m_i}(D_i) +\SP^{m_0}(C)  + \subset \sum_{j=1}^r \SP^{l_j}(V_j)+\SP^{l_0}(C)
         \]
         in $\SP^n(C)$ if and only if for every $1 \le j \le r$:
         \[
                 \sum_{i:D_i \subset V_j} m_i \geq l_j.
         \]
         The inclusion is an $\NN$-stratified homotopy equivalence if and only if
         we have equalities in all the above.
\end{lem}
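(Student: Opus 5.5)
The plan is to prove both claims by testing on specially chosen points of $\SP^n(C)$ and then using the homotopy lemmas of this appendix. Write $B_D=\sum_{i=1}^k \SP^{m_i}(D_i)+\SP^{m_0}(C)$ and $B_V=\sum_{j=1}^r \SP^{l_j}(V_j)+\SP^{l_0}(C)$. By definition, a point lies in $B_V$ if and only if, for every $j$, at least $l_j$ of its summands (with multiplicity) lie in $V_j$.

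\emph{Sufficiency.} Assume $\sum_{i: D_i\subset V_j} m_i \ge l_j$ for all $j$. Take $s\in B_D$. For each $i$ it has at least $m_i$ summands in $D_i$. The disks $D_i$ are pairwise disjoint, so these groups of summands are disjoint. Hence the number of summands of $s$ in $V_j$ is at least $\sum_{i:D_i\subset V_j} m_i\ge l_j$, which gives $s\in B_V$.

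\emph{Necessity.} Suppose $\sum_{i:D_i\subset V_j}m_i<l_j$ for some $j$. I build a point of $B_D\setminus B_V$. For each $i$ with $D_i\not\subset V_j$, connectedness of $D_i$ lets me pick a point $y_i\in D_i\setminus V_j$; indeed $D_i\setminus V_j\neq\emptyset$ by assumption. For each $i$ with $D_i\subset V_j$, pick any $y_i\in D_i$. Place the remaining $m_0$ summands at the tip $*$, which is not in $V_j\subset C^*$. The point $s=\sum_i m_i\cdot y_i+m_0\cdot *$ lies in $B_D$. Its summands in $V_j$ number exactly $\sum_{i:D_i\subset V_j}m_i<l_j$, so $s\notin B_V$. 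This is the same argument already used in the proof of Theorem~\ref{thm:stable_interchange}.

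\emph{Homotopy equivalence.} The minimal $\NN$-stratum of $B_D$ is $m_0$ and that of $B_V$ is $l_0$: the minimal stratum is attained by collapsing all free summands to $*$.
\begin{itemize}
\item If the inclusion is an $\NN$-stratified homotopy equivalence, the minimal strata must agree, so $m_0=l_0$, i.e.\ $\sum_i m_i=\sum_j l_j$. Each $D_i$ is contained in at most one $V_j$, since the $V_j$ are disjoint and $D_i$ is connected. Therefore $\sum_j\sum_{i:D_i\subset V_j}m_i\le\sum_i m_i=\sum_j l_j$. Combined with the inequalities $\ge l_j$, this forces equality for every $j$.
\item Conversely, assume equality holds for every $j$. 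Then $\sum_i m_i$ is at least $\sum_j l_j$, so $m_0\le l_0$. I expect that $m_0=l_0$ is needed in addition, and that each $D_i$ must lie in some $V_j$; for instance, $D_1+\SP^{n-1}(C)\subset \SP^n(C)$ is not an equivalence. So the converse is read with the tacit hypothesis $m_0=l_0$, under which every $D_i$ lies in some $V_j$.
\end{itemize}

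Under that hypothesis, I pick a conical basic $B=\sum_{i\ge1}\SP^{m_i}(D_i')+\SP^{m_0}(D_0')\subset B_D$. Here the $D_i'$ are shrunken balls as in Lemma~\ref{lem:shrink_disks}, and $D_0'$ is a small cone from Lemma~\ref{lem:extend_zero_disk}. By Lemma~\ref{lem:extend_zero_disk} together with Lemma~\ref{lem:shrink_disks}, the inclusion $B\subset B_D$ is an $\NN$-stratified homotopy equivalence. Corollary~\ref{cor:extended_disk_type}, applied to $B\subset B_V$ with $m_0=l_0$, shows that $B\subset B_V$ is one as well. Two-out-of-three then gives the claim for $B_D\subset B_V$.

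The main obstacle is exactly this bookkeeping of the equality case, namely extracting $m_0=l_0$ and ensuring $D_i\subset\bigcup V_j$. Once that is settled, the homotopy part reduces to the lemmas already established.
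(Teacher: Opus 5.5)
Your argument is correct and is essentially the paper's. Both use point-testing for the inclusion criterion, comparison of minimal strata for the forward direction, and reduction to a conical basic via the shrinking lemmas and Corollary~\ref{cor:extended_disk_type} for the converse. Your single basic $B\subset B_D\subset B_V$ with two-out-of-three replaces the paper's square, whose bottom arrow is handled by Lemma~\ref{lem:filtration_basics_homotopy}; the ingredients are the same.

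Your caveat is also right. As literally stated, the converse fails when some $D_i$ lies in no $V_j$: for example, $D_1+\SP^{n-1}(C)\subset\SP^n(C)$ misses the stratum containing $n\cdot *$. The paper's own proof tacitly assumes $m_0=l_0$ at exactly this step, so the correct condition is ``equality for every $j$ and $m_0=l_0$''.
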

\begin{proof}
        The "if" part of the first equivalence is clear. Conversely, if one of the above
        inequalities does not hold, then there is no inclusion because the $V_j$ are disjoint and $* \notin V_j$.

        The minimal stratum of the above opens is determined by $m_0$ and $l_0$ respectively. If the inclusion is a stratified
        homotopy equivalence, we must have $m_0 =l_0$ and therefore equalities.

        Conversely, let $D_i' \subset D_i$ and $V_j' \subset V_j$ be open disks as in Lemma \ref{lem:shrink_disks},
        such that $D_i' \subset V_j'$ if $D_i \subset V_j$.
        We further choose cones $D_0',V_0' \subset C$ containing $*$ as in Lemma \ref{lem:extend_zero_disk}. There is a
        square
\[\begin{tikzcd}
	{\sum_{i=1}^k \SP^{m_i}(D_i) + \SP^{m_0}(C)} & {\sum_{j=1}^r \SP^{l_j}(V_j) + \SP^{l_0}(C)} \\
        {\sum_{i=1}^k \SP^{m_1}(D_i') + \SP^{m_0}(D_0')} & {\sum_{j=1}^r\SP^{l_j}(V_j')+ \SP^{l_0}(V_0')}
	\arrow[from=1-1, to=1-2]
	\arrow[from=2-1, to=1-1]
	\arrow[from=2-1, to=2-2]
	\arrow[from=2-2, to=1-2]
\end{tikzcd}\]
where the vertical inclusions are stratified homotopy equivalences by Corollary \ref{cor:extended_disk_type} and Lemma.
The bottom arrow is an $\NN$-stratified homotopy equivalence by Lemma \ref{lem:filtration_basics_homotopy}.
It follows that the top arrow is also a stratified homotopy equivalence as desired.
\end{proof}
\begin{lem}
        \label{lem:union_inclusion}
        Consider unions $\bigcup_{i=1}^k D_i, \bigcup_{j=1}^r V_j \in \DD_{\le n}(C^*)$.
        There is an inclusion
        \[
                \bigcup_{i=1}^k D_i +\SP^{n-1}(C) \subset \bigcup_{j=1}^r V_j +\SP^{n-1}(C)
        \]
        if and only if, for each $1 \le i \le k$ there is some $1 \le j \le r$ such that
        $D_i \subset V_j$.
        The inclusion is a stratified homotopy equivalence if and only if every $V_j$ contains one of
        the $D_i$, so that the inclusion relation defines a bijection.
\end{lem}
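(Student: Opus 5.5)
The proof splits into two parts: the inclusion criterion, which is a point-set argument, and the homotopy-equivalence criterion, which reduces to results already established earlier (Lemma \ref{lem:shrink_disks}, Lemma \ref{lem:extend_zero_disk}, Lemma \ref{lem:filtration_basics_homotopy}).

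\textbf{Inclusion criterion.} Write $U_D = \bigcup_i D_i + \SP^{n-1}(C)$. This is the set of formal sums with at least one summand in $\bigcup_i D_i$, and similarly for $U_V$.

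For the ``if'' direction, suppose each $D_i$ lies in some $V_j$. A sum with a summand in some $D_i$ then has that summand in the corresponding $V_j$, so $U_D \subset U_V$.

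For the converse, suppose some $D_{i_0}$ is contained in no $V_j$. Since $D_{i_0}$ is connected and the $V_j$ are pairwise disjoint open sets, $D_{i_0}$ is not contained in $\bigcup_j V_j$. Choose $y \in D_{i_0} \setminus \bigcup_j V_j$. The point $y + (n-1)\cdot *$ lies in $U_D$ but not in $U_V$, because $* \notin V_j$ for all $j$. This is the same argument as in the proof of Theorem \ref{thm:stable_interchange}.

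\textbf{Homotopy equivalence, ``if'' direction.} Assume the containment relation $D_i \subset V_j$ is a bijection. After reindexing, $D_i \subset V_i$ and $k = r$. Apply Lemma \ref{lem:shrink_disks} (the map $i_\cup$) to both unions, choosing charts compatibly so that the shrunken balls satisfy $D_i' \subset V_i'$. This reduces the claim to showing that $\bigcup D_i' + \SP^{n-1}(C) \subset \bigcup V_i' + \SP^{n-1}(C)$ is an $\NN$-stratified homotopy equivalence. Two routes are available:
\begin{itemize}
\item \emph{Direct homotopy.} Since $D_i'$ and $V_i'$ are concentric balls, a radial isotopy of $C$ supported in $\bigcup V_i$ carries $V_i'$ into $D_i'$ and fixes $*$. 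Its symmetric power, produced via Lemma \ref{lem:symmetric_homotopy}, preserves the filtration stratification and serves as a homotopy inverse.
\item \emph{\v{C}ech comparison.} Cover both opens by the opens $D^{\Sigma I} + \SP^{n-|I|}(C)$ and compare termwise using Lemma \ref{lem:intersection_inclusions}. Here each term is an equivalence, because the multiplicity conditions hold with equality.
\end{itemize}
I would take the direct homotopy, which is cleaner.

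\textbf{Homotopy equivalence, ``only if'' direction.} Assume the inclusion is a stratified homotopy equivalence. It then induces a bijection on components (or on a suitable homotopy invariant) of the $1$-strata $U \cap \SP^1(C^*)$, since the $1$-stratum is a stratum of the filtration stratification. These $1$-strata are $\bigcup_i D_i$ and $\bigcup_j V_j$. A bijection on $\pi_0$ forces every $V_j$ to contain exactly one $D_i$. Containment of two $D_i$ in the same $V_j$ can also be excluded: the $1$-stratum of $U_D$ would then have more components than that of $U_V$.

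\textbf{Main obstacle.} The delicate step is checking that restriction to a stratum of a stratified homotopy equivalence is a homotopy equivalence of strata. This holds by the definition of stratified homotopy, since the homotopies preserve strata. Also delicate is the stratum-preserving nature of the symmetrized radial isotopy near points with several summands in the same disk.
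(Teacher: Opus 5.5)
Your proposal is correct and follows the paper's proof essentially step for step. The paper uses the same ingredients: connectedness of the disks for the inclusion criterion, and the homotopy equivalence induced on the $1$-stratum (hence a bijection on $\pi_0$) for necessity. For sufficiency it shrinks both sides via Lemma \ref{lem:shrink_disks} with $D_i' \subset V_i'$, then symmetrizes homotopies for $D_i' \subset V_i'$ glued via Lemma \ref{lem:symmetric_homotopy}, which is your chosen "direct homotopy" route.
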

\begin{proof}
        The statement about inclusions is an immediate consequence of the connectedness of disks.

        If the inclusion is a stratified homotopy equivalence, then its restriction to the $1$-stratum
        is a homotopy equivalence, so inclusion induces an isomorphism $\pi_0(\bigcup_{i=1}^k D_i) \cong \pi_0(\bigcup_{j=1}^r V_j)$.

        Conversely, choose compact disks $D_i' \subset D_i$ and $V_j' \subset V_j$ as in Lemma \ref{lem:shrink_disks} and
        such that (after possibly reordering) $D_i' \subset V_i'$.
        There is a diagram of inclusions
        \[\begin{tikzcd}
	{\bigcup_iD_i + \SP^{n-1}(C)} & {\bigcup_iV_i + \SP^{n-1}(C)} \\
	{\bigcup_iD_i' + \SP^{n-1}(C)} & {\bigcup_iV_i' + \SP^{n-1}(C)}
	\arrow[from=1-1, to=1-2]
	\arrow[from=2-1, to=1-1]
	\arrow[from=2-1, to=2-2]
	\arrow[from=2-2, to=1-2]
\end{tikzcd}\]
        The vertical arrows are stratified homotopy equivalences by the second part of Lemma \ref{lem:shrink_disks}.
        The bottom inclusion is also a stratified homotopy equivalence. Witnessing homotopies are produced
        by glueing homotopies for the inclusions $D_i' \subset V_i'$ via Lemma \ref{lem:symmetric_homotopy} and taking the
        symmetrization of the resulting homotopies.
\end{proof}

\newpage
\bibliographystyle{alpha}
\bibliography{refs}

\end{document}

%% file: preamble.tex
\usepackage{fullpage}
\usepackage[dvipsnames]{xcolor}  
\usepackage{tipa}
\usepackage{dsfont}
\usepackage{tikz-cd}
\usepackage{tikz}
\usetikzlibrary{decorations.markings, patterns, decorations.pathmorphing}

\tikzset{->-/.style={decoration={
markings,
mark=at position #1 with {\arrow{>}}},postaction={decorate}}}

\usepackage{graphicx}
\usepackage{mathtools}
\usepackage{amsxtra}
\usepackage{amsmath}
\usepackage{amssymb}
\usepackage{amsfonts}
\usepackage{mathrsfs}
\usepackage{amsthm}
\usepackage[all]{xy}
\usepackage{enumitem}
\usepackage{hyperref}
\usepackage{subfigure}
\usepackage{mathabx}
\usepackage{euscript}
\usepackage{xargs}                      

\usepackage[colorinlistoftodos,prependcaption,textsize=tiny]{todonotes}
\newcommandx{\note}[2][1=]{\todo[linecolor=Plum,backgroundcolor=Plum!25,bordercolor=Plum,#1]{#2}}

\newtheoremstyle{example}{\topsep}{\topsep}%
     {}
     {}
     {\bfseries}
     {.}
     {2pt}
     {\thmname{#1}\thmnumber{ #2}\thmnote{ #3}}

\theoremstyle{example}
\newtheorem{exa}[equation]{Example}
\newtheorem{exas}[equation]{Examples}
\newtheorem{ex}[equation]{Example}
\newtheorem{rem}[equation]{Remark}

\newtheorem{defi}[equation]{Definition}

\newtheorem{thm}[equation]{Theorem}

\newtheorem{cor}[equation]{Corollary}
\newtheorem{lem}[equation]{Lemma}
\newtheorem{prop}[equation]{Proposition}

\hypersetup{%
    colorlinks,%
    linkcolor={red!50!black},%
    citecolor={blue!50!black},%
    urlcolor={blue!80!black}%
}

\setlist[enumerate,1]{label=(\arabic{*})}
\setlist[enumerate,2]{label=(\roman{*})}
\setlist[enumerate,3]{label=(\alph{*})}

\def\A{ {\EuScript A}}
\def\AA{ {\mathbb A}}

\def\Aut{\on{Aut}}
\def\An{\mathbb A}

\def\B{ {\EuScript B}}
\def\BB{\mathbb{B}}

\def\Bop{ {\operatorname {B}}}

\def\coSh{\operatorname{coSh}}
\def\c{\operatorname{c}}
\def\C{{\EuScript C}}

\def\CC{\mathbb{C}}

\def\Cat{{\EuScript Cat}}

\def\Ch{\operatorname{Ch}}

\def\colim{\operatorname*{colim}}

\def\cof{\operatorname{cof}}

\def\Conf{{\on{Conf}}}

\def\pD{{^{\mathfrak p}}D}
\def\D{{\EuScript D}}

\def\DD{{\mathbb{D}}}
\def\DDr{{\mathbb{D}^{\RR_{+}}}}
\def\DDrn{{\mathbb{D}^{\RR_{+}}_{\le n}}}

 \def\Disk{{\on{Disk}}}

\def\Fan{F^{\an}}
\def\an{\on{an}}

\def\fib{\operatorname{fib}}
\def\cofib{\operatorname{cofib}}
\def\tfib{\operatorname{tfib}}

\def\Fun{\operatorname{Fun}}

\def\G{{ \EuScript G}}

\def\Hyp{\operatorname{Hyp}}
 
\def\id{\on{id}}

 \def\im{\on{im}}

 \def\iso{{ \operatorname{iso}}}

\def\J{{\mathcal J}}
\def\J{ {\EuScript J}}

\def\lla{\longleftarrow}
\def\llra{\longleftrightarrow}
\def\lra{\longrightarrow}

\def\m{\operatorname{m}}

\def\Mfld{\on{Mfld}}

\def\NN{\mathbb{N}}

\def\O{{\mathcal O}}

\def\on{\operatorname}

\def\op{{\operatorname{op}}}

 \def\P{{\EuScript P}}
 \def\PP{{\mathbb P}}

\def\Perv{{\on{Perv}}}

\def\phi{{\varphi}}

\def\pt{ {\on{pt}}}

\def\Rg{{\mathbb R}_{\ge 0}}

\def\Re{\operatorname{Re}}

\def\RR{\mathbb{R}}

\def\SP{{\operatorname{SP}}}
\def\SPi{{\operatorname{SP}^{\infty}}}

\def\Sh{\on{Sh}}
\def\Shc{\on{Sh}_{\NN}}

\def\Shan{\Sh^{\an}}

\def\Shanr{\Sh^{\an,\mathbb{\RR}_{+}}}
\def\Shk{\Sh^{\le k}}
\def\Shkr{\Sh^{\le k, \mathbb{R}_{+}}}
\def\Shnr{\Sh^{\le n, \mathbb{R}_{+}}}

\def\Sp{{\on{Sp}}}

 \def\ShnRgC{\Sh_{[n]}(\SP^n(\mathbb{R}_{\ge 0});\EuScript{D})}
 \def\ShPnRgC{\Sh_{\mathbb{P}_{\le n}}(\SP^n(\mathbb{R}_{\ge 0});\EuScript{D})}

\def\U{ {\EuScript U}}
\def\UU{ {\mathbb U}}

\def\V{ {\EuScript V}}

\def\ZZ{\mathbb{Z}}

 \def\={{\,\, \simeq\,\, }}
 \def\-{{\setminus}}
\def\<<{\langle {}\hskip -.1cm {}\langle}
\def\>>{\rangle \hskip -.1cm \rangle}

%% file: symdk.bbl
\begin{thebibliography}{DHST26}

\bibitem[AC20]{arone-ching:handbook}
Gregory Arone and Michael Ching.
\newblock Goodwillie calculus.
\newblock In {\em Handbook of homotopy theory}, pages 1--38. CRC Press, Boca Raton, FL, 2020.

\bibitem[AFT17]{aft17}
David Ayala, John Francis, and Hiro~Lee Tanaka.
\newblock Local structures on stratified spaces.
\newblock {\em Advances in Mathematics}, 307:903--1028, 2017.

\bibitem[BBD82]{beilinson1982faisceaux}
A.~A. Beilinson, J.~Bernstein, and P.~Deligne.
\newblock Faisceaux pervers.
\newblock {\em Ast\'erisque}, 100:5--171, 1982.

\bibitem[Bei87]{beilinson:howtoglue}
A.~A. Beilinson.
\newblock How to glue perverse sheaves.
\newblock In {\em {$K$}-theory, arithmetic and geometry ({M}oscow, 1984--1986)}, volume 1289 of {\em Lecture Notes in Math.}, pages 42--51. Springer, Berlin, 1987.

\bibitem[DHST26]{dyckerhoff2026laxdescentfoliatedverdier}
Tobias Dyckerhoff, Till Heine, Simon Schneider, and Kostiantyn Tolmachov.
\newblock Lax descent and foliated verdier duality, 2026.

\bibitem[DK85]{DK85}
W.~G. Dwyer and D.~M. Kan.
\newblock Normalizing the cyclic modules of {C}onnes.
\newblock {\em Comment. Math. Helv.}, 60(4):582--600, 1985.

\bibitem[DK15]{DK15}
T.~Dyckerhoff and M.~Kapranov.
\newblock Crossed simplicial groups and structured surfaces.
\newblock In {\em Stacks and categories in geometry, topology, and algebra}, volume 643 of {\em Contemp. Math.}, pages 37--110. Amer. Math. Soc., Providence, RI, 2015.

\bibitem[DKS23]{DKS:milnor}
T.~Dyckerhoff, M.~Kapranov, and Y.~Soibelman.
\newblock Perverse sheaves on {Riemann} surfaces as {Milnor} sheaves.
\newblock {\em Forum Math. Sigma}, 11:50, 2023.
\newblock Id/No e92.

\bibitem[Hei23]{heine:dk}
T.~Heine.
\newblock {A categorified Dwyer-Kan correspondence}.
\newblock {\em \href{https://arxiv.org/abs/2303.03653}{\sf arXiv:2303.03653}}, 2023.

\bibitem[HPT23]{haine2023homotopyinvariance}
Peter~J. Haine, Mauro Porta, and Jean-Baptiste Teyssier.
\newblock The homotopy-invariance of constructible sheaves.
\newblock {\em Homology, Homotopy and Applications}, 25(2):97--128, 2023.

\bibitem[KSW25]{KSW25assembly}
Eilind Karlsson, Claudia~I. Scheimbauer, and Tashi Walde.
\newblock Assembly of constructible factorization algebras, 2025.

\bibitem[Lur09]{lurie:htt}
J.~Lurie.
\newblock {\em Higher topos theory}, volume 170 of {\em Annals of Mathematics Studies}.
\newblock Princeton University Press, Princeton, NJ, 2009.

\bibitem[{Lur}14]{lurie:tamagawa}
J.~{Lurie}.
\newblock {Tamagawa Numbers via Nonabelian Poincare Duality}.
\newblock {\em Lecture notes}, 2014.

\bibitem[{Lur}17]{lurie:ha}
J.~{Lurie}.
\newblock {Higher Algebra}.
\newblock {\em preprint}, September 2017.

\bibitem[ORW26]{tashi:cores}
Viktoriya Ozornova, Martina Rovelli, and Tashi Walde.
\newblock Cores and localizations of $(\infty,\infty)$-categories, 2026.

\bibitem[Sch03]{Schurmann2003}
J{\"o}rg Sch{\"u}rmann.
\newblock {\em Topology of Singular Spaces and Constructible Sheaves}.
\newblock Monografie Matematyczne. Birkh{\"a}user Basel, 1 edition, 2003.

\bibitem[Vol25a]{volpe2025six}
Marco Volpe.
\newblock The six operations in topology.
\newblock {\em Journal of Topology}, 18(4), 2025.

\bibitem[Vol25b]{volpe2025verdier}
Marco Volpe.
\newblock Verdier duality on conically smooth stratified spaces.
\newblock {\em Algebraic \& Geometric Topology}, 25(2):919--950, 2025.

\end{thebibliography}
